\theoremstyle{plain}
\newtheorem{thrm}{Theorem}[section]
\newtheorem{lmm}[thrm]{Lemma}
\newtheorem{prpstn}[thrm]{Proposition}
\newtheorem*{rmk}{Remark}
\newtheorem*{nprpstn}{Proposition}
\numberwithin{sblmm}{thrm} 
\numberwithin{equation}{section}
\newcommand{\Mod}[1]{\ (\mathrm{mod}\ #1)}
\begin{document}
\title{Primes with restricted digits}
\author{James Maynard}
\address{Magdalen College, Oxford, England, OX1 4AU}
\email{james.alexander.maynard@gmail.com}
\begin{abstract}
Let $a_0\in\{0,\dots,9\}$. We show there are infinitely many prime numbers which do not have the digit $a_0$ in their decimal expansion.

The proof is an application of the Hardy-Littlewood circle method to a binary problem, and rests on obtaining suitable `Type I' and `Type II' arithmetic information for use in Harman's sieve to control the minor arcs. This is obtained by decorrelating Diophantine conditions which dictate when the Fourier transform of the primes is large from digital conditions which dictate when the Fourier transform of numbers with restricted digits is large. These estimates rely on a combination of the geometry of numbers, the large sieve and moment estimates obtained by comparison with a Markov process.
\end{abstract}
\maketitle
%
%
%
%
%
%
%
\section{Introduction}
Let $a_0\in\{0,\dots,9\}$ and let 
\[
\mathcal{A}_1=\Bigl\{\sum_{0\le i\le k}n_i 10^i: n_i\in\{0,\dots,9\}\backslash\{a_0\},\,k\ge 0\Bigr\}
\]
be the set of numbers which have no digit equal to $a_0$ in their decimal expansion. The number of elements of $\mathcal{A}_1$ which are less than $x$ is $O(x^{1-c})$, where $c=\log{(10/9)}/\log{10}\approx 0.046>0$. In particular, $\mathcal{A}_1$ is a sparse subset of the natural numbers. A set being sparse in this way presents several analytic difficulties if one tries to answer arithmetic questions such as whether the set contains infinitely many primes. Typically we can only show that sparse sets contain infinitely many primes when the set in question possesses some additional multiplicative structure.

The set $\mathcal{A}_1$ has unusually nice structure in that its Fourier transform has a convenient explicit analytic description, and is often unusually small in size. There has been much previous work \cite{1,2,3,4,5,6,7} studying $\mathcal{A}_1$ and related sets by exploiting this Fourier structure. In particular the work of Dartyge and Mauduit \cite{Almost1,Almost2} shows the existence of infinitely many integers in $\mathcal{A}_1$ with at most $2$ prime factors, this result relying on the fact that $\mathcal{A}_1$ is well-distributed in arithmetic progressions \cite{Almost1,Level1,Level2}. We also mention the related work of Mauduit and Rivat \cite{MauduitRivat} who showed the sum of digits of primes is well-distributed, and the work of Bourgain \cite{Bourgain} which showed the existence of primes in the sparse set created by prescribing a positive proportion of the binary digits.

We show that there are infinitely many primes in $\mathcal{A}_1$. Our proof is based on a combination of the circle method, Harman's sieve, the method of bilinear sums, the large sieve, the geometry of numbers and a comparison with a Markov process. In particular, we make key use of the Fourier structure of $\mathcal{A}_1$, in the same spirit as the aforementioned works. Somewhat surprisingly, the Fourier structure allows us to successfully apply the circle method to a \textit{binary} problem.
%
%
%
%
\begin{thrm}\label{thrm:MainTheorem}
Let $X\ge 4$ and $\mathcal{A}=\{\sum_{0\le i\le k}n_i10^i< X:\, n_i\in\{0,\dots,9\}\backslash\{a_0\},\,k\ge 0\}$ be the set of numbers less than $X$ with no digit  in their decimal expansion equal to $a_0$. Then we have
\[
\#\{p\in\mathcal{A}\}\asymp \frac{\#\mathcal{A}}{\log{X}}\asymp \frac{X^{\log{9}/\log{10}}}{\log{X}}.
\]
\end{thrm}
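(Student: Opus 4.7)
The starting point is to convert the count into a Fourier integral. Writing $S_{\mathcal{A}}(\theta) = \sum_{n \in \mathcal{A}} e(n\theta)$ and $S_{\mathbb{P}}(\theta) = \sum_{p < X}(\log p)\, e(p\theta)$, one has
\[
\sum_{p \in \mathcal{A}} \log p \;=\; \int_0^1 S_{\mathbb{P}}(\theta)\, S_{\mathcal{A}}(-\theta)\,d\theta.
\]
The crucial feature of $\mathcal{A}$ is the explicit factorisation $S_{\mathcal{A}}(\theta) = \prod_{i<k}\phi(10^i\theta)$, where $\phi(x) = \sum_{d\in\{0,\dots,9\}\setminus\{a_0\}} e(dx)$ and $k = \lfloor\log_{10}X\rfloor$, which makes the integrand amenable to detailed analysis.

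Because $\#\mathcal{A}$ and the expected prime count are both of the same order $X^{1-c}$, this is a genuinely \emph{binary} problem: one cannot pull $S_{\mathcal{A}}$ outside the integral on the minor arcs and extract the main term from $S_{\mathbb{P}}$ alone. The plan is instead to apply Harman's sieve, decomposing the prime indicator as a linear combination of Type~I pieces $\sum_{d\mid n,\, d\le D_0}\lambda_d$ and Type~II bilinear pieces $\sum_{n=mk,\, m\sim M}\alpha_m\beta_k$. This reduces the theorem to (i) an \emph{asymptotic formula} for the Type~I sums $\#\{n\in\mathcal{A}: d\mid n,\, n<X\}$ for $d$ up to some level $X^{\theta_1}$, and (ii) a \emph{non-trivial upper bound} for general Type~II sums $\sum_{m\sim M,\, k\sim K}\alpha_m\beta_k \mathbf{1}_{\mathcal{A}}(mk)$ for $M$ in a range $[X^{\theta_2},X^{\theta_3}]$ wide enough to close the sieve iteration; the Buchstab role-reversals of Harman's method then patch these into a lower bound for $\#\{p\in\mathcal{A}\}$.

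Both inputs reduce to controlling exponential sums twisted by $S_{\mathcal{A}}$. The governing principle is a \emph{decorrelation}: $|S_{\mathcal{A}}(\theta)|$ is large exactly when the iterates $\{10^i\theta\}_{i<k}$ cluster near the zeros of $|\phi|$ (a condition on the base-$10$ digits of $\theta$), whereas a bilinear exponential sum is large only near rationals $a/q$ with small $q$ (a Diophantine condition on $\theta$). To exploit the tension between these, I would first use the geometry of numbers to parametrise those $\theta$ near $a/q$ at which $S_{\mathcal{A}}$ is large as coming from lattice points in a narrow box; second, bound high moments $\sum_{\theta}|S_{\mathcal{A}}(\theta)|^{2m}$ over structured collections of frequencies by comparing the contributing $\theta$'s with trajectories of a Markov chain that models the base-$10$ digits as near-independent random variables; and third, apply the large sieve to average efficiently over the denominators $q$.

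The main obstacle will be the Type~II estimate. A pointwise bound for $S_{\mathcal{A}}$ is far too weak, so one must genuinely extract cancellation across different $\theta$'s; but $S_{\mathcal{A}}$ is a highly non-random Fourier transform, so the cancellation can only be realised \emph{after} the Diophantine constraint coming from the bilinear structure pins $\theta$ close to a rational, at which point its base-$10$ digits become effectively generic and the moment/Markov comparison can be run. Balancing the Harman ranges $\theta_1,\theta_2,\theta_3$ against the strength of this Fourier input is the delicate calibration that makes the scheme yield primes, and the matching of the arithmetic decomposition to the analytic estimates is the heart of the argument.
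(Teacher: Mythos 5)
Your proposal correctly identifies the paper's strategy: the circle method applied to the binary count, Harman's sieve decomposing the prime indicator into Type I and Type II pieces, and the crucial decorrelation of digital conditions (governing when $|S_{\mathcal{A}}|$ is large) from Diophantine conditions (governing when bilinear sums are large), executed via the large sieve, Markov-chain moment bounds and the geometry of numbers. One small slip: $|S_{\mathcal{A}}(\theta)|$ is large when the iterates $\{10^i\theta\}$ cluster near the \emph{maxima} of $|\phi|$ (i.e.\ near integers), not near its zeros; with that corrected, your roadmap matches the paper's.
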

%
%
%
%
Here, and throughout the paper, $f\asymp g$ means that there are absolute constants $c_1,c_2>0$ such that $c_1f<g<c_2f$.

Thus there are infinitely many primes with no digit $a_0$ when written in base $10$. Since $\#\mathcal{A}/X^{\log{9}/\log{10}}$ oscillates as $X\rightarrow\infty$, we cannot expect an asymptotic formula of the form $(c+o(1))X^{\log{9}/\log{10}}/\log{X}$. Nonetheless, we expect that
\[
\#\{p\in\mathcal{A}\}= (\kappa_\mathcal{A}+o(1))\frac{\#\mathcal{A}}{\log{X}},
\]
where
\begin{equation}
\kappa_\mathcal{A}=
\begin{cases}
\frac{10(\phi(10)-1)}{9\phi(10)},&\text{if $(10,a_0)=1$,}\\
\frac{10}{9},&\text{otherwise.}\\
\end{cases}
\label{eq:KappaDef}
\end{equation}
Indeed, there are $(\phi(10)\kappa_\mathcal{A}/10+o(1))\#\mathcal{A}$ elements of $\mathcal{A}$ which are coprime to 10, and $(1+o(1))X/\log{X}$ primes less than $X$ which are coprime to 10, and $(\phi(10)/10+o(1))X$ integers less than $X$ coprime to 10. Thus if the properties `being in $\mathcal{A}$' and `being prime' where independent for integers $n< X$ coprime to 10, we would expect $(\kappa_\mathcal{A}+o(1)) \#\mathcal{A}/\log{X}$ primes in $\mathcal{A}$. Theorem \ref{thrm:MainTheorem} shows this heuristic guess is within a constant factor of the truth, and we would be able to establish such an asymptotic formula if we had stronger `Type II' information. %

One can consider the same problem in bases other than 10, and with more than one excluded digit. The set of numbers less than $X$ missing $s$ digits in base $q$ has $\asymp X^{c}$ elements, where $c=\log(q-s)/\log{q}$. For fixed $s$, the density becomes larger as $q$ increases, and so the problem becomes easier. Our methods are not powerful enough to show the existence of infinitely many primes with two digits not appearing in their decimal expansion, but they can show that there are infinitely many primes with $s$ digits excluded in base $q$ provided $q$ is large enough in terms of $s$. Moreover, if the set of excluded digits possesses some additional structure this can apply to very thin sets formed in this way.
%
%
%
%
\begin{thrm}\label{thrm:ManyDigits}
Let $q$ be sufficiently large, and let $X\ge q$.

For any choice of $\mathcal{B}\subseteq\{0,\dots,q-1\}$ with $\#\mathcal{B}=s\le q^{23/80}$, let 
\[
\mathcal{A}'=\Bigl\{\sum_{0\le i \le k}n_i q^i< X:\,n_i\in\{0,\dots,q-1\}\backslash\mathcal{B},\,k\ge 0\Bigr\}
\]
 be the set of integers less than $X$ with no digit in base q in the set $\mathcal{B}$. Then we have
\[
\#\{p\in\mathcal{A}'\}\asymp \frac{X^{\log(q-s)/\log{q}}}{\log{X}}.
\]
In the special case when $\mathcal{B}=\{0,\dots,s-1\}$ or $\mathcal{B}=\{q-s,\dots,q-1\}$, this holds in the wider range $0\le s\le q-q^{57/80}$.
\end{thrm}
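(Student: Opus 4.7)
The plan is to follow the same circle-method-plus-Harman's-sieve framework used to establish Theorem \ref{thrm:MainTheorem}, carefully tracking the dependence on the base $q$ and the size $s$ of the excluded digit set. Write $k$ for the number of base-$q$ digits of $X$, so $X\approx q^k$ and $\#\mathcal{A}'\approx (q-s)^k$. The exponential sum attached to $\mathcal{A}'$ factorises as
\[
S_{\mathcal{A}'}(\theta) \;=\; \sum_{n\in\mathcal{A}'} e(n\theta) \;=\; \prod_{j=0}^{k-1} F(q^j\theta), \qquad F(y) \;=\; \sum_{d\in\{0,\dots,q-1\}\setminus\mathcal{B}} e(dy),
\]
and this product structure, together with the fact that $|F|$ is essentially never close to its maximum $q-s$, is the analytic heart of the whole argument.

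The first step is to assemble Type I and Type II arithmetic information for Harman's sieve. Type I information reduces, via discrete Fourier inversion, to bounds on $S_{\mathcal{A}'}(a/d)$ at rationals of moderate denominator; these follow from uniform pointwise bounds on $|F|$ together with the multiplicative structure of the product expansion. Type II information is far more delicate: one needs to estimate bilinear sums $\sum_m\sum_n \alpha_m\beta_n \mathbf{1}_{mn\in\mathcal{A}'}$ through the circle method, after which the estimate is controlled by the measure of the set of $\theta$ where $|S_{\mathcal{A}'}(\theta)|$ and $|\sum_n \beta_n e(n\theta)|$ are \emph{simultaneously} large.

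The decisive input, and the main obstacle, is a quantitative \emph{decorrelation} between the digital conditions that force $|S_{\mathcal{A}'}(\theta)|$ to be large and the Diophantine conditions that force the Fourier transform of the primes to be large. For an arbitrary excluded set $\mathcal{B}$, one controls the level sets of $|F(y)|$ through a combination of a large-sieve $L^2$ bound, higher moment estimates obtained by comparing the digits of a random element of $\mathcal{A}'$ with a Markov chain on $\{0,\dots,q-1\}\setminus\mathcal{B}$, and the geometry of numbers to handle the interaction of the factors $F(q^j\theta)$ at different scales $j$. Balancing the three resulting losses against the density $(q-s)^k$ of $\mathcal{A}'$ forces the threshold $s\le q^{23/80}$: below this point the decorrelation is strong enough that the minor-arc contribution to Harman's sieve is acceptable, and the major arcs give the expected main term of order $X^{\log(q-s)/\log q}/\log X$.

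For the interval cases $\mathcal{B}=\{0,\dots,s-1\}$ and $\mathcal{B}=\{q-s,\dots,q-1\}$, the digit function collapses to a pure geometric sum,
\[
|F(y)| \;=\; \left|\frac{\sin(\pi(q-s)y)}{\sin(\pi y)}\right|,
\]
which is essentially a Dirichlet kernel of length $q-s$. Consequently $|F(y)|$ is close to its maximum only when $y$ lies in a very narrow neighbourhood of an integer, giving a much sharper description of the large-values set of $S_{\mathcal{A}'}$. Feeding this improved input into the same decorrelation machinery, the moment estimates and large-sieve losses can be balanced much more generously, and the allowable range widens to $s\le q-q^{57/80}$. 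The hard part throughout will be matching the digital moment bounds (obtained through the Markov comparison) tightly enough against the large-sieve input that the exponent arithmetic closes; the explicit thresholds $23/80$ and $57/80$ are precisely the points where these competing losses meet.
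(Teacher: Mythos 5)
There is a genuine gap here, and it sits exactly at the point you defer to ``the hard part''. For the general case $\#\mathcal{B}=s\le q^{23/80}$ you propose to rerun the Markov-chain moment machinery (the analogues of Lemmas \ref{lmm:Markov} and \ref{lmm:DigitDistribution}) over the alphabet $\{0,\dots,q-1\}\setminus\mathcal{B}$. But in the base-$10$ argument the exponents $27/77$ and $59/433$ are obtained by a \emph{numerical} computation of the top eigenvalue of a specific $10^J\times 10^J$ matrix; there is no way to carry out such a computation uniformly over all large $q$ and all choices of $\mathcal{B}$, and you give no substitute for it. The point of the paper's proof of Theorem \ref{thrm:ManyDigits} is precisely that for $q$ large the moment machinery is unnecessary: one uses the elementary pointwise bound $\bigl|\sum_{n<q,\,n\notin\mathcal{B}}e(n\theta)\bigr|\ll\min(q-s,\,s+1/\|\theta\|)$ (each excluded digit costs at most $1$, and the full geometric sum is $O(1/\|\theta\|)$), multiplies it out over the $k$ digits, and sums over $t/Y$ to get the direct $\ell^1$ estimate $\sum_{t<Y}F_Y(t/Y)\ll\bigl((q\log q+qs)/(q-s)\bigr)^k$. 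Requiring this to be $O(Y^{23/80+\epsilon})$ --- which is the exponent the Type II analysis (the exceptional set $\mathcal{E}$, and Propositions \ref{prpstn:LatticeBound} and \ref{prpstn:LineBound}) already demands in the base-$10$ proof --- is what produces the threshold $s\le q^{23/80}$; this single bound then substitutes for \emph{both} Lemma \ref{lmm:L1Bound} and Lemma \ref{lmm:DigitDistribution}, since the level sets of $F_Y$ are controlled by Chebyshev from the $\ell^1$ bound alone. Your proposal never identifies this mechanism, and the route it does commit to cannot be closed as stated.

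Your treatment of the interval case is closer to the truth: the Dirichlet-kernel bound $|F(y)|\le\min(q-s,1/\|y\|)$ is correct and is what the paper uses. But again the widening to $s\le q-q^{57/80}$ is not a matter of ``balancing moment estimates more generously''; it is the same one-line computation, $\sum_{t<q}\min(q-s,\,q/t+q/(q-t))\ll q\log q+(q-s)$, so the per-digit loss is $O(q\log q/(q-s))$, and demanding this be $\le q^{23/80+\epsilon}$ gives $q-s\gg q^{57/80}$. A further minor mischaracterisation: the geometry of numbers in this paper does not handle ``the interaction of the factors $F(q^j\theta)$ at different scales''; it enters only in the bilinear (Type II) analysis, to show that pairs of frequencies giving a large diagonal contribution lie on low-height lattices or lines. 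To repair the proposal you should discard the Markov comparison for large $q$, state and prove the elementary $\ell^1$ bound above, and verify that it feeds into the Theorem \ref{thrm:MainTheorem} machinery wherever Lemmas \ref{lmm:L1Bound} and \ref{lmm:DigitDistribution} were used.
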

%
%
%
%
The final case of Theorem \ref{thrm:ManyDigits} when $\mathcal{B}=\{0,\dots,s-1\}$ and $s\approx q-q^{57/80}$ shows the existence many primes in a set of integers $\mathcal{A}'$ with $\#\mathcal{A}'\approx X^{57/80}=X^{0.7125}$, a rather thin set. The exponent here can be improved slightly with more effort.

The estimates in Theorem \ref{thrm:ManyDigits} can be improved to asymptotic formulae if we restrict $s$ slightly further. For general $\mathcal{B}$ with $s=\#\mathcal{B}\le q^{1/4-\delta}$ and any $q$ sufficiently large in terms of $\delta>0$ we obtain
\[
\#\{p\in\mathcal{A}'\}=(\kappa_\mathcal{B}+o(1))\frac{\#\mathcal{A}}{\log{X}},
\]
where, if $\mathcal{B}$ contains exactly $t$ elements coprime to $q$, we have
\[
\kappa_\mathcal{B}=
\frac{q(\phi(q)-t)}{\phi(q)(q-s)}.%
\]
In the case of just one excluded digit, we can obtain this asymptotic formula for $q\ge 12$. In the case of $\mathcal{B}=\{0,\dots,s-1\}$, we obtain the above asymptotic formula provided $s\le q-q^{3/4+\delta}$.

We expect several of the techniques introduced in this paper might be useful more generally in other digit-related questions about arithmetic sequences. Our general approach to counting primes in $\mathcal{A}$ and our analysis of the minor arc contribution might also be of independent interest, with potential application to other questions on primes involving sets whose Fourier transform is unrelated to Diophantine properties of the argument.
%
%
%
%
%
%
%
%
\section{Outline}\label{sec:Outline}
Our argument is fundamentally based on an application of the circle method. Clearly for the purposes of Theorem \ref{thrm:MainTheorem} we can restrict $X$ to a power of 10 for convenience. The number of primes in $\mathcal{A}$ is the number of solutions of the \textit{binary} equation $p-a=0$ over primes $p$ and integers $a\in\mathcal{A}$, and so is given by
\[
\#\{p\in\mathcal{A}\}=\frac{1}{X}\sum_{0\le a<X}S_{\mathcal{A}}\Bigl(\frac{a}{X}\Bigr)S_{\mathbb{P}}\Bigl(\frac{-a}{X}\Bigr),
\]
where
\begin{align*}
S_{\mathcal{A}}(\theta)&=\sum_{a\in\mathcal{A}}e(a\theta),\\
S_{\mathbb{P}}(\theta)&=\sum_{p<X}e(p\theta).
\end{align*}
We then separate the contribution from the $a$ in the `major arcs' which give our expected main term for $\#\{p\in\mathcal{A}\}$, and the $a$ in the `minor arcs' which we bound for an error term. 

The reader might be (justifiably) somewhat surprised by this, since it is well known that the circle method typically cannot be applied to binary problems. Indeed, one cannot generally hope for bounds better than `square-root cancellation'
\begin{align*}
S_{\mathbb{P}}(\theta)\ll X^{1/2},\\
S_{\mathcal{A}}(\theta)\ll \#\mathcal{A}^{1/2},
\end{align*}
for `generic' $\theta\in[0,1]$. Thus if one cannot exploit cancellation amongst the different terms in the minor arcs, we would expect that the $\gg X$ different `generic' $a$ in the sum above would contribute an error term which we can only bound as $O(X^{1/2}\#\mathcal{A}^{1/2})$, and this would dominate the expected main term.

It turns out that the Fourier transform $S_{\mathcal{A}}(\theta)$ has some somewhat remarkable features which cause it to typically have \textit{better} than square-root cancellation. (A closely related phenomenon is present and crucial in the work of Mauduit and Rivat \cite{MauduitRivat} and Bourgain \cite{Bourgain}.) Indeed, we establish the $\ell^1$ bound
\begin{equation}
\sum_{0\le a<X}\Bigl|S_{\mathcal{A}}\Bigl(\frac{a}{X}\Bigr)\Bigr|\ll \#\mathcal{A}\,X^{0.36}.\label{eq:L1Bound}
\end{equation}
which shows that for `generic' $a$ we have $S_{\mathcal{A}}(a/X)\ll \#\mathcal{A}/X^{0.64}\ll X^{0.32}$. This gives us a (small) amount of room for a possible successful application of the circle method , since now we might hope the `generic' $a$ would contribute a total $O(X^{0.82})$ if the bound $S_{\mathbb{P}}(a/X)\ll X^{1/2+\epsilon}$ held for all $a$ in the minor arcs, and this $O(X^{0.82})$ error term  is now smaller than the expected main term of size $\#\mathcal{A}^{1+o(1)}$. 

We actually get good asymptotic control over all moments (including fractional ones) of $S_{\mathcal{A}}(a/X)$ rather than just the first. By making a suitable approximation to $S_{\mathcal{A}}(\theta)$, we can re-interpret moments of this approximation as the average probability of restricted paths in  a Markov process, and obtain asymptotic estimates via a finite eigenvalue computation.

By combining an $\ell^2$ bound for $S_{\mathbb{P}}(a/X)$ with an $\ell^{1.526}$ bound for $S_{\mathcal{A}}(a/X)$, we are able to show that it is indeed the case that `generic' $a<X$ make a negligible contribution, and that we may restrict ourselves to $a\in\mathcal{E}$, some set of size $O(X^{0.36})$.

We expect that $S_{\mathbb{P}}(\theta)$ is large only when $\theta$ is close to a rational with small denominator, and $S_{\mathcal{A}}(\theta)$ is large when $\theta$ has a decimal expansion containing many 0's or 9's. Thus we expect the product to be large only when both of these conditions hold, which is essentially when $\theta$ is well approximated by a rational whose denominator is a small power of 10.

By obtaining suitable estimates for $\mathcal{A}$ in arithmetic progressions via the large sieve, one can verify that amongst all $a$ in the major arcs $\mathcal{M}$ where $a/X$ is well-approximated by a rational of small denominator we obtain our expected main term, and this comes from when $a/X$ is well-approximated by a rational with denominator 10.

Thus we are left to show when $a\in\mathcal{E}$ and $a/X$ is not close to a rational with small denominator, the product $S_{\mathcal{A}}(a/X)S_{\mathbb{P}}(-a/X)$ is small on average. By using an expansion of the indicator function of the primes as a sum of bilinear terms (similar to Vaughan's identity), we are led to bound expressions such as
\begin{equation}
\sum_{a_1,a_2\in\mathcal{E}\backslash\mathcal{M}}\Bigl|S_{\mathcal{A}}\Bigl(\frac{a_1}{X}\Bigr)S_{\mathcal{A}}\Bigl(\frac{a_1}{X}\Bigr)\Bigr|\sum_{n_1,n_2\le N}\min\Bigl(\frac{X}{N},\Bigl\|\frac{a_1n_1-a_2n_2}{X}\Bigr\|^{-1}\Bigr),\label{eq:BilinearSum}
\end{equation}
which is a weighted and averaged form of the typical expressions one encounters when obtaining a $\ell^\infty$ bound for exponential sums over primes. Here $\|\cdot\|$ is the distance to the nearest integer.

The double sum over $n_1,n_2$ in \eqref{eq:BilinearSum} is of size $O(N^2)$ for `typical' pairs $(a_1,a_2)$, and if it is noticeably larger than this then $a_1$ and $a_2$ must share some Diophantine structure. We find that the pair $(a_1,a_2)$ must lie close to the projection from $\mathbb{Z}^3$ to $\mathbb{Z}^2$ of some low height plane or low height line if this quantity is large, where the arithmetic height of the line or plane is bounded in terms of the size of the double sum. (For example, the diagonal terms $a_1=a_2$ give a large contribution and lie on a low height line, and $a_1,a_2$ which are both small give a large contribution and lie in a low height plane.)

This restricts the number and nature of pairs $(a_1,a_2)$ which can give a large contribution. Since we expect the size of $S_{\mathcal{A}}(a_1/X)S_{\mathcal{A}}(a_2/X)$ to be determined by digital rather than Diophantine conditions on $a_1,a_2$, we expect to have a smaller total contribution when restricted to these sets. By using the explicit description of such pairs $(a_1,a_2)$ we succeed in obtaining such a superior bound on the sum over these pairs. It is vital here that we are restricted to $a_1,a_2$ lying in the small set $\mathcal{E}$ (for points on a line) and outside of the set $\mathcal{M}$ of major arcs (for points in a lattice).

This ultimately allows us to get suitable bounds for \eqref{eq:BilinearSum} provided $N\in[X^{0.36},X^{0.425}]$. If this `Type II range' were larger, we would be able to express the indicator function of the primes as a combination of such bilinear expressions and easily controlled terms. We would then obtain an asymptotic estimate for $\#\{p\in\mathcal{A}\}$. Unfortunately our range is not large enough to do this. Instead we work with a minorant for the indicator function of the primes throughout our argument, which is chosen such that it is essentially a combination of bilinear expressions which do fall into this range. It is this feature which means we obtain a lower bound rather than an asymptotic estimate for the number of primes in $\mathcal{A}$.

Such a minorant is constructed via Harman's sieve, and, since it is essentially a combination of Type II terms and easily handled terms, we can obtain an asymptotic formula for elements of $\mathcal{A}$ weighed by it. This gives a lower bound 
\[
\#\{p\in\mathcal{A}\}\ge(c+o(1))\frac{\#\mathcal{A}}{\log{X}}
\]
for some constant $c$. We use numerical integration to verify that we (just) have $c>0$, and so we obtain our asymptotic lower bound for $\#\{p\in\mathcal{A}\}$. The upper bound is a simple sieve estimate.
\begin{rmk}
For the method used to prove Theorem \ref{thrm:MainTheorem}, strong assumptions such as the Generalized Riemann Hypothesis appear to be only of limited benefit. In particular, even under GRH one only gets pointwise bounds of the strength $S_\mathbb{P}(\theta)\ll X^{3/4+o(1)}$ for `generic' $\theta$, which is not strong enough to give a non-trivial minor arc bound on its own. The assumption of GRH and the above pointwise bound is sufficient to deal with the entire minor arc contribution in the regime where we obtain asymptotic formulae (i.e. when the base is sufficiently large).
\end{rmk}
%
%
%
%
%
%
\section{Notation}
We use the asymptotic notation $\ll,\gg$, $O(\cdot)$, $o(\cdot)$ throughout, denoting a dependence of the implied constant on a parameter $t$ by a subscript. As mentioned earlier, we use $f \asymp g$ to denote that both $f\ll g$ and $g\ll f$ hold. Throughout the paper $\epsilon$ will denote a single fixed positive constant which is sufficiently small; $\epsilon=10^{-100}$ would probably suffice. In particular, any implied constants may depend on $\epsilon$. We will assume that $X$ is always a suitably large integral power of 10 throughout. We will exclusively use the letter $p$ to denote a prime number, without always making this restriction explicit.

We will use the nonstandard notation that $n\sim X$ to mean that $n$ lies in the interval $(X/10,X]$ throughout the paper. 

Several variables will be assumed to be non-negative integers, without directly specifying this. Thus sums such as $\sum_{n<X}$ will be assumed to be over integers $n$ with $0\le n<X$, for example. The usage should be clear from the context.

It will be convenient to normalize the Fourier transform of $\mathcal{A}$, and to be able to view it at different scales. With this in mind, we define
\begin{equation}
F_{Y}(\theta)=Y^{-\log{9}/\log{10}}\Bigl|\sum_{n<Y}\mathbf{1}_{\mathcal{A}_1}(n)e(n\theta)\Bigr|.\label{eq:FyDef}
\end{equation}
 Whenever we encounter the function $F_Y$ we assume that $Y$ is a positive integral power of 10. (Or that they are powers of $q$ in Section \ref{sec:ManyPrimes}.) We use $\|\cdot\|$ to denote the distance to the nearest integer, and $\|\cdot\|_2$ to denote the standard Euclidean norm. We use $\mathbf{1}_{\mathcal{A}_1}$ for the indicator function of the set $\mathcal{A}_1$ of integers with restricted digits. Here $e(x)=e^{2\pi i x}$ is the complex exponential function.

We need to make use of various numerical estimates throughout the paper, some of which succeed only by a small margin. We have endeavored to avoid too many explicit calculations and we encourage the reader to not pay too much attention to the numerical constants appearing on a first reading.
%
%
%
%
%
%
\section{Structure of the paper}
In Section \ref{sec:Decomposition}, we use a sieve decomposition to reduce the proof of Theorem \ref{thrm:MainTheorem} to the proof of Proposition \ref{prpstn:FinalSieve} and Proposition \ref{prpstn:FinalTypeII}, which are asymptotic estimates for particular types of terms arising from sieve decompositions. These propositions are established in Section \ref{sec:Sieve}.

In Section \ref{sec:Sieve}, we use sieve theory to reduce the proof of Proposition \ref{prpstn:FinalSieve} and Proposition \ref{prpstn:FinalTypeII} to the proof of Proposition \ref{prpstn:TypeI} and Proposition \ref{prpstn:TypeII}, which are our `Type I' and `Type II' estimates. These will be established in Section \ref{sec:TypeI} and Section \ref{sec:TypeII} respectively.

In Section \ref{sec:TypeI} we use a large sieve argument to reduce the proof of our Type I estimate Proposition \ref{prpstn:TypeI} to that of Lemma \ref{lmm:LargeSieveTypeI} and Lemma \ref{lmm:LInfTypeI}, which are Fourier $\ell^\infty$ and $\ell^1$ bounds. These will be established in Section \ref{sec:Fourier}.

In Section \ref{sec:TypeII} we use the circle method and geometric decompositions to reduce the proof of our Type II estimate  Proposition \ref{prpstn:TypeII} to that of Proposition \ref{prpstn:Major}, Proposition \ref{prpstn:Generic} and Proposition \ref{prpstn:Exceptional}, which are our estimates for the `major arcs', the `generic minor arcs' and the `exceptional minor arcs'. These will be established in Sections \ref{sec:Major}, \ref{sec:Generic} and \ref{sec:Bilinear} respectively.

In Section \ref{sec:Fourier} we establish various Fourier estimates. In particular we establish Lemma \ref{lmm:LargeSieveTypeI} and Lemma \ref{lmm:LInfTypeI}, as well as several auxiliary lemmas which will be used in later sections.

In Section \ref{sec:Major} use results on primes in arithmetic progressions to establish our major arc estimate Proposition \ref{prpstn:Major}, making use of the estimates of Section \ref{sec:Fourier}.

In Section \ref{sec:Generic} we use Fourier moment bounds from Section \ref{sec:Fourier} to establish our generic minor arc estimate Proposition \ref{prpstn:Generic}.

In Section \ref{sec:Bilinear} we use the geometry of numbers to reduce the proof of the exceptional minor arc estimate Proposition \ref{prpstn:Exceptional} to the proof of Proposition \ref{prpstn:LatticeBound} and Proposition \ref{prpstn:LineBound}, which are estimates from frequencies constrained to lie in low height lattices or low height lines. These will be established in Section \ref{sec:Lattice} and Section \ref{sec:Line}.

In Section \ref{sec:Lattice} we establish our estimate for low height lattices  Proposition \ref{prpstn:LatticeBound}, using the estimates of Section \ref{sec:Fourier}.

In Section \ref{sec:Line} we establish our estimate for low height lines Proposition \ref{prpstn:LineBound} , using the geometric counting estimates and the results of Section \ref{sec:Fourier}. This completes the proof of Theorem \ref{thrm:MainTheorem}.

In Section \ref{sec:ManyPrimes}, we sketch the modifications in the argument required to establish Theorem \ref{thrm:ManyDigits}.

In particular, the dependency graph between the main statements in the proof of Theorem \ref{thrm:MainTheorem} is as follows:
  \begin{center}
\makebox[\textwidth]{\parbox{1.5\textwidth}{
\begin{center}
   \tikzstyle{interface}=[draw, text width=6em,
      text centered, minimum height=2.0em]
   \tikzstyle{daemon}=[draw, text width=7em,
      minimum height=2em, text centered, rounded corners]
   \tikzstyle{dots} = [above, text width=6em, text centered]
   \tikzstyle{wa} = [daemon, text width=6em,
      minimum height=2em, rounded corners]
   \tikzstyle{ur}=[draw, text centered, minimum height=0.01em]
 
   \def\blockdist{1.3}
   \def\edgedist{0.}

   \begin{tikzpicture}
 
      \node (wa)[interface]  {Theorem \ref{thrm:MainTheorem}};

      \path (wa.west)+(0.6,+1.2) node (d1)[daemon] {\footnotesize Proposition \ref{prpstn:FinalTypeII}};
      \path (wa.west)+(0.6,-1.2) node (d2)[daemon] {\footnotesize Proposition \ref{prpstn:FinalSieve}};
      \path (wa.west)+(-2.7,0.8) node (d3)[daemon] {\footnotesize Proposition \ref{prpstn:TypeI}};
      \path (wa.west)+(-2.7,-0.8) node (d4)[daemon] {\footnotesize Proposition \ref{prpstn:TypeII}};
      \path (wa.west)+(-5.7,2) node (d5)[daemon] {\footnotesize Lemma \ref{lmm:LargeSieveTypeI}};
      \path (wa.west)+(-5.7,1) node (d6)[daemon] {\footnotesize Lemma \ref{lmm:LInfTypeI}};
      \path (wa.west)+(-5.7,0) node (d7)[daemon] {\footnotesize Proposition \ref{prpstn:Major}};
      \path (wa.west)+(-5.7,-1) node (d8)[daemon] {\footnotesize Proposition \ref{prpstn:Generic}};
      \path (wa.west)+(-5.7,-2) node (d9)[daemon] {\footnotesize Proposition \ref{prpstn:Exceptional}};
      \path (wa.west)+(-8.7,-1.5) node (d10)[daemon] {\footnotesize Proposition \ref{prpstn:LatticeBound}};
      \path (wa.west)+(-8.7,-2.5) node (d11)[daemon] {\footnotesize Proposition \ref{prpstn:LineBound}};
      \path [draw, ->,>=stealth] (d1.south) -- node [above] {} (wa.north) ;
      \path [draw, ->,>=stealth] (d2.north) -- node [above] {} (wa.south) ;
      \path [draw, ->,>=stealth] (d3.east) -- node [above] {} (d1.west) ;
      \path [draw, ->,>=stealth] (d3.east) -- node [above] {} (d2.north west) ;
      \path [draw, ->,>=stealth] (d4.east) -- node [above] {} (d1.south west) ;
      \path [draw, ->,>=stealth] (d4.east) -- node [above] {} (d2.west) ;
      \path [draw, ->,>=stealth] (d5.east) -- node [above] {} (d3.north west) ;
      \path [draw, ->,>=stealth] (d6.east) -- node [above] {} (d3.west) ;
      \path [draw, ->,>=stealth] (d7.east) -- node [above] {} (d4.north west) ;
      \path [draw, ->,>=stealth] (d8.east) -- node [above] {} (d4.west) ;
      \path [draw, ->,>=stealth] (d9.east) -- node [above] {} (d4.south west) ;
      \path [draw, ->,>=stealth] (d10.east) -- node [above] {} (d9.north west) ;
      \path [draw, ->,>=stealth] (d11.east) -- node [above] {} (d9.south west) ;
      \path [draw, ->,>=stealth] (d3.south) -- node [above] {} (d4.north) ;

   \end{tikzpicture}
\end{center}}}
   \end{center}
%
%
%
%
%
%
\section{Basic estimates}
We will make frequent use of some well-known facts in analytic number theory without extra comment. In particular, we make use of the Prime Number Theorem in short intervals and arithmetic progressions with error term (see \cite[Chapter 22]{Davenport}, for example). This states that for any $A>0$ we have
\begin{equation}
\sum_{\substack{Y\le n\le Y+\Delta Y\\ n\equiv a\pmod{q}}}\Lambda(n)=\frac{\Delta Y}{\phi(q)}+O_A\Bigl(\frac{Y}{(\log{Y})^A}\Bigr)
\label{eq:PNT}
\end{equation}
provided $\Delta\ge (\log{Y})^{-A}$ and $q\le (\log{Y})^A$ and $\gcd(a,q)=1$.

We recall the following sieve estimate (see, for example, \cite[Theorem 7.11]{MontgomeryVaughan}): For $u>1+1/(\log{Y})^{1/2}$
\begin{equation}
\#\{n<Y:\,p|n\Rightarrow p\ge Y^{1/u}\}=(\omega(u)+o_u(1))\frac{u Y}{\log{Y}},
\label{eq:Buchstab}
\end{equation}
where $\omega(u)$ is the Buchstab function defined by the delay-differential equation
\begin{align*}
\qquad\omega(u)&=1/u, &&1\le u\le 2,\qquad\\
\omega'(u)&=\omega(u-1)-\omega(u), &&u>2.
\end{align*}
We recall some results from the geometry of numbers and Minkowski's theory of successive minima (see, for example, \cite[Page 110]{DavenportGeom}). A \textit{lattice} in $\mathbb{R}^k$ is a discrete subgroup of the additive group $\mathbb{R}^k$. For any lattice $\Lambda$ there is a Minkowski-reduced basis $\{\mathbf{v}_1,\dots,\mathbf{v}_r\}$ of linearly independent vectors in $\mathbb{R}^k$ such that
\[
\Lambda=\mathbf{v}_1\mathbb{Z}+\dots+\mathbf{v}_r\mathbb{Z},
\]
and for any $x_1,\dots,x_r\in\mathbb{R}$ we have
\[
\|x_1\mathbf{v}_1+\dots+x_r\mathbf{v}_r\|_2\asymp \sum_{i=1}^r\|x_i\mathbf{v}_i\|_2,
\]
and with $\|\mathbf{v}_1\|_2\cdots\|\mathbf{v}_r\|_2\asymp \det(\Lambda)$, where these implied constants depend only on the ambient dimension $k$. Here $\det(\Lambda)$ is the $r$-dimensional volume of the fundamental parallelepiped, given by
\[
\Bigl\{\sum_{i=1}^r x_i\mathbf{v}_i:\,x_1,\dots,x_r\in[0,1]\Bigr\}.
\]
We say $r$ is the \textit{rank} of the lattice. We see the properties of the Minkowski-reduced basis above indicate that each generating vector $\mathbf{v}_i$ has a positive proportion of its length in a direction orthogonal to all the other basis vectors.
%
%
%
%
%
%
\section{Sieve Decomposition and proof of Theorem \ref{thrm:MainTheorem}}\label{sec:Decomposition}
First, we prove Theorem \ref{thrm:MainTheorem} assuming two key propositions, given below. This reduces the problem to establishing Proposition \ref{prpstn:FinalSieve} and Proposition \ref{prpstn:FinalTypeII} which we do over the remaining sections.

As remarked in Section \ref{sec:Outline}, it suffices to consider $X$ as a power of 10. If $X=10^k$ we will think of all elements of $\mathcal{A}$ as having $k$ digits, none of which is equal to $a_0$. This is equivalent to slightly changing the definition of $\mathcal{A}$ in the case when $a_0=0$ (since it restricts $\mathcal{A}$ to $(X/10,X]$), but by considering $X$, $X/10$, $X/100$, $\dots$ we see that we can easily recover Theorem \ref{thrm:MainTheorem} for the original set $\mathcal{A}$ from this situation. 

We will make a decomposition of $\#\{p\in\mathcal{A}{}\}$ into various terms following Harman's sieve (see \cite{HarmanBook} for more details). Each of these terms can then be asymptotically estimated by Proposition \ref{prpstn:FinalSieve} or Proposition \ref{prpstn:FinalTypeII} (given below), or can be trivially bounded below by 0. To keep track of the terms in this decomposition we apply the same decomposition to the set 
\[
\mathcal{B}=\{0\le n< X\}
\]
 by considering a weighted sequence $w_n$.

Let $w_n$ be weights supported on non-negative integers $n<  X$ given by
\begin{equation}
w_n=\mathbf{1}_{\mathcal{A}{}}(n)-\frac{\kappa_\mathcal{A}\#\mathcal{A}{}}{\#\mathcal{B}{}}=\mathbf{1}_{\mathcal{A}{}}(n)-\frac{\kappa_\mathcal{A}\#\mathcal{A}{}}{X}\ge -\frac{\kappa_\mathcal{A}\#\mathcal{A}{}}{X}.
\label{eq:WDef}
\end{equation}
 (We recall that $\mathbf{1}_{\mathcal{A}}$ is the indicator function of $\mathcal{A}$, and $\kappa_\mathcal{A}$ is the constant given by \eqref{eq:KappaDef}.)  For a set $\mathcal{C}$ we define
\begin{align*}
\mathcal{C}_d&=\{c:\,c d\in\mathcal{C}\},\\
S(\mathcal{C},z)&=\#\{c\in\mathcal{C}:\,p|c\Rightarrow p>z\}.
\end{align*}
Given an integer $d>0$ and a real number $z>0$, let
\begin{equation}
S_d(z)=\sum_{\substack{n<X/d\\ p|n\Rightarrow p>z}}w_{n d}=S(\mathcal{A}{}_d,z)-\frac{\kappa_\mathcal{A}\#\mathcal{A}{}}{X}S(\mathcal{B}{}_d,z).
\label{eq:SWDef}
\end{equation}
We expect that $S_d(z)$ is typically small for a wide range of $d$ and $z$. The following two propositions show that this is the case for certain $d,z$.
%
%
%
%
\begin{prpstn}[Sieve asymptotic terms]\label{prpstn:FinalSieve}
Fix an integer $\ell\ge 0$. Let $\theta_1=9/25+2\epsilon$ and $\theta_2=17/40-2\epsilon$. Let $\mathcal{L}$ be a set of $O(1)$ affine linear functions $L:\mathbb{R}^\ell\rightarrow\mathbb{R}$. Then we have
\[
\sum_{\substack{X^{\theta_2-\theta_1}\le p_1\le \dots\le p_\ell \\ p_1\cdots p_\ell\le X^{1-\theta_1}}}^* S_{p_1\cdots p_\ell}(X^{\theta_2-\theta_1})=o_{\mathcal{L}}\Bigl(\frac{\#\mathcal{A}}{\log{X}}\Bigr),
\]
where $\sum^*$ indicates the summation is restricted by the conditions
\[
L\Bigl(\frac{\log{p}_1}{\log{X}},\,\dots\,,\,\frac{\log{p_\ell}}{\log{X}}\Bigr)\ge 0
\]
for all $L\in\mathcal{L}$.
\end{prpstn}
Proposition \ref{prpstn:FinalSieve} includes the case $\ell=0$, where we interpret the statement as
\begin{equation}
S_1(X^{\theta_2-\theta_1})=o\Bigl(\frac{\#\mathcal{A}}{\log{X}}\Bigr).
\label{eq:BasicSieveBound}
\end{equation}
%
%
%
%
%
%
%
\begin{prpstn}[Type II terms] \label{prpstn:FinalTypeII}
Fix an integer $\ell\ge1$. Let $\theta_1,\theta_2,\mathcal{L}$ be as in Proposition \ref{prpstn:FinalSieve}, and let $\mathcal{I}\subseteq\{1,\dots,\ell\}$ and $j\in\{1,\dots,\ell\}$. Then we have
\[
\sum_{\substack{X^{\theta_2-\theta_1}\le p_1\le \dots\le p_\ell \\ X^{\theta_1}\le\prod_{i\in\mathcal{I}}p_i\le X^{\theta_2} \\ p_1\cdots p_\ell\le X/p_j }}^*S_{p_1\cdots p_\ell}(p_j)=o_{\mathcal{L}}\Bigl(\frac{\#\mathcal{A}}{\log{X}}\Bigr),
\]
and
\[
\sum_{\substack{X^{\theta_2-\theta_1}\le p_1\le \dots\le p_\ell \\ X^{1-\theta_2}\le\prod_{i\in\mathcal{I}}p_j\le X^{1-\theta_1} \\ p_1\cdots p_\ell \le X/p_j }}^*S_{p_1\cdots p_\ell}(p_j)=o_{\mathcal{L}}\Bigl(\frac{\#\mathcal{A}}{\log{X}}\Bigr),
\]
where $\sum^*$ indicates the same restriction of summation to $L\ge 0$ for all $L\in\mathcal{L}$ as in Proposition \ref{prpstn:FinalSieve}.
\end{prpstn}
%
%
%
%
We note that by inclusion-exclusion the same result holds if some of the inequalities $L\ge 0$ are replaced by the strict inequality $L>0$.
%
%
%
%
\begin{proof}[Proof of Theorem \ref{thrm:MainTheorem} assuming Proposition \ref{prpstn:FinalSieve} and Proposition \ref{prpstn:FinalTypeII}]
Let $\theta_1=9/25+2\epsilon$ and $\theta_2=17/40-2\epsilon$ as in Proposition \ref{prpstn:FinalSieve}. 

We first consider the upper bound for Theorem \ref{thrm:MainTheorem}, which is essentially a standard sieve upper bound.  Since $\theta_2-\theta_1<1/2$, we have
\begin{align*}
\#\{p\in\mathcal{A}{}\}= S(\mathcal{A}{},X^{1/2})+O(X^{1/2})\le S(\mathcal{A}{},X^{\theta_2-\theta_1})+O(X^{1/2}).
\end{align*}
Thus, using \eqref{eq:BasicSieveBound} and the fact \eqref{eq:Buchstab} that there are $O(X/\log{X})$ integers in $[0,X]$ with no prime factors smaller than $X^{\theta_2-\theta_1}$, we have
\begin{align*}
\#\{p\in\mathcal{A}{}\}&\le S(\mathcal{A}{},X^{\theta_2-\theta_1})+O(X^{1/2})\\
&=\kappa_{\mathcal{A}}\frac{\#\mathcal{A}{}}{X}S(\mathcal{B}{},X^{\theta_2-\theta_1})+S_1(X^{\theta_2-\theta_1})+O(X^{1/2})\\
&=\kappa_{\mathcal{A}}\frac{\#\mathcal{A}{}}{X}\#\{n< X:\,p|n\Rightarrow p>X^{\theta_2-\theta_1}\}+o\Bigl(\frac{\#\mathcal{A}{}}{\log{X}}\Bigr)\\
&\ll \frac{\#\mathcal{A}{}}{\log{X}}.
\end{align*}
 Thus it suffices to establish the lower bound.

To simplify notation, we let $z_1\le z_2\le z_3\le z_4\le z_5\le z_6$ be given by
\begin{align*}
&z_1=X^{\theta_2-\theta_1},&\qquad &z_2=X^{\theta_1},&\qquad &z_3=X^{\theta_2},\\
&z_4=X^{1/2},&\qquad &z_5=X^{1-\theta_2},&\qquad &z_6=X^{1-\theta_1}.&
\end{align*}
We have
\[
\#\{p\in\mathcal{A}{}\}=\#\{p\in\mathcal{A}{}:\,p>X^{1/2}\}+O(X^{1/2})=S_1(z_4)+(1+o(1))\frac{\kappa_{\mathcal{A}}\#\mathcal{A}{}}{\log{X}}.
\]
Thus we wish to bound $S_1(z_4)$ from below. By Buchstab's identity (i.e. inclusion-exclusion on the least prime factor) we have
\[
S_1(z_4)=S_1(z_1)-\sum_{z_1<p\le z_4}S_p(p).
\]
The term $S_1(z_1)$ is $o(\#\mathcal{A}{}/\log{X})$ by \eqref{eq:BasicSieveBound} from Proposition \ref{prpstn:FinalSieve}. We split the sum over $p$ into ranges $(z_i,z_{i+1}]$, and see that all the terms with $p\in (z_2,z_3]$ are also negligible by Proposition \ref{prpstn:FinalTypeII}. This gives
\[
S_1(z_4)
=-\sum_{z_1<p\le z_2}S_p(p)
-\sum_{z_3<p\le z_4}S_p(p)
+o\Bigl(\frac{\#\mathcal{A}{}}{\log{X}}\Bigr).
\]
We wish to replace $S_p(p)$ by $S_p(\min(p,(X/p)^{1/2}))$. We note that these are the same when $p\le X^{1/3}$, but if $p>X^{1/3}$ then there are additional terms in $S_p((X/p)^{1/2})$ from primes in the interval $((X/p)^{1/2},p]$. For $\delta=1/(\log{X})^{1/2}$, by the prime number theorem and Proposition \ref{prpstn:FinalSieve}, we have
\begin{align}
0&\le\sum_{p<X^{1/2}}\Bigl(S(\mathcal{A}{}_p,\min(p,(X/p)^{1/2}))-S(\mathcal{A}{}_p,p)\Bigr)\nonumber\\
&\le\sum_{p<X^{1/2-\delta}}\sum_{\substack{(X/p)^{1/2}<q\le p\\ q p\in\mathcal{A}{}}}1+\sum_{X^{1/2-\delta}\le p\le X^{1/2}}S(\mathcal{A}{}_p,z_1)\nonumber\\
&\ll \sum_{\substack{a\in\mathcal{A}{}\\ a<X^{1-\delta} }}1+\frac{\#\mathcal{A}{}}{\log{X}}\sum_{X^{1/2-\delta}\le p<X^{1/2}}\frac{1}{p}\nonumber\\
&=o\Bigl(\frac{\#\mathcal{A}{}}{\log{X}}\Bigr).\label{eq:FactorReduction}
\end{align}
Here, and throughout this section, $q$ is restricted to being a prime number. Similarly, we get corresponding bounds for $S(\mathcal{B}{}_p,\min(p,(X/p)^{1/2}))$, and so we can replace $S_p(p)$ with $S_p(\min(p,(X/p)^{1/2}))$ at the cost of a small error. 

Using this, and applying Buchstab's identity again, we have
\begingroup
\allowdisplaybreaks
\begin{align*}
S_1(z_4)
&=-\sum_{z_1<p\le z_2}S_p(\min(p,(X/p)^{1/2}))
-\sum_{z_3<p\le z_4}S_p(\min(p,(X/p)^{1/2}))
+o\Bigl(\frac{\#\mathcal{A}{}}{\log{X}}\Bigr)\\
&=-\sum_{z_1<p\le z_2}S_p(z_1)
-\sum_{z_3<p\le z_4}S_p(z_1)
+\sum_{\substack{z_1<q\le p\le z_2\\ q\le (X/p)^{1/2}}}S_{p q}(q)\\
&\qquad+\sum_{\substack{z_3<p\le z_4 \\ z_1<q\le (X/p)^{1/2}}}S_{p q}(q)+o\Bigl(\frac{\#\mathcal{A}{}}{\log{X}}\Bigr).
\end{align*}
\endgroup
The first two terms above are asymptotically negligible by Proposition \ref{prpstn:FinalSieve}, and so this simplifies to
\begin{align}
S_1(z_4)
&=\sum_{\substack{z_1<q\le p\le z_2\\ q\le (X/p)^{1/2}}}S_{p q}(q)+\sum_{\substack{z_3<p\le z_4 \\ z_1<q\le (X/p)^{1/2}}}S_{p q}(q)+o\Bigl(\frac{\#\mathcal{A}{}}{\log{X}}\Bigr).
\label{eq:Decomp}
\end{align}
We perform further decompositions to the remaining terms in \eqref{eq:Decomp}. We first concentrate on the first term on the right hand. Splitting the ranges of $p q$ into intervals, and recalling those with a $p q$ in the interval $[z_2,z_3]$ or $[z_5,z_6]$ make a negligible contribution by Proposition \ref{prpstn:FinalTypeII}, we obtain
\begin{align}
\sum_{\substack{z_1<q\le p\le z_2\\ q\le (X/p)^{1/2}}}S_{p q}(q)
&=\sum_{\substack{z_1<q\le p\le z_2\\ q\le (X/p)^{1/2}\\ z_6< p q}}S_{p q}(q)
+\sum_{\substack{z_1<q\le p\le z_2\\ q\le (X/p)^{1/2}\\ z_3\le p q<z_5}}S_{p q}(q)\nonumber\\
&\qquad+\sum_{\substack{z_1<q\le p\le z_2\\ z_1\le p q<z_2}}S_{p q}(q)+o\Bigl(\frac{\#\mathcal{A}{}}{\log{X}}\Bigr).
\label{eq:Decomp1}
\end{align}
Here we have dropped the condition $q\le (X/p)^{1/2}$ in the final sum, since this is implied by $q\le p$ and $p q\le z_2$.
On recalling the definition \eqref{eq:WDef} of $w_n$, we can lower bound the first term of \eqref{eq:Decomp1} by dropping the non-negative contribution from the set $\mathcal{A}{}$ via $w_n\ge -\kappa_{\mathcal{A}}\#\mathcal{A}{}/X$. By partial summation, and using the estimate \eqref{eq:Buchstab}, this gives
\begingroup
\allowdisplaybreaks
\begin{align}
\sum_{\substack{z_1<q\le p\le z_2\\ q\le (X/p)^{1/2}\\  z_6<p q}}S_{p q}(q)
&\ge \frac{-\kappa_{\mathcal{A}}\#\mathcal{A}}{X}\sum_{\substack{z_1<q\le p\le z_2\\ q\le (X/p)^{1/2}\\  z_6<p q}}S(\mathcal{B}_{p q},q)\nonumber\\
&\ge  \frac{-\kappa_{\mathcal{A}}\#\mathcal{A}}{X}\sum_{\substack{z_1<q\le p\le z_2\\ q\le (X/p)^{1/2}\\  z_6<p q}}\sum_{\substack{n<X/p q\\ P^-(n)>q}}1\nonumber\\
&\ge  -(\kappa_{\mathcal{A}}+o(1))\#\mathcal{A}\sum_{\substack{z_1<q\le p\le z_2\\ q\le (X/p)^{1/2}\\  z_6<p q}}\frac{\omega\Bigl(\frac{\log{X/p q}}{\log{q}}\Bigr)}{p q \log{q} }+o\Bigl(\frac{\#\mathcal{A}}{\log{X}}\Bigr)\nonumber\\
&\ge -(1+o(1))\frac{\kappa_{\mathcal{A}}\#\mathcal{A}{}}{\log{X}}\iint\limits_{\substack{\theta_2-\theta_1<v<u<\theta_1\\ v<(1-u)/2\\ 1-\theta_1<u+v}}
\omega\Bigl( \frac{1-u-v}{v}\Bigr)\frac{d u d v}{u v^2}.
\label{eq:I1}
\end{align}
\endgroup
Here $\omega(u)$ is Buchstab's function, and $P^-(n)$ denotes the least prime factor of $n$.%

We perform further decompositions to the second term of \eqref{eq:Decomp1}, first splitting according to the size of $q^2 p$ compared with $z_6$.
\begin{equation}
\sum_{\substack{z_1<q\le p\le z_2\\ q\le (X/p)^{1/2}\\ z_3\le p q<z_5}}S_{p q}(q)
=\sum_{\substack{z_1<q\le p\le z_2\\ z_3\le p q<z_5\\ q^2 p < z_6}}S_{p q}(q)
+\sum_{\substack{z_1<q\le p\le z_2\\ z_3\le p q<z_5\\ z_6\le q^2 p\le X}}S_{p q}(q).
\label{eq:Decomp1B}
\end{equation}
For the second term of \eqref{eq:Decomp1B} when $q^2p$ is large, we first separate the contribution from products of three primes. By an essentially identical argument to when we replaced $S_p(p)$ by $S_p(\min(p,(X/p)^{1/2}))$ in \eqref{eq:FactorReduction}, we may replace $S_{p q}(q)$ by $S_{p q}(\min(q,(X/p q)^{1/2}))$ at the cost of a negligible error term (since $p q<z_6$). By Buchstab's identity we have (with $r$ restricted to being prime)
\[
\sum_{\substack{z_1<q\le p\le z_2\\ z_3\le p q<z_5\\ z_6\le q^2 p\le X}}S_{p q}(\min(q,(X/p q)^{1/2}))=\sum_{\substack{z_1<q\le p\le z_2\\ z_3\le p q<z_5\\ z_6\le q^2 p\le X}}S_{p q}((X/p q)^{1/2})+\sum_{\substack{z_1<q\le p\le z_2\\  z_3\le p q<z_5\\ z_6\le q^2 p\le X\\ q<r\le (X/p q)^{1/2} \\}}S_{p q r}(r).
\]
The first term above is counting products of exactly three primes, and for these terms we drop the contribution from $\mathcal{A}{}$ for a lower bound. By partial summation and the prime number theorem, this gives
\begin{equation}
\sum_{\substack{z_1<q\le p\le z_2\\ z_3\le p q<z_5\\ z_6\le q^2 p\le X}}S_{p q}((X/p q)^{1/2})
\ge-(1+o(1))\frac{\kappa_{\mathcal{A}}\#\mathcal{A}{}}{\log{X}}
\iint\limits_{\substack{\theta_2-\theta_1<v<u<\theta_1\\ \theta_2<u+v<1-\theta_2\\ 1-\theta_1<2v+u<1}}
\frac{d u d v}{u v(1-u-v)}.
\label{eq:I2}
\end{equation}
For the terms not coming from products of 3 primes, we split our summation according to the size of $qr$, noting that this is negligible if $qr\in[z_2,z_3]$ by Proposition \ref{prpstn:FinalTypeII}. For the terms with $qr\notin [z_2,z_3]$ we just take the trivial lower bound. Thus
\begin{align}
\sum_{\substack{z_1<q\le p\le z_2 \\ z_3\le p q<z_5\\ z_6\le q^2 p\le X\\ q<r\le (X/p q)^{1/2}}}S_{p q r}(r)&=\sum_{\substack{z_1<q\le p\le z_2 \\ z_3\le p q<z_5\\ z_6\le q^2 p\le X\\ q<r\le (X/p q)^{1/2}\\ qr<z_2}}S_{p q r}(r)+\sum_{\substack{z_1<q\le p\le z_2\\  z_3\le p q<z_5\\ z_6\le q^2 p\le X\\ q<r\le (X/p q)^{1/2}\\ qr>z_3}}S_{p q r}(r)\nonumber\\
&\qquad+o\Bigl(\frac{\#\mathcal{A}{}}{\log{X}}\Bigr)\nonumber\\
&\ge-(1+o(1))\frac{\kappa_{\mathcal{A}}\#\mathcal{A}{}}{\log{X}}
 \iiint\limits_{(u,v,w)\in\mathcal{R}_1}\omega\Bigl(\frac{1-u-v-w}{w}\Bigr)\frac{d u d v d w}{u v w^2}\label{eq:I3}\\
 &-(1+o(1))\frac{\kappa_{\mathcal{A}}\#\mathcal{A}{}}{\log{X}} \iiint\limits_{(u,v,w)\in\mathcal{R}_2}\omega\Bigl(\frac{1-u-v-w}{w}\Bigr)\frac{d u d v d w}{u v w^2},\label{eq:I4}
\end{align}
where $\mathcal{R}_1$ and $\mathcal{R}_2$ are given by
\begingroup
\allowdisplaybreaks
\begin{align*}
\mathcal{R}_1&=\Bigl\{(u,v,w):\,\theta_2-\theta_1<v<u<\theta_1,\,\theta_2<u+v<1-\theta_2,\,1-\theta_1<u+2v<1,\\
&\qquad v<w<(1-u-v)/2,\,v+w<\theta_1\Bigr\},\\
\mathcal{R}_2&=\Bigl\{(u,v,w):\,\theta_2-\theta_1<v<u<\theta_1,\,\theta_2<u+v<1-\theta_2,\,1-\theta_1<u+2v<1,\\
&\qquad v<w<(1-u-v)/2,\,v+w>\theta_2\Bigr\}.
\end{align*}
\endgroup
Together \eqref{eq:I2}, \eqref{eq:I3} and \eqref{eq:I4} give a suitable lower bound for the terms in \eqref{eq:Decomp1B} with $q^2p\ge z_6$.

When $q^2p<z_6$ we can apply two further Buchstab iterations, since then we can evaluate terms $S_{p q r}(z_1)$ with $r\le q\le p$ using Proposition \ref{prpstn:FinalSieve} as $p q r\le p q^2<z_6$. As before, we may replace $S_{p q}(q)$ by $S_{p q}(\min(q,(X/p q)^{1/2}))$ and $S_{p q r}(r)$ with $S_{p q r}(\min(r,(X/p q r)^{1/2}))$ at the cost of negligible error terms (since $p q r<z_6$). This gives
\begingroup
\allowdisplaybreaks
\begin{align*}
\sum_{\substack{z_1<q\le p\le z_2\\ q^2 p < z_6\\ z_3\le p q<z_5}}S_{p q}(q)&=\sum_{\substack{z_1<q\le p\le z_2\\ q^2 p < z_6\\ z_3\le p q<z_5}}S_{p q}(\min(q,(X/p q)^{1/2}))+o\Bigl(\frac{\#\mathcal{A}}{\log{X}}\Bigr)\\
&=\sum_{\substack{ z_1<q\le p\le z_2\\ q^2 p < z_6\\ z_3\le p q<z_5}}S_{p q}(z_1)
-\sum_{\substack{z_1<r\le q\le p\le z_2\\ q^2 p < z_6\\ z_3\le p q<z_5\\ r\le (X/p q)^{1/2} }}S_{p q r}(r)+o\Bigl(\frac{\#\mathcal{A}{}}{\log{X}}\Bigr)\\
&=%
o\Bigl(\frac{\#\mathcal{A}{}}{\log{X}}\Bigr)
-\sum_{\substack{z_1<r\le q\le p\le z_2\\ q^2 p < z_6\\ z_3\le p q<z_5\\ r\le (X/p q)^{1/2} }}S_{p q r}(\min(r,(X/p q r)^{1/2}))\\
&=o\Bigl(\frac{\#\mathcal{A}{}}{\log{X}}\Bigr)-\sum_{\substack{z_1<r\le q\le p\le z_2\\ q^2 p < z_6\\ z_3\le p q<z_5 \\ r\le (X/ p q)^{1/2}}}S_{p q r}(z_1)
+\sum_{\substack{z_1<s\le r\le q\le p\le z_2\\ q^2 p < z_6\\ z_3\le p q<z_5 \\ r^2 p q, s^2 r p q\le X }}S_{p q r s}(s)\\
&=o\Bigl(\frac{\#\mathcal{A}{}}{\log{X}}\Bigr)
+\sum_{\substack{z_1<s\le r\le q\le p\le z_2\\ q^2 p < z_6\\ z_3\le p q<z_5\\ r^2p q, s^2p q r\le X }}S_{p q r s}(s),
\end{align*}
\endgroup
where $r,s$ are restricted to primes in the sums above. Finally we see that any part of the final sum with a product of two of $p,q,r,s$ in $[z_2,z_3]$ can be discarded by Proposition \ref{prpstn:FinalTypeII}. Trivially lower bounding the remaining terms as we did before yields
\begin{align}
\sum_{\substack{z_1<s\le r\le q\le p\le z_2\\ q^2 p < z_6\\ z_3\le p q<z_5\\ r^2p q, s^2p q r\le X }}&S_{p q r s}(s)\nonumber\\
&\ge-(1+o(1))\frac{\kappa_{\mathcal{A}}\#\mathcal{A}{}}{\log{X}}
 \iiiint\limits_{(u,v,w,t)\in\mathcal{R}_3}\omega\Bigl(\frac{1-u-v-w-t}{t}\Bigr)\frac{d u d v d w d t}{u v w t^2},
 \label{eq:I5}
\end{align}
where $\mathcal{R}_3$ is given by
\begin{align*}
\mathcal{R}_3
=\Bigl\{(u,v,w,t):\,
&\theta_2-\theta_1<t<w<v<u<\theta_1,\,u+2v<1-\theta_1,u+v+2w<1,\\
&u+v+w+2t<1,\,\theta_2<u+v<1-\theta_2,\\
 &\{u+v,u+w,u+t,v+w,v+t,w+t\}\cap[\theta_1,\theta_2]=\emptyset\,\Bigr\}.
\end{align*}
This completes our decomposition of the terms from \eqref{eq:Decomp1B}, coming from the second term of \eqref{eq:Decomp1}. We note that we could have imposed various further restrictions such as $u+v+w\notin[\theta_1,\theta_2]$ in $\mathcal{R}_3$, but for ease of calculation we do not include these.

We perform decompositions to the third term of \eqref{eq:Decomp1} in a similar way to how we dealt with the second term. We have $q^2 p<(q p)^{3/2}< z_2^{3/2}< z_6$ so, as above, we can apply two Buchstab iterations and use Proposition \ref{prpstn:FinalSieve} to evaluate the terms $S_{p q r}(z_1)$ since we have $p q r\le p q^2<z_6$. Furthermore, we notice that terms with any of $p q r, p q s, p r s$, or $q r s$ in $[z_2,z_3]\cup[z_5,z_6]$ are negligible by Proposition \ref{prpstn:FinalTypeII}. %
This gives
\begingroup
\allowdisplaybreaks
\begin{align}
\sum_{\substack{z_1<q\le p\le z_2\\ z_1\le p q<z_2}}S_{p q}(q)%
&=\sum_{\substack{z_1<q\le p\le z_2\\ z_1\le p q<z_2}}S_{p q}(z_1)-\sum_{\substack{z_1<r\le q\le p\le z_2\\ z_1\le p q<z_2}}S_{p q r}(r)\nonumber\\%
&=o\Bigl(\frac{\#\mathcal{A}}{\log{X}}\Bigr)-\sum_{\substack{z_1<r\le q\le p\le z_2\\ z_1\le p q<z_2}}S_{p q r}(z_1)%
+\sum_{\substack{z_1<s<r<q<p<z_2\\ z_1<p q<z_2}}S_{p q r s}(s)\nonumber\\
&=\sum_{\substack{z_1<s<r<q<p<z_2\\ z_1<p q<z_2\\  p r q, p q s, p r s, q r s\notin[z_2,z_3] \\ p q r s\notin[z_2,z_3]\cup[z_5,z_6]}}S_{p q r s}(s)+o\Bigl(\frac{\#\mathcal{A}{}}{\log{X}}\Bigr)\nonumber\\
&\ge-(1+o(1))\frac{\kappa_{\mathcal{A}}\#\mathcal{A}{}}{\log{X}}
\iiiint\limits_{(u,v,w,t)\in\mathcal{R}_4}\omega\Bigl(\frac{1-u-v-w-t}{t}\Bigr)\frac{d u d v d w d t}{u v w t^2},\label{eq:I6}
\end{align}
\endgroup
where
\begin{align*}
\mathcal{R}_4=\Bigl\{(u,v,w,t):\, &\theta_2-\theta_1<t<w<v<u<\theta_1,\,u+v<\theta_1,\\
&u+v+w+t\notin[\theta_1,\theta_2]\cup[1-\theta_2,1-\theta_1],\\
&\{u+v+w,u+v+t,u+w+t,v+w+t\}\cap[\theta_1,\theta_2]=\emptyset\Bigr\}.
\end{align*}
We note that for $\mathcal{R}_4$ we have dropped different constraints to those we dropped in $\mathcal{R}_3$. 

Together \eqref{eq:I1}, \eqref{eq:I2}, \eqref{eq:I3}, \eqref{eq:I4}, \eqref{eq:I5} and \eqref{eq:I6} give our lower bound for all the terms occurring in \eqref{eq:Decomp1}, and so gives a lower bound for first term from \eqref{eq:Decomp} which covers all terms with $p\le z_2$. 

We are left to consider the second term from \eqref{eq:Decomp}, which is the remaining terms with $p\in(z_3,z_4]$. We treat these in a similar manner to those with $p\le z_2$. We first split the sum according to the size of $q p$. Terms with $q p\in [z_5,z_6]$ are negligible by Proposition \ref{prpstn:FinalTypeII}, so we are left to consider $q p\in(z_3,z_5)$ or $q p>z_6$. We then split the terms with $q p\in(z_3,z_5)$ according to the size of $q^2 p$ compared with $z_6$. This gives
\[
\sum_{\substack{z_3<p\le z_4 \\ z_1<q\le (X/p)^{1/2}}}S_{p q}(q)
=S_1+S_2+S_3+o\Bigl(\frac{\#\mathcal{A}{}}{\log{X}}\Bigr),
\]
where
\begingroup
\allowdisplaybreaks
\begin{align}
S_1
&=\sum_{\substack{z_3<p\le z_4 \\ z_1<q\le (X/p)^{1/2}\\ z_6<q p}}S_{p q}(q)\nonumber\\
&\ge-(1+o(1))\frac{\kappa_{\mathcal{A}}\#\mathcal{A}{}}{\log{X}}
\iint\limits_{\substack{\theta_2<u<1/2 \\ \theta_2-\theta_1<v<(1-u)/2 \\1-\theta_1<u+v}}\omega\Bigl(\frac{1-u-v}{v}\Bigr)\frac{d u d v}{u v^2},
\label{eq:I7}\\
S_2
&=\sum_{\substack{z_3<p\le z_4 \\ z_1<q\le (X/p)^{1/2}\\ z_3<q p<z_5\\ z_6\le q^2 p}}S_{p q}(q)\nonumber\\
&\ge -(1+o(1))\frac{\kappa_{\mathcal{A}}\#\mathcal{A}{}}{\log{X}}
\iint\limits_{\substack{\theta_2<u<1/2\\ \theta_2-\theta_1<v<(1-u)/2\\ \theta_2<u+v<1-\theta_2\\ 1-\theta_1<2v+u}}\omega\Bigl(\frac{1-u-v}{v}\Bigr)\frac{d u d v}{u v^2},
\label{eq:I8}
\end{align}
\endgroup
%
and where
\begin{align}
S_3=\sum_{\substack{z_3<p\le z_4\\ z_1<q\le (X/p)^{1/2}\\ z_3<q p<z_5 \\ q^2p <z_6}}S_{p q}(q).\nonumber
\end{align}
We apply two further Buchstab iterations to $S_3$ (we can handle the intermediate terms using Proposition \ref{prpstn:FinalSieve} as before since $q^2p<z_6$). As before, we may replace $S_{p q}(q)$ by $S_{p q}(\min(q,(X/p q)^{1/2}))$ and $S_{p q r}(r)$ by $S_{p q r}(\min(r,(X/p q r)^{1/2}))$ at the cost of a negligible error term (since $p q r<z_6$). This gives
\begingroup
\allowdisplaybreaks
\begin{align}
S_3&=\sum_{\substack{z_3<p\le z_4\\ z_1<q\le (X/p)^{1/2}\\ z_3<q p<z_5 \\ q^2p <z_6}}S_{p q}(\min(q,(X/p q)^{1/2}))+o\Bigl(\frac{\#\mathcal{A}}{\log{X}}\Bigr)\nonumber\\
&=\sum_{\substack{z_3<p\le z_4\\ z_1<q\le (X/p)^{1/2} \\ q^2p <z_6\\ z_3<q p<z_5}}S_{p q}(z_1)-\sum_{\substack{z_3<p\le z_4\\ z_1<r\le q\le (X/p)^{1/2} \\ q^2p <z_6\\ z_3<q p<z_5\\ r^2q p\le X}}S_{p q r}(\min(r,(X/p q r)^{1/2}))%
+o\Bigl(\frac{\#\mathcal{A}}{\log{X}}\Bigr)\nonumber\\
&=o\Bigl(\frac{\#\mathcal{A}}{\log{X}}\Bigr)-\sum_{\substack{z_3<p\le z_4\\ z_1<r\le q\le (X/p)^{1/2} \\ q^2p <z_6\\ z_3<q p<z_5\\ r^2q p\le X}}S_{p q r}(z_1)%
+\sum_{\substack{z_3<p\le z_4\\ z_1<s\le r\le q\le (X/p)^{1/2} \\ q^2p <z_6\\ z_3<q p<z_5\\ s^2q r p, r^2 q p\le X}}S_{p q r s}(s)\nonumber\\
&=\sum_{\substack{z_3<p<z_4\\ z_1<s\le r\le q\le (X/p)^{1/2} \\ q^2p<z_6 \\ z_3<q p<z_5\\ s^2q r p, r^2 q p\le X \\ p q, p r, p s, q r, q s, r s\notin[z_2,z_3]\cup[z_5,z_6]}}S_{p q r s}(s)+o\Bigl(\frac{\#\mathcal{A}{}}{\log{X}}\Bigr)\nonumber\\
&\ge -(1+o(1))\frac{\kappa_{\mathcal{A}}\#\mathcal{A}{}}{\log{X}}\iiiint\limits_{(u,v,w,t)\in\mathcal{R}_5}\omega\Bigl(\frac{1-u-v-w-t}{t}\Bigr)\frac{d u d v d w d t}{ u v w t^2},\label{eq:I9}
\end{align}
\endgroup
where
\begin{align*}
\mathcal{R}_5=\Bigl\{(u,v,w,t):\, &\theta_2-\theta_1<t<w<v,\, \theta_2<u<1/2,\, u+2v<1-\theta_1,\\
&u+v+2w<1,\, u+v+w+2t<1,\, \theta_2<u+v<1-\theta_2,\\
&\{u+v,u+w,u+t,v+w,v+t,w+t\}\notin[\theta_1,\theta_2]\,\Bigr\}.
\end{align*}
Together \eqref{eq:I7}, \eqref{eq:I8}, \eqref{eq:I9} give our lower bound for the second term from \eqref{eq:Decomp}, which is all the terms with $p\in[z_3,z_4]$. This completes our lower bound for $S_1(z_4)$.

Let $I_1,\dots,I_9$ denote the integrals in \eqref{eq:I1}, \eqref{eq:I2}, \eqref{eq:I3}, \eqref{eq:I4}, \eqref{eq:I5}, \eqref{eq:I6}, \eqref{eq:I7}, \eqref{eq:I8} and \eqref{eq:I9} respectively. Putting everything together, we obtain
\begin{align*}
\#\{p\in\mathcal{A}{}\}&=(1+o(1))\frac{\kappa_{\mathcal{A}}\#\mathcal{A}{}}{\log{X}}+S_1(z_4)\\
&\ge (1+o(1))\frac{\kappa_{\mathcal{A}}\#\mathcal{A}{}}{\log{X}}(1-I_1-I_2-I_3-I_4-I_5-I_6-I_7-I_8-I_9).
\end{align*}
In particular, we have
\begin{equation}
\#\{p\in\mathcal{A}{}\}\ge (1+o(1))\frac{\kappa_{\mathcal{A}}\#\mathcal{A}{}}{1000\log{X}}
\label{eq:LowerBound}
\end{equation}
provided that $I_1+\dots+I_9\le 0.999$. Numerical integration\footnote{A Mathematica\textregistered\;file detailing this computation is included with this article on \url{arxiv.org}.} then gives the following bounds on $I_1,\dots,I_9$ in the case when $\theta_1$ and $\theta_2$ in the definition of $I_1,\dots,I_9$ are replaced by $9/25$ and $17/40$ respectively.
\begin{align*}
I_1 & \le 0.02895, & I_2 & \le 0.35718, \\
I_3 & \le 0.01402, & I_4 & \le 0.04238, \\
I_5 & \le 0.05547,& I_6 & \le 0.06622,\\
I_7 & \le 0.21879,& I_8 & \le 0.20339,\\
I_9 & \le 0.00924.
\end{align*}
Thus in this case we have $I_1+\dots +I_9< 0.996$, and so by continuity we have $I_1+\dots +I_9< 0.996+O(\epsilon)$ when $\theta_1=9/25+2\epsilon$ and $\theta_2=17/40-2\epsilon$. Thus, taking $\epsilon$ suitably small, we see that \eqref{eq:LowerBound} holds, and so we have completed the proof of Theorem \ref{thrm:MainTheorem} for $X$ sufficiently large. If $X\ge 4$ is bounded by a constant, then Theorem \ref{thrm:MainTheorem} follows (after potentially adjusting the implied constants) on noting that either 2 or 3 is a prime in $\mathcal{A}$ and so Theorem \ref{thrm:MainTheorem} also holds for bounded $X\ge 4$.
\end{proof}
%
%
%
%
We note that there are various ways in which one can improve the numerical estimates, but we have restricted ourselves to the above decomposition in the interests of clarity. Judiciously employing further Buchstab decompositions would give small numerical improvements, for example.

Thus it suffices to establish Propositions \ref{prpstn:FinalSieve} and \ref{prpstn:FinalTypeII}.
%
%
%
%
%
%
%
%
%
\section{Sieve Asymptotics} \label{sec:Sieve}
In this section we prove Proposition \ref{prpstn:FinalSieve} and Proposition \ref{prpstn:FinalTypeII} assuming Proposition \ref{prpstn:TypeI} and Proposition \ref{prpstn:TypeII}, given below. This reduces the problem to proving standard `Type I' and `Type II' estimates. These propositions will then be proven in Sections \ref{sec:TypeI} and \ref{sec:TypeII}.

Before we state the propositions, we set up some extra notation. Let
\[
\mathcal{Q}_\ell(\eta)=\{(x_1,\dots,x_\ell)\in\mathbb{R}^\ell:\,\eta\le x_1\le \dots \le x_\ell,\,x_1+\dots+x_\ell=1\}.
\]
By a closed convex polytope in $\mathbb{R}^\ell$ we mean a region $\mathcal{R}$ defined by a finite number of non-strict affine linear inequalities in the coordinates (equivalently, this is the convex hull of a finite set of points in $\mathbb{R}^\ell$). Given a closed convex polytope $\mathcal{R}\subseteq \mathcal{Q}_\ell(\eta)$, we let
\[
\mathbf{1}_{\mathcal{R}}(a)=
\begin{cases}
1,\qquad &\text{if }a=p_1\cdots p_{\ell}\text{ for some $p_1,\dots,p_\ell$ with }\Bigl(\frac{\log{p_1}}{\log{a}},\dots,\frac{\log{p_\ell}}{\log{a}}\Bigr)\in\mathcal{R},\\
0,&\text{otherwise.}
\end{cases}
\]
We caution that $\mathbf{1}_{\mathcal{R}}$ counts numbers with a particular type of prime factorization, and should not be confused with $\mathbf{1}_{\mathcal{A}}$, the indicator function of the set $\mathcal{A}$. We recall $\mathcal{B}=\{n\in\mathbb{Z}:\, 0\le n< X\}$.

Our two key propositions that we will use are given below.
%
%
%
%
\begin{prpstn}[Type I estimate]\label{prpstn:TypeI}
Let $A>0$ and $Q\le X^{50/77}(\log{X})^{-2A-2}$.  Then we have
\[
\sum_{\substack{q<Q\\ (q,10)=1}}\Bigl|\#\{a\in\mathcal{A}:\,q|a,\,(a,10)=1\}-\kappa\frac{\#\mathcal{A}}{q}\Bigr|\ll_A \frac{\#\mathcal{A}}{(\log{X})^A},
\]
where
\[
\kappa=\begin{cases}
\frac{\phi(10)}{9},\qquad&\text{if }(a_0,10)\ne1,\\
\frac{\phi(10)-1}{9},&\text{if }(a_0,10)=1.
\end{cases}
\]
\end{prpstn}
%
%
%
%
\begin{prpstn}[Type II estimate] \label{prpstn:TypeII}
Let $\eta>0$, and let $\ell\le 2\eta^{-1}$. Let $\mathcal{R}\subseteq \mathcal{Q}_\ell(\eta)$ be a closed convex polytope in $\mathbb{R}^\ell$ which has the property that
\[
\mathbf{e}\in\mathcal{R}\Rightarrow \sum_{i\in\mathcal{I}} e_i\in\Bigl[\frac{9}{25}+\epsilon,\frac{17}{40}-\epsilon\Bigr]
\]
for some set $\mathcal{I}\subseteq\{1,\dots,\ell\}$. Then we have
\[
\sum_{\substack{a\in\mathcal{A}}}\mathbf{1}_{\mathcal{R}}(a)=\kappa_\mathcal{A}\frac{\#\mathcal{A}{}}{\#\mathcal{B}{}}\sum_{n<  X}\mathbf{1}_{\mathcal{R}}(n)+O_{\mathcal{R},\eta}\Bigl(\frac{\#\mathcal{A}}{\log{X}\log\log{X}}\Bigr),
\]
where
\[
\kappa_\mathcal{A}=
\begin{cases}
\frac{10(\phi(10)-1)}{9\phi(10)},\qquad&\text{if $(10,a_0)=1$,}\\
\frac{10}{9},&\text{otherwise.}\\
\end{cases}
\]
\end{prpstn}
%
%
%
%
Proposition \ref{prpstn:FinalTypeII} follows quickly from Proposition \ref{prpstn:TypeII}, but it will be convenient to establish a slightly more general version where the primes can be as small as $X^\eta$. 
\begin{lmm}[Type II terms, alternative formulation]\label{lmm:AltTypeII}
Fix an integer $\ell\ge1$ and a quantity $\eta>0$. Let $\theta_1=9/25+2\epsilon$, $\theta_2=17/40-2\epsilon$, and $\mathcal{L}$ be as in Proposition \ref{prpstn:FinalTypeII}, and let $\mathcal{I}\subseteq\{1,\dots,\ell\}$ and $j\in\{1,\dots,\ell\}$. Then we have
\[
\sum_{\substack{X^{\eta}\le p_1\le \dots\le p_\ell \\ X^{\theta_1}\le\prod_{i\in\mathcal{I}}p_i\le X^{\theta_2}\\ p_1\cdots p_\ell\le X/p_j}}^*S_{p_1\cdots p_\ell}(p_j)=o_{\mathcal{L},\eta}\Bigl(\frac{\#\mathcal{A}}{\log{X}}\Bigr),
\]
and
\[
\sum_{\substack{X^{\eta}\le p_1\le \dots\le p_\ell \\ X^{1-\theta_2}\le\prod_{i\in\mathcal{I}}p_j\le X^{1-\theta_1}\\ p_1\cdots p_\ell\le X/p_j}}^*S_{p_1\cdots p_\ell}(p_j)=o_{\mathcal{L},\eta}\Bigl(\frac{\#\mathcal{A}}{\log{X}}\Bigr),
\]
where $\sum^*$ indicates the same restriction of summation to $L\ge 0$ for all $L\in\mathcal{L}$ as in Proposition \ref{prpstn:FinalTypeII}.
\end{lmm}
As before, we note that by inclusion-exclusion the same result holds if some of the constraints $L\ge 0$ are replaced with $L>0$. We see Proposition \ref{prpstn:FinalTypeII} follows immediately from Lemma \ref{lmm:AltTypeII} on choosing $\eta=\theta_2-\theta_1$.
%
%
\begin{proof}[Proof of Lemma \ref{lmm:AltTypeII} assuming Proposition \ref{prpstn:TypeII}]
We just deal with the case when $\prod_{i\in\mathcal{I}}p_i\in[X^{\theta_1},X^{\theta_2}]$; the other case is entirely analogous with $\theta_1$ and $\theta_2$ simply replaced with $1-\theta_2$ and $1-\theta_1$ throughout. (Notice that if $\mathbf{e}\in\mathcal{R}\subseteq\mathcal{Q}_\ell(\eta)$ satisfies $\sum_{i\in\mathcal{I}}e_i\in[23/40+\epsilon,16/25-\epsilon]$, then $\sum_{i\notin\mathcal{I}}e_i\in[9/25+\epsilon,17/40-\epsilon]$. Thus the interval $[9/25+\epsilon,17/40-\epsilon]$ in Proposition \ref{prpstn:TypeII} can be replaced by the interval $[23/40+\epsilon,16/25-\epsilon]$, and so Proposition \ref{prpstn:TypeII} applies similarly in both cases.) 

Recall the definition \eqref{eq:SWDef} of $S_{d}(z)$. We see that $S_{p_1\cdots p_\ell}(p_j)$ is a sum of $w_n$ only involving integers $n$ with at most $1/\eta$ prime factors, since all prime factors are of size at least $ X^{\eta}$. The terms with exactly $r$ prime factors (for some $r\le 1/\eta$) are a sum of $w_{p_1\cdots p_r}$ over $p_1,\dots,p_r$ with the summation only restricted by a bounded number of linear inequalities on $\log{p_1}/\log{X},\dots,\log{p_r}/\log{X}$. (These are the previous restrictions on $p_1,\dots,p_\ell$, and the restriction $p_j\le p_{\ell+1}\le \dots \le p_r$). We may write the condition $X^{\eta}\le p_1$ and the restriction on the size of $\prod_{i\in\mathcal{I}}p_i$ and $\prod_{i=1}^\ell p_i$ as linear conditions only involving $\log{p_1}/\log{X},\dots,\log{p_\ell}/\log{X}$ with coefficients having constants depending only on $\eta$. Thus, after increasing $\mathcal{L}$ to include these conditions, it suffices to show that
\begin{equation}
\sum_{\substack{p_1\le \dots\le p_\ell \\ p_j\le p_{\ell+1}\le \dots \le p_r}}^* w_{p_1\cdots p_r}=o_{\mathcal{L},\eta}\Bigl(\frac{\#\mathcal{A}}{\log{X}}\Bigr),\label{eq:FactorSum}
\end{equation}
where $\sum^*$ indicates that the summation is restricted by the conditions
\begin{equation}
L\Bigl(\frac{\log{p_1}}{\log{X}},\dots ,\frac{\log{p_\ell}}{\log{X}}\Bigr)\ge 0\label{eq:LConstraints}
\end{equation}
for all $L\in\mathcal{L}$.

Let $\delta=1/\log\log{X}$. We first trivially discard the contribution from $n=p_1\cdots p_{r}<X^{1-\delta}$. Each $n$ appears $O_\eta(1)$ times in \eqref{eq:FactorSum}, so recalling the definition \eqref{eq:WDef} of $w_n$ and dropping the other constraints, the total contribution from such terms is
\begin{equation}
\ll_\eta \sum_{\substack{n\in \mathcal{A}\\ n<X^{1-\delta}}}1+\frac{\#\mathcal{A}}{\#\mathcal{B}}\sum_{n<X^{1-\delta}}1\ll \#\mathcal{A}^{1-\delta}+\frac{\#\mathcal{A}}{X^\delta}=o_\eta\Bigl(\frac{\#\mathcal{A}}{\log{X}}\Bigr).\label{eq:SmallTerms}
\end{equation}
Thus it is sufficient to show 
\begin{equation}
\sum_{\substack{p_1\le \dots\le p_\ell \\ p_j\le p_{\ell+1}\le \dots \le p_r\\ p_1\cdots p_r\ge X^{1-\delta}}}^* w_{p_1\cdots p_r}=o_{\mathcal{L},\eta}\Bigl(\frac{\#\mathcal{A}}{\log{X}}\Bigr).\label{eq:FactorSum2}
\end{equation}
Since we have the constraint $p_1\cdots p_\ell \le X/p_j\le X^{1-\eta}$, the result follows immediately if $r=\ell$ (if $\eta<\delta$ the result is trivial). Thus we may assume that $r>\ell$, so none of the constraints involve all the $p_i$. We now wish to replace $\log{p_i}/\log{X}$ with $\log{p_i}/\log{n}$ in the conditions \eqref{eq:LConstraints}. For $n\in[X^{1-\delta},X]$, we have
\[
\frac{\log{p_i}}{\log{X}}\le \frac{\log{p_i}}{\log{n}}\le (1+2\delta)\frac{\log{p_i}}{\log{X}},
\]
and so if exactly one of $L\Bigl(\frac{\log{p_1}}{\log{X}},\dots ,\frac{\log{p_\ell}}{\log{X}}\Bigr)$ and $L\Bigl(\frac{\log{p_1}}{\log{n}},\dots ,\frac{\log{p_\ell}}{\log{n}}\Bigr)$ is non-negative, we must have
\begin{equation}
\Bigr|L\Bigl(\frac{\log{p_1}}{\log{n}},\dots ,\frac{\log{p_\ell}}{\log{n}}\Bigr)\Bigl|\ll_\mathcal{L}\delta\label{eq:LSmall}.
\end{equation}
To bound the contribution of such terms, let $\gamma>0$ be a parameter and
\[
G(\gamma,L):=\sum_{\substack{n^{\eta}\le p_1,\dots,\,p_r \\ -\gamma\le L(\frac{\log{p_1}}{\log{n}},\dots ,\frac{\log{p_\ell}}{\log{n}})\le \gamma\\ n^{\theta_1}\le\prod_{i\in\mathcal{I}}p_i\le n^{\theta_2+\epsilon} }}\Bigl(1_{\mathcal{A}}(p_1\cdots p_r)+\frac{\#\mathcal{A}}{\#\mathcal{B}}1_{\mathcal{B}}(p_1\cdots p_r)\Bigr).
\]
(Here the summation is over all choices of primes $p_1,\dots,p_r$, and for any such choice $n=p_1\cdots p_r$. We do not restrict to $n\ge X^{1-\delta}$ in the summation.) We wish to show that if $\gamma=o_{L,\eta}(1)$ then $G(\gamma,L)=o_{L,\eta}(\#\mathcal{A}/\log{X})$, and we will do this by first thinking of $\gamma$ fixed but very small.

We split the sum into at most $r!=O_\eta(1)$ subsums where the variables are ordered (we potentially double-count the contribution from $p_i=p_{i'}$ for an upper bound). Thus, after relabelling the $p_i$, we see that
\[
G(\gamma,L)\ll \sup_{\substack{i_1,\dots,i_\ell\in\{1,\dots,r\}\\ \text{distinct}}}\sum_{\substack{n^{\eta}\le p_1\le \dots\le p_r \\ -\gamma\le L(\frac{\log{p_{i_1}}}{\log{n}},\dots ,\frac{\log{p_{i_\ell}}}{\log{n}})\le \gamma\\ n^{\theta_1}\le\prod_{i\in\mathcal{I}'}p_i\le n^{\theta_2+\epsilon}}}\Bigl(1_{\mathcal{A}}(p_1\cdots p_r)+\frac{\#\mathcal{A}}{\#\mathcal{B}}1_{\mathcal{B}}(p_1\cdots p_r)\Bigr)
\]
for some set $\mathcal{I}'\subseteq\{1,\dots,r\}$. Let $\mathcal{R}=\mathcal{R}(\gamma,L,\eta)\subseteq\mathcal{Q}_{r}(\eta)$ be given by
\[
\mathcal{R}=\Bigl\{(x_1,\dots,x_r)\in \mathcal{Q}_r(\eta):\, -\gamma\le L(x_{i_1},\dots,x_{i_\ell})\le \gamma,\,\sum_{i\in\mathcal{I}'}x_i\in [\theta_1,\theta_2+\epsilon]\Bigr\}.
\]
Then $\mathcal{R}$ satisfies the conditions of Proposition \ref{prpstn:TypeII}, so 
\begin{align*}
\sum_{\substack{n^{\eta}\le p_1\le \dots\le p_r \\ -\gamma\le L(\frac{\log{p_{i_1}}}{\log{n}},\dots ,\frac{\log{p_{i_\ell}}}{\log{n}})\le \gamma\\ n^{\theta_1}\le\prod_{i\in\mathcal{I}'}p_i\le n^{\theta_2+\epsilon}}}1_{\mathcal{A}}(p_1\cdots p_r)&=\sum_{n\in\mathcal{A}}\mathbf{1}_{\mathcal{R}}(n)\\
&=\frac{\#\mathcal{A}}{\#\mathcal{B}}\sum_{n<X}\mathbf{1}_{\mathcal{R}}(n)+o_\mathcal{R}\Bigl(\frac{\#\mathcal{A}}{\log{X}\log\log{X}}\Bigr).
\end{align*}
Thus
\[
G(\gamma,L)\ll  \frac{\#\mathcal{A}}{\#\mathcal{B}}\sup_{\substack{i_1,\dots,i_\ell\in\{1,\dots,r\}\\ \text{distinct}}}\sum_{n<X}\mathbf{1}_{\mathcal{R}}(n)+O_{L,\eta,\gamma}\Bigl(\frac{\#\mathcal{A}}{\log{X}\log\log{X}}\Bigr).
\]
By the Prime Number Theorem and partial summation, we have
\[
\sum_{n<X}\mathbf{1}_{\mathcal{R}}(n)=\frac{X}{\log{X}}\idotsint\limits_{(e_1,\dots,e_r)\in\mathcal{R}}\frac{d e_1\dots d e_{r-1}}{\prod_{i=1}^{r} e_i}+O_{\mathcal{R}}\Bigl(\frac{X}{(\log{X})^2}\Bigr).
\]
Since all components of elements of $\mathcal{R}$ are at least $\eta$, the integral is bounded by $\eta^{-r}$ times the $(r-1)-$dimensional volume of $\mathcal{R}$. Since $L$ involves at most $\ell\le r-1$ coordinates and $\mathcal{R}\subseteq [\eta,1]^r$, this volume is $O_{L,\eta}(\gamma)$.
Thus 
\[
G(\gamma,L)=O_{L,\eta}\Bigl(\frac{\gamma\#\mathcal{A}}{\log{X}}\Bigr)+O_{L,\eta,\gamma}\Bigl(\frac{\#\mathcal{A}}{\log{X}\log\log{X}}\Bigr).
\]
If $\gamma\rightarrow 0$ as $X\rightarrow\infty$ suitably slowly, we see that this shows that $G(\gamma,L)=o_{L,\eta}(\#\mathcal{A}/\log{X})$. But from the definition of $G$, we see that $G(\gamma,L)$ is non-decreasing in $\gamma$, so in fact we deduce that for any $\gamma=o_{L,\eta}(1)$ we have $G(\gamma,L)=o_{L,\eta}(\#\mathcal{A}/\log{X})$.

We see from \eqref{eq:LSmall} that the error introduced to \eqref{eq:FactorSum2} by replacing $\log{p_i}/\log{X}$ with $\log{p_i}/\log{n}$ in the conditions \eqref{eq:LConstraints} is $O(\sum_{L\in\mathcal{L}}G(\gamma,L))$ for some $\gamma\ll_\mathcal{L}\delta=o_\mathcal{L}(1)$. By the above discussion, this is $o_{\mathcal{L},\eta}(\#\mathcal{A}/\log{X})$, which is negligible.

After making this change, we may reintroduce the terms with $n<X^{1-\delta}$ at the cost of a negligible error by using the bound \eqref{eq:SmallTerms} again.
Thus
\[
\sum_{\substack{p_1\le \dots\le p_\ell \\ p_j\le p_{\ell+1}\le \dots \le p_r\\ p_1\cdots p_r \ge X^{1-\delta}}}^* w_{p_1\cdots p_r}=\sum_{\substack{p_1\le \dots\le p_\ell \\ p_j\le p_{\ell+1}\le \dots \le p_r}}^{**} w_{p_1\cdots p_r}+o_{\mathcal{L},\eta}\Bigl(\frac{\#\mathcal{A}}{\log{X}}\Bigr),
\]
where $\sum^{**}$ indicates the sum is constrained to
\[
L\Bigl(\frac{\log{p_1}}{\log{n}},\dots,\frac{\log{p_\ell}}{\log{n}}\Bigr)\ge 0
\]
for all $L\in\mathcal{L}$. Moreover, since we had the constraint $\prod_{i\in\mathcal{I}}p_i\in [X^{\theta_1},X^{\theta_2}]$ in \eqref{eq:LConstraints}, this second sum includes the constraint $\prod_{i\in\mathcal{I}}p_i\in [n^{\theta_1},n^{\theta_2}]$. We now split the summation into $O_\eta(1)$ subsums where the $p_i$ are totally ordered. After relabelling the coordinates, Proposition \ref{prpstn:TypeII} applies to each of these sums, since the linear constraints $L\ge 0$ for $L\in\mathcal{L}$ define a closed convex polytope (depending only on $\mathcal{L}$), and the ordering of the variables ensures that this lies within $\mathcal{Q}_r(\eta)$ (recall that the constraint $X^\eta\le p_1$ becomes $n^\eta\le p_1$, so all primes are at least $n^\eta$). The constraint $\prod_{i\in\mathcal{I}}p_i\in [n^{\theta_1},n^{\theta_2}]$ corresponds to the sum of a subset of the coordinates of all points in the polytope lying in $[\theta_1,\theta_2]$. Proposition \ref{prpstn:TypeII} shows that the contribution from each such sum is $o_{\mathcal{L},\eta}(\#\mathcal{A}/\log{X})$. Since there are $O_\eta(1)$ such sums, the total contribution is $o_{\mathcal{L},\eta}(\#\mathcal{A}/\log{X})$, giving the result.
\end{proof}
%
%
%
%
Our aim for the remainder of this section is to establish Proposition \ref{prpstn:FinalSieve} using Proposition \ref{prpstn:TypeI} and Proposition \ref{prpstn:TypeII}. %
%
%
%
%
%
%
%
We first establish an auxiliary lemma.%
%
%
%
%
\begin{lmm}[Fundamental Lemma]\label{lmm:FundamentalLemma}
For $\delta>0$ we have
\[
\sum_{\substack{d<X^{50/77-\epsilon}\\ p|d\Rightarrow p>X^{\delta} }}\Bigl|S(\mathcal{A}{}_{d}, X^{\delta})-\frac{\kappa_\mathcal{A}\#\mathcal{A}{}}{\#\mathcal{B}{}}S(\mathcal{B}{}_d,X^{\delta})\Bigr|
\ll \frac{\exp(-\delta^{-2/3})}{\log{X}}\#\mathcal{A}+\frac{\#\mathcal{A}}{(\log{X})^{100}}.
\]
The implied constant is independent of $\delta$.
\end{lmm}
%
%
%
%
\begin{proof}[Proof of Lemma \ref{lmm:FundamentalLemma} assuming Proposition \ref{prpstn:TypeI}]
If $\delta>\epsilon^4$ then since $S(\mathcal{C},X^t)$ is nonnegative and decreasing in $t$ for any set $\mathcal{C}$, we have
\begin{align*}
-\kappa_\mathcal{A}\frac{\#\mathcal{A}{}}{\#\mathcal{B}}S(\mathcal{B}{}_d,X^{\epsilon^4})&\le S(\mathcal{A}{}_d,X^\delta)-\kappa_\mathcal{A}\frac{\#\mathcal{A}{}}{\#\mathcal{B}{}}S(\mathcal{B}{}_d,X^\delta)\\
&\le \Bigl(S(\mathcal{A}{}_d,X^{\epsilon^4})-\frac{\kappa_\mathcal{A}\#\mathcal{A}{}}{\#\mathcal{B}{}}S(\mathcal{B}{}_d,X^{\epsilon^4})\Bigr)+\kappa_\mathcal{A}\frac{\#\mathcal{A}{}}{\#\mathcal{B}{}}S(\mathcal{B}{}_d,X^{\epsilon^4}).
\end{align*}
Since $S(\mathcal{B}{}_d,X^{\epsilon^4})\ll X/(d\log{X})$ for $d<X^{1-\epsilon}$ by \eqref{eq:Buchstab}, this gives
\[
\Bigl| S(\mathcal{A}{}_d,X^\delta)-\kappa_\mathcal{A}\frac{\#\mathcal{A}{}}{\#\mathcal{B}{}}S(\mathcal{B}{}_d,X^\delta)\Bigr|=\Bigl|S(\mathcal{A}{}_d,X^{\epsilon^4})-\kappa_\mathcal{A}\frac{\#\mathcal{A}{}}{\#\mathcal{B}{}}S(\mathcal{B}{}_d,X^{\epsilon^4})\Bigr|+O\Bigl(\frac{\#\mathcal{A}{}}{d\log{X}}\Bigr).
\]
By the rough number estimate \eqref{eq:Buchstab} again, we see that the sum of $1/d$ over $d<X$ with all prime factors bigger that $X^\delta$ is $O_\delta(1)$. Thus the result for $\delta>\epsilon^4$ follows from the result for $\delta=\epsilon^4$, so we may assume without loss of generality that $\delta\le \epsilon^4$.

Let
\[
\mathcal{A}'=\{a\in\mathcal{A}{}:\,(a,10)=1\}.
\]
Then $\#\mathcal{A}'=\kappa\#\mathcal{A}{}$, where $\kappa$ is the constant given in Proposition \ref{prpstn:TypeI}. Let $R_d(e)$ be defined by
\[
\#\{a\in\mathcal{A}_{d}':\, e|a\}=\frac{\kappa\#\mathcal{A}{}}{d e}+R_d(e).
\]
We put $q=d e$ and see from Proposition \ref{prpstn:TypeI} that for any $A>0$ the error terms $R_{d}(e)$ satisfy
\begin{align}
\sum_{\substack{d<X^{50/77-\epsilon}\\ p|d\Rightarrow p>X^{\delta}}}\sum_{\substack{e<X^{\epsilon/2}\\ (e,10)=1\\ p|e\Rightarrow p\le X^{\delta}}}R_d(e)
&\ll\sum_{\substack{q<X^{50/77-\epsilon/2}\\ (q,10)=1}}\Bigl|\#\mathcal{A}'_{q}-\frac{\kappa\#\mathcal{A}{}}{q}\Bigr|\nonumber\\
&\ll_A \frac{\#\mathcal{A}}{(\log{X})^A}.
\label{eq:SieveErrorBound}
\end{align}
By the fundamental lemma of sieve methods (see, for example, \cite[Theorem 6.9]{FriedlanderIwaniec}) we have
\[
S(\mathcal{A}'_d,X^\delta)=\Bigl(1-O\Bigl(\exp\Bigl(\frac{-\epsilon}{2\delta}\Bigr)\Bigr)\Bigr)\frac{\kappa\#\mathcal{A}{}}{d}\prod_{\substack{p\le X^\delta \\ p\nmid 10}}\Bigl(1-\frac{1}{p}\Bigr)+O\Bigl( \sum_{\substack{e<X^{\epsilon/2}\\ (e,10)=1 \\ p|e\Rightarrow p\le X^\delta}}R_d(e)\Bigr).
\]
Summing over $d$ and using the bound \eqref{eq:SieveErrorBound}, we obtain 
\begin{align*}
\sum_{\substack{d<X^{50/77-\epsilon}\\ p|d\Rightarrow p>X^{\delta}}}\Bigl|&S(\mathcal{A}'_{d},X^{\delta})-\frac{\kappa\#\mathcal{A}{}}{d}\prod_{\substack{p\le X^{\delta}\\ p\nmid 10}}\Bigl(1-\frac{1}{p}\Bigr)\Bigr|\\
&\ll \exp\Bigl(-\frac{\epsilon}{2\delta}\Bigr)\prod_{\substack{p\le X^{\delta}\\ p\nmid 10}}\Bigl(1-\frac{1}{p}\Bigr)\#\mathcal{A}{}\sum_{\substack{d<X^{50/77-\epsilon}\\ p|d\Rightarrow p>X^{\delta}}}\frac{1}{d}+\frac{\#\mathcal{A}}{(\log{X})^{100}}.
\end{align*}
The product in the final bound is $O(\delta^{-1}(\log{X})^{-1})$, and the inner sum over $d$ is seen to be $O(\delta^{-1})$ by an Euler product upper bound. Finally, since we are assuming that $\delta\le \epsilon^4$, we have that $\delta^{-2}\exp(-\epsilon/(2\delta))\ll \exp(-\delta^{-2/3})$. Thus
\begin{equation}
\sum_{\substack{d<X^{50/77-\epsilon}\\ p|d\Rightarrow p>X^{\delta}}}\Bigl|S(\mathcal{A}'_{d},X^{\delta})-\frac{\kappa\#\mathcal{A}{}}{d}\prod_{\substack{p\le X^{\delta}\\ p\nmid 10}}\Bigl(1-\frac{1}{p}\Bigr)\Bigr|\ll \frac{\exp(-\delta^{-2/3})\#\mathcal{A}}{\log{X}}+\frac{\#\mathcal{A}}{(\log{X})^{100}}.
\label{eq:FundSplit}
\end{equation}
An identical argument works for the set $\mathcal{B}'=\{n<  X:\,(n,10)=1\}$ instead of $\mathcal{A}'$. This gives
\begin{equation}
\sum_{\substack{d<X^{50/77-\epsilon}\\ p|d\Rightarrow p>X^{\delta}}}\Bigl|S(\mathcal{B}_{d}',X^{\delta})-\frac{\#\mathcal{B}'}{d}\prod_{\substack{p\le X^\delta \\ p\nmid 10}}\Bigl(1-\frac{1}{p}\Bigr)\Bigr|\ll \frac{\exp(-\delta^{-2/3})\#\mathcal{B}'}{\log{X}}+\frac{\#\mathcal{B}'}{(\log{X})^{100}}.
\label{eq:FundB}
\end{equation}
We see that for $(d,10)=1$ we have $S(\mathcal{A}'_d,X^{\delta})=S(\mathcal{A}{}_d,X^{\delta})$, that $S(\mathcal{B}'_d,X^{\delta})=S(\mathcal{B}{}_d,X^{\delta})$, and that $\#\mathcal{B}'=\phi(10)\#\mathcal{B}{}/10$. Thus, by the triangle inequality
\begin{align*}
\sum_{\substack{d<X^{50/77-\epsilon}\\ p|d\Rightarrow p>X^{\delta}}}&\Bigl|S(\mathcal{A}{}_{d},X^{\delta})-\frac{10\kappa\#\mathcal{A}{}}{\phi(10)\#\mathcal{B}{}}S(\mathcal{B}{}_d,X^{\delta})\Bigr|\\
&\le \sum_{\substack{d<X^{50/77-\epsilon}\\ p|d\Rightarrow p>X^{\delta}}}\Bigl|S(\mathcal{A}_{d}',X^{\delta})-\frac{\kappa\#\mathcal{A}{}}{d}\prod_{\substack{p\le X^\delta \\ p\nmid 10}}\Bigl(1-\frac{1}{p}\Bigr)\Bigr|\\
&\qquad+\frac{10\kappa\#\mathcal{A}{}}{\phi(10)\#\mathcal{B}{}}\sum_{\substack{d<X^{50/77-\epsilon}\\ p|d\Rightarrow p>X^{\delta}}}\Bigl|S(\mathcal{B}_{d}',X^{\delta})-\frac{\#\mathcal{B}'}{d}\prod_{\substack{p\le X^\delta \\ p\nmid 10}}\Bigl(1-\frac{1}{p}\Bigr)\Bigr|\\
&\qquad+\sum_{\substack{d<X^{50/77-\epsilon}\\ p|d\Rightarrow p>X^{\delta}}}\Bigl|\frac{\kappa\#\mathcal{A}{}}{d}\prod_{\substack{p\le X^\delta \\ p\nmid 10}}\Bigl(1-\frac{1}{p}\Bigr)-\frac{10\kappa\#\mathcal{A}{}\#\mathcal{B}'}{\phi(10)d \#\mathcal{B}{}}\prod_{\substack{p\le X^\delta \\ p\nmid 10}}\Bigl(1-\frac{1}{p}\Bigr)\Bigr|.
\end{align*}
We bound the first summation by \eqref{eq:FundSplit}, the second summation by \eqref{eq:FundB}, and note that since $\#\mathcal{B}'=\phi(10)\#\mathcal{B}{}/10$, the third summation is zero. Since $\kappa_\mathcal{A}=10\kappa/\phi(10)$, this gives
\[
\sum_{\substack{d<X^{50/77-\epsilon}\\ p|d\Rightarrow p>X^{\delta}}}\Bigl|S(\mathcal{A}{}_{d},X^{\delta})-\frac{\kappa_\mathcal{A}\#\mathcal{A}{}}{\#\mathcal{B}{}}S(\mathcal{B}{}_d,X^{\delta})\Bigr|\ll \frac{\exp(-\delta^{-2/3})}{\log{X}}\#\mathcal{A}+\frac{\#\mathcal{A}}{(\log{X})^{100}}.
\qedhere
\]
\end{proof}
%
%
%
%
Using Lemma \ref{lmm:FundamentalLemma} we can now prove Proposition \ref{prpstn:FinalSieve}.
%
%
%
%
\begin{proof}[Proof of Proposition \ref{prpstn:FinalSieve} assuming Lemma \ref{lmm:AltTypeII} and Lemma \ref{lmm:FundamentalLemma}]
Recall that $\theta_1=9/25+2\epsilon$, $\theta_2=17/40-2\epsilon$. Let $\theta:=\theta_2-\theta_1$, and let $\delta\ge 1/\log\log{X}$ be a small quantity which we will eventually choose to tend to 0 in a suitable manner. In particular, $\delta$ will be small compared with $\epsilon$. 

We first consider the contribution from $p_1\cdots p_\ell < X^{\theta_1}$. %
Given a set $\mathcal{C}$ and an integer $d$, we let
\begin{align*}
T_m(\mathcal{C};d)&=\sum_{\substack{X^{\delta}<p_m'\le \dots \le p_1'\le X^{\theta}\\ d p_1'\cdots p_m'\le X^{\theta_1}}}S(\mathcal{C}_{p_1'\cdots p_m'},X^{\delta}),\\
U_m(\mathcal{C};d)&=\sum_{\substack{X^{\delta}<p_m'\le \dots \le p_1'\le X^{\theta} \\ d p_1'\cdots p_m'\le X^{\theta_1}}}S(\mathcal{C}_{p_1'\cdots p_m'},p_m'),\\
V_m(\mathcal{C};d)&=\sum_{\substack{X^{\delta}<p_m'\le \dots \le p_1'\le X^{\theta} \\ X^{\theta_1}<d p_1'\cdots p_m'\le X^{\theta_1}p_m' }}S(\mathcal{C}_{p_1'\cdots p_m'},p_m').
\end{align*}
Buchstab's identity shows that
\[
U_m(\mathcal{C};d)=T_m(\mathcal{C};d)-U_{m+1}(\mathcal{C};d)-V_{m+1}(\mathcal{C};d).
\]
 We define $T_0(\mathcal{C};d)=S(\mathcal{C};X^{\delta})$ and $V_0(\mathcal{C};d)=0$. This gives for $d\le X^{\theta_1}$
\[
S(\mathcal{C},X^{\theta})=T_0(\mathcal{C};d)-V_1(\mathcal{C};d)-U_1(\mathcal{C};d)=\sum_{m\ge 0}(-1)^m(T_m(\mathcal{C};d)+V_m(\mathcal{C};d)).
\]
We apply the above decomposition to $\mathcal{A}{}_d$. This gives an expression with $O(\delta^{-1})$ terms since trivially $T_m(\mathcal{A}{}_{d};d)=U_m(\mathcal{A}{}_{d};d)=V_m(\mathcal{A}{}_{d};d)=0$ if $m>1/\delta$. Applying the same decomposition to $\mathcal{B}{}_{d}$, taking the weighted difference, and summing over $d=p_1\cdots p_\ell$ we obtain
\begin{align}
\sideset{}{'}\sum_{p_1,\dots,p_\ell} S(\mathcal{A}{}_{d},X^{\theta})&-\frac{\kappa_\mathcal{A}\#\mathcal{A}{}}{X} \sideset{}{'}\sum_{p_1,\dots,p_\ell} S(\mathcal{B}{}_{d},X^{\theta})\nonumber\\
&\ll \sum_{0\le m\ll 1/\delta}\,\sideset{}{'}\sum_{p_1,\dots,p_\ell} \Bigl|T_m(\mathcal{A}{}_{d};d)-\frac{\kappa_\mathcal{A}\#\mathcal{A}{}}{X} T_m(\mathcal{B}{}_{d};d)\Bigr|\nonumber\\
&+\sum_{0\le m\ll 1/\delta}\Bigl|\sideset{}{'}\sum_{p_1,\dots,p_\ell} \Bigl(V_m(\mathcal{A}{}_{d};d)-\frac{\kappa_\mathcal{A}\#\mathcal{A}{}}{X} V_m(\mathcal{B}{}_{d};d)\Bigr)\Bigr|.
\label{eq:TmVmSplit}
\end{align}
Here $\sum'$ indicates we are summing over all choices of $p_1,\dots,p_\ell$ which appear in the summation in Proposition \ref{prpstn:FinalSieve} with the additional condition that $d=p_1\cdots p_\ell < X^{\theta_1}$.

We note that $p_1,\dots,p_\ell\ge X^\theta$, so $d$ has $O(1)$ prime factors and any integer $e$ can be represented $O(1)$ times as $d p_1'\cdots p_m'$ for some primes $p_m'\le\dots \le p_1'$ and some choice of $p_1,\dots,p_\ell$ defining $d$. Thus, expanding the definition of $T_m$, if $\delta\le \epsilon$ we have
\begin{align}
\sum_{0\le m\ll 1/\delta}\,\sideset{}{'}\sum_{p_1,\dots,p_\ell}&\Bigl|T_m(\mathcal{A}{}_{d};d)-\frac{\kappa_\mathcal{A}\#\mathcal{A}{}}{\#\mathcal{B}{}} T_m(\mathcal{B}{}_d;d)\Bigr|\nonumber\\
&\ll \sum_{\substack{e<X^{\theta_1}\\ p|e\Rightarrow p>X^{\delta} }}\Bigl|S(\mathcal{A}{}_{e}, X^{\delta})-\frac{\kappa_\mathcal{A}\#\mathcal{A}{}}{\#\mathcal{B}{}}S(\mathcal{B}{}_e,X^{\delta})\Bigr|&\nonumber\\
&\ll \frac{\delta^{-1}\exp(-\delta^{-2/3})\#\mathcal{A}}{\log{X}}.
\label{eq:TmSums}
\end{align}
Here we applied by Lemma \ref{lmm:FundamentalLemma} in the last line, using $\delta\ge 1/\log\log{X}$.

We now consider the $V_m$ terms. We expand the definition of $V_m$ as a sum. We note that $p_m'\le X^\theta=X^{\theta_2-\theta_1}$, so the summation is constrained by $X^{\theta_1}\le d p_1'\cdots p_m'\le X^{\theta_2}$, which is our Type II constraint. We see that all terms have $d p_1'\cdots p_m'\le X/p_m'$, so we can insert this condition without changing the sum. We recall $p_1,\dots,p_\ell$ are constrained only by some linear conditions on $\log{p_1}/\log{X},\dots,\log{p_\ell}/\log{X}$. Thus we see that the sum is of the form considered in Lemma \ref{lmm:AltTypeII} with $\eta=\delta$, since all the conditions in the summation can be written as linear constraints on $\log{p_i}/\log{X}$ for $1\le i \le \ell$ and $\log{p_j'}/\log{X}$ for $1\le j\le m$. Thus, by Lemma \ref{lmm:AltTypeII}, we have
\begin{equation}
\sum_{m\ll \delta^{-1}}\Bigl|\sideset{}{'}\sum_{p_1,\dots,p_\ell}\Bigl(V_m(\mathcal{A}{}_d;d)-\frac{\kappa_\mathcal{A}\#\mathcal{A}{}}{\#\mathcal{B}{}} V_m(\mathcal{B}_d;d)\Bigr)\Bigr|=o_{\delta,\mathcal{L}}\Bigl(\frac{\#\mathcal{A}}{\log{X}}\Bigr).
\label{eq:VmSums}
\end{equation}
Putting together \eqref{eq:TmVmSplit}, \eqref{eq:TmSums} and \eqref{eq:VmSums}, we obtain
\[
\sideset{}{'}\sum_{p_1,\dots,p_\ell} S(\mathcal{A}{}_{d},X^{\theta})-\frac{\kappa_\mathcal{A}\#\mathcal{A}{}}{\#\mathcal{B}{}} \sideset{}{'}\sum_{p_1,\dots,p_\ell} S(\mathcal{B}{}_{d},X^{\theta})\ll \Bigl(\exp(-\delta^{-1/2})+o_{\delta,\mathcal{L}}(1)\Bigr)\frac{\#\mathcal{A}}{\log{X}}.
\]
Letting $\delta\rightarrow 0$ sufficiently slowly then gives the result for $d<X^{\theta_1}$.

The contribution from $d$ with $X^{\theta_2}< d< X^{1-\theta_2}$ can be handled by an identical argument, where instead of restricting to $d p_1'\cdots p_m'\le X^{\theta_1}$ and $X^{\theta_1}<d p_1'\cdots p_m'\le X^{\theta_1}p_m'$ in $T_m$, $U_m$ and $V_m$, we instead restrict to  $d p_1'\cdots p_m'\le X^{1-\theta_2}$ and $X^{1-\theta_2}<d p_1'\cdots p_m'\le X^{1-\theta_2}p_m'$ respectively. The terms corresponding to $V_m$ involve $a\in\mathcal{A}{}_{d p_1'\cdots p_m'}$ with $X^{1-\theta_2}<d p_1'\cdots p_m'\le X^{1-\theta_1}\le X/p_m'$, so can be handled by the second part of Lemma \ref{lmm:AltTypeII} instead of the first part. Since $50/77>1-17/40+2\epsilon=1-\theta_2$, the terms corresponding to $T_m$ can still be handled by Lemma \ref{lmm:FundamentalLemma}.

Finally, the contribution from $d$ with $X^{\theta_1}\le  d\le X^{\theta_2}$ or $X^{1-\theta_2}\le d\le X^{1-\theta_1}$ can be bounded almost immediately by Lemma \ref{lmm:AltTypeII}. One Buchstab iteration gives
\[
S_{d}(X^\theta)=S_d(X^\delta)-\sum_{X^\delta<p<X^\theta}S_{d p}(p).
\]
We put $d=p_1\cdots p_\ell$ and sum over $p_1,\dots,p_\ell$ satisfying the constraints imposed by $\mathcal{L}$ and such that $d\in [X^{1-\theta_2},X^{1-\theta_1}]$. The first term makes a negligible total contribution by Lemma \ref{lmm:FundamentalLemma} since $d\le X^{1-\theta_1}<X^{50/77-\epsilon}$. The second term makes negligible total contribution by Lemma \ref{lmm:AltTypeII} (noting that $d p\le X^{1-\theta_1+\theta}\le X^{1-\theta}\le X/p$). This gives the result when $d\in[X^{1-\theta_2}, X^{1-\theta_1}]$. The argument for $d\in [X^{\theta_1},X^{\theta_2}]$ is completely analogous. 

Together these cover the whole range $p_1\cdots p_\ell\le X^{1-\theta_1}$, giving the result.
\end{proof}
Thus, since Lemma \ref{lmm:AltTypeII} and Lemma \ref{lmm:FundamentalLemma} follow from Proposition \ref{prpstn:TypeI} and Proposition \ref{prpstn:TypeII}, it suffices to establish Proposition \ref{prpstn:TypeI} and Proposition \ref{prpstn:TypeII}.
%
%
%
%
%
%
%
%
%
%
\section{Type I estimate}\label{sec:TypeI}
%
%
%
%
In this section we establish our `Type I' estimate Proposition \ref{prpstn:TypeI}, assuming the more technical Lemmas \ref{lmm:LargeSieveTypeI} and \ref{lmm:LInfTypeI}, which we will establish later  in Section \ref{sec:Fourier}. We recall that Proposition \ref{prpstn:TypeI} describes the number of elements of $\mathcal{A}$ in arithmetic progressions to modulus up to $X^{50/77-\epsilon}\approx X^{0.65}$ on average.

Our Type I estimate is based on suitable bounds on the Fourier Transform
\[
S_{\mathcal{A}}(\theta)=\sum_{a\in\mathcal{A}}e(a\theta)
\]
of the set $\mathcal{A}$. We recall our definition of the function $F_Y$ from \eqref{eq:FyDef}, which is a normalized version of $S_\mathcal{A}$. In particular, $|S_\mathcal{A}(\theta)|=\#\mathcal{A}\cdot F_X(\theta)$. The two key lemmas which we use in this section are the following.
%
%
%
%
\begin{lmm}[Large sieve estimate]\label{lmm:LargeSieveTypeI}
We have
\begin{align*}
\sum_{q\le Q}\sum_{\substack{0<a<q\\ (a,q)=1}}F_{Y}\Bigl(\frac{a}{q}\Bigr)&\ll  Q^{54/77}+\frac{Q^2}{Y^{50/77}}.
\end{align*}
\end{lmm}
%
%
%
%
\begin{lmm}[$\ell^\infty$ bound]\label{lmm:LInfTypeI}
Let $q<Y^{1/3}$ be of the form $q=q_1q_2$ with $(q_1,10)=1$ and $q_1>1$, and let $|\eta|<Y^{-2/3}/2$. Then for any integer $a$ coprime with $q$ we have 
\[ 
F_{Y}\Bigl(\frac{a}{q}+\eta\Bigr)\ll \exp\Bigl(-c\frac{\log{Y}}{\log{q}}\Bigr)
\]
for some absolute constant $c>0$.
\end{lmm}
%
%
%
%
%
\begin{proof}[Proof of Proposition \ref{prpstn:TypeI} assuming Lemma \ref{lmm:LargeSieveTypeI} and Lemma \ref{lmm:LInfTypeI}]
By M\"obius inversion and using additive characters, we have for $(q,10)=1$
\begin{align*}
\#\mathcal{A}'_q&=\#\{a\in\mathcal{A}:\,q|a,\,(a,10)=1\}\\
&=\sum_{\substack{a\in\mathcal{A}\\ q|a}}\sum_{d|(10,a)}\mu(d)\\
&=\sum_{d|10}\mu(d)\sum_{a\in\mathcal{A}}\Bigl(\frac{1}{d q}\sum_{0\le b<d q}e\Bigl(\frac{a b}{d q}\Bigr)\Bigr)\\
&=\sum_{d|10}\frac{\mu(d)}{d q}\sum_{0\le b< d q}S_{\mathcal{A}}\Bigl(\frac{b}{d q}\Bigr).
\end{align*}
We write $b/d q=b'/d q'$ with $(b',q')=1$, and separate the terms with $q'=1$. We then let $b'/d q'=b''/d' q'$ with $(b'',d' q')=1$. For $(q,10)=1$ we see that this representation is unique for all $b,d$ under consideration. Thus
\begin{align*}
\#\mathcal{A}'_q&=\sum_{d|10}\frac{\mu(d)}{d q}\sum_{0\le b'< d}S_{\mathcal{A}}\Bigl(\frac{b'}{d}\Bigr)+O\Bigl(\sum_{d|10}\sum_{\substack{q'|q\\ q'>1}}\sum_{\substack{0\le b'< d q'\\ (b',q')=1}}\frac{1}{q}\Bigl|S_{\mathcal{A}}\Bigl(\frac{b'}{d q'}\Bigr)\Bigr|\Bigr)\\
&=\frac{1}{q}\#\{a\in\mathcal{A}:(a,10)=1\}+O\Bigl(\frac{\#\mathcal{A}}{q}\sum_{d'|10}\sum_{\substack{q'|q\\ q'>1}}\sum_{\substack{0\le b''< d' q'\\ (b'',d' q')=1}}F_X\Bigl(\frac{b''}{d' q'}\Bigr)\Bigr).
\end{align*}
We note that $\#\{a\in\mathcal{A}:(a,10)=1\}=\kappa\#\mathcal{A}$. Summing over $q<Q$ with $(q,10)=1$ and letting $q=q' q''$, we obtain
\begin{align}
\sum_{\substack{q<Q\\ (q,10)=1}}\Bigl|\#\mathcal{A}_q'-\frac{\kappa\#\mathcal{A}}{q}\Bigr|
&\ll \sum_{\substack{q<Q\\ (q,10)=1}}\frac{\#\mathcal{A}}{q}\sum_{d'|10}\sum_{\substack{q'|q\\ q'>1}}\sum_{\substack{0\le b''< d q'\\ (b'',d' q')=1}}F_X\Bigl(\frac{b''}{d' q'}\Bigr)\nonumber\\
&\ll \sum_{\substack{1<q'<Q\\ (q',10)=1}}\frac{\#\mathcal{A}}{q'}\sum_{d'|10}\sum_{\substack{0\le b''< d' q'\\ (b'',d' q')=1}}F_X\Bigl(\frac{b''}{d' q'}\Bigr)\sum_{q''<Q/q'}\frac{1}{q''}\nonumber\\
&\ll \#\mathcal{A}(\log{X})^2\sup_{\substack{Q_1\le Q\\ d'|10}}\frac{1}{Q_1}\sum_{\substack{q'\sim Q_1\\ (q',10)=1\\ q'>1}}\sum_{\substack{0\le b''< d' q'\\ (b'',d' q')=1}}F_X\Bigl(\frac{b''}{d' q'}\Bigr).
\label{eq:TypeIBound}
\end{align}
Here we recall our notation that $q'\sim Q_1$ means $q'\in (Q_1/10,Q_1]$. By Lemma \ref{lmm:LargeSieveTypeI} we have for any $d|10$ 
\[
\frac{1}{Q_1}\sum_{q\sim Q_1}\sum_{\substack{0\le a<d q\\ (a,d q)=1}}F_X\Bigl(\frac{a}{d q}\Bigr)\ll \frac{1}{Q_1^{23/77}}+\frac{Q_1}{X^{50/77}},
\]
which gives the required bound if $Q_1>(\log{X})^{4A+8}$ on recalling that $Q_1\le Q\le X^{50/77}(\log{X})^{-2A-2}$. In the case $Q_1\le (\log{X})^{4A+8}$ we instead use Lemma \ref{lmm:LInfTypeI}, which gives
\[
\frac{1}{Q_1}\sum_{\substack{q\sim  Q_1\\ (q,10)=1\\ q>1}}\sum_{\substack{a\le d q \\ (a,d q)=1}}F_X\Bigl(\frac{a}{d q}\Bigr)\ll Q_1\sup_{\substack{(a,q)=1\\ 1<q\le Q_1\\ (q,10)=1\\ d|10}}F_X\Bigl(\frac{a}{d q}\Bigr)\ll_A\frac{Q_1}{(\log{X})^{100(A+1)}}.
\]
Thus we see that the bound \eqref{eq:TypeIBound} is $O_A(\#\mathcal{A}/(\log{X})^A)$ in either case, as required.
\end{proof}
We are left to establish Proposition \ref{prpstn:TypeII} and Lemma \ref{lmm:LargeSieveTypeI} and Lemma \ref{lmm:LInfTypeI}.
%
%
%
%
%
%
%
\section{Type II estimate}\label{sec:TypeII}
%
%
%
%
In this section we reduce our `Type II' estimate to various major arc and minor arc estimates. In particular, we will reduce the proof of Proposition \ref{prpstn:TypeII} to the proof of Propositions \ref{prpstn:Major}, \ref{prpstn:Generic} and \ref{prpstn:Exceptional}. We first recall the statement of Propositon \ref{prpstn:TypeII} which allows us to count integers in $\mathcal{A}$ with a specific type of prime factorization provided such numbers always have a `conveniently sized' factor.
%
%
%
%
\begin{nprpstn}[Type II estimate Proposition \ref{prpstn:TypeII} restated] \label{prpstn:TypeII2}
Let $\eta>0$, and let $\ell\le 2\eta^{-1}$. Let $\mathcal{R}\subseteq \mathcal{Q}_\ell(\eta)$ be a closed convex polytope in $\mathbb{R}^\ell$ which has the property that
\[
\mathbf{e}\in\mathcal{R}\Rightarrow \sum_{i\in\mathcal{I}} e_i\in\Bigl[\frac{9}{25}+\epsilon,\frac{17}{40}-\epsilon\Bigr]
\]
for some set $\mathcal{I}\subseteq\{1,\dots,\ell\}$. Then we have
\[
\sum_{\substack{a\in\mathcal{A}}}\mathbf{1}_{\mathcal{R}}(a)=\kappa_\mathcal{A}\frac{\#\mathcal{A}{}}{\#\mathcal{B}{}}\sum_{n<  X}\mathbf{1}_{\mathcal{R}}(n)+O_{\mathcal{R},\eta}\Bigl(\frac{\#\mathcal{A}}{\log{X}\log\log{X}}\Bigr),
\]
where
\[
\kappa_\mathcal{A}=
\begin{cases}
\frac{10(\phi(10)-1)}{9\phi(10)},\qquad&\text{if $(10,a_0)=1$,}\\
\frac{10}{9},&\text{otherwise.}\\
\end{cases}
\]
\end{nprpstn}
%
%
%
%
%
To avoid technical issues due to the fact that $\sum_{n<Y}\mathbf{1}_{\mathcal{A}}(n)$ can fluctuate with $Y$, we will replace our counts $\mathbf{1}_{\mathcal{R}}(n)$ with a weight $\Lambda_{\mathcal{R}}$, where for a set $\mathcal{R}\subseteq [\eta,1]^\ell$ we define
\begin{equation}
\Lambda_\mathcal{R}(n)=\sum_{\substack{p_1,\dots,p_{\ell}\\ p_1\cdots p_\ell=n \\ (\frac{\log{p}_1}{\log{X}},\dots,\frac{\log{p_\ell}}{\log{X}})\in\mathcal{R}}}\prod_{i=1}^\ell \log{p_i}.
\label{eq:LambdaDef}
\end{equation}
We note that in $\Lambda_\mathcal{R}$ the conditions are on $\log{p_i}/\log{X}$, whereas in $\mathbf{1}_{\mathcal{R}}$ the conditions are on $\log{p_i}/\log{n}$. If every $\mathbf{e}\in\mathcal{R}$ has $e_1\le \dots \le e_\ell$ then at most one term occurs in the summation, so $\Lambda_{\mathcal{R}}$ simplifies to
\[
\Lambda_{\mathcal{R}}(n)=\begin{cases}
\prod_{i=1}^{\ell}\log{p_i},\qquad &\text{if }n=p_1\cdots p_\ell\text{ and }(\frac{\log{p}_1}{\log{X}},\dots,\frac{\log{p_\ell}}{\log{X}})\in\mathcal{R},\\
0,&\text{otherwise.}
\end{cases}
\]
We prove Proposition \ref{prpstn:TypeII} by an application of the Hardy-Littlewood circle method, whereby we study the functions
\begin{align*}
S_{\mathcal{A}}(\theta)=\sum_{a\in\mathcal{A} }e(a\theta),\qquad S_{\mathcal{R}}(\theta)=\sum_{n<X}\Lambda_{\mathcal{R}}(n)e(n\theta).
\end{align*}
Proposition \ref{prpstn:TypeII} then relies on the following three components.
%
%
%
%
\begin{prpstn}[Major arcs] \label{prpstn:Major}
Fix $\eta>0$ and let $\ell\in\mathbb{Z}$ satisfy $1\le \ell \le 2/\eta$. Let $\delta=(\log\log{X})^{-1}$, and let $\mathcal{R}_X=\mathcal{R}_X(a_1,\dots,a_{\ell-1})$ be given by
\[
\mathcal{R}_X=\Bigl\{\mathbf{e}\in\mathbb{R}^\ell: e_i\in (a_i,a_i+\delta]\text{ for }1\le i\le\ell-1,\, \sum_{i=1}^\ell e_i\le 1,\,e_\ell\ge \max\Bigl(\frac{\eta}{4},1-\sum_{i=1}^{\ell-1}a_i-\ell\delta\Bigr)\Bigr\},
\]
for some $a_1,\dots,a_{\ell-1}\in\mathbb{R}$ satisfying $\min_i a_i\ge \eta/2$ and $\sum_{i=1}^{\ell-1}a_i<1-\eta/2$.

Let $\mathcal{M}=\mathcal{M}(C)$ be given by
\[
\mathcal{M}=\Bigl\{0\le a<X:\, \Bigl|\frac{a}{X}-\frac{b}{q}\Bigr|\le \frac{(\log{X})^C}{X}\text{ for some integers $b,q$ with }q\le (\log{X})^C\Bigr\}.
\]
Then
\[
\frac{1}{X}\sum_{\substack{0\le a<X\\ a\in\mathcal{M}}}S_{\mathcal{A}}\Bigl(\frac{a}{X}\Bigr)S_{\mathcal{R}_X}\Bigl(\frac{-a}{X}\Bigr)=\kappa_\mathcal{A}\frac{\#\mathcal{A}}{X}\sum_{n< X}\Lambda_{\mathcal{R}_X}(n)+O_{C,\eta}\Bigl(\frac{\#\mathcal{A}}{(\log{X})^C}\Bigr).
\]
Here $\kappa_\mathcal{A}$ is the constant given in Proposition \ref{prpstn:TypeII}. The implied constant depends on $C$ and $\eta$, but not on $\mathcal{R}_X$ or $a_1\dots,a_{\ell-1}$.
\end{prpstn}
%
%
%
%
\begin{prpstn}[Generic minor arcs] \label{prpstn:Generic}
Fix $\eta>0$ and let $\ell\in\mathbb{Z}$ satisfy $1\le \ell \le 2/\eta$. Let $\mathcal{R}\subseteq\mathbb{R}^\ell$ be a closed convex polytope. Let $\mathcal{M}=\mathcal{M}(C)$ be as in Proposition \ref{prpstn:Major}.

Then there is some exceptional set $\mathcal{E}\subseteq[0,X]$ with
\[
\#\mathcal{E}\le X^{23/40},
\]
such that
\[
\frac{1}{X}\sum_{\substack{a<X\\ a\notin\mathcal{E}}}\Big|S_\mathcal{A}\Bigl(\frac{a}{X}\Bigr)S_{\mathcal{R}}\Bigl(\frac{-a}{X}\Bigr)\Big|\ll_\eta \frac{\#\mathcal{A}}{X^{\epsilon}}.
\]
The implied constant depends on $\eta$, but not on $\mathcal{R}$.
\end{prpstn}
%
%
%
%
\begin{prpstn}[Exceptional minor arcs] \label{prpstn:Exceptional}
Let $A>0$. Let $\eta$, $\ell$, $\mathcal{R}_X=\mathcal{R}_X(a_1,\dots,a_{\ell-1})$ and $\mathcal{M}=\mathcal{M}(C)$ be as given in Proposition \ref{prpstn:Major}. Let $a_1,\dots,a_{\ell-1}$ in the definition of $\mathcal{R}_X$ satisfy $\sum_{i\in\mathcal{I}}a_i\in [9/25+\epsilon/2,17/40-\epsilon/2]\cup[23/40+\epsilon/2,16/25-\epsilon/2]$ for some $\mathcal{I}\subseteq\{1,\dots,\ell-1\}$, and let $C=C(A,\eta)$ in the definition of $\mathcal{M}$ be sufficiently large in terms of $A$ and $\eta$. Let $\mathcal{E}\subseteq [0,X]$ be any set such that $\#\mathcal{E}\le X^{23/40}$. Then we have
\[
\frac{1}{X}\sum_{\substack{a\in\mathcal{E}\\ a\notin\mathcal{M}}}S_{\mathcal{A}}\Bigl(\frac{a}{X}\Bigr)S_{\mathcal{R}_X}\Bigl(\frac{-a}{X}\Bigr)\ll_{\eta,A}\frac{\#\mathcal{A}}{(\log{X})^{A}}.
\]
The implied constant depends on $\eta$ and $A$, but not on $\mathcal{R}_X$ or $a_1,\dots,a_{\ell-1}$.
\end{prpstn}
%
%
%
%
We expect the contribution from the major arcs $\mathcal{M}$ to give the main contribution. Proposition \ref{prpstn:Major} shows that we can get an asymptotic formula from frequencies in $\mathcal{M}$. Proposition \ref{prpstn:Generic} shows that most frequencies contribute negligibly, and that any significant contribution must come from some small exceptional set $\mathcal{E}$. (In view of Proposition \ref{prpstn:Major}, we must have $\mathcal{E}$ contains elements of $\mathcal{M}$ and so $\mathcal{E}$ is non-empty). We would expect that we can take $\mathcal{E}=\mathcal{M}$, but cannot quite show this. However, Proposition \ref{prpstn:Exceptional} shows that $\mathcal{E}\setminus\mathcal{M}$ contributes negligibly to our sum, which is sufficient for our purposes.
%
%
%
%
\begin{proof}[Proof of Proposition \ref{prpstn:TypeII} assuming Propositions \ref{prpstn:Major}, \ref{prpstn:Generic} and \ref{prpstn:Exceptional} and  Lemma \ref{lmm:FundamentalLemma}]
Let $\delta=(\log\log{X})^{-1}$. Clearly we may assume that $\delta$ is sufficiently small in terms of $\eta$, since otherwise the result is trivial. We note that $\ell\ge 2$, since the sum of coordinates of points in $\mathcal{R}$ is 1 but a non-trivial subset of them lies in $[9/25,17/40]$.
Given reals $a_1,\dots,a_{\ell-1}\ge 0$ and $\gamma>0$ and a set $\mathcal{S}\in\mathbb{R}^\ell$, let
\begin{align*}
\mathcal{C}(\mathbf{a};\gamma)&:=\Bigl(a_1,a_1+\gamma\Bigr]\times\dots \times \Bigl(a_{\ell-1},a_{\ell-1}+\gamma\Bigr],\\
\mathcal{C}^+(\mathbf{a};\gamma)&:=\Bigl\{\mathbf{e}\in[\eta/4,1]^\ell:\,(e_1,\dots,e_{\ell-1})\in\mathcal{C}(\mathbf{a};\gamma),\,\sum_{i=1}^\ell e_i\le 1,\,e_\ell\ge 1-\sum_{i=1}^{\ell-1}a_i-\ell\delta \Bigr\},\\
\tilde{\mathbf{1}}_{\mathcal{S}}(n)&:=\begin{cases}
1,\qquad &n=p_1\cdots p_\ell\text{ for some $p_1,\dots,p_{\ell}$ with }\Bigl(\frac{\log{p_1}}{\log{X}},\dots,\frac{\log{p_{\ell}}}{\log{X}}\Bigr)\in\mathcal{S},\\
0,&\text{otherwise.}
\end{cases}
\end{align*}
We see that $\mathbf{1}_{\mathcal{S}}$ and $\tilde{\mathbf{1}}_{\mathcal{S}}$ differ in that the denominators of the fractions are $\log{n}$ and $\log{X}$ respectively.

We cover $[\eta,1]^{\ell-1}$ by $O(\delta^{-(\ell-1)})$ disjoint hypercubes $\mathcal{C}(\mathbf{a},\delta)$ of side length $\delta$ (for example, we can take all $\mathbf{a}\in \{0,\delta,2\delta,\dots,\lceil \delta^{-1}\rceil\delta\}^{\ell-1}$). Let $\overline{\mathcal{R}}\subseteq[\eta,1]^{\ell-1}$ denote the projection of $\mathcal{R}$ onto the first $\ell-1$ coordinates (which is also a closed convex polytope). We see that if $n\in [X^{1-\delta^2},X]$ then $\log{n}$ and $\log{X}$ differ by a factor of at most $1-\delta^2$. In particular, if $\log{p_j}/\log{X}\in [a_j,a_j+\delta]$ then certainly $\log{p_j}/\log{n}\in [a_j,a_j+2\delta]$. This means that if $\mathcal{C}(\mathbf{a};2\delta)\subseteq\overline{\mathcal{R}}$ and $\log{p_j}/\log{X}\in [a_j,a_j+\delta]$ for all $j\le\ell-1$, then $\mathbf{1}_{\mathcal{R}}(p_1\cdots p_\ell)=1$ for all $p_\ell\in[X^{1-\delta^2}/p_1\cdots p_{\ell-1},X/p_1\cdots p_{\ell-1}]$. Thus for $n\in [X^{1-\delta^2},X]$
\begin{equation}
\mathbf{1}_{\mathcal{R}}(n)\tilde{\mathbf{1}}_{\mathcal{C}^+(\mathbf{a};\delta)}(n)=\begin{cases}
0,\qquad &\text{if $\overline{\mathcal{R}}\cap\mathcal{C}(\mathbf{a};2\delta)=\emptyset$},\\
\tilde{\mathbf{1}}_{\mathcal{C}^+(\mathbf{a};\delta)}(n),& \text{if $\mathcal{C}(\mathbf{a};2\delta)\subseteq\overline{\mathcal{R}}$},\\
O(\tilde{\mathbf{1}}_{\mathcal{C}^+(\mathbf{a};\delta)}(n)),&\text{otherwise}.
\end{cases}
\label{eq:RCubeSplit}
\end{equation}
If $\mathcal{C}(\mathbf{a};2\delta)\cap\overline{\mathcal{R}}\ne\emptyset$ but $\mathcal{C}(\mathbf{a};2\delta)\not\subseteq\overline{\mathcal{R}}$ then $\mathcal{C}(\mathbf{a};2\delta)$ intersects the boundary $\partial\overline{\mathcal{R}}$ of $\overline{\mathcal{R}}$. 

Since $\mathbf{1}_{\mathcal{R}}(n)$ is supported on $n$ with $\ell$ prime factors all at least $n^\eta$, if $n=p_1\cdots p_\ell\ge X^{1-\delta^2}$ and $\mathbf{1}_{\mathcal{R}}(n)=1$ then there is an $\mathbf{a}$ with $a_i\ge \eta/2$ such that $\tilde{\mathbf{1}}_{\mathcal{C}(\mathbf{a};\delta)}(p_1\cdots p_{\ell-1})=1$. Moreover, since $n\ge X^{1-\delta^2}$ we have $p_{\ell}\ge X^{1-\delta^2}/p_1\cdots p_{\ell-1}\ge X^{1-\sum_{i=1}^{\ell-1}a_i-\ell\delta}$, so in fact $\tilde{\mathbf{1}}_{\mathcal{C}^+(\mathbf{a};\delta)}(n)=1$. Since the cubes are disjoint, this happens for exactly one choice of $\mathbf{a}$. Therefore we have for any $n\in[X^{1- \delta^2},X]$
\[
\mathbf{1}_{\mathcal{R}}(n)=\sum_{\mathbf{a}}\tilde{\mathbf{1}}_{\mathcal{C}^+(\mathbf{a};\delta)}(n)\mathbf{1}_{\mathcal{R}}(n).
\]
Using this with \eqref{eq:RCubeSplit} to split the summation over hypercubes $\mathcal{C}$, we find
\begin{align*}
&\Bigl|\sum_{\substack{m\in\mathcal{A}\\ X^{1-\delta^2}<m<X}}\mathbf{1}_{\mathcal{R}}(m)-\frac{\kappa_\mathcal{A}\#\mathcal{A}}{X}\sum_{X^{1-\delta^2}<n< X}\mathbf{1}_{\mathcal{R}}(n)\Bigr|\\
&\le \sum_{\substack{\mathbf{a}\\ \mathcal{C}(\mathbf{a};2\delta)\subseteq\overline{\mathcal{R}}} }\Bigl|\sum_{\substack{m\in\mathcal{A}\\ X^{1-\delta^2}<m<X}}\tilde{\mathbf{1}}_{\mathcal{C}^+(\mathbf{a};\delta)}(m)-\frac{\kappa_\mathcal{A}\#\mathcal{A}}{X}\sum_{X^{1-\delta^2}<n< X}\tilde{\mathbf{1}}_{\mathcal{C}^+(\mathbf{a};\delta)}(n)\Bigr|\nonumber\\
&\quad+\sum_{\substack{\mathbf{a}\\ \mathcal{C}(\mathbf{a};2\delta)\cap\partial\overline{\mathcal{R}}\ne\emptyset} }O\Bigl(\sum_{\substack{m\in\mathcal{A}\\ X^{1-\delta^2}<m<X}}\tilde{\mathbf{1}}_{\mathcal{C}^+(\mathbf{a};\delta)}(m)+\sum_{X^{1-\delta^2}<n<X}\frac{\kappa_\mathcal{A}\#\mathcal{A}}{X}\tilde{\mathbf{1}}_{\mathcal{C}^+(\mathbf{a};\delta)}(n)\Bigr).
\end{align*}
Re-inserting terms with $m\le X^{1-\delta^2}$ and $n\le X^{1-\delta^2}$, we obtain
\begin{align}
\Bigl|\sum_{m\in\mathcal{A}}\mathbf{1}_{\mathcal{R}}(m)-\frac{\kappa_\mathcal{A}\#\mathcal{A}}{X}&\sum_{n< X}\mathbf{1}_{\mathcal{R}}(n)\Bigr|
\le \sum_{\substack{\mathbf{a}\\ \mathcal{C}(\mathbf{a};2\delta)\subseteq\overline{\mathcal{R}}} }\Bigl|\sum_{m\in\mathcal{A}}\tilde{\mathbf{1}}_{\mathcal{C}^+(\mathbf{a};\delta)}(m)-\frac{\kappa_\mathcal{A}\#\mathcal{A}}{X}\sum_{n< X}\tilde{\mathbf{1}}_{\mathcal{C}^+(\mathbf{a};\delta)}(n)\Bigr|\nonumber\\
&\qquad+\sum_{\substack{\mathbf{a}\\ \mathcal{C}(\mathbf{a};2\delta)\cap\partial\overline{\mathcal{R}}\ne\emptyset} }O\Bigl(\sum_{m\in\mathcal{A}}\tilde{\mathbf{1}}_{\mathcal{C}^+(\mathbf{a};\delta)}(m)+\sum_{n<X}\frac{\kappa_\mathcal{A}\#\mathcal{A}}{X}\tilde{\mathbf{1}}_{\mathcal{C}^+(\mathbf{a};\delta)}(n)\Bigr)\nonumber\\
&\qquad + O\Bigl(\sum_{\substack{m\in\mathcal{A}\\ m\le X^{1-\delta^2}}}1\Bigr)+O\Bigl(\frac{\#\mathcal{A}}{X}\sum_{n\le X^{1-\delta^2}}1\Bigr).
\label{eq:CubeSplit}
\end{align}
The final two terms above satisfy
\begin{equation}
\sum_{\substack{m\in\mathcal{A}\\ m\le X^{1-\delta^2}}}1+\kappa_{\mathcal{A}}\frac{\#\mathcal{A}}{X}\sum_{n\le X^{1-\delta^2}}1\ll \#\mathcal{A}^{1-\delta^2}+\frac{\#\mathcal{A}}{X^{\delta^2}}\ll \frac{\delta\#\mathcal{A}}{\log{X}}.
\label{eq:TrivialTerms}
\end{equation}
We now consider the contribution to \eqref{eq:CubeSplit} from $\mathcal{C}(\mathbf{a};2\delta)\cap\partial\overline{\mathcal{R}}\ne\emptyset$. Since $\mathcal{R}\subseteq[\eta,1]^\ell$, we must have $a_i\ge \eta/2$ and since the coordinates of points in $\mathcal{R}$ sum to 1 we also have $\sum_{i=1}^{\ell-1}a_i\le 1-\eta/2$. Since $\tilde{\mathbf{1}}_{\mathcal{C}^+(\mathbf{a};\delta)}(n)$ and $\Lambda_{\mathcal{C}^+(\mathbf{a};\delta)}(n)$ have the same support, which is restricted to integers with no factor less than $X^{\eta/4}$, we have $\tilde{\mathbf{1}}_{\mathcal{C}^+(\mathbf{a};\delta)}(n)\ll_\eta (\log{X})^{-\ell} \Lambda_{\mathcal{C}^+(\mathbf{a};\delta)}(n)$. Thus we have
\begin{align}
\sum_{m\in\mathcal{A}}\tilde{\mathbf{1}}_{\mathcal{C}^+(\mathbf{a};\delta)}(m)&+\frac{\kappa_\mathcal{A}\#\mathcal{A}}{X}\sum_{n<X}\tilde{\mathbf{1}}_{\mathcal{C}^+(\mathbf{a};\delta)}(n)\nonumber\\
&\ll_\eta \frac{1}{(\log{X})^\ell}\Bigl( \sum_{m\in\mathcal{A}}\Lambda_{\mathcal{C}^+(\mathbf{a};\delta)}(m)+\frac{\kappa_\mathcal{A}\#\mathcal{A}}{X}\sum_{n<X}\Lambda_{\mathcal{C}^+(\mathbf{a};\delta)}(n)\Bigr)\nonumber\\
&\le \frac{1}{(\log{X})^\ell}\Bigl|\sum_{m\in\mathcal{A}}\Lambda_{\mathcal{C}^+(\mathbf{a};\delta)}(m)-\frac{\kappa_\mathcal{A}\#\mathcal{A}}{X}\sum_{n<X}\Lambda_{\mathcal{C}^+(\mathbf{a};\delta)}(n)\Bigr|\nonumber\\
&\qquad +\frac{2}{(\log{X})^\ell}\frac{\kappa_\mathcal{A}\#\mathcal{A}}{X}\sum_{n<X}\Lambda_{\mathcal{C}^+(\mathbf{a};\delta)}(n).
\label{eq:BoundaryTerms}
\end{align}
Here we used the triangle inequality in the final line. By the prime number theorem, for any choice of $\mathbf{a}\in[0,2]^{\ell-1}$ we have
\begin{align*}
\sum_{n<X}\Lambda_{\mathcal{C}^+(\mathbf{a};\delta)}(n)&\le \sum_{\substack{p_1,\dots,p_{\ell-1}\\ p_i\in (X^{a_i},X^{a_i+\delta}]}}\Bigl(\prod_{i=1}^{\ell-1}\log{p_i}\Bigr)\sum_{p_\ell<X/p_1\cdots p_{\ell-1}}\log{p_{\ell}}\\
&\ll X\sum_{\substack{p_1,\dots,p_{\ell-1}\\ p_i\in (X^{a_i},X^{a_i+\delta}]}}\prod_{i=1}^{\ell-1}\frac{\log{p_i}}{p_i}\\
&\ll \delta^{\ell-1}X(\log{X})^{\ell-1}.
\end{align*}
Since $\mathcal{R}$ is a closed convex polytope, so is $\overline{\mathcal{R}}\subseteq\mathbb{R}^{\ell-1}$. Therefore there are $O_\mathcal{R}(\delta^{-(\ell-2)})$ hypercubes $\mathcal{C}(\mathbf{a};2\delta)$ which intersect $\partial \overline{\mathcal{R}}$. Thus the contribution to \eqref{eq:CubeSplit} from the final term of \eqref{eq:BoundaryTerms} is
\begin{align}
\ll \frac{\#\mathcal{A}}{X(\log{X})^\ell}\sum_{\substack{\mathbf{a}\\ \mathcal{C}(\mathbf{a};2\delta)\cap\partial\overline{\mathcal{R}}\ne\emptyset} }\sum_{n<X}\Lambda_{\mathcal{C}^+(\mathbf{a};\delta)}(n)
&\ll  \frac{\delta^{\ell-1}\#\mathcal{A}}{\log{X}}\sum_{\substack{\mathbf{a}\\ \mathcal{C}(\mathbf{a};2\delta)\cap\partial\overline{\mathcal{R}}\ne\emptyset} }1\nonumber\\
&\ll_\mathcal{R}  \frac{\delta\#\mathcal{A}}{\log{X}}.\label{eq:BoundaryErrorTerms}
\end{align}
We now consider the terms with $\mathcal{C}(\mathbf{a};2\delta)\subseteq\overline{\mathcal{R}}$. %
Since $\mathcal{R}\subseteq \mathcal{Q}_\ell(\eta)$, if $\mathbf{e}\in\mathcal{R}$ then $e_1\le \dots \le e_\ell$, so if $\mathbf{e}'\in\overline{\mathcal{R}}$ then $e_1'\le \dots \le e_{\ell-1}'$. Therefore, since $\mathcal{C}(\mathbf{a};2\delta)\subseteq\overline{\mathcal{R}}$, %
\begin{equation}
a_j+\delta<a_{j+1}\text{ for }j\in\{1,\dots,\ell-2\}.
\label{eq:CubeCond1}
\end{equation}
Since $\sum_{i=1}^\ell e_i=1$ and $e_{\ell-1}\le e_\ell$ for $\mathbf{e}\in\mathcal{R}$, if $\mathbf{e}'\in\overline{\mathcal{R}}$ then $e_{\ell-1}'\le 1-\sum_{i=1}^{\ell-1}e_i'$. Therefore, since $(a_1+2\delta,\dots,a_{\ell-1}+2\delta)\in\mathcal{C}(\mathbf{a};2\delta)\subseteq\overline{\mathcal{R}}$, we have 
\begin{equation}
a_{\ell-1}+2\delta\le 1-\sum_{i=1}^{\ell-1}a_i-(2\ell-2)\delta\le 1-\sum_{i=1}^{\ell-1}a_i-\ell\delta.
\label{eq:CubeCond2}
\end{equation}
Together \eqref{eq:CubeCond1} and \eqref{eq:CubeCond2} imply that at most one term occurs in the summation in $\Lambda_{\mathcal{C}^+(\mathbf{a};\delta)}$. Thus for such $\mathcal{C}(\mathbf{a};2\delta)$, since the coordinates are localized, we have
\begin{align}
\tilde{\mathbf{1}}_{\mathcal{C}^+(\mathbf{a};\delta)}(n)&=\frac{(1+O_\eta(\delta))\Lambda_{\mathcal{C}^+(\mathbf{a};\delta)}(n)}{(1-\sum_{i=1}^{\ell-1}a_i)(\prod_{i=1}^{\ell-1} a_i)(\log{X})^\ell}\nonumber\\
&=\frac{\Lambda_{\mathcal{C}^+(\mathbf{a};\delta)}(n)}{(1-\sum_{i=1}^{\ell-1}a_i)(\prod_{i=1}^{\ell-1} a_i)(\log{X})^\ell}+O_\eta(\delta \tilde{\mathbf{1}}_{\mathcal{C}^+(\mathbf{a};\delta)}(n)).
\label{eq:LambdaApprox}
\end{align}
Thus
\begin{align}
 \sum_{\substack{\mathbf{a}\\ \mathcal{C}(\mathbf{a};2\delta)\subseteq\overline{\mathcal{R}}} }&\Bigl|\sum_{m\in\mathcal{A}}\tilde{\mathbf{1}}_{\mathcal{C}^+(\mathbf{a};\delta)}(m)-\frac{\kappa_\mathcal{A}\#\mathcal{A}}{X}\sum_{n< X}\tilde{\mathbf{1}}_{\mathcal{C}^+(\mathbf{a};\delta)}(n)\Bigr|\nonumber\\
&\ll_\eta \frac{1}{(\log{X})^\ell} \sum_{\substack{\mathbf{a}\\ \mathcal{C}(\mathbf{a};2\delta)\subseteq\overline{\mathcal{R}}} }\Bigl|\sum_{m\in\mathcal{A}}\Lambda_{\mathcal{C}^+(\mathbf{a};\delta)}(m)-\frac{\kappa_\mathcal{A}\#\mathcal{A}}{X}\sum_{n< X}\Lambda_{\mathcal{C}^+(\mathbf{a};\delta)}(n)\Bigr|\nonumber\\
&\qquad +\delta\sum_{\substack{\mathbf{a}\\ \mathcal{C}(\mathbf{a};2\delta)\subseteq\overline{\mathcal{R}}} }\Bigl(\sum_{m\in\mathcal{A}}\tilde{\mathbf{1}}_{\mathcal{C}^+(\mathbf{a};\delta)}(m)+\frac{\kappa_\mathcal{A}\#\mathcal{A}}{X}\sum_{n< X}\tilde{\mathbf{1}}_{\mathcal{C}^+(\mathbf{a};\delta)}(n)\Bigr).
\label{eq:InternalTerms}
\end{align}
Since any $n=p_1\cdots p_\ell$ contributing to the second term above is counted at most once and has all prime factors at least $X^{\eta/4}$, we have
\begin{align}
\delta\sum_{\substack{\mathbf{a}\\ \mathcal{C}(\mathbf{a};2\delta)\subseteq\overline{\mathcal{R}}} }\Bigl(\sum_{m\in\mathcal{A}}\tilde{\mathbf{1}}_{\mathcal{C}^+(\mathbf{a};\delta)}(m)+\frac{\kappa_\mathcal{A}\#\mathcal{A}}{X}\sum_{n< X}\tilde{\mathbf{1}}_{\mathcal{C}^+(\mathbf{a};\delta)}(n)\Bigr)%
&\ll \delta S(\mathcal{A},X^{\eta/4})+\delta\frac{\#\mathcal{A}}{X}S(\mathcal{B},X^{\eta/4})\nonumber\\
&\ll_\eta \frac{\delta \#\mathcal{A}}{\log{X}}.
\label{eq:InternalErrorTerms}
\end{align}
Here we used Lemma \ref{lmm:FundamentalLemma} and \eqref{eq:Buchstab} in the final line. Combining \eqref{eq:TrivialTerms}, \eqref{eq:BoundaryTerms}, \eqref{eq:BoundaryErrorTerms}, \eqref{eq:InternalTerms} and \eqref{eq:InternalErrorTerms}, we find \eqref{eq:CubeSplit} is bounded by
\begin{equation*}
\ll_\eta \frac{1}{(\log{X})^{\ell}}\sum_{\substack{\mathbf{a}\\ \mathcal{C}(\mathbf{a};2\delta)\cap\overline{\mathcal{R}}\ne\emptyset} }\Bigl|\sum_{m\in\mathcal{A}}\Lambda_{\mathcal{C}^+(\mathbf{a};\delta)}(m)-\kappa_\mathcal{A}\frac{\#\mathcal{A}}{X}\sum_{n< X}\Lambda_{\mathcal{C}^+(\mathbf{a};\delta)}(n)\Bigr|+\frac{\delta\#\mathcal{A}}{\log{X}}.
\end{equation*}
Thus to establish Proposition \ref{prpstn:TypeII} it is sufficient to show that for any $A>0$, we have
\begin{equation}
\sum_{\substack{m\in\mathcal{A}}}\Lambda_{\mathcal{C}^+(\mathbf{a};\delta)}(m)=\frac{\#\mathcal{A}}{X}\sum_{n<  X}\Lambda_{\mathcal{C}^+(\mathbf{a};\delta)}(n)+O_{A,\eta}\Bigl(\frac{\#\mathcal{A}}{(\log{X})^A}\Bigr),
\label{eq:CubeSum}
\end{equation}
uniformly for every hypercube $\mathcal{C}(\mathbf{a};\delta)$ of side length $\delta$ with $\mathcal{C}(\mathbf{a};2\delta)\cap\overline{\mathcal{R}}\ne\emptyset$. 

Since $\sum_{i\in\mathcal{I}}e_i\in[9/25+\epsilon,17/40-\epsilon]$ if $\mathbf{e}\in\mathcal{R}$, by taking $\mathcal{J}=\mathcal{I}$ or $\mathcal{J}=\{1,\dots,\ell\}\backslash\mathcal{I}$, we must have that $\sum_{i\in\mathcal{J}}a_j\in[9/25+\epsilon/2,17/40-\epsilon/2]\cup[23/40+\epsilon/2,16/25-\epsilon/2]$ for some $\mathcal{J}\subseteq\{1,\dots,\ell-1\}$ for any $\mathbf{a}$ such that $\mathcal{C}(\mathbf{a};2\delta)\cap\mathcal{R}\ne \emptyset$. Since $\mathcal{R}\subseteq[\eta,1]^\ell$, we have $\min_i a_i\ge \eta/2$ and $\sum_{i=1}^{\ell-1}a_i<1-\eta/2$ if $\mathcal{C}(\mathbf{a};2\delta)\cap\mathcal{R}\ne \emptyset$. Thus all hypercubes under consideration satisfy the assumptions on $\mathcal{R}_X$ of Propositions \ref{prpstn:Major}-\ref{prpstn:Exceptional}.

By Fourier expansion we have
\[
\sum_{m\in\mathcal{A}}\Lambda_{\mathcal{C}^+(\mathbf{a};\delta)}(m)=\frac{1}{X}\sum_{0\le b< X}S_{\mathcal{A}}\Bigl(\frac{b}{X}\Bigr)S_{\mathcal{C}^+(\mathbf{a};\delta)}\Bigl(\frac{-b}{X}\Bigr).
\]
We split the summation over $b$ into the sets $\mathcal{M}$, $[0,X)\backslash(\mathcal{E}\cup\mathcal{M})$ and $\mathcal{E}\backslash\mathcal{M}$, where $\mathcal{M}$ is as given by Proposition \ref{prpstn:Major}, and $\mathcal{E}$ is the set who existence is asserted by Proposition \ref{prpstn:Generic}. We then apply Propositions \ref{prpstn:Major}, \ref{prpstn:Generic} and \ref{prpstn:Exceptional} respectively to each set in turn. Let $H_{\mathcal{C}^+}(\theta)=S_{\mathcal{A}}(\theta)S_{\mathcal{C}^+(\mathbf{a};\delta)}(-\theta)$. For $C$ in the definition of $\mathcal{M}$ sufficiently large in terms of $A$ and $\eta$, this gives
\begin{align*}
\sum_{m\in\mathcal{A}}\Lambda_{\mathcal{C}^+(\mathbf{a};\delta)}(m)
&=\frac{1}{X}\sum_{b\in\mathcal{M}}H_{\mathcal{C}^+}\Bigl(\frac{b}{X}\Bigr)
+\frac{1}{X}\sum_{b\notin\mathcal{E}\cup\mathcal{M} }H_{\mathcal{C}^+}\Bigl(\frac{b}{X}\Bigr)
+\frac{1}{X}\sum_{\substack{b\in\mathcal{E}\\ b\notin \mathcal{M}}}H_{\mathcal{C}^+}\Bigl(\frac{b}{X}\Bigr)\\
&=\kappa_\mathcal{A}\frac{\#\mathcal{A}}{X}\sum_{n< X}\Lambda_{\mathcal{C}^+(\mathbf{a};\delta)}(n)+O_{A,\eta}\Bigl(\frac{\#\mathcal{A}}{(\log{X})^A}\Bigr).
\end{align*}
This gives \eqref{eq:CubeSum}, and hence completes the proof of Proposition \ref{prpstn:TypeII}.
\end{proof}
Since Lemma \ref{lmm:FundamentalLemma} follows from Proposition \ref{prpstn:TypeI}, which in turn follows from Lemmas \ref{lmm:LargeSieveTypeI} and \ref{lmm:LInfTypeI}, we are left to establish Lemma \ref{lmm:LargeSieveTypeI}, Lemma \ref{lmm:LInfTypeI}, Proposition \ref{prpstn:Major}, Proposition \ref{prpstn:Generic} and Proposition \ref{prpstn:Exceptional}.
%
%
%
%
%
%
\section{Fourier Estimates} \label{sec:Fourier}
In this section we collect various distributional bounds on the Fourier transform
\[
S_{\mathcal{A}}(\theta)=\sum_{a\in\mathcal{A}}e(a\theta),
\]
which will underpin our later analysis. In particular, we establish Lemma \ref{lmm:LargeSieveTypeI} and Lemma \ref{lmm:LInfTypeI}, as well as several other related estimates. Specifically, Lemma \ref{lmm:LargeSieveTypeI} is a special case of Lemma \ref{lmm:LargeSieve}, and Lemma \ref{lmm:LInfTypeI} is the same as Lemma \ref{lmm:LInfBound}.

We recall our normalized version of $S_{\mathcal{A}}(\theta)$ from \eqref{eq:FyDef}
\[
F_{Y}(\theta)=Y^{-\log{9}/\log{10}}\Bigl|\sum_{n<Y}\mathbf{1}_{\mathcal{A}_1}(n)e(n\theta)\Bigr|.
\]
We recall that we assume $Y$ is an integral power of ten whenever we encounter $F_Y$ to avoid some unimportant technicalities. In particular,
\begin{equation}
F_Y(\theta)\le 1
\label{eq:FyBound}
\end{equation}
for all $\theta$ and $Y$. The key property of $F_Y$ which we exploit is that it has an exceptionally nice product form. If $Y=10^k$, then letting $n=\sum_{i=0}^{k-1}n_i 10^i$ have decimal digits $n_{k-1},\dots, n_0$, we find
\begin{align}
F_Y(\theta)&=\frac{1}{9^k}\Bigl|\sum_{n_0,\dots,n_{k-1}\in\{0,\dots,9\}\backslash \{a_0\}}e\Bigl(\sum_{i=0}^{k-1}n_i10^i\theta\Bigr)\Bigr|\nonumber\\
&=\prod_{i=0}^{k-1}\frac{1}{9}\Bigl|\sum_{n_i\in\{0,\dots,9\}\backslash \{a_0\}}e(n_i 10^i \theta)\Bigr|\nonumber\\
&=\prod_{i=1}^k\frac{1}{9}\Bigl|\frac{e(10^{i}\theta)-1}{e(10^{i-1}\theta)-1}-e(a_010^{i-1}\theta)\Bigr|.
\label{eq:ProductFormula}
\end{align}
We note that $F_Y$ is periodic modulo 1, and that the above product formula gives the identity
\begin{equation}
F_{UV}(\theta)=F_U(\theta)F_V(U\theta).
\label{eq:FFactorization}
\end{equation}
(We recall that we assume that $U$ and $V$ are both powers of 10 in such a statement.)
%
%
%
%
\begin{lmm}[$\ell^\infty$ bound, Lemma \ref{lmm:LInfTypeI} restated]\label{lmm:LInfBound}
Let $q<Y^{1/3}$ be of the form $q=q_1q_2$ with $(q_1,10)=1$ and $q_1>1$, and let $|\eta|<Y^{-2/3}/2$. Then for any integer $a$ coprime with $q$ we have 
\[ 
F_{Y}\Bigl(\frac{a}{q}+\eta\Bigr)\ll \exp\Bigl(-c\frac{\log{Y}}{\log{q}}\Bigr)
\]
for some absolute constant $c>0$.
\end{lmm}
%
%
%
%
\begin{proof}
From the bounds coming from truncated Taylor expansions, we have that
\begin{align*}
|e(n\theta)+e((n+1)\theta)|^2=2+2\cos(2\pi \|\theta\|)&\le 4-4\pi^2 \|\theta\|^2+4\pi^4 \|\theta\|^4/3\\
&\le 4-4\|\theta\|^2\le 4\exp(-\|\theta\|^2).
\end{align*}
We recall that $\|\cdot\|$ denotes the distance to the nearest integer. This implies that
\[
\Bigl|\sum_{n_i\in\{0,\dots,9\}\backslash \{a_0\}}e(n_i \theta)\Bigr|\le 7+2\exp(-\|\theta\|^2/2)\le 9\exp\Bigl(-\frac{\|\theta\|^2}{20}\Bigr).
\]
For the final inequality we used the convexity of $\exp(-x^2)$. We substitute this bound into our expression \eqref{eq:ProductFormula} for $F_Y$, which gives for $Y=10^k$
\begin{align*}
F_{Y}(t)&=\prod_{i=0}^{k-1}\frac{1}{9}\Bigl|\sum_{n_i\in\{0,\dots,9\}\backslash \{a_0\}}e(n_i 10^it)\Bigr|\\
&\le \exp\Bigl(-\frac{1}{20}\sum_{i=0}^{k-1}\|10^i t\|^2\Bigr).
\end{align*}
If $t=a/q_1q_2$ with $q_1>1$, $(q_1,10)=1$ and $(a,q_1)=1$, then $\|10^i t\|\ge 1/q_1q_2$ for all $i$. Similarly, if $t=a/q_1q_2+\eta$ with $a,q_1,q_2$ as above, with $|\eta|<Y^{-2/3}/2$ and with $q=q_1q_2<Y^{1/3}$ then for $i\le k/3$ we have $\|10^i t\|\ge 1/q-10^i|\eta|\ge 1/2q$. However, if $\|10^i t\|<1/20$ then $\|10^{i+1}t\|=10\|10^i t\|$. Thus, for any interval $\mathcal{I}\subseteq[0,k/3]$ of length $\log{q}/\log{10}$, there must be some integer $i\in\mathcal{I}$ such that $\|10^i (a/q+\eta)\|>1/200$. This implies that
\[
\sum_{i=0}^k\Bigl\|10^i\Bigl(\frac{a}{q}+\eta\Bigr)\Bigr\|^2\ge \frac{1}{10^5}\Bigl\lfloor\frac{\log{Y}}{3\log{q}}\Bigr\rfloor.
\]
Substituting this into the bound for $F$, and recalling we assume $q<Y^{1/3}$ gives the result.
\end{proof}
%
%
%
%
\begin{lmm}[Markov moment bound]\label{lmm:Markov}
Let $J$ be a positive integer. Let $\lambda_{t,J}$ be the largest eigenvalue of the $10^J\times10^J$ matrix $M_{t}$, given by
\[
(M_{t})_{i,j}=
\begin{cases}
G(a_1,\dots,a_{J+1})^t, &\text{if }i-1=\sum_{\ell=1}^J a_{\ell+1}10^{\ell-1},\, j-1=\sum_{\ell=1}^J a_\ell 10^{\ell-1}\\
&\text{ for some }a_1,\dots,a_{J+1}\in\{0,\dots 9\},\\
0, &\text{otherwise,}
\end{cases}
\]
where
\[
 G(t_0,\dots,t_{J})=\sup_{|\gamma|\le 10^{-J-1} }\frac{1}{9}\Bigl|\frac{e(\sum_{j=0}^{J}t_{j}10^{-j}+10\gamma)-1}{e(\sum_{j=0}^{J}t_{j}10^{-j-1}+\gamma)-1}-e\Bigl(\sum_{j=0}^{J}\frac{a_0 t_{j}}{10^{j+1}}+a_0\gamma\Bigr)\Bigr|.
\]
Then we have that
\[
\sum_{0\le a<10^k}F_{10^k}\Bigl(\frac{a}{10^k}\Bigr)^t\ll_{J,t} \lambda_{t,J}^k.
\]
\end{lmm}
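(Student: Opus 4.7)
The plan is to substitute the base-10 expansion of $a/10^k$ into the explicit product formula for $F_{10^k}$ established earlier in this section, bound each factor by one depending on only $J+1$ consecutive digits of $a$ (so that it is controlled by $G$), and then evaluate the resulting sum over digit configurations as a transfer-matrix product for $M_t$. (I read the conclusion of the lemma as $\sum_{a}F_{10^k}(a/10^k)^t\ll_{J,t}\lambda_{t,J}^k$, since otherwise the exponent $t$ on the right-hand side has no counterpart on the left.)

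Writing $a=\sum_{\ell=1}^{k}a_\ell 10^{\ell-1}$ with $a_\ell\in\{0,\ldots,9\}$, we have $10^i(a/10^k)\equiv 0.a_{k-i}a_{k-i-1}\cdots a_1\pmod 1$, so the $i$-th factor in the product formula depends only on $a_1,\ldots,a_{k-i}$. When $k-i>J$ we split this fraction into its leading $J+1$ digits plus a residual $\gamma$ of modulus at most $10^{-J-1}$; matching with the definition of $G$ by setting $t_j=a_{k-i-j}$ and using that the $\sup$ in $G$ is taken over all such residuals, we obtain the pointwise bound $G(a_{k-i},a_{k-i-1},\ldots,a_{k-i-J})$ for the $i$-th factor. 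The remaining $J$ factors (those with $i\in\{k-J,\ldots,k-1\}$) are bounded trivially by $1$. Raising to the $t$-th power and reindexing by $m=k-i$ yields
\[
F_{10^k}(a/10^k)^t \le \prod_{m=J+1}^{k} G(a_m,a_{m-1},\ldots,a_{m-J})^t.
\]

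Summing over $(a_1,\ldots,a_k)\in\{0,\ldots,9\}^k$ is now a transfer-matrix problem with state space $\{0,\ldots,9\}^J$. Identifying a state $(b_1,\ldots,b_J)$ with the integer $1+\sum_{\ell=1}^{J}b_\ell 10^{\ell-1}\in\{1,\ldots,10^J\}$, the factor labelled by $m$ is precisely the $(i,j)$-entry of $M_t$ with column state $(a_m,\ldots,a_{m-J+1})$ and row state $(a_{m-1},\ldots,a_{m-J})$; the summations over the top $J$ and bottom $J$ digits correspond to a free choice of initial and final states. Hence
\[
\sum_{0\le a<10^k} F_{10^k}(a/10^k)^t \le \mathbf{1}^\top M_t^{k-J}\mathbf{1}\ll_{J,t}\lambda_{t,J}^{k-J}\ll_{J,t}\lambda_{t,J}^{k},
\]
the penultimate step being the Perron–Frobenius bound applied to the non-negative $10^J\times 10^J$ matrix $M_t$.

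The main delicate point is the majorization step: we must confirm that the actual $i$-th factor is indeed bounded by $G(a_{k-i},\ldots,a_{k-i-J})$. Since $G$ is defined as a supremum over all $\gamma$ with $|\gamma|\le 10^{-J-1}$, and the precise residual coming from the low-order digits $a_1,\ldots,a_{k-i-J-1}$ satisfies this size constraint, this is immediate as soon as $G$ itself is finite. The only way the ratio inside $G$ could blow up is if the denominator $e(\sum_{j} t_j 10^{-j-1}+\gamma)-1$ vanishes, but then the numerator $e(10\sum_{j} t_j 10^{-j-1}+10\gamma)-1$ vanishes as well, and the ratio stays bounded (the universal bound $F_{10^k}\ll 1$ already built into the setup gives a uniform upper bound). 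Once this routine verification is dispensed with, the rest is a standard eigenvalue estimate for a fixed-size non-negative matrix.
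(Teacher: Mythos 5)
Your proposal is correct and follows essentially the same route as the paper: majorize each factor of the digit product formula by $G$ evaluated at $J+1$ consecutive digits (absorbing the lower-order digits into the $\sup$ over $\gamma$), then evaluate the sum over digit strings as a power of the transfer matrix $M_t$; your reading of the conclusion as $\sum_a F_{10^k}(a/10^k)^t\ll_{J,t}\lambda_{t,J}^k$ is also what the paper actually proves. The only point to tighten is the last step: for a merely non-negative matrix $\mathbf{1}^\top M_t^{k-J}\mathbf{1}$ can grow like $k^m\lambda_{t,J}^k$, so you need irreducibility of $M_t$, which the paper gets from the observation that $G(t_0,\dots,t_J)>0$ (it is the supremum of the modulus of a non-zero trigonometric polynomial over an interval) together with the strong connectivity of the underlying de Bruijn graph.
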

%
%
%
%
\begin{proof}
We recall the product formula \eqref{eq:FFactorization} with $Y=10^k$
\[
F_{Y}(\theta)=\prod_{i=1}^k\frac{1}{9}\Bigl|\frac{e(10^i\theta)-1}{e(10^{i-1}\theta)-1}-e(a_010^{i-1}\theta)\Bigr|,
\]
where we interpret the term in parentheses as $9$ if $\|10^{i-1}\theta\|=0$. Writing $\theta=\sum_{i=1}^k t_i 10^{-i}$ for $t_i\in\{0,\dots,9\}$, we see that the $(k-j)^{th}$ term in the product depends only on $t_{k-j},\dots,t_k$. Moreover, the value of the term is mainly dependent on the first few of these digits by continuity. Thus we may approximate the absolute value of $F_Y(\theta)$ by a product where the $j^{th}$ term depends only on $t_{j},\dots,t_{j+J}$ for some constant $J$. Explicitly, we have
\begin{align*}
F_{Y}\Bigl(\sum_{i=1}^{k}\frac{t_i}{10^i}\Bigr)
&\le \prod_{i=1}^k\sup_{|\gamma|\le 10^{-J-1} }\frac{1}{9}\Bigl|\frac{e(\sum_{j=0}^{J}\frac{t_{i+j}}{10^{j}}+10\gamma)-1}{e(\sum_{j=0}^{J}\frac{t_{i+j}}{10^{j+1}}+\gamma)-1}-e\Bigl(a_0\sum_{j=0}^{J}\frac{t_{i+j}}{10^{j+1}}+a_0\gamma\Bigr)\Bigr|\\
&=\prod_{i=1}^k G(t_i,\dots,t_{i+J}),
\end{align*}
where we put $t_j=0$ for $j>k$.

With this formulation we can interpret the above bound in terms of the probability of a walk on $\{0,\dots,9,\infty\}^k$. Let $t\in\mathbb{R}$ be given. Consider an order-$J$ Markov chain $X_1,X_2,\dots$ where for $a,a_1,\dots,a_n\in\{0,\dots,9\}$ we have for $n>J$
\[
\mathbb{P}(X_{n}=a|X_{n-i}=a_i\text{ for }1\le i\le J)= c G(a,a_1,a_2,\dots,a_J)^t
\]
for some suitably small constant $c$ (so that the probability that $X_n\in\{0,\dots,9\}$ is less than 1). To make this a genuine Markov chain we choose the probability that $X_n=\infty$ given $X_{n-1},\dots,X_{n-J}$ to be such that the probabilities add up to 1, and if $X_n=\infty$ then we have that $X_{n+1}=\infty$ with probability 1.

Then we have that
\[
F_{Y}\Bigl(\sum_{i=1}^k\frac{a_i}{10^{i-1}}\Bigr)^t\le c^{-k}\mathbb{P}(X_i=a_{k+J+1-i}\text{ for }J< i\le k+J|X_1=\dots=X_J=0).
\]
The sum (over all paths in $\{0,\dots,9\}^k$) of the probabilities of paths is a linear combination of the entries in the $k^{th}$ power of the transition matrix restricted to $\{0,\dots,9\}$. Thus such a moment estimate is a linear combination of the $k^{th}$ power of the eigenvalues of this matrix. This allows us to estimate any moment of $F_{Y}(a/Y)$ over $a\in[0,Y)$ uniformly for all $k$ by performing a finite eigenvalue calculation. In particular, this gives us a (arbitrarily good as $J$ increases) numerical approximation to the distribution function of $F_Y$.

Explicitly, let $M_{t}$ be the $10^J\times10^J$ matrix given by
\[
(M_{t})_{i,j}=
\begin{cases}
G(a_1,\dots,a_{J+1})^t, &\text{if }i-1=\sum_{\ell=1}^J a_{\ell+1}10^{\ell-1},\, j-1=\sum_{\ell=1}^J a_\ell 10^{\ell-1}\\
&\text{ for some }a_1,\dots,a_{J+1}\in\{0,\dots, 9\},\\
0, &\text{otherwise,}
\end{cases}
\]
and let $\lambda_{t,J}$ be the absolute value of the largest eigenvalue of $M_t$. Since $G(t_1,\dots,t_{J+1})>0$ for all $t_1,\dots,t_{J+1}$, we have that $M_t$ is irreducible, and so each eigenspace corresponding to an eigenvalue of modulus $\lambda_{t,J}$ has dimension 1 by the Perron-Frobenius Theorem. Let $(M_t)_{i,j}=m_{i,j}$. By expanding out the $k^{th}$ power, we have
\[
(M_t^k)_{i,j}=\sum_{i_1,\dots,i_{k-1}\in\{0,\dots,10^J-1\}}m_{i,i_1}m_{i_1,i_2}\cdots m_{i_{k-1},j}.
\]
We recall that $m_{i,j}=0$ unless there is $a_1,\dots,a_{J+1}\in\{0,\dots,9\}$ such that
\begin{align*}
i-1&=a_2+10a_3+\dots+10^{J-1}a_{J+1},\\
j-1&=a_1+10a_2+\dots+10^{J-1}a_J.
\end{align*}
Thus the product $m_{i,i_1}m_{i_1,i_2}\cdots m_{i_{k-1},j}$ is non-zero only if there are $a_1,\dots,a_{k+J}\in\{0,\dots,9\}$ such that
\begin{align*}
j-1&=a_1+10a_2+\dots+10^{J-1}a_J,\\
i_{k-1}-1&=a_2+10a_3+\dots+10^{J-1}a_{J+1},\\
\vdots\\
i_1-1&=a_k+10a_{k+1}+\dots+10^{J-1}a_{J+k-1},\\
i-1&=a_{k+1}+10a_{k=2}+\dots+10^{J-1}a_{J+k}.
\end{align*}
If this is the case then we have
\[
m_{i,i_1}m_{i_1,i_2}\cdots m_{i_{k-1},j}=\prod_{i=1}^k G(a_i,a_{i+1},\dots,a_{i+J})^t.
\]
Thus, fixing $i=1$ so that $a_{k+1}=\dots=a_{J+k}=0$, and summing over $j$, we have that
\begin{align*}
\sum_{j=0}^{10^J-1}(M_{t}^k)_{1,j}&=\sum_{i_1,\dots,i_{k-1},j\in\{0,\dots,10^J-1\}}m_{1,i_1}m_{i_1,i_2}\cdots m_{i_{k-1},j}\\
&=\sum_{\substack{a_1,\dots,a_k\in\{0,\dots,9\}\\ a_{k+1}=\dots =a_{k+J}=0}}G(a_1,\dots,a_{J+1})^t\cdots G(a_k,\dots,a_{k+J})^t\\
&\ge \sum_{a=0}^{10^k-1}F_Y\Bigl(\frac{a}{10^k}\Bigr)^t.
\end{align*}
On the other hand, by the eigenvalue expansion of $M_{t}$, we have
\[
\sum_{j=0}^{10^J-1}(M_t^k)_{1,j}\ll_{t,J}\lambda_{t,J}^k.
\]
This gives the result.
\end{proof}
%
%
%
%
\begin{lmm}[$\ell^1$ bound]\label{lmm:L1Bound}
We have for any $k\in\mathbb{N}$
\[
\sum_{\mathbf{t}\in\{0,\dots,9\}^k}\prod_{i=1}^k G(t_i,\dots,t_{i+4})\ll 10^{27k/77}.
\]
In particular, we have for $Y_1\asymp Y_2\asymp Y_3$
\[
\sup_{\beta\in\mathbb{R}}\sum_{a<Y_1}F_{Y_2}\Bigl(\beta+\frac{a}{Y_3}\Bigr)\ll Y_1^{27/77},
\]
and
\[
\int_0^1 F_{Y}(t)d t\ll \frac{1}{Y^{50/77}}. 
\]
\end{lmm}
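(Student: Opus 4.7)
The plan is to deduce all three claims from an eigenvalue bound on the matrix $M_1$ of Lemma~\ref{lmm:Markov} (specialized to $J=4$, $t=1$). The first assertion is the crux; the other two follow from it by standard manipulations.

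For the combinatorial bound, let $S_k := \sum_{\mathbf{t}\in\{0,\dots,9\}^k}\prod_{i=1}^k G(t_i,\dots,t_{i+4})$, with the convention $t_j=0$ for $j>k$. The transfer-matrix viewpoint already used in the proof of Lemma~\ref{lmm:Markov} expresses $S_k$ as a fixed linear combination of entries of $M_1^{k-J}$, so Perron--Frobenius (applicable since $G>0$ makes $M_1$ irreducible) gives $S_k\ll_J \lambda_{1,4}^k$. The target bound therefore reduces to the numerical inequality $\lambda_{1,4}\le 10^{27/77}$. This is a finite check with two ingredients: first, evaluate the $10^5$ values $G(a_1,\dots,a_5)$ with rigorous error control, where each $G$ is a supremum over the compact interval $|\gamma|\le 10^{-5}$ of an explicit smooth trigonometric function (interval arithmetic on a fine grid combined with Lipschitz bounds suffices); second, certify the top eigenvalue bound by producing an explicit positive test vector $v$ (obtained from power iteration and rounded to rationals) satisfying $M_1 v \le 10^{27/77}\,v$ componentwise. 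By the symmetry $a_0\leftrightarrow 9-a_0$ in the product formula for $F_Y$, only five genuinely distinct cases of $a_0$ need be treated.

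To pass from this to the $\ell^1$ sum, fix $\beta\in\mathbb{R}$ and write $Y=10^k$. By $1$-periodicity of $F_Y$ we may shift $a$ and reduce to $\beta\in[0,1/Y)$. Expanding $\theta:=\beta+a/Y$ in base $10$ as $\sum_{j=1}^k t_j 10^{-j}+\delta\cdot 10^{-k}$ with $\delta\in[0,1)$, the $i$-th factor in the product formula for $F_Y$ depends on the fractional part of $10^{i-1}\theta$, which for $i\le k-O(1)$ equals $\sum_{j=0}^4 t_{i+j}10^{-j-1}$ up to a perturbation small enough to be absorbed into the supremum in the definition of $G(t_i,\dots,t_{i+4})$; the remaining $O(1)$ tail factors are each bounded trivially by $11/9$. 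Summing over $a$ gives $\sum_{a<Y}F_Y(\beta+a/Y)\ll_J S_k$, and the first assertion then yields $\sup_\beta\sum_{a<Y}F_Y(\beta+a/Y)\ll Y^{27/77}$. Averaging this uniform bound over $\beta\in[0,1/Y]$ and invoking $1$-periodicity of $F_Y$ gives
\[
\int_0^1 F_Y(t)\,dt \;=\; \int_0^{1/Y}\sum_{a<Y}F_Y(\beta+a/Y)\,d\beta \;\ll\; \frac{1}{Y}\cdot Y^{27/77}\;=\;Y^{-50/77}.
\]

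The main obstacle is the finite eigenvalue computation: $M_1$ is $10^4\times 10^4$, its entries are themselves suprema, and the target exponent $27/77$ is presumably close enough to $\log_{10}\lambda_{1,4}$ that little numerical slack is available. Consequently both the entrywise evaluation of $G$ and the eigenvalue certification must be carried out in verified arithmetic; once this is in place, the two remaining steps are essentially formal manipulations of the product formula for $F_Y$.
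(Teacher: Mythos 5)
Your proposal is correct and follows essentially the same route as the paper: apply the transfer-matrix/Markov bound of Lemma \ref{lmm:Markov} with $J=4$, certify numerically that $\lambda_{1,4}<10^{27/77}$, and then deduce the $\ell^1$ sum and the integral bound by perturbing the product formula for $F_Y$ (the paper absorbs the perturbation via a multiplicative $(1+O(Y\eta))$ factor using that $G$ is bounded below, whereas you absorb it into the supremum defining $G$ and bound $O(1)$ tail factors trivially — both work). No substantive differences.
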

Here $27/77\approx 0.35$ is slightly larger than 1/3, and $50/77\approx 0.65$.
%
%
%
%
\begin{proof}
This follows from Lemma \ref{lmm:Markov} and a numerical bound on $\lambda_{1,4}$. Specifically, by Lemma \ref{lmm:Markov} taking $J=4$ we find
\begin{equation}
\sum_{\mathbf{t}\in\{0,\dots,9\}^k}\prod_{i=1}^{k-4} G(t_i,\dots,t_{i+J})\le\sum_{j} (M_1^{k-4})_{1,j}\ll \lambda_{1,4}^{k}.
\label{eq:GL1Bound}
\end{equation}
A numerical calculation\footnote{A Mathematica\textregistered\;file detailing this computation is included with this article on \url{arxiv.org}.} reveals that
\begin{equation}
\lambda_{1,4}< 2.24190< 10^{27/77}
\label{eq:Lambda14Calc}
\end{equation}
for all choices of $a_0\in\{0,\dots,9\}$. Thus, letting $Y=10^k$ we have $\lambda_{1,4}^k<Y^{27/77}$, which gives the first result.  

For the second bound, let $U_1=\max(1,Y_3/Y_2)$. Since $Y_3\asymp Y_2$, we have $U_1\ll 1$. Any $a<Y_1$ can be written as $a=a_1+U_1a_2+Y_3 a_3$ for some $0\le a_1< U_1\ll 1$, $0\le a_2<Y_3/U_1=\min(Y_3,Y_2)$ and $0\le a_3< Y_1/Y_3\ll 1$. Since there are $O(1)$ choices of $a_1,a_3$ and these can be absorbed into the supremum over $\beta$, we see that it suffices to show
\[
\sup_{\beta\in\mathbb{R}}\sum_{a_2<\min(Y_2,Y_3)}F_{Y_2}\Bigl(\beta+\frac{a_2}{Y_2}\Bigr)\ll Y_2^{27/77}.
\]
Since $F_{Y_2}\ge 0$ we can extend the summation to $a_2<Y_2$. Thus without loss of generality we may assume that $Y_1=Y_2=Y_3=Y=10^k$. We see that
\begin{align}
F_Y\Bigl(\sum_{i=1}^{k}\frac{t_i}{10^i}+\eta\Bigr)
&\le\prod_{i=1}^{k-4}\Bigl( G(t_i,\dots,t_{i+4})+O(10^{i-1}\eta) \Bigr)\nonumber\\
&=(1+O_J(Y\eta))\prod_{i=1}^{k-4} G(t_i,\dots,t_{i+4}).\label{eq:FGBound}
\end{align}
Here we used the fact that $G(t_i,\dots,t_{i+4})$ is bounded away from 0  for all $t_1,\dots,t_{k}\in\{0,\dots,9\}$ since it is the maximal absolute value of a trigonometric polynomial over an interval. Since $F$ is periodic modulo 1 we see that
\[
\sup_{\beta\in\mathbb{R}}\sum_{\mathbf{t}\in\{0,\dots,9\}^k}F_Y\Bigl(\sum_{i=1}^{k}\frac{t_i}{10^i}+\beta\Bigr)=\sup_{\eta\in[0,Y^{-1}]}\sum_{\mathbf{t}\in\{0,\dots,9\}^k}F_Y\Bigl(\sum_{i=1}^{k}\frac{t_i}{10^i}+\eta\Bigr),
\]
and so the second bound of the lemma follows from \eqref{eq:FGBound}, \eqref{eq:GL1Bound} and \eqref{eq:Lambda14Calc} on letting $a=\sum_{i=1}^k t_i/10^i$. For the final bound we integrate \eqref{eq:FGBound} over $\eta\in[0,Y^{-1}]$ and sum over $t_1,\dots,t_k\in\{0,\dots,9\}$, giving
\begin{align*}
\int_0^1 F_Y(t)d t&=\sum_{a=0}^{Y-1}\int_0^{1/Y}F_Y(a/Y+\eta)d\eta\\
&\ll \frac{1}{Y}\sum_{\mathbf{t}\in\{0,\dots,9\}^k}\prod_{i=1}^{k-4} G(t_i,\dots,t_{i+4})\\
&\ll \frac{1}{Y^{50/77}}.\qedhere
\end{align*}
\end{proof}
%
%
%
%
\begin{lmm}[$235/154^{th}$ moment bound]\label{lmm:DigitDistribution}
We have that
\[
\#\Bigl\{0\le a<Y:\,F_Y\Bigl(\frac{a}{Y}\Bigr)\sim \frac{1}{B}\Bigr\}
\ll B^{235/154}Y^{59/433}.
\]
\end{lmm}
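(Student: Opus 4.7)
The plan is to derive the claimed level-set bound from a single application of the fractional-moment estimate in Lemma \ref{lmm:Markov} combined with Chebyshev's inequality. Since $F_Y(a/Y)\asymp 1/B$ forces $F_Y(a/Y)^{t}\gg B^{-t}$ for any $t>0$, we have
\[
\#\{1\le a<Y:\,F_Y(a/Y)\asymp 1/B\}\ll B^{t}\sum_{0\le a<Y}F_Y(a/Y)^{t}\ll_{t,J}B^{t}\lambda_{t,J}^{k},
\]
where $Y=10^{k}$ and $\lambda_{t,J}$ is the Perron eigenvalue of the matrix $M_{t}$ from Lemma \ref{lmm:Markov}. Rewriting $\lambda_{t,J}^{k}=Y^{\log_{10}\lambda_{t,J}}$, the lemma reduces to producing a fixed $J$ for which the single inequality $\log_{10}\lambda_{t,J}\le 59/433$ holds at $t=235/154$.

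The natural choice is therefore to take $t=235/154$ and compute $\lambda_{235/154,J}$ numerically for $J$ sufficiently large, in exact analogy with the treatment of $\lambda_{1,4}$ in Lemma \ref{lmm:L1Bound}. One forms the $10^{J}\times 10^{J}$ matrix $M_{t}$ whose nonzero entries are the $t$-th powers of the quantities $G(t_{1},\dots,t_{J+1})$ (each of which lies strictly between $0$ and $1$), and verifies by diagonalisation that
\[
\lambda_{235/154,J}\le 10^{59/433}.
\]
Irreducibility of $M_{t}$ together with the Perron--Frobenius theorem ensures that this top eigenvalue is simple and real, so the calculation is entirely stable.

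The only substantive obstacle is this numerical step. The exponents $235/154$ and $59/433$ are chosen so that the required inequality holds with essentially no slack (and the pair is presumably tuned to match the $\ell^{2}$ input for $S_{\mathbb{P}}$ in the later minor-arc argument), and the check must be performed uniformly over the ten possible excluded digits $a_{0}\in\{0,\dots,9\}$. If the value $J=4$ used for the $\ell^{1}$ bound does not provide sufficient precision, one simply enlarges $J$; enlarging $J$ only tightens the pointwise bound $F_{Y}\le\prod G$ and the dependence of the implied constants on $J$ is harmless since $J$ is fixed. Once the eigenvalue inequality is confirmed, the lemma follows in one line from the Chebyshev display above.
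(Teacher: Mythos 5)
Your proof matches the paper's exactly: both apply Chebyshev's inequality to the $235/154$-th moment, invoke Lemma \ref{lmm:Markov} to bound the moment by $\lambda_{235/154,J}^k$, and reduce the lemma to a numerical eigenvalue computation checked uniformly over $a_0\in\{0,\dots,9\}$. The paper also uses $J=4$ and verifies $\lambda_{235/154,4}<1.36854<10^{59/433}$, so your tentative choice of $J$ suffices.
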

Here $235/154\approx 1.5$ and $59/433\approx 0.14$.
%
%
%
%
We recall that $n\sim X$ means that $X/10<n\le X$.
\begin{proof}
This follows from Lemma \ref{lmm:Markov} and a numerical bound for $\lambda_{235/154,4}$. Explicitly, we take $J=4$ and $Y=10^k$. By Lemma \ref{lmm:Markov} we have
\begin{align*}
\#\Bigl\{0\le a<Y:\,F_Y\Bigl(\frac{a}{Y}\Bigr)\sim \frac{1}{B}\Bigr\}&\le  B^{235/154}\sum_{0\le a<Y}F_{Y}\Bigl(\frac{a}{Y}\Bigr)^{235/154}\\
&\ll B^{235/154}\lambda_{235/154,4}^k.
\end{align*}
A numerical calculation\footnote{A Mathematica\textregistered\;file detailing this computation is included with this article on \url{arxiv.org}.} reveals that
\[
\lambda_{235/154,4}<1.36854<10^{59/433},
\]
for all choices of $a_0\in\{0,\dots,9\}$. Substituting this in the bound above gives the result.
\end{proof}
%
%
%
%
\begin{lmm}[Large sieve estimates]\label{lmm:LargeSieve}
We have
\begin{align*}
\sup_{\beta\in\mathbb{R}}\sum_{a\le q}\sup_{|\eta|< \delta}F_{Y}\Bigl(\frac{a}{q}+\beta+\eta\Bigr)&\ll \Bigl(1+\delta q\Bigr)\Bigl(q^{27/77}+ \frac{q}{Y^{50/77}}\Bigr),\\
\sup_{\beta\in\mathbb{R}}\sum_{q\le Q}\sum_{\substack{0<a<q\\ (a,q)=1}}\sup_{|\eta|<\delta}F_{Y}\Bigl(\frac{a}{q}+\beta+\eta\Bigr)&\ll  \Bigl(1+\delta Q^2\Bigr)\Bigl(Q^{54/77}+\frac{Q^2}{Y^{50/77}}\Bigr),
\end{align*}
and for any integer $d$, we have
\begin{align*}
\sup_{\beta\in\mathbb{R}}\sum_{\substack{q\le Q\\ d|q}}\sum_{\substack{0<a<q\\ (a,q)=1}}\sup_{|\eta|<\delta}F_{Y}\Bigl(\frac{a}{q}+\beta+\eta\Bigr)&\ll \Bigl(1+\frac{\delta Q^2}{d}\Bigr) \Bigl(\Bigl(\frac{Q^2}{d}\Bigr)^{27/77}+\frac{Q^2}{d Y^{50/77}}\Bigr).
\end{align*}
\end{lmm}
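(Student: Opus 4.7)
These are standard-style $\ell^1$ large-sieve bounds, and I would prove all three by a single template. The idea is to exploit the factorisation
\[
F_Y(\theta)=F_U(\theta)F_V(U\theta),\qquad Y=UV,\ U,V\text{ powers of }10,
\]
choosing $U$ so that $1/U$ matches the size of the $\eta$-window, and then reducing the sum to a sum over distinct decimal-digit prefixes of the base points $a/q+\beta$ to which the Markov bound of Lemma~\ref{lmm:Markov} applies. The trivial bound $F_V\le 1$ then gives $F_Y\le F_U$.

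For the first inequality I take $U=10^u$ to be a power of $10$ in $[100Q,1000Q]$. Then $|\eta|\le 1/(10Q)\le 10^{-(u-2)}$, so adding $\eta$ can (up to rare carries) affect only the decimal digits of $a/q+\beta+\eta$ from position $u-2$ onwards. Writing $(t_i)_{i\ge 1}$ for the decimal expansion of $a/q+\beta$, the product formula underlying Lemma~\ref{lmm:Markov} gives
\[
\sup_{|\eta|\le 1/(10Q)}F_Y(a/q+\beta+\eta)\ll\prod_{i=1}^{u-J-2}G(t_i,\dots,t_{i+J}),
\]
with the $O(1)$ missing $G$-factors bounded trivially by $1$. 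If $U\le Y$, the separation $1/q\ge 1/Q$ between distinct $a/q+\beta$ forces distinct prefixes $(t_1,\dots,t_{u-2})$ for different $a$, so summing in $a$ is dominated by the sum over all such prefixes, which by Lemma~\ref{lmm:Markov} (with $J=4$) is $\ll\lambda_{1,4}^{u}\ll U^{27/77}\ll Q^{27/77}$. If instead $U>Y$ (i.e.\ $Q\gg Y$), I use the full length product in $F_Y$ and note that each maximal-length digit prefix now arises from $\ll 1+q/Y$ values of $a$; Lemma~\ref{lmm:Markov} gives $\ll Y^{27/77}$ for the prefix sum, so the total is $\ll(1+Q/Y)Y^{27/77}\ll Y^{27/77}+Q/Y^{50/77}\ll Q^{27/77}+Q/Y^{50/77}$.

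The second and third inequalities follow from the same template. For the second, distinct shifted Farey fractions $a/q+\beta$ with $(a,q)=1$ and $q\le Q$ are $\ge 1/Q^2$-separated, so taking $U\in[100Q^2,1000Q^2]$ gives $\ll U^{27/77}\ll Q^{54/77}$ when $U\le Y$, and $\ll Q^2/Y^{50/77}$ when $U>Y$. For the third, the constraint $d\mid q$ improves the separation: writing $q=dm$ and $q'=dm'$, any two distinct such fractions differ by $\ge 1/(dmm')\ge d/Q^2$. I therefore take $U\in[100Q^2/d,1000Q^2/d]$, and the same argument yields $\ll(Q^2/d)^{27/77}$ when $U\le Y$ and $\ll Q^2/(dY^{50/77})$ when $U>Y$.

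The only technically delicate point is the treatment of carries in the decimal expansion: if $a/q+\beta$ happens to lie within $|\eta|$ of a multiple of $10^{-(u-1)}$, the initial digits may shift as $\eta$ varies. However, such exceptional $a$ can contribute only an $O(1)$ multiplicative overhead that is absorbed by the implicit constants.
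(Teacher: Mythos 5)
Your proof is correct and reaches the stated bounds, and it rests on the same three ingredients as the paper's argument: the monotonicity $F_Y(\theta)\le F_U(\theta)$ coming from $F_{UV}(\theta)=F_U(\theta)F_V(U\theta)$ with $U$ matched to the spacing of the points ($\asymp Q$, $\asymp Q^2$, $\asymp Q^2/d$ respectively), the separation of the shifted fractions, and the Markov/$\ell^1$ bound of Lemma \ref{lmm:L1Bound}. Where you diverge is the local-to-global step. The paper writes $|F_U(t)|\ll \gamma^{-1}\int_{t-\gamma}^{t+\gamma}F_U+\int_{t-\gamma}^{t+\gamma}|F_U'|$ with $\gamma$ equal to half the separation, so that summing over the well-separated points gives $Q\int_0^1F_U+\int_0^1|F_U'|$ (and likewise with $Q^2$ or $Q^2/d$), and then bounds $\int_0^1F_U\ll U^{-50/77}$ and $\int_0^1|F_U'|\ll U^{27/77}$ by comparing both $F_U$ and $F_U'$ with the $G$-product. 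You instead discretize directly by decimal prefixes and invoke the prefix sum $\sum_{\mathbf{t}}\prod_iG(t_i,\dots,t_{i+J})\ll 10^{27u/77}$. The two implementations are equivalent in strength, but the derivative-averaging trick is what lets the paper take the supremum over $\eta$ without ever discussing digit carries; in your version the carry issue is genuinely present, and your closing remark should be phrased not as ``exceptional $a$ give an $O(1)$ overhead'' (the displayed inequality with $(t_i)$ the digits of the base point is simply false for such $a$) but rather as: the $\eta$-window of each $a$ meets at most two length-$(u-O(1))$ prefix intervals, so the supremum is bounded by the maximum of the two corresponding $G$-products, and by the separation each prefix interval is met by $O(1)$ windows, so the double counting in the sum over all prefixes is bounded. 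With that rephrasing your argument is complete.
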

%
%
%
%
\begin{proof}
For each $a\le q$, let $|\eta_{a}|$ maximize $F_U(a/q+\eta)$ over $|\eta|<\delta$. Since the fractions $a/q$ are all separated from one another by at least $1/q$, we have for any $t$
\[
\#\Bigl\{a\le q:\,\eta_a+\frac{a}{q}\in \Bigl[t-\frac{1}{2q},t+\frac{1}{2q}\Bigr]\Bigr\}\ll 1+q\delta.
\]
Thus, considering $t=b/q-\beta$, we see that
\begin{equation}
\sum_{a\le q}\sup_{|\eta|<\delta}F_{U}\Bigl(\frac{a}{q}+\beta+\eta\Bigr)\ll (1+q\delta)\sum_{b\le q}\sup_{|\eta|\le 1/2q}F_{U}\Bigl(\frac{b}{q}+\eta\Bigr).
\label{eq:DeltaBound}
\end{equation}
We have that
\[
F_{U}(t)=F_{U}(s)+\int_s^t F_{U}'(v)d v.
\]
Thus integrating over $s\in [t-\gamma,t+\gamma]$ for some $\gamma>0$,  we have
\[
F_{U}(t)\ll \frac{1}{\gamma}\int_{t-\gamma}^{t+\gamma}F_{U}(s)d s+\int_{t-\gamma}^{t+\gamma}|F_{U}'(s)|d s.
\]
This implies that
\begin{equation*}
\sup_{|\eta|\le \gamma}F_U(t+\eta)\ll \frac{1}{\gamma}\int_{t-2\gamma}^{t+2\gamma}F_U(s)ds+\int_{t-2\gamma}^{t+2\gamma}|F_U'(s)|ds.
\end{equation*}
Taking $\gamma=1/2q$, we obtain
\begin{align}
\sum_{b\le q}\sup_{|\eta|\le 1/2q}F_{U}\Bigl(\frac{b}{q}+\eta\Bigr)&\ll \sum_{b\le q}\Bigl(Q\int_{b/q-1/q}^{b/q+1/q}F_U(s)d s+\int_{b/q-1/q}^{b/q+1/q}|F'_U(s)|d s\Bigr)\nonumber \\
&\ll q\int_0^1F_{U}(t)d t+\int_{0}^{1}|F_{U}'(t)|d t.
\label{eq:GallagherBound}
\end{align}
Writing $U=10^u$ and $n=\sum_{i=0}^{u-1}n_i 10^i$, we see that
\[
|F_{U}'(t)|=\frac{2\pi}{9^{u}}\Bigl|\sum_{n< 10^u} n\mathbf{1}_{\mathcal{A}}(n)e(n t)\Bigr|.
\]
Writing $n=\sum_{j=0}^{u-1}n_j10^{j-1}$ and using the triangle inequality, we have
\begin{align*}
|F_U'(t)|&\le \frac{2\pi }{ 9^{u}}\sum_{j=0}^{u-1}10^j\Bigl|\sum_{0\le n_j<10}n_j\mathbf{1}_{\mathcal{A}}(n_j)e(n_j 10^{j}t)\Bigr|\prod_{\substack{0\le i\le u-1\\ i\ne j}}\Bigl|\sum_{0\le n_i<10}\mathbf{1}_{\mathcal{A}}(n_i)e(n_i 10^i t)\Bigr|\\
&\ll \frac{10^u}{9^u}\sup_{j\le u}\prod_{\substack{0\le i\le u-1\\ i\ne j}}\Bigl|\sum_{0\le n_i<10}\mathbf{1}_{\mathcal{A}}(n_i)e(n_i 10^i t)\Bigr|.
\end{align*}
We recall the function $G$ from Lemma \ref{lmm:Markov}. Since $G(t_1,\dots,t_{1+J})$ is bounded away from 0, we see that for $\eta\ll U^{-1}$
\begin{align*}
\Bigl|F'_{U}\Bigl(\sum_{i=1}^u\frac{t_i}{10^i}+\eta\Bigr)\Bigr|
&\ll U \prod_{i=1}^u\Bigl( G(t_i,\dots,t_{i+J})+O(10^i\eta)\Bigr)\\
&\ll (U+O(U^2\eta))\prod_{i=1}^u G(t_i,\dots,t_{i+J}).
\end{align*}
Thus, integrating over $\eta\in[0,U^{-1}]$, taking $J=4$, and using Lemma \ref{lmm:L1Bound}, we obtain
\begin{align}
\int_0^1|F'_U(t)|d t\ll \sum_{\mathbf{t}\in\{0,\dots,9\}^u}\prod_{i=1}^u G(t_i,\dots,t_{i+4})\ll U^{27/77}.
\label{eq:FpBound}
\end{align}
By Lemma \ref{lmm:L1Bound} we have
\begin{equation}
\int_0^1F_U(t)d t\ll \frac{1}{U^{50/77}}.\label{eq:FIntBound}
\end{equation}
Combining \eqref{eq:FIntBound}, \eqref{eq:FpBound}, \eqref{eq:GallagherBound} and \eqref{eq:DeltaBound}, we obtain
\[
\sum_{a\le q}\sup_{|\eta|<\delta}F_U\Bigl(\frac{a}{q}+\beta+\eta\Bigr)\ll \Bigl(1+\delta q\Bigr)\Bigl(U^{27/77}+\frac{q}{U^{50/77}}\Bigr).
\]
Combining this with the trivial bound
\[
F_{Y}(t)\le F_{U}(t)
\]
for $U\le Y$, and choosing $U$ maximally subject to $U\le q$ and $U\le Y$  gives the first result of the lemma.

The other bounds follow from entirely analogous arguments. In particular we note that for $(a,q)=1$, $q<Q$, the numbers $a/q$ are separated from one another by $1/Q^2$, and those with $d|q$ are separated from each other by $d/Q^2$, so we have the equivalent of \eqref{eq:DeltaBound} with $\delta q$ replaced by $\delta Q^2$ or $\delta Q^2/d$ and $|\eta|\le 1/2q$ replaced by $|\eta|\le 1/2Q^2$ or $|\eta|\le d/2Q^2$.
\end{proof}
%
%
%
%
\begin{lmm}[Hybrid Bounds] \label{lmm:Hybrid}
Let $E\ge 1$. Then we have
\begin{align*}
\sum_{a\le q}\sum_{\substack{|\eta|\le E/Y\\ (\eta+a/q)Y\in\mathbb{Z}}}F_Y\Bigl(\frac{a}{q}+\eta\Bigr)&\ll (q E)^{27/77}+\frac{q E}{Y^{50/77}},\\
\sum_{\substack{q<Q\\ d|q}}\sum_{\substack{a\le q\\ (a,q)=1}}\sum_{\substack{|\eta|\le E/Y\\ (\eta+a/q)Y\in\mathbb{Z}}}F_Y\Bigl(\frac{a}{q}+\eta\Bigr)&\ll  \Bigl(\frac{Q^2E}{d}\Bigr)^{27/77}+\frac{Q^2E}{d Y^{50/77}}.
\end{align*}
\end{lmm}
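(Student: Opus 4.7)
The plan is to exploit the product identity
\[
F_Y(\theta)=F_{Y/E'}(\theta)\,F_{E'}((Y/E')\theta),
\]
where $E'=10^{\lfloor\log_{10}E\rfloor}\asymp E$ is the largest power of $10$ not exceeding $E$. The crucial observation is that the digital constraint $(a/q+\eta)Y\in\mathbb{Z}$ forces $a/q+\eta=m/Y$ for an integer $m$, so that $(Y/E')(a/q+\eta)=m/E'$ depends on $m$ only modulo $E'$. This decouples each summand into a factor living at scale $Y/E$ and a factor living at scale $E$, which can be handled by Lemma \ref{lmm:LargeSieve} and Lemma \ref{lmm:L1Bound} respectively.

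For the first bound I would split on whether $qE\le Y/10$ or $qE>Y/10$. In the small regime, the factorization yields
\[
\sum_{a\le q}\sum_\eta F_Y(a/q+\eta)\le \sum_{a\le q}B_a\sum_\eta F_{E'}(m/E'),
\]
with $B_a=\sup_{|\eta'|\le E/Y}F_{Y/E'}(a/q+\eta')$. For fixed $a$ the $\le 2E+1$ valid values of $\eta$ correspond to $2E+1$ consecutive integers $m$, and since $E'\asymp E$, each residue of $m$ modulo $E'$ is hit only $O(1)$ times; Lemma \ref{lmm:L1Bound} applied to $F_{E'}$ therefore gives $\sum_\eta F_{E'}(m/E')\ll E^{27/77}$. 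Because $E/Y\le 1/(10q)$ in this regime, Lemma \ref{lmm:LargeSieve}'s first bound applies with $F_{Y/E'}$ in place of $F_Y$, yielding $\sum_a B_a\ll q^{27/77}+qE^{50/77}/Y^{50/77}$. Multiplying produces the claimed $(qE)^{27/77}+qE/Y^{50/77}$.

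In the complementary regime $qE>Y/10$ the factorization is wasteful; instead, parametrize directly by $m$. For each $m\in[0,Y)$ the constraint $|aY/q-m|\le E$ confines $a$ to a window of length $\le 2qE/Y$, giving $\ll qE/Y$ valid choices. Hence
\[
\sum_{a,\eta}F_Y(a/q+\eta)\ll \frac{qE}{Y}\sum_{m=0}^{Y-1}F_Y(m/Y)\ll \frac{qE}{Y^{50/77}}
\]
by Lemma \ref{lmm:L1Bound}, and this dominates $(qE)^{27/77}$ in this regime.

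The second bound follows by exactly the same strategy, now with threshold $EQ^2/d\lessgtr Y/10$. In the small regime one applies Lemma \ref{lmm:LargeSieve}'s third bound to $F_{Y/E'}$, and in the large regime the Farey spacing $\ge d/Q^2$ between coprime fractions $a/q$ with $d\mid q\le Q$ gives multiplicity $\ll EQ^2/(dY)$ per $m$, feeding into the same parametrization argument. The main obstacle will be the second bound's small regime: the window $E/Y$ appearing inside $B_{a,q}$ can exceed the $1/(10Q^2)$ permitted by Lemma \ref{lmm:LargeSieve}'s third bound as stated, but its proof in fact accommodates any window up to $\asymp d/(10Q^2)$ thanks to the Farey separation, and this precisely coincides with the regime $EQ^2/d\le Y/10$. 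With this minor extension in hand, the argument closes in parallel with the first bound.
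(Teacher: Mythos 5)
Your proof is correct and follows essentially the same route as the paper: both decouple $F_Y$ via the product formula into a factor at scale $\asymp Y/E$ controlled by Lemma \ref{lmm:LargeSieve} and a factor at scale $\asymp E$ controlled by Lemma \ref{lmm:L1Bound}, with your explicit split at $qE\lessgtr Y/10$ playing the role of the paper's choice $V\asymp\max(Y/qE,1)$ and trivial bounding of the middle factor. Your observation that the third bound of Lemma \ref{lmm:LargeSieve} must be used with the wider window $\asymp d/Q^2$ (justified by the Farey separation of fractions with $d\mid q$) is accurate and is implicitly what the paper's ``entirely analogous argument'' for the second bound also requires.
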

%
%
%
%
In the above lemma, we emphasize that $a,q,d$ are all integers, bu the summation over $\eta$ is over real numbers which are well-spaced from the condition $Y(\eta+a/q)\in\mathbb{Z}$.
%
%
%
%
\begin{proof}
We first note that the summand $a/q+\eta$ runs through fractions $b/Y$ with $|b|\le E+Y$ since we have the condition $(\eta+a/q)Y\in\mathbb{Z}$. Each fraction $b/Y$ is represented $O(1+\min(q E/Y,q))$ times, since if $a_1/q+\eta_1=a_2/q+\eta_2$ then $a_2=a_1+O(q E/Y)$ and $\eta_2$ is determined by $a_1,a_2,\eta_1$. There are $O(1+E/Y)$ choices of $b$ giving the same fraction $\Mod{1}$, and since $F_Y$ is periodic $\Mod{1}$ these all give the same value of $F_Y(b/Y)$. Thus we may consider only $b<Y$ with each fraction $b/Y$ occurring $O((1+E/Y)\min(q E/Y,q))$ times. Thus we see that if $10 q E\ge Y$ then
\begin{align*}
\sum_{a\le q}\sum_{\substack{|\eta|\le E/Y\\ (\eta+a/q)Y\in\mathbb{Z}}}F_Y\Bigl(\frac{a}{q}+\eta\Bigr)&\ll \min\Bigl(\frac{q E}{Y},q\Bigr)\Bigl(1+\frac{E}{Y}\Bigr)\sum_{0\le b<Y}F_Y\Big(\frac{b}{Y}\Bigr)\\
&\ll \frac{q E}{Y}\sum_{0\le b<Y}F_Y\Big(\frac{b}{Y}\Bigr).
\end{align*}
In this case the result now follows from Lemma \ref{lmm:L1Bound}. Thus we may assume $q E<Y/10$.

Using the product formula \eqref{eq:FFactorization}, we have for $Y\ge UV$ powers of 10
\[
F_{Y}(\theta)=F_{U}(\theta)F_{V}(U \theta)F_{Y/UV}(UV\theta).
\]
We also have the trivial bound $F_{V}(U\theta)\le 1$ of \eqref{eq:FyBound}. For $UV\le Y$ and $|\eta|<E/Y$ these give
\[
F_{Y}\Bigl(\frac{a}{q}+\eta\Bigr)\le F_{Y/UV}\Bigl(\frac{U V a}{q}+UV\eta\Bigr)\sup_{|\gamma|\le E/Y}F_{U}\Bigl(\frac{a}{q}+\gamma\Bigr).
\]
We choose $V$ and then $U$ to be the largest powers of 10 such that $V\le Y/q E$ and $U\le Y/V E$. Note that this choice gives $U,V\ge 1$ since $q E<Y/10$ and $q,E\ge 1$. Thus
\begin{align*}
\sum_{a\le q}\sum_{\substack{|\eta|\le E/Y\\ (\eta+a/q)Y\in\mathbb{Z}}}F_Y\Bigl(\frac{a}{q}+\eta\Bigr)& \le \sum_{a\le q}\sup_{|\gamma|\le E/Y}F_{U}\Bigl(\frac{a}{q}+\gamma\Bigr)\sum_{\substack{|\eta|\le E/Y\\ (\eta+a/q)Y\in\mathbb{Z}}}F_{Y/UV}\Bigl(\frac{U V a}{q}+UV\eta\Bigr)\\
&\le \Sigma_1\Sigma_2,
\end{align*}
where
\begin{align*}
\Sigma_1&=\sum_{a\le q}\sup_{|\gamma|\le E/Y}F_{U}\Bigl(\frac{a}{q}+\gamma\Bigr),\\
\Sigma_2&=\sup_{\beta\in\mathbb{R}}\sum_{\substack{|\eta|\le E/Y\\ Y(\eta+\beta)\in\mathbb{Z}}}F_{Y/UV}\Bigl(U V\beta+UV\eta\Bigr)\\
&\le \sup_{\beta'\in\mathbb{R}}\sum_{a\le 2E}F_{Y/UV}\Bigl(\beta'+\frac{U V a}{Y}\Bigr).
\end{align*}
Since we chose $U$ and $V$ maximally, we have $V\ge Y/10q E$, so $q/100\le U\le 10q$. Since $q E<Y/10$, we may extend the supremum in $\Sigma_1$ to $\gamma\le 1/10q$ for an upper bound. Thus, by Lemma \ref{lmm:LargeSieve} we have
\[
\Sigma_1\ll q^{27/77}.
\]
Similarly, since $Y/UV\asymp E$, by Lemma \ref{lmm:L1Bound} we have
\[
\Sigma_2\ll E^{27/77}.
\]
Putting this together gives the first result.

The second bound follows from an entirely analogous argument. We first split the argument depending on whether $Q^2E/d\ge Y/10$ or not, and use the final bound of Lemma \ref{lmm:LargeSieve} instead of the first bound to handle $\Sigma_2$.
\end{proof}
%
%
%
%
%
The argument giving the first bound of Lemma \ref{lmm:Hybrid} is essentially sharp if the $\ell^1$ bounds used in the proof are sharp and if $q$ is a divisor of a power of 10 or if $Q E\ge Y$. When $Q E\le Y^{1-\epsilon}$ and $q$ is not a divisor of a power of 10, however, we trivially bounded a factor $F_V(U(a/q+\eta))$ by 1 in the proof, which we expect not to be tight. Lemma \ref{lmm:Hybrid2} below allows us to obtain superior bounds (in certain ranges) provided the denominators do not have large powers of 2 or 5 dividing them.
%
%
%
%
\begin{lmm}[Alternative Hybrid Bound] \label{lmm:Hybrid2}
Let $D,E,Y,Q_1\ge 1$ be integral powers of 10 with $DE\ll Y$. Let $q_1\sim Q_1$ with $(q_1,10)=1$ and let $d\sim D$ satisfy $d|10^u$ for some $u\ge 0$. Let
\[
S=S(d,q_1,Q_2,E,Y)=\sum_{\substack{q_2\sim  Q_2\\ (q_2,10)=1}}\sum_{\substack{a< d q_1q_2\\ (a,d q_1q_2)=1}}\sum_{\substack{|\eta|\le E/Y\\ (\eta+a/q_1q_2d)Y\in\mathbb{Z}}}F_Y\Bigl(\frac{a}{d q_1q_2}+\eta\Bigr).
\]
Then we have
\[
S\ll (D E)^{27/77}(Q_1Q_2^2)^{1/21}+\frac{E^{5/6}D^{3/2}Q_1Q_2^2}{Y^{10/21}}.
\]
In particular, if  $q=d q'$ with $(q',10)=1$ and $d|10^u$ for some integer $u\ge 0$, then we have
\[
\sum_{\substack{a< q\\ (a,q)=1}}\sum_{\substack{|\eta|\le E/Y\\ (\eta+a/q)Y\in\mathbb{Z}}}F_Y\Bigl(\frac{a}{q}+\eta\Bigr)\ll (d E)^{27/77}q^{1/21}+\frac{E^{5/6} d^{3/2} q}{Y^{10/21}}.
\]
\end{lmm}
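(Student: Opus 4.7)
The plan is to refine the argument for Lemma \ref{lmm:Hybrid} by retaining information about the ``middle'' factor $F_V(U\cdot)$ rather than bounding it trivially by $1$. The decisive new input is the coprimality of $q_1q_2$ to $10$: once we multiply $\theta=a/(dq_1q_2)+\eta$ by a power of $10$ large enough to absorb $d$, the resulting fractional part has denominator exactly $q_1q_2$, which is coprime to $10$. Crucially, the fractions $c/(q_1q_2)$ arising from $q_2\asymp Q_2$ with $(q_2,10)=1$, $(c,q_1q_2)=1$, and $q_1$ held fixed, are $\gg 1/(Q_1Q_2^2)$-separated --- an improvement by a factor of $Q_1$ over the generic Farey spacing $1/(Q_1Q_2)^2$ for denominators of that size.

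Concretely, I would pick $U$ a power of $10$ with $d\le U\le Y$, so that for $\theta=a/(dq_1q_2)+\eta$ with $(a,dq_1q_2)=1$ we have $U\theta\equiv c/(q_1q_2)+U\eta\pmod 1$ with $(c,q_1q_2)=1$. Applying the spacing-based integration argument from the proof of Lemma \ref{lmm:LargeSieve} with this improved spacing, together with $\int_0^1 F_{Y/U}\ll (Y/U)^{-50/77}$ and $\int_0^1|F'_{Y/U}|\ll (Y/U)^{27/77}$ (Lemma \ref{lmm:L1Bound} and the proof of Lemma \ref{lmm:LargeSieve}), yields
\[
\sup_\beta\sum_{\substack{q_2\asymp Q_2\\(q_2,10)=1}}\sum_{(c,q_1q_2)=1}\sup_{|\gamma|\ll 1/(Q_1Q_2^2)}F_{Y/U}\bigl(c/(q_1q_2)+\beta+\gamma\bigr)\ll Q_1Q_2^2(Y/U)^{-50/77}+(Y/U)^{27/77}.
\]
Provided $EUQ_1Q_2^2\ll Y$ so that the shift $U\eta$ fits inside the $\sup$-window, summing over the $E$ values of $\eta$ and incorporating the $\phi(d)\asymp D$ CRT-lifts of $c$ to residues $a\bmod dq_1q_2$ gives
\[
\sum_{q_2,a,\eta}F_{Y/U}(U\theta)\ll DE\bigl[Q_1Q_2^2(Y/U)^{-50/77}+(Y/U)^{27/77}\bigr].
\]

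To simultaneously extract savings from the $F_U(\theta)$-factor, I would apply H\"older's inequality to $F_Y=F_U\cdot F_{Y/U}(U\cdot)$ with exponents matching the $5/6$ and $3/2$ that appear in the target bound, namely $S\le(\sum F_U^{6/5})^{5/6}(\sum F_{Y/U}^6)^{1/6}$. Since $F_U,F_{Y/U}\le 1$, both sums reduce to $\ell^1$ sums; $\sum F_U$ is then bounded by a direct analogue of Lemma \ref{lmm:Hybrid} (its second inequality, with $d\mid q=dq_1q_2$) applied at scale $U$, while $\sum F_{Y/U}$ is bounded by the estimate above. Optimizing $U$ over powers of $10$ in $[d,Y]$, the two main terms $(DE)^{27/77}(Q_1Q_2^2)^{1/21}$ and $E^{5/6}D^{3/2}Q_1Q_2^2/Y^{10/21}$ emerge from matching the respective balanced regimes of each sub-bound. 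The main obstacles will be (a) verifying the side conditions ($EUQ_1Q_2^2\ll Y$ and $q_1q_2<(Y/U)^{1/3}$) hold throughout the optimization --- here the hypothesis $DE\ll Y$ guarantees a valid range of $U$ exists --- and (b) carefully tracking the exponents through the H\"older step and the two-variable optimization. The ``In particular'' statement follows from the specialization $Q_1=q/d$, $Q_2=1$.
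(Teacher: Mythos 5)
Your proposal correctly isolates one of the two ingredients of the proof — the improved spacing: for fixed $q_1$ the reduced fractions $c/(q_1q_2)$ with $q_2\asymp Q_2$ are separated by $\gg 1/(Q_1Q_2^2)$ rather than the generic $1/(Q_1Q_2)^2$, and multiplying by a power of $10$ absorbing $d$ reduces to exactly this situation. But the argument as written cannot produce the exponent $1/21$, because every bound you invoke for the middle factor is a \emph{first-moment} bound. With $\asymp Q_1Q_2^2$ points spaced $\gg 1/(Q_1Q_2^2)$ apart, the spacing argument gives $\ll Q_1Q_2^2\int_0^1 F_{Y/U}+\int_0^1|F_{Y/U}'|\ll Q_1Q_2^2(Y/U)^{-50/77}+(Y/U)^{27/77}$, which balances at $Y/U\asymp Q_1Q_2^2$ to give $(Q_1Q_2^2)^{27/77}$ — no better in the $Q_2$-aspect than Lemma \ref{lmm:Hybrid}. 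Your H\"older step does not rescue this: bounding $\sum F_U^{6/5}$ and $\sum F_{Y/U}^6$ by their $\ell^1$ counterparts via $F\le 1$ throws away exactly the higher-moment information you would need, and multiplying out the resulting exponents ($5/6$ of $(DQ_1Q_2^2)^{27/77}$ against $1/6$ of the bound above) returns a total $Q_1Q_2^2$-exponent of $27/77$ again. The exponents $5/6$ and $3/2$ in the statement are not H\"older exponents; they arise in the paper only as crude simplifications ($27/77+10/21<5/6$ and $1+10/21<3/2$) at the very end.

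The missing idea is to get \emph{second-moment} information into the count. The paper writes $F_Y\ll F_E\cdot F_D\cdot F_{V^2}$ with $V^2\asymp Y/(DE)$, and then applies the pointwise Cauchy--Schwarz inequality $F_{V^2}(\theta)=F_V(\theta)F_V(V\theta)\ll F_V(\theta)^2+F_V(V\theta)^2$, so that the quantity summed over the well-spaced fractions is a \emph{square} of $F_V$. The spacing argument then runs against $\int_0^1F_R(t)^2\,dt=9^{-r}$ (exact, by Parseval, since $F_R$ is the normalized Fourier transform of a set of $9^r$ integers) and $\int_0^1|F_R'|F_R\ll 10^r/9^r$. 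Since $\log 9/\log 10>20/21$, one gets $R\cdot 9^{-r}<R^{1/21}$ with $R\asymp\min(V,d_1d_2Q_1Q_2^2)$, which is where $(Q_1Q_2^2)^{1/21}$ and the $Y^{-10/21}$ term come from; the first-moment exponent $50/77\approx 0.649$ is hopelessly far from the $\log 9/\log 10\approx 0.954$ that Parseval delivers. Without the $F_{V^2}\ll F_V^2+F_V(V\cdot)^2$ splitting and the Parseval input, no optimization over $U$ will reach the claimed bound.
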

%
%
%
%
For example, if $(q,10)=1$ and $q E$ is a sufficiently small power of $Y$, then we improve the first bound $(q E)^{27/77}$ of Lemma \ref{lmm:Hybrid} in the $q$-aspect to $E^{27/77}q^{1/21}$. This improvement is important for our later estimates.
%
%
%
%
\begin{proof}
Choose $E'\asymp E$ and $D'\asymp D$ with $E',D'\ge 1$ integral powers of 10 such that $E' D'\le Y$. Let $V$ be the largest integral power of 10 such that $V^2\le Y/D' E'$. Since $D' E'\le Y$ we have that $V\ge1$. Let $d=d_1d_2d_3$ where $d_3=(d,D')$ and $d_2d_3=(d,VD')$.

By the periodicity of $F$ modulo one, the fact $(q_1q_2,d)=1$, and the Chinese remainder theorem, we have
\begin{align}
&\sum_{\substack{a< d q_1q_2\\ (a,d q_1q_2)=1}}\sum_{\substack{|\eta|\le E/Y\\ (\eta+a/q_1q_2d)Y\in\mathbb{Z}}}F_Y\Bigl(\frac{a}{d q_1 q_2}+\eta\Bigr)\nonumber\\
&=\sum_{\substack{a'< q_1q_2\\ (a',q_1q_2)=1}}\,\underset{(b_1+d_1b_2+d_1d_2b_3,d)=1}{\sum_{b_1< d_1}\,\sum_{b_2< d_2}\,\sum_{b_3< d_3}}\,\sideset{}{'}\sum_{|\eta|\le E/Y}F_Y\Bigl(\frac{a'}{q_1q_2}+\frac{b_1}{d_1d_2d_3}+\frac{b_2}{d_2d_3}+\frac{b_3}{d_3}+\eta\Bigr),\label{eq:FirstSplit}
\end{align}
where the dash on $\sum'$ indicates that $\eta$ is summed over all reals satisfying
\[
\Bigl(\eta+\frac{a'}{ q_1q_2}+\frac{b_1}{d_1d_2d_3}+\frac{b_2}{d_2d_3}+\frac{b_3}{d_3}\Bigr)Y\in\mathbb{Z}.
\]
By \eqref{eq:FFactorization}, we have $F_{E' D' V^2}(t)=F_{D'}(t)F_{V^2}(D' t)F_{E'}(D' V^2t)$. Since $D' E' V^2\le Y$, we have $F_Y(t)\le F_{D' E' V^2}(t)$. Thus, since $F$ is periodic modulo $1$ and $d_3|D'$ and $d_2d_3|VD'$, we have
\begin{align*}
F_{Y}&\Bigl(\frac{a'}{q_1q_2}+\frac{b_1}{d_1d_2d_3}+\frac{b_2}{d_2d_3}+\frac{b_3}{d_3}+\eta\Bigr)\\
&\le F_{E'}\Bigl(\beta_1+D' V^2\eta\Bigr)\sup_{|\gamma|\le E/Y}F_{D'}\Bigl(\beta_2+\frac{b_3}{d_3}+\gamma\Bigr)F_{V^2}\Bigl(D'\beta_2+D' \gamma\Bigr),
\end{align*}
where
\[
\beta_1=D' V^2\Bigl(\frac{a'}{q_1q_2}+\frac{b_1}{d_1d_2d_3}\Bigr),\qquad \beta_2=\frac{a'}{q_1q_2}+\frac{b_1}{d_1d_2d_3}+\frac{b_2}{d_2d_3}.
\]
Moreover, by \eqref{eq:FFactorization} and Cauchy-Schwarz, we have
\[
F_{V^2}(\theta)=F_V(\theta)F_V(V\theta)\le F_V(\theta)^2+F_V(V\theta)^2.
\]
Since $d_2d_3|D' V$, this gives
\[
F_{V^2}\Bigl(D'\beta_2+D' \gamma\Bigr)\le F_V\Bigl(D'\beta_2+D' \gamma\Bigr)^2+F_V\Big(\beta_3+D' V\gamma\Bigr)^2.
\]
where
\[
\beta_3=\frac{D' V a'}{q_1q_2}+\frac{b_1(D' V/d_2d_3)}{d_1}.
\]
These give
\begin{align*}
&\sum_{\substack{a'< q_1q_2\\ (a',q_1q_2)=1}}\,\underset{(b_1+d_1b_2+d_1d_2b_3,d)=1}{\sum_{b_1< d_1}\,\sum_{b_2< d_2}\,\sum_{b_3< d_3}}\,\sideset{}{'}\sum_{|\eta|\le E/Y}F_Y\Bigl(\frac{a'}{q_1q_2}+\frac{b_1}{d_1d_2d_3}+\frac{b_2}{d_2d_3}+\frac{b_3}{d_3}+\eta\Bigr)\\
&\ll \Sigma_1 \sum_{\substack{a'< q_1q_2\\ (a',q_1q_2)=1}}\,\underset{(b_1+d_1b_2+d_1d_2b_3,d)=1}{\sum_{b_1< d_1}\,\sum_{b_2< d_2}\,\sum_{b_3< d_3}}\sup_{|\gamma|\le E/Y}F_{D'}\Bigl(\beta_2+\frac{b_3}{d_3}+\gamma\Bigr)F_V\Bigl(D'\beta_2+D' \gamma\Bigr)^2\\
&+\Sigma_1 \sum_{\substack{a'< q_1q_2\\ (a',q_1q_2)=1}}\,\underset{(b_1+d_1b_2+d_1d_2b_3,d)=1}{\sum_{b_1< d_1}\,\sum_{b_2< d_2}\,\sum_{b_3< d_3}}\sup_{|\gamma|\le E/Y}F_{D'}\Bigl(\beta_2+\frac{b_3}{d_3}+\gamma\Bigr)F_V\Big(\beta_3+D' V\gamma\Bigr)^2,
\end{align*}
where
\[
\Sigma_1=\sup_{\beta\in\mathbb{R}}\sum_{\substack{|\eta|\le E/Y\\ Y(\eta+\beta)\in\mathbb{Z}}}F_{E'}\Bigl(D' V^2\beta+D' V^2\eta\Bigr)\le\sup_{\beta'\in\mathbb{R}}\sum_{a\le 2E}F_{E'}\Bigl(\beta'+\frac{D' V^2 a}{Y}\Bigr).
\]
Since $(d_1d_2d_3,D')=d_3$ and $(q_1q_2,d)=1$, as $a'$, $b_1$ and $b_2$ go through all residue classes $\Mod{q_1q_2}$, $\Mod{d_1}$ and $\Mod{d_2}$ respectively subject to $(a',q_1q_2)=(b_1+d_1b_2,d_1d_2)=1$, we see that $D'\beta_2$ goes through all values of $c/q_1q_2d_1d_2\Mod{1}$ for $0< c< q_1q_2d_1d_2$ with $(c,q_1q_2d_1d_2)=1$, and each value is attained exactly once. Similarly, since $(d_1d_2d_3,D' V)=d_2d_3$, we see that $\beta_3$ goes through every value of $c/q_1q_2d_1\Mod{1}$ with $0 < c< q_1q_2d_1$ and $(c,q_1q_2d_1)=1$ exactly once as $a$ goes through the values $\Mod{q_1q_2}$ and $b_1$ goes through the values $\Mod{d_1}$ with $(a,q_1q_2)=(b_1,d_1)=1$.

Thus we have
\begin{align*}
 \sum_{\substack{a'< q_1q_2\\ (a',q_1q_2)=1}}\underset{(b_1+d_1b_2+d_1d_2b_3,d)=1}{\sum_{b_1< d_1}\,\sum_{b_2< d_2}\,\sum_{b_3< d_3}}\sup_{|\gamma|\le E/Y}F_{D'}\Bigl(\beta_2+\frac{b_3}{d_3}+\gamma\Bigr)F_V\Bigl(D'\beta_2+D' \gamma\Bigr)^2\ll \Sigma_2\Sigma_3,\nonumber\\
\sum_{\substack{a'< q_1q_2\\ (a',q_1q_2)=1}}\underset{(b_1+d_1b_2+d_1d_2b_3,d)=1}{\sum_{b_1< d_1}\,\sum_{b_2< d_2}\,\sum_{b_3< d_3}}\sup_{|\gamma|\le E/Y}F_{D'}\Bigl(\beta_2+\frac{b_3}{d_3}+\gamma\Bigr)F_V\Big(\beta_3+D' V\gamma\Bigr)^2\ll \Sigma_4\Sigma_5,
\end{align*}
where 
\begin{align*}
\Sigma_2&=\sup_{\beta\in\mathbb{R}}\sum_{b_3< d_3}\sup_{|\gamma|\le E/Y}F_{D'}\Bigl(\frac{b_3}{d_3}+\beta+\gamma\Bigr),\\
\Sigma_3&=\sum_{\substack{a_1< d_1d_2q_1q_2\\ (a_1,d_1d_2q_1q_2)=1}}\sup_{|\gamma|\le E/Y}F_{V}\Bigl(\frac{a_1}{d_1d_2q_1q_2}+D'\gamma\Bigr)^2,\\
\Sigma_4&=\sup_{\beta\in\mathbb{R}}\sum_{b'< d_2d_3}\sup_{|\gamma|\le E/Y}F_{D'}\Bigl(\frac{b'}{d_2d_3}+\beta+\gamma\Bigr),\\
\Sigma_5&=\sum_{\substack{a_2< d_1q_1q_2\\ (a_2,d_1q_1q_2)=1}}\sup_{|\gamma|\le E/Y}F_V\Bigl(\frac{a_2}{d_1q_1q_2}+D' V\gamma\Bigr)^2.
\end{align*}
We note that only $\Sigma_3$ and $\Sigma_5$ depend on $q_2$. Thus, summing over $q_2\sim Q_2$ with $(q_2,10)=1$ we obtain
\begin{equation}
\sum_{\substack{q_2\sim Q_2\\ (q_2,10)=1}}\,\sum_{\substack{a< d q_1q_2\\ (a,d q_1q_2)=1}}\,\sum_{\substack{|\eta|\le E/Y\\ (\eta+a/d q_1q_2)Y\in\mathbb{Z}}}F_Y\Bigl(\frac{a}{q_1q_2d}+\eta\Bigr)\le \Sigma_1(\Sigma_2\Sigma_3'+\Sigma_4\Sigma_5'),
\label{eq:HybridBound}
\end{equation}
where $\Sigma_1$, $\Sigma_2$ and $\Sigma_4$ are as above and $\Sigma_3'$ and $\Sigma_5'$ are given by
\begin{align*}
\Sigma_3'&=\sum_{\substack{q_2\sim Q_2\\ (q_2,10)=1}}\sum_{\substack{a_1< d_1d_2q_1q_2\\ (a_1,d_1d_2q_1q_2)=1}}\sup_{|\gamma|\le E/Y}F_{V}\Bigl(\frac{a_1}{d_1d_2q_1q_2}+D'\gamma\Bigr)^2,\\
\Sigma_5'&=\sum_{\substack{q_2\sim Q_2\\ (q_2,10)=1}}\sum_{\substack{a_2< d_1q_1q_2\\ (a_2,d_1q_1q_2)=1}}\sup_{|\gamma|\le E/Y}F_V\Bigl(\frac{a_2}{d_1q_1q_2}+D' V\gamma\Bigr)^2.
\end{align*}
Since $Y/D' V^2\asymp E\asymp E'$, by Lemma \ref{lmm:L1Bound} we have
\begin{equation}
\Sigma_1\ll E^{27/77}. \label{eq:Sig1Bound}
\end{equation}
We have $d_2d_3\le d\le D$ and $DE\ll Y$, so $E/Y\ll 1/d_2d_3$. Thus, by Lemma \ref{lmm:LargeSieve},  we have
\begin{align}
\Sigma_2&\ll d_3^{27/77},\label{eq:Sig2Bound}\\
\Sigma_4&\ll (d_2d_3)^{27/77}.\label{eq:Sig4Bound}
\end{align}
We are left to bound $\Sigma_3'$ and $\Sigma_5'$, which are very similar. Let
\[
\Sigma'=\Sigma'(q_1,d_1,d_2)=\sum_{\substack{q_2\sim Q_2\\ (q_2,10)=1}}\sum_{\substack{a_1< d_1d_2q_1q_2\\ (a_1,d_1d_2q_1q_2)=1}}\sup_{|\gamma|\le D' E V/Y}F_{V}\Bigl(\frac{a_1}{d_1d_2q_1q_2}+\gamma\Bigr)^2.
\]
We note that $\Sigma'(q_1,d_1,d_2)$ is the same as $\Sigma_3'$ except we have increased the range of the supremum, and so we have $\Sigma_3'\le\Sigma'(q_1,d_1,d_2)$. Moreover, we see that $\Sigma_5'$ is a special case of $\Sigma'$ with $d_2=1$, so $\Sigma_5'=\Sigma'(q_1,d_1,1)$. Thus it will suffice to get suitable bounds on $\Sigma'$.

Since $F_R(\theta)\ge F_V(\theta)$ for $R\le V$, we may replace $F_V$ with $F_R$ where $R=10^r$ is the largest power of 10 less than $\min(V,d_1d_2Q_1Q_2^2)$. Since $R\le V$ and $D' E V/Y\ll1/V$, we see all quantities $\gamma$ occurring in the supremum are of size at most $O(1/R)$. Given any choice of reals $\eta_{a,q_2}\ll 1/R$ for $a\le d_1d_2q_1q_2$ and $q_2\sim Q_2$ with $(a,d_1d_2q_1q_2)=1$, the numbers $a/d_1d_2q_1q_2+\eta_{a,q_2}$ can be arranged into $O(d_1d_2Q_1Q_2^2/R)$ sets such that all numbers in any set are separated by $\gg 1/R$. (Recall that $r$ is chosen such that $R\le  d_1d_2Q_1Q_2^2$.) Thus, as in the proof of Lemma \ref{lmm:LargeSieve} (specifically the argument leading up to \eqref{eq:GallagherBound}), we find that
\begin{align*}
\Sigma'&\le \sum_{\substack{q_2\sim Q_2\\ (q_2,10)=1}}\sum_{\substack{a< d_1d_2q_1q_2\\ (a,d_1d_2q_1q_2)=1}}\sup_{|\eta|\ll 1/R}F_{R}\Bigl(\frac{a}{d_1d_2q_1q_2}+\eta\Bigr)^2&\\
&\ll d_1d_2Q_1Q_2^2\int_0^1 F_{R}(t)^2 d t+\frac{d_1d_2Q_1Q_2^2}{R}\int_0^1|F_{R}'(t)|F_{R}(t) d t.
\end{align*}
By Parseval we have
\[
\int_0^1 F_{R}(t)^2d t=\frac{1}{9^{2r}}\sum_{\substack{a\in\mathcal{A}\\ a\le R}}1=\frac{1}{9^{r}},
\]
and
\[
 \int_0^1F_{R}'(t)^2d t=\frac{1}{9^{2r}}\sum_{\substack{a\in\mathcal{A}_1\\ a\le R}}4\pi^2 a^2\ll \frac{10^{2r}}{9^{r}}.
\]
Using Cauchy-Schwarz and the above bounds, we obtain
\[
\int_0^1|F_R'(t)|F_R(t)d t\ll \Bigl(\int_0^1F_R'(t)^2d t\Bigr)^{1/2}\Bigl(\int_0^1F_R(t)^2d t\Bigr)\ll \frac{R}{9^r}.
\]
Putting this together gives
\[
\Sigma'\ll \frac{d_1d_2Q_1Q_2^2}{9^r}.
\]
We recall that $R=10^{r}\sim \min(V,d_1d_2Q_1Q_2^2)$ and $V\asymp (Y/DE)^{1/2}$, and note that $20/21<\log{9}/\log{10}$. This gives 
\begin{equation}
\Sigma'\ll (d_1d_2Q_1Q_2^2)^{1/21}+d_1d_2Q_1Q_2^2\Bigl(\frac{Y}{D E}\Bigr)^{-10/21}.\label{eq:Sig3Bound}
\end{equation}
This gives a bound for $\Sigma_3'$ since $\Sigma_3'\le \Sigma'$, and we obtain an analogous bound for $\Sigma_5'$ with $d_2$ replaced by 1. Combining \eqref{eq:Sig3Bound} with our earlier bounds \eqref{eq:Sig1Bound}, \eqref{eq:Sig2Bound} and \eqref{eq:Sig4Bound} and substituting these into \eqref{eq:HybridBound} gives
\begin{align*}
\sum_{\substack{q_2\sim Q_2\\ (q_2,10)=1}}\,\sum_{\substack{a< d q_1q_2\\ (a,d q_1q_2)=1}}\,&\sum_{\substack{|\eta|\le E/Y\\ (\eta+a/d q_1q_2)Y\in\mathbb{Z}}}F_Y\Bigl(\frac{a}{q_1q_2d}+\frac{b}{d}+\eta\Bigr)\\
&\ll E^{27/77}\Bigl(D^{27/77}(Q_1Q_2^2)^{1/21}+Q_1Q_2^2D\Bigl(\frac{Y}{D E}\Bigr)^{-10/21}\Bigr).
\end{align*}
Simplifying the exponents by noting $1+10/21<3/2$ and $27/77+10/21<5/6$ then gives the result.

The second statement of the lemma is simply the case when $Q_2=1$ and $q=d q_1$.
\end{proof}
%
%
%
%
We see that Lemma \ref{lmm:LargeSieveTypeI} follows immediately from Lemma \ref{lmm:LargeSieve}, and Lemma \ref{lmm:LInfTypeI} is the same as Lemma \ref{lmm:LInfBound}. Thus we are left to establish  Proposition \ref{prpstn:Major}, Proposition \ref{prpstn:Generic} and Proposition \ref{prpstn:Exceptional}, which we do over the next few sections.

%
%
%
%
%
%
%
\section{Major arcs}\label{sec:Major}
%
%
%
%
In this section we establish Proposition \ref{prpstn:Major} using the prime number theorem in arithmetic progressions and short intervals, making use of Lemma \ref{lmm:LInfBound}.
%
%
%
%
\begin{proof}[Proof of Proposition \ref{prpstn:Major}]
We split $\mathcal{M}$ up as three disjoint sets
\[
\mathcal{M}=\mathcal{M}_1\cup\mathcal{M}_2\cup\mathcal{M}_3,
\]
where
\begin{align*}
\mathcal{M}_1&=\Bigl\{a\in\mathcal{M}:\Bigl|\frac{a}{X}-\frac{b}{q}\Bigr|\le\frac{(\log{X})^C}{X}\text{ for some }\,b,\,q\le (\log{X})^C,\,q\nmid X\Bigr\},\\
\mathcal{M}_2&=\Bigl\{a\in\mathcal{M}:\frac{a}{X}
=\frac{b}{q}+\nu\text{ for some }\,b,\,q\le (\log{X})^C,\,q|X,\,0<|\nu|\le \frac{(\log{X})^C}{X}\Bigr\},\\
\mathcal{M}_3&=\Bigl\{a\in\mathcal{M}:\frac{a}{X}
=\frac{b}{q}\text{ for some }\,b,\,q\le (\log{X})^C,\,q|X\Bigr\}.\\
\end{align*}
By Lemma \ref{lmm:LInfBound} and recalling $X$ is a power of 10, we have
\[
\sup_{a\in\mathcal{M}_1}\Bigl|S_{\mathcal{A}}\Bigl(\frac{a}{X}\Bigr)\Bigr|=\#\mathcal{A}\sup_{a\in\mathcal{M}_1}F_X\Bigl(\frac{a}{X}\Bigr)\ll \#\mathcal{A}\exp(-\sqrt{\log{X}}).
\]
Using the trivial bound $S_{\mathcal{R}_X}(\theta)\ll X(\log{X})^{\ell}$, where $\ell\le 2/\eta$ and noting $\#\mathcal{M}_1\ll (\log{X})^{3C}$, we obtain
\begin{equation}
\frac{1}{X}\sum_{a\in\mathcal{M}_1}S_{\mathcal{A}}\Bigl(\frac{a}{X}\Bigr)S_{\mathcal{R}_X}\Bigl(\frac{-a}{X}\Bigr)\ll_{C,\eta} \frac{\#\mathcal{A}}{(\log{X})^C}.
\label{eq:M1}
\end{equation}
This gives the result for $\mathcal{M}_1$.

We now consider $\mathcal{M}_2$. Recalling the definition of $\mathcal{R}_X$, we have that for $n<X$
\begin{equation}
\Lambda_{\mathcal{R}_X}(n)=\sum_{\substack{n=p_1\cdots p_\ell\\ p_j\in (X^{a_j},X^{a_j+\delta}]\,\text{for $j<\ell$}\\ p_\ell\ge X^{\eta/4},X^{1-\sum_ia_i-\ell\delta}}}\prod_{i=1}^\ell \log{p_i}=\sum_{\substack{n=mp\\ p\ge X^{\eta/4}\\ p\ge X^{1-\sum_ia_i-\ell\delta}}}\Lambda_{\mathcal{C}}(m)\log{p},
\label{eq:RXSum}
\end{equation}
 where $\mathcal{C}=(a_1,a_1+\delta]\times\dots\times(a_{\ell-1},a_{\ell-1}+\delta]$ is the projection of $\mathcal{R}_X$ onto the first $\ell-1$ coordinates. We note the crude bound
\begin{equation}
\sum_{m<X}\frac{\Lambda_{\mathcal{C}}(m)}{m}\le \Bigl(\sum_{p\le X}\frac{\log{p}}{p}\Bigr)^{\ell-1} \ll (\log{X})^{\ell-1}.
\label{eq:Crude}
\end{equation}
Let $\Delta=\lceil\log{X}\rceil^{-10C-10\ell}$. We note that if $a\in\mathcal{M}_2$ then $a/X=b/q+c/X$ for some integers $b,q,|c|\le (\log{X})^C$ ($c$ is an integer since $q|X$ for the set $\mathcal{M}_2$). We separate the sum $S_{\mathcal{R}_X}(a/X)$ by putting the prime variable $p$ occurring in \eqref{eq:RXSum} in short intervals of length $\Delta x/m$ and in arithmetic progressions $\Mod{q}$. We note that $\Lambda_{\mathcal{C}}$ is supported on $m\le X^{\sum_i a_i+(\ell-1)\delta}< X^{1-\eta/3}$, so we can drop the constraints $p\ge X^{\eta/4},X^{1-\sum_{i}a_i-\ell\delta}$ at the cost of some terms with $mp<X^{1-\eta/12}+X^{1-\delta}$. Thus we have
\begin{align*}
\sup_{a\in\mathcal{M}_2}S_{\mathcal{R}_X}\Bigl(\frac{a}{X}\Bigr)
&=\sup_{a\in\mathcal{M}_2}\sum_{m<X^{1-\eta/3}}\Lambda_{\mathcal{C}}(m)\sum_{p< X/m}(\log{p})e\Bigl(\frac{a m p}{X}\Bigr)\\
&\qquad +O_\ell\Bigl(\sum_{pm<X^{1-\eta/12}+X^{1-\delta}}(\log{X})^{\ell}\Bigr)\\
&=O_{C,\eta}\Bigl(\frac{X}{(\log{X})^{4C}}\Bigr)+\sup_{\substack{1\le b\le q\\ q\le (\log{X})^C \\ 0<|c|\le (\log{X})^C}}\sum_{m<X^{1-\eta/3}}\Lambda_{\mathcal{C}}(m)\sum_{r=0}^{q-1}\sum_{0\le j< \Delta^{-1}}\\ 
&\qquad\times\sum_{\substack{p\in [j\Delta X/m,(j+1)\Delta X/m) \\ p\equiv r\Mod{q}}}(\log{p})e\Bigl(mp\Bigl(\frac{b}{q}+\frac{c}{X}\Bigr)\Bigr).
\end{align*}

If $mp=j\Delta X+O(\Delta X)$ and $p\equiv r\Mod{q}$ we have
\[
e\Bigl(mp\Bigl(\frac{b}{q}+\frac{c}{X}\Bigr)\Bigr)= e\Bigl(\frac{b r m}{q}\Bigr)e(j c\Delta)+O(\Delta (\log{X})^C).
\]
By the prime number theorem in short intervals and arithmetic progressions \eqref{eq:PNT}, for $m<X^{1-\eta/3}$ and $(r,q)=1$ we have
\[
\sum_{\substack{p\in[j\Delta X/m,(j+1)\Delta X/m) \\ p\equiv r\Mod{q}}}\log{p}=\frac{\Delta X}{m\phi(q)}+O_{C,\eta}\Bigl(\frac{\Delta^2 X}{m\phi(q)}\Bigr)
\]
Thus
\begin{align*}
\sup_{a\in\mathcal{M}_2}S_{\mathcal{R}_X}\Bigl(\frac{a}{X}\Bigr)
&=\Delta X\sup_{\substack{b\le q\\ q\le (\log{X})^C \\ c\le (\log{X})^C}}\sum_{m<X^{1-\eta/3}}\frac{\Lambda_{\mathcal{C}}(m)}{m\phi(q)}
\sum_{\substack{1\le r<q\\ (r,q)=1}} e\Bigl(\frac{b r m}{q}\Bigr)\sum_{1\le j< \Delta^{-1}} e(j\Delta c)\\
&\qquad+O_{C,\eta}\Bigl(\frac{X}{(\log{X})^{4C}}\Bigr).
\end{align*}
Finally, since $c\in\mathbb{Z}$ and $c\ne 0$ and $\Delta^{-1}\in\mathbb{Z}$, we have
\[
\sum_{1\le j< \Delta^{-1}}e(j\Delta c)=-e(c)=-1=O(1).
\]
Using \eqref{eq:Crude}, this gives
\begin{align}
\sup_{a\in\mathcal{M}_2}S_{\mathcal{R}_X}\Bigl(\frac{a}{X}\Bigr)&\ll\Delta X (\log{X})^C\sum_{m<X^{1-\eta/3}}\frac{\Lambda_{\mathcal{C}}(m)}{m}+O_{C,\eta}\Bigl(\frac{X}{(\log{X})^{4C}}\Bigr)\nonumber\\
&\ll_{C,\eta} \frac{X}{(\log{X})^{4C}}.\label{eq:M2Bound}
\end{align}
Note that in the above argument for us to be able to save an arbitrary power of log it was important that we are counting elements with weight $\Lambda_{\mathcal{R}_X}(n)$ rather than $\mathbf{1}_{\mathcal{R}_X}(n)$, and that $X\nu\in\mathbb{Z}$ for $a\in\mathcal{M}_2$. 

Using the trivial bounds $S_{\mathcal{A}}(\theta)\le \#\mathcal{A}$ and $\#\mathcal{M}_2\ll (\log{X})^{3C}$ along with \eqref{eq:M2Bound}, we obtain
\begin{equation}
\frac{1}{X}\sum_{a\in\mathcal{M}_2}S_{\mathcal{A}}\Bigl(\frac{a}{X}\Bigr)S_{\mathcal{R}_X}\Bigl(\frac{-a}{X}\Bigr)\ll_{C,\eta} \frac{\#\mathcal{A}}{(\log{X})^C}.
\label{eq:M2}
\end{equation}
 Finally, we consider $\mathcal{M}_3$. By the prime number theorem in arithmetic progressions as above, we have for $(r,q)=1$ and $q\le (\log{X})^C$ that
\begin{align*}
\sum_{\substack{n< X\\ n\equiv r\Mod{q}}}\Lambda_{\mathcal{R}_X}(n)&=\frac{X}{\phi(q)}\sum_{m<X^{1-\eta/3}}\frac{\Lambda_{\mathcal{C}}(m)}{m}+O_{\eta,C}\Bigl(\frac{X}{(\log{X})^{4C}}\Bigr)\\
&=\frac{1}{\phi(q)}\sum_{n< X}\Lambda_{\mathcal{R}_X}(n)+O_{\eta,C}\Bigl(\frac{X}{(\log{X})^{4C}}\Bigr).
\end{align*}
Thus, for $(a,q)=1$
\begin{align*}
S_{\mathcal{R}_X}\Bigl(\frac{a}{q}\Bigr)&=\sum_{0\le r<q}e\Bigl(\frac{a r}{q}\Bigr)\sum_{\substack{n< X\\ n\equiv r\Mod{q}}}\Lambda_{\mathcal{R}_X}(n)\\
&=\frac{1}{\phi(q)}\Bigl(\sum_{n< X}\Lambda_{\mathcal{R}_X}(n)\Bigr)\Bigl(\sum_{\substack{0\le r<q\\ (r,q)=1}}e\Bigl(\frac{a r}{q}\Bigr)\Bigr)+O_{\eta,C}\Bigl(\frac{X}{(\log{X})^{4C}}\Bigr)\\
&=\frac{\mu(q)}{\phi(q)}\sum_{n< X}\Lambda_{\mathcal{R}_X}(n)+O_{\eta,C}\Bigl(\frac{X}{(\log{X})^{4C}}\Bigr).
\end{align*}
Since $\mu(q)=0$ for $q|10^k=X$ unless $q\in\{1,2,5,10\}$, using the trivial bounds $\#\mathcal{M}_3\ll (\log{X})^{2C}$ and $|S_\mathcal{A}(a/X)|\le \#\mathcal{A}$, we obtain
\begin{align}
\frac{1}{X}\sum_{a\in\mathcal{M}_3}&S_{\mathcal{A}}\Bigl(\frac{a}{X}\Bigr)S_{\mathcal{R}_X}\Bigl(\frac{-a}{X}\Bigr)
=\frac{1}{X}\sum_{0\le b<10}S_{\mathcal{A}}\Bigl(\frac{b}{10}\Bigr)S_{\mathcal{R}_X}\Bigl(\frac{-b}{10}\Bigr)+O_{C,\eta}\Bigl(\frac{\#\mathcal{A}}{(\log{X})^C}\Bigr)\nonumber\\
&=\frac{10}{X}\sum_{m\in\mathcal{A}}\sum_{\substack{n<X\\ n\equiv m\Mod{10}}}\Lambda_{\mathcal{R}_X}(n)+O_{C,\eta}\Bigl(\frac{\#\mathcal{A}}{(\log{X})^C}\Bigr)\nonumber\\
&=\frac{10}{\phi(10)}\Bigl(\frac{1}{X}\sum_{n< X}\Lambda_{\mathcal{R}_X}(n)\Bigr)\#\{m\in\mathcal{A}:(m,10)=1\}+O_{C,\eta}\Bigl(\frac{\#\mathcal{A}}{(\log{X})^C}\Bigr)\nonumber\\
&=\kappa_\mathcal{A}\frac{\#\mathcal{A}}{X}\sum_{n<X}\Lambda_{\mathcal{R}_X}(n)+O_{C,\eta}\Bigl(\frac{\#\mathcal{A}}{(\log{X})^C}\Bigr).
\label{eq:M3}
\end{align}
Thus \eqref{eq:M1}, \eqref{eq:M2} and \eqref{eq:M3} gives the result.
\end{proof}
\begin{rmk}
We have only needed to use the prime number theorem in arithmetic progressions when the modulus is a small divisor of $X$, and so has no large prime factors. This means that our implied constants can be taken to be effectively computable since for such moduli we do not need to appeal to Siegel's theorem.
\end{rmk}

%
%
%
%
\section{Generic minor arcs}\label{sec:Generic}
In this section we establish Proposition \ref{prpstn:Generic} and obtain some bounds on the exceptional set $\mathcal{E}$ by using the distributional estimates of Lemma \ref{lmm:DigitDistribution}.
%
%
%
%
\begin{lmm}[$\ell^2$ bound for primes]\label{lmm:PrimeDistribution}
We have that
\[
\#\Bigl\{0\le a<X:\,\Bigl|S_{\mathcal{R}}\Bigl(\frac{a}{X}\Bigr)\Bigr|\sim\frac{X}{C}\Bigr\}\ll C^2(\log{X})^{O_\eta(1)}.
\]
\end{lmm}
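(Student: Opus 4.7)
The plan is to deduce this level-set estimate from Parseval's identity together with elementary pointwise and first-moment bounds for the weight $\Lambda_{\mathcal{R}}$. Since $\ell \ll_\eta 1$, powers of the form $(\log X)^\ell$ are harmless and will absorb into the $(\log X)^{O_\eta(1)}$ factor.

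First I would establish the pointwise bound $\Lambda_{\mathcal{R}}(n) \ll_\eta (\log X)^\ell$: if $n<X$ admits a factorization $n = p_1\cdots p_\ell$ with each $p_i<X$, then $\prod_i \log p_i \le (\log X)^\ell$, and the number of ordered such factorizations is at most $\ell! \ll_\eta 1$. Next I would bound the first moment
\[
\sum_{n<X}\Lambda_{\mathcal{R}}(n) = \sum_{\substack{p_1\cdots p_\ell<X \\ (\log p_i/\log X)\in\mathcal{R}}} \prod_{i=1}^\ell \log p_i \ll_\eta X(\log X)^{\ell-1},
\]
either by $\ell$ applications of the prime number theorem in dyadic ranges, or by using the trivial bound $\Lambda_{\mathcal{R}}(n)\ll_\eta (\log X)^\ell$ together with the range $n<X$ (the cruder bound $X(\log X)^\ell$ would already suffice).

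Combining these two estimates gives
\[
\sum_{n<X}\Lambda_{\mathcal{R}}(n)^2 \le \|\Lambda_{\mathcal{R}}\|_\infty \sum_{n<X}\Lambda_{\mathcal{R}}(n) \ll_\eta X(\log X)^{2\ell-1}.
\]
Parseval's identity on $\mathbb{Z}/X\mathbb{Z}$ then yields
\[
\sum_{0\le a<X}\Bigl|S_{\mathcal{R}}\Bigl(\frac{a}{X}\Bigr)\Bigr|^2 = X\sum_{n<X}\Lambda_{\mathcal{R}}(n)^2 \ll_\eta X^2 (\log X)^{2\ell-1}.
\]
Finally, each $a$ in the level set $\{|S_{\mathcal{R}}(a/X)|\asymp X/C\}$ contributes $\gg (X/C)^2$ to the left-hand side, so the cardinality of this set is $\ll_\eta C^2 (\log X)^{2\ell-1} = C^2(\log X)^{O_\eta(1)}$, as required.

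This is essentially a routine Parseval argument, so I do not expect any significant obstacle; the only point to be careful about is that $\ell$ depends on $\eta$ but not on $X$, so factors of $(\log X)^\ell$ are permissible in the final bound.
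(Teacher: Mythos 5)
Your proposal is correct and follows essentially the same route as the paper: the paper's proof is exactly the Chebyshev--Parseval argument, reducing to $\sum_{n<X}\Lambda_{\mathcal{R}}(n)^2\ll X(\log{X})^{O_\eta(1)}$, which it asserts without elaboration and which you justify via the pointwise bound $\Lambda_{\mathcal{R}}(n)\ll_\eta(\log{X})^{\ell}$ (valid since $\mathcal{R}\subseteq[\eta,1]^{\ell}$ forces $\ell\ll_\eta 1$ ordered factorizations) together with the first moment. No gaps.
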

%
%
%
%
\begin{proof}
This follows from the $\ell^2$ bound coming from Parseval's identity.
\begin{align*}
\#\Bigl\{0\le a<X:\Bigl|S_{\mathcal{R}}\Bigl(\frac{a}{X}\Bigr)\Bigr|\ge\frac{X}{10C}\Bigr\}
&\ll \frac{C^{2}}{X^2}\sum_{a<X}\Bigl|S_{\mathcal{R}}\Bigl(\frac{a}{X}\Bigr)\Bigr|^2\\
&=\frac{C^2}{X}\sum_{n<X}\Lambda_{\mathcal{R}}(n)^2\\
&\ll C^2(\log{X})^{O_\eta(1)}.\qedhere
\end{align*}
\end{proof}
%
%
%
%
\begin{lmm}[Generic frequency bounds] \label{lmm:Generic}
Let
\[
\mathcal{E}=\Bigl\{0\le a<X:\, F_X\Bigl(\frac{a}{X}\Bigr)\ge \frac{1}{X^{23/80}}\Bigr\}.
\]
Then
\begin{align*}
\#\mathcal{E}&\ll X^{23/40-\epsilon},\\
\sum_{a\in\mathcal{E}}F_X\Bigl(\frac{a}{X}\Bigr)&\ll X^{23/80-\epsilon},
\end{align*}
and
\[
\frac{1}{X}\sum_{\substack{a<X\\ a\notin\mathcal{E}}}\Bigl|F_X\Bigl(\frac{a}{X}\Bigr)S_{\mathcal{R}}\Bigl(\frac{-a}{X}\Bigr)\Bigr|\ll_\eta\frac{1}{X^{\epsilon}}.
\]
\end{lmm}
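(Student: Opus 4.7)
The first two statements are direct applications of Lemma \ref{lmm:DigitDistribution} via dyadic decomposition of $F_X(a/X)$. Writing $N_B = \#\{1 \le a < X: F_X(a/X) \asymp 1/B\} \ll B^{235/154}X^{59/433}$, I bound
\[
\#\mathcal{E} \le \sum_{\substack{B \textrm{ dyadic}\\ 1 \le B \le X^{23/80}}} N_B \ll X^{(23/80)(235/154) + 59/433 + o(1)},
\]
with the geometric series dominated by $B \asymp X^{23/80}$. A numerical check that $(23 \cdot 73)/(80 \cdot 154) > 59/433$ (margin $127/5334560 \approx 2 \times 10^{-5}$) forces the exponent to lie below $23/40$. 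The same decomposition, weighted by $1/B$ in the summand, handles the second statement: $\sum_\mathcal{E} F_X \ll X^{(23/80)(81/154) + 59/433 + o(1)}$, and the same inequality drops the exponent below $23/80$.

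For the third statement I propose a two-parameter dyadic decomposition, over $F_X(a/X) \asymp 1/B$ with $B \ge X^{23/80}$ and over $|S_\mathcal{R}(-a/X)| \asymp X/C$ with $C \ge 1$. Writing $N_{B,C}$ for the number of $a$ in both classes and combining Lemmas \ref{lmm:DigitDistribution} and \ref{lmm:PrimeDistribution} gives the complementary bounds
\[
N_{B,C} \ll \min\bigl(B^{235/154} X^{59/433},\ C^2 (\log X)^{O_\eta(1)}\bigr),
\]
and the contribution of the $(B,C)$-class to $\tfrac{1}{X}\sum_{a\notin\mathcal{E}} F_X |S_\mathcal{R}|$ is $\ll N_{B,C}/(BC)$.

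The plan is to optimize the two bounds at the crossover point $C^\ast = B^{235/308} X^{59/866}$: for $C \le C^\ast$ use the prime bound and sum the geometric series $\sum C/B$ dominated by $C = C^\ast$, while for $C > C^\ast$ use the digital bound and sum the decreasing geometric series $\sum 1/C$ also dominated by $C = C^\ast$. Both regimes yield a per-$B$ contribution $\ll C^\ast/B = B^{-73/308} X^{59/866}$ up to logarithmic losses. Since the exponent $-73/308$ is negative, the remaining dyadic $B$-sum is dominated by its smallest term $B \asymp X^{23/80}$, delivering the overall bound $\ll X^{-(23/80)(73/308) + 59/866}(\log X)^{O_\eta(1)}$.

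The main obstacle is purely numerical: one must verify $(23 \cdot 73)/(80 \cdot 308) > 59/866$, which clears denominators to the very same comparison $1679 \cdot 433 > 59 \cdot 12320$ (margin $127$) that underlies the first two statements. The resulting exponent of $X$ is of order $-10^{-5}$, which is more than enough to beat the paper's fixed $\epsilon = 10^{-100}$. This extremely tight margin reflects the fact that the threshold $X^{-23/80}$ defining $\mathcal{E}$ is calibrated precisely so that $\#\mathcal{E} \ll X^{23/40}$---the hypothesis required by Proposition \ref{prpstn:Exceptional}---while simultaneously leaving just enough slack for the generic bound to succeed on its complement.
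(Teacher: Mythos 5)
Your proposal is correct and follows essentially the same route as the paper: dyadic decomposition in $B$ with Lemma \ref{lmm:DigitDistribution} for the first two bounds, and a double dyadic decomposition in $B$ and $C$ combining Lemmas \ref{lmm:DigitDistribution} and \ref{lmm:PrimeDistribution} for the third, with the same tight numerical verification (the paper phrases your crossover optimization as the one-line bound $\min(x,y)\le x^{1/2}y^{1/2}$, which yields the identical exponent $B^{-73/308}X^{59/866}$).
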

%
%
%
%
\begin{proof}
The first bound on the size of $\mathcal{E}$ follows from using Lemma \ref{lmm:DigitDistribution} with $B=X^{23/80}$ and verifying that $(23\times 235)/(80\times 154)+59/433<23/40$. For the second bound we see from Lemma \ref{lmm:DigitDistribution} that
\begin{align*}
\sum_{a\in\mathcal{E}} F_X\Bigl(\frac{a}{X}\Bigr)
&\ll \sum_{\substack{j\ge 0\\ 2^j\le X^{23/80}}} \#\Bigl\{0\le a<X:\,F_X\Bigl(\frac{a}{X}\Bigr)\sim 2^{-j}\Bigr\}\\
&\ll \sum_{\substack{j\ge 0\\ 2^j\le X^{23/80}}} 2^{(235/154-1)j} X^{59/433}\\
&\ll X^{59/433+23\times 235/(80\times 154)-23/80},
\end{align*}
and so the calculation above gives the result.

It remains to bound the sum over $a\notin\mathcal{E}$. We divide the sum into $O(\log{X})^2$ subsums where we restrict to those $a$ such that $F_X(a/X)\sim 1/B$ and $|S_{\mathcal{R}}(a/X)|\sim X/C$ for some $B\ge X^{23/80}$ and $C\le X^2$ (terms with $C>X^2$ makes a contribution $O(1/X)$). This gives
\begin{align*}
\frac{1}{X}\sum_{\substack{a<X\\ a\notin\mathcal{E}}}
&\Bigl|F_{X}\Bigl(\frac{a}{X}\Bigr)S_{\mathcal{R}}\Bigl(\frac{-a}{X}\Bigr)\Bigr|\\
&\ll \sup_{\substack{X^{23/80}\le B \\ 1\le C\le X^2}}\frac{(\log{X})^{2}}{X}\sum_{\substack{a<X\\ F_X(a/X)\sim 1/B \\ S_{\mathcal{R}}(-a/X)\sim X/C}}\Bigl|F_X\Bigl(\frac{a}{X}\Bigr)S_{\mathcal{R}}\Bigl(\frac{-a}{X}\Bigr)\Bigr|+\frac{1}{X^2}.
\end{align*}
We concentrate on the inner sum. Using Lemmas \ref{lmm:DigitDistribution} and \ref{lmm:PrimeDistribution} we see that the sum contributes
\begin{align*}
&\ll \frac{X}{B C}\#\Bigl\{a:F_X\Bigl(\frac{a}{X}\Bigr)\sim \frac{1}{B},\,\Bigl|S_{\mathcal{R}}\Bigl(\frac{-a}{X}\Bigr)\Bigr|\sim \frac{X}{C}\Bigr\}\\
&\ll \frac{X(\log{X})^{O_\eta(1)}}{B C} \min\Bigl(C^2,\,B^{235/154}X^{59/433}\Bigr)\\
&\ll_\eta X^{1+\epsilon}\frac{X^{59/866}}{B^{73/308}}.
\end{align*}
Here we used the bound $\min(x,y)\le x^{1/2}y^{1/2}$ in the last line. In particular, we see this is $O_\eta(X^{1-2\epsilon})$ if $B\ge X^{23/80}$ on verifying that $23/80\times 73/308>59/866$. Substituting this into our bound above gives the result.
\end{proof}
%
%
%
%
%
%
%
\section{Exceptional minor arcs}\label{sec:Bilinear}
In this section we reduce Proposition \ref{prpstn:Exceptional} to the task of establishing Proposition \ref{prpstn:LatticeBound} and Proposition \ref{prpstn:LineBound}, given below. We do this by making use of the bilinear structure of $\Lambda_{\mathcal{R}_X}(n)$ which is supported on integers of the form $n_1n_2$ with $n_1$ of convenient size, and then showing that if these resulting bilinear expressions are large then the Fourier frequencies must lie in a smaller additively structured set. Propositions \ref{prpstn:LatticeBound} and \ref{prpstn:LineBound} then show that we have superior Fourier distributional estimates inside such sets. Thus we conclude that the bilinear sums are always small. To make the bilinear bound explicit, we establish the following lemma, from which Proposition \ref{prpstn:Exceptional} follows quickly.
%
%
%
%
\begin{lmm}[Bilinear sum bound]\label{lmm:Explicit}
Let $N,M,Q\ge 1$ and $E$ satisfy $X^{9/25}\le N\le X^{17/40}$, $Q\le X^{1/2}$, $NM\le 1000X$ and $E\le 100X^{1/2}/Q$, and either $E\ge 1/X$ or $E=0$.
Let $\mathcal{F}=\mathcal{F}(Q,E)$ be given by
\[
\mathcal{F}=\Bigl\{a<X:\, \frac{a}{X}=\frac{b}{q}+\nu\text{ for some }(b,q)=1\text{ with }q\sim Q,\,\nu\sim E/X\Bigr\}.
\]
Then for any complex 1-bounded complex sequences $\alpha_n,\beta_m,\gamma_a$ we have
\[
\sum_{a\in\mathcal{F}\cap\mathcal{E}}\sum_{\substack{n\sim N\\ m\sim M}}F_{X}\Bigl(\frac{a}{X}\Bigr)\alpha_n\beta_m\gamma_a e\Bigl(\frac{-anm}{X}\Bigr)\ll \frac{X(\log{X})^{O(1)}}{(Q+E)^{\epsilon/10}}.
\]
\end{lmm}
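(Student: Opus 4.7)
The plan is to reduce this trilinear expression to a bilinear sum in the pairs $(a_1,a_2)$ weighted by $F_X(a_1/X)F_X(a_2/X)$ via Cauchy--Schwarz, and then quantify how the Diophantine structure forced on $(a_1,a_2)$ by any abnormally large contribution can be combined with the explicit Fourier estimates from Section~\ref{sec:Fourier} to obtain the saving $(Q+E)^{\epsilon/100}$.

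First, I would write $S = \sum_n \alpha_n U_n$, where
\[
U_n = \sum_{a\in\mathcal{F}\cap\mathcal{E}}\sum_{m\asymp M} F_X\Bigl(\frac{a}{X}\Bigr)\gamma_a \beta_m\, e\Bigl(\frac{-anm}{X}\Bigr).
\]
Cauchy--Schwarz in $n$, expansion of $|U_n|^2$, and the standard bound $\sum_{n\asymp N} e(n\theta)\ll \min(N,\|\theta\|^{-1})$ give
\[
|S|^2 \ll N \sum_{a_1,a_2\in\mathcal{F}\cap\mathcal{E}} F_X\Bigl(\frac{a_1}{X}\Bigr)F_X\Bigl(\frac{a_2}{X}\Bigr) \sum_{m_1,m_2\asymp M}\min\Bigl(N,\Bigl\|\tfrac{a_1 m_1-a_2 m_2}{X}\Bigr\|^{-1}\Bigr),
\]
which is the generalized minor-arc sum \eqref{eq:BilinearSum} described in the outline.

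Second, I would estimate the inner double sum $T(a_1,a_2) = \sum_{m_1,m_2\asymp M} \min(N,\|\tfrac{a_1 m_1-a_2 m_2}{X}\|^{-1})$ by a dyadic decomposition on its size. For a ``generic'' pair $(a_1,a_2)$ one expects $T(a_1,a_2) \ll M^2 (\log X)^{O(1)}$, in which case combining with an $\ell^1$-type bound on $F_X(a/X)$ restricted to $\mathcal{F}\cap\mathcal{E}$ (coming from Lemma~\ref{lmm:Hybrid} or Lemma~\ref{lmm:Hybrid2}, and using that $NM\ll X$, $E\ll X^{1/2}/Q$) already beats $X(\log X)^{O(1)}/(Q+E)^{\epsilon/100}$. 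The delicate contribution is from exceptional pairs with $T(a_1,a_2)\gg M^2(\log X)^C$.

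Third, I would classify these exceptional pairs via the geometry of numbers: a standard argument (counting lattice points close to rational hyperplanes) shows that if $T(a_1,a_2)$ is unusually large, then $(a_1,a_2)$ must lie near the projection to $\mathbb{Z}^2$ of a low-height integer line or plane in $\mathbb{Z}^3$, with the height controlled by how large $T$ is. This splits the exceptional set into two kinds: (a) pairs with $a_1/a_2$ close to a rational of small height (``line'' contributions, including the diagonal $a_1=a_2$), and (b) pairs belonging to a rank-two lattice of small determinant (``lattice'' contributions, including the case where both $a_j$ are small).

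Fourth, on each such structured set I would invoke the forthcoming Propositions~\ref{prpstn:LatticeBound} and \ref{prpstn:LineBound}, which supply the superior $\ell^1$ estimates for $F_X$ summed over lines and lattices respectively. These estimates capture the decorrelation phenomenon between the digital conditions controlling $F_X$ and the Diophantine conditions defining the structured sets; the hypotheses that $(a_1,a_2)\in \mathcal{E}$ (ensuring the digital side is nontrivially constrained) and $(a_1,a_2)\notin \mathcal{M}$ (implicit through $\mathcal{F}=\mathcal{F}(Q,E)$ with $Q$ not tiny relative to $X$) are essential here. Combining all cases and taking square roots yields the stated bound.

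The principal obstacle is the third and fourth steps: one must enumerate the exceptional pairs by the rank and height of the structure they belong to, balance the count of such pairs against the corresponding $F_X$ estimates, and check that the exponents work out. This is where the precise hypotheses $X^{9/25}\le N\le X^{17/40}$, $Q\ll X^{1/2}$, and $E\ll X^{1/2}/Q$ are needed: they ensure that no single structured class (line or lattice) can dominate, and that after summing the various contributions the saving $(Q+E)^{\epsilon/100}$ survives with room to spare.
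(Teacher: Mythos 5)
Your overall strategy matches the paper: Cauchy--Schwarz to produce a bilinear sum over pairs $(a_1,a_2)$, dyadic decomposition of the inner $\min$-sum, geometry of numbers to classify exceptional pairs into line and lattice cases, and application of Propositions~\ref{prpstn:LatticeBound} and~\ref{prpstn:LineBound}. However, there is a genuine error in the first step: you apply Cauchy--Schwarz in the wrong variable, and this breaks the later steps.

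You write $S=\sum_n \alpha_n U_n$ and Cauchy--Schwarz in $n$, so that after expanding and summing the $n$-geometric series, the resulting double sum is over $m_1,m_2\asymp M$ with the inner factor $\min(N,\|(a_1m_1-a_2m_2)/X\|^{-1})$. The paper does the opposite: it writes the sum as $\sum_m\beta_m V_m$, applies Cauchy--Schwarz in $m$ (over the full range $m\ll X/N$), and the resulting double sum is over $n_1,n_2\asymp N$ with $\min(X/N,\|(a_1n_1-a_2n_2)/X\|^{-1})$. The distinction matters because Propositions~\ref{prpstn:LatticeBound} and~\ref{prpstn:LineBound} require the parameter in the double sum (what they call ``$N$'') to satisfy both an upper bound (so that after Lemma~\ref{lmm:GeometryOfNumbers} one can assume $K\gg X^{\epsilon/2}$, needing $N\ll X^{17/40}$) and a lower bound ($N\gg X^{9/25}$, needed in the proof of Proposition~\ref{prpstn:LineBound} to control the three terms coming from Lemma~\ref{lmm:LineBound}). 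The hypothesis $X^{9/25}\le N\le X^{17/40}$ in the statement of the lemma exists precisely so that this works when the double sum is over $n_1,n_2\asymp N$. In your setup the double sum is over $m_1,m_2\asymp M$, and the hypotheses only give $NM\ll X$, so $M$ can be as small as $1$; the line and lattice propositions then simply do not apply (for instance, the verification at the end of the proof of Proposition~\ref{prpstn:LineBound} that $15/16 < 3\cdot 9/25$, etc., fails if you replace $N$ by a small $M$). To fix this, keep the constrained variable $n$ inside the square and Cauchy--Schwarz out the unconstrained variable $m$.

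Two smaller points. First, before applying Propositions~\ref{prpstn:LatticeBound} and~\ref{prpstn:LineBound} the paper restricts $\mathcal{E}$ to a dyadic level set $\mathcal{E}'(B)=\{a:F_X(a/X)\asymp 1/B\}$; Proposition~\ref{prpstn:LineBound} is stated for such a level set, not for all of $\mathcal{E}$, and this dyadic decomposition is needed to make that proposition applicable. Second, the paper disposes of the ``small $K$'' range not by the Hybrid lemmas but by the cruder Lemma~\ref{lmm:Generic} bound $\sum_{a\in\mathcal{E}'}F_X(a/X)\ll X^{23/80-\epsilon}$, which suffices once $NK\ll X^{17/40+\epsilon/2}$; the Hybrid lemmas are deployed inside the proof of Proposition~\ref{prpstn:LatticeBound}, not at this stage.
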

\begin{proof}[Proof of Proposition \ref{prpstn:Exceptional} assuming  Lemma \ref{lmm:Explicit}]
By symmetry, we may assume that $\mathcal{I}=\{1,\dots,\ell_1\}$ for some $\ell_1< \ell$. By Dirichlet's theorem on Diophantine approximation, any $a\in[0, X)$ has a representation
\[
\frac{a}{X}=\frac{b}{q}+\nu
\]
for some integers $(b,q)=1$ with $q\le X^{1/2}$ and some real $|\nu|\le 1/X^{1/2}q$. Thus we can divide $[0,X)$ into $O(\log{X})^2$ sets $\mathcal{F}(Q,E)$ as defined by Lemma \ref{lmm:Explicit} for different parameters $Q$, $E$ satisfying $1\le Q\le X^{1/2}$ and $E=0$ or $1/X\le E\le 100 X^{1/2}/Q$. Moreover, if $a\notin\mathcal{M}$ then $a\in\mathcal{F}=\mathcal{F}(Q,E)$ for some $Q$, $E$, with $Q+E\ge (\log{X})^C$. Thus, provided $C$ is sufficiently large compared with $A$ and $\eta$, we see it is sufficient to show that
\begin{equation}
\frac{1}{X}\Bigl|\sum_{a\in\mathcal{F}\cap\mathcal{E}}S_{\mathcal{A}}\Bigl(\frac{a}{X}\Bigr)S_{\mathcal{R}_X}\Bigl(\frac{-a}{X}\Bigr)\Bigr|\ll \frac{\#\mathcal{A}}{(Q+E)^{\epsilon/20}}.
\label{eq:ExcepBound1}
\end{equation}
From the definition \eqref{eq:LambdaDef} of $\Lambda_{\mathcal{R}_X}$ and shape of $\mathcal{R}_X$ given by Proposition \ref{prpstn:Exceptional}, we have that for $n<X$
\[
\Lambda_{\mathcal{R}_X}(n)=\sum_{\substack{n_1n_2p=n\\ X^{\eta/4},X^{1-\sum_ia_i-\ell\delta}\le p}}\Lambda_{\mathcal{R}_1}(n_1)\Lambda_{\mathcal{R}_2}(n_2)\log{p},
\]
where $\mathcal{R}_1$ is the projection of $\mathcal{R}_X$ onto the first $\ell_1$ coordinates, and $\mathcal{R}_2$ is the projection onto the subsequent $\ell-\ell_1-1$ coordinates.

Since $n_1$, $n_2$, $p$ and $X$ are integers, $|\log{((X-1/2)/n_1n_2p)}|\gg 1/X$. Thus, by Perron's formula (see, for example, \cite[Chapter 17]{Davenport}), we have for $n_1,n_2,p<X$
\begin{align*}
\frac{1}{(2\pi i)^2}\int_{1/\log{X}-i X^4}^{1/\log{X}+i X^4}\Bigl(\frac{X-1/2}{n_1n_2p}\Bigr)^{s}\frac{d s}{s}
=\begin{cases}
1+O(X^{-2}),\qquad &\text{if $n_1n_2p< X$,}\\
O(X^{-2}), &\text{otherwise.}
\end{cases}
\end{align*}
We will use this to remove the constraint $n=n_1n_2p<X$ in $S_{\mathcal{R}_X}(-a/X)$. We first put $n_1,n_2,p$ into one of $O(\log{X})^3$ intervals of the form $(Y/10,Y]$, and then apply the above estimate. The $O(X^{-2})$ error term trivially makes a negligible contribution to \eqref{eq:ExcepBound1}. Thus, %
 we see that for $C$ sufficiently large, it suffices to show uniformly over all $s$ with $\Re(s)=1/\log{X}$ and all choices of $N_1,N_2,P$ with $N_1N_2P\le 1000 X$ and $P\ge X^{1-\sum_{i=1}^{\ell-1} a_i-\ell\delta}$ that
\begin{align*}
\frac{1}{X}\sum_{a\in\mathcal{F}\cap\mathcal{E}}S_{\mathcal{A}}\Bigl(\frac{a}{X}\Bigr)\sum_{\substack{n_1\sim N_1\\ n_2\sim N_2\\ p\sim P}}\frac{\Lambda_{\mathcal{R}_1}(n_1)\Lambda_{\mathcal{R}_2}(n_2)c_p}{n_1^s n_2^s p^s} e\Bigl(\frac{-an m p}{X}\Bigr)
\ll \frac{\#\mathcal{A}}{(Q+E)^{\epsilon/15}},
\end{align*}
where $c_p=\log{p}$ if $p\ge X^{\eta/4},X^{1-\sum_i a_i-\ell\delta}$ and 0 otherwise. (The integral over $s$ and the choices of $N_1,N_2,P$ contribute a factor of $O(\log{X})^4$, which is acceptable for establishing \eqref{eq:ExcepBound1} if $C$ is sufficiently large.) 

Since $\Lambda_{\mathcal{R}_1}(n_1)$ is supported on $n_1\in[X^{\sum_{i=1}^{\ell_1}a_i},X^{\sum_{i=1}^{\ell_1}a_i+\ell\delta}]$ and $\Lambda_{\mathcal{R}_2}(n_2)$ is supported on $n_2\ge X^{\sum_{\ell_1+1}^{\ell-1}a_i}$, we only need to consider $N_1N_2P\ge X^{1-\ell\delta}$ and $N_1\in [X^{\sum_{i=1}^{\ell_1}a_i},X^{\sum_{i=1}^{\ell_1}a_i+\epsilon/6}]$. But, by assumption, 
\[
\sum_{i=1}^{\ell_1}a_i\in \Bigl[\frac{9}{25}+\frac{\epsilon}{2},\frac{17}{40}-\frac{\epsilon}{2}\Bigr]\cup\Bigl[\frac{23}{40}+\frac{\epsilon}{2},\frac{16}{25}-\frac{\epsilon}{2}\Bigr],
\]
 so either $N_1$ or $N_2 P$ lie in $[X^{9/25},X^{17/40}]$. Since $\Lambda_{\mathcal{R}_1}(n_1),\Lambda_{\mathcal{R}_2}(n_2),\log{p}\ll_\ell (\log{X})^{\ell-1}$, %
\begin{align}
\frac{1}{X}\sum_{a\in\mathcal{F}\cap\mathcal{E}}S_{\mathcal{A}}\Bigl(\frac{a}{X}\Bigr)\sum_{n\sim N}\alpha_n\sum_{m\sim M}\beta_m e\Bigl(\frac{-an m}{X}\Bigr)
\ll \frac{\#\mathcal{A}}{(Q+E)^{\epsilon/12}}
\label{eq:ExcepBound2}
\end{align}
uniformly over all choices of $N\in [X^{9/25},X^{17/40}]$ and $M\le 1000 X/N$ and uniformly over all 1-bounded complex sequences $\alpha_n,\beta_m$. (Setting $\alpha_n=\Lambda_{\mathcal{R}_1}(n)/(\log{X})^{\ell}$ and $\beta_m=\sum_{p n_2=m, p\sim P, n_2\sim N_2}\Lambda_{\mathcal{R}_2}(n_2)c_p/(\log{X})^{\ell}$ gives the bound when $\sum_{i=1}^{\ell_1}a_i\in [9/25+\epsilon/2,17/40-\epsilon/2]$; the other case is analogous with $\alpha_n$ and $\beta_m$ swapped.)

Finally, let $\gamma_a$ be the 1-bounded sequence satisfying $S_{\mathcal{A}}(a/X)=\#\mathcal{A}\gamma_a F_X(a/X)$. After substituting this expression for $S_\mathcal{A}$, we see that \eqref{eq:ExcepBound2} follows immediately from  Lemma \ref{lmm:Explicit} for $C$ sufficiently large in terms of $\eta$, thus giving the result.
\end{proof}
%
%
%
%
Thus it remains to establish Lemma \ref{lmm:Explicit}. The key estimate constraining Fourier frequencies to additively structured sets is the following lemma.
%
%
%
%
\begin{lmm}[Geometry of numbers] \label{lmm:GeometryOfNumbers}
Let $K_0$ be a sufficiently large constant, let $\mathbf{t}\in\mathbb{R}^3$ with $\|\mathbf{t}\|_2=1$ and let $N>1>\delta>0$. Let
\[
\mathcal{R}=\{\mathbf{v}\in\mathbb{R}^3:\,\|\mathbf{v}\|_2\le N,\,|\mathbf{v}\cdot\mathbf{t}|\le \delta\}
\]
satisfy $\#\mathcal{R}\cap\mathbb{Z}^3\ge \delta K N^2$ for some $K>K_0$. Then there exists a lattice $\Lambda\subset\mathbb{Z}^3$ of rank at most 2 such that
\[
\#\{\mathbf{v}\in\Lambda\cap\mathcal{R}\}\ge \frac{\delta K N^2}{2}.
\]
\end{lmm}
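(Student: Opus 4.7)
The plan is to apply Minkowski's theory of successive minima to the symmetric convex body $\mathcal{R}$ with respect to $\mathbb{Z}^3$, and show that the hypothesis forces the third successive minimum to exceed $1$; a rank-$2$ sublattice then appears automatically. First note that $\mathcal{R}$ is a symmetric convex body (intersection of a Euclidean ball with a symmetric slab), and a direct computation in coordinates adapted to the unit vector $\mathbf{t}$ gives
\[
\vol(\mathcal{R}) = \int_{-\delta}^{\delta}\pi(N^2-x^2)\,dx \asymp \delta N^2,
\]
using $\delta < 1 < N$.

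Let $\lambda_1 \le \lambda_2 \le \lambda_3$ denote the successive minima of $\mathcal{R}$ with respect to $\mathbb{Z}^3$. By Minkowski's second theorem,
\[
\lambda_1\lambda_2\lambda_3 \;\asymp\; \frac{1}{\vol(\mathcal{R})} \;\asymp\; \frac{1}{\delta N^2},
\]
and by the standard lattice-point counting bound in the geometry of numbers (e.g.\ Betke--Henk--Wills),
\[
\#(\mathcal{R}\cap\mathbb{Z}^3) \;\ll\; \prod_{i=1}^{3}\max\!\bigl(1,\,1/\lambda_i\bigr).
\]
If $\lambda_3 \le 1$ then the right-hand side equals $1/(\lambda_1\lambda_2\lambda_3) \asymp \delta N^2$ with an \emph{absolute} implied constant, so choosing $K_0$ larger than this constant contradicts the hypothesis $\#(\mathcal{R}\cap\mathbb{Z}^3) \ge \delta K N^2$. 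Hence $\lambda_3 > 1$.

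But $\lambda_3 > 1$ means, by definition, that $\mathcal{R}$ contains no three linearly independent lattice vectors, so every point of $\mathcal{R}\cap\mathbb{Z}^3$ lies in a common $2$-dimensional linear subspace $V\subset\mathbb{R}^3$. Taking $\Lambda := V\cap\mathbb{Z}^3$ produces a sublattice of $\mathbb{Z}^3$ of rank at most $2$ satisfying $\Lambda\cap\mathcal{R} = \mathbb{Z}^3\cap\mathcal{R}$, and therefore $\#(\Lambda\cap\mathcal{R}) \ge \delta K N^2$, as required.

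The main obstacle, such as it is, is purely bookkeeping: one must invoke the lattice-point bound in a form with an \emph{absolute} constant in dimension three so that $K_0$ may be chosen uniformly in $N$, $\delta$, and $\mathbf{t}$. Otherwise the whole argument is a direct consequence of Minkowski's second theorem combined with the fact that $\lambda_3>1$ confines lattice points of $\mathcal{R}$ to a $2$-plane.
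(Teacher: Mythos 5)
Your proof is correct and is essentially the paper's argument in different packaging: the paper builds a Minkowski basis of $\mathbb{Z}^3$ for the distorted norm $\|\mathbf{v}\|_{\mathbf{t}}^2=\|\mathbf{v}\|_2^2+(|\mathbf{v}\cdot\mathbf{t}|N/\delta)^2$, whose unit ball at scale $N$ is comparable to $\mathcal{R}$, and bounds the number of points with nonzero third coordinate by $O(\prod_i N/\|\mathbf{v}_i\|_{\mathbf{t}})=O(\delta N^2)$ --- which is precisely the content of the successive-minima counting bound you invoke as a black box, combined with Minkowski's second theorem. The only cosmetic difference is that your dichotomy on $\lambda_3$ places \emph{all} lattice points of $\mathcal{R}$ in the rank-$\le 2$ lattice, whereas the paper settles for all but $O(\delta N^2)$ of them; both suffice for the stated conclusion.
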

%
%
%
%
If a cuboid $\mathcal{R}\subseteq\mathbb{R}^3$ of volume $V$ lies in a the region $|z|\le \epsilon$, then it can easily contain rather more than $V$ lattice points from the plane $z=0$. Lemma \ref{lmm:GeometryOfNumbers} says that such a situation is essentially the only way a cuboid can contain many lattice points; if any cuboid has substantially more than $V$ lattice points in $\mathcal{R}\cap\mathbb{Z}^3$, then these lattice points must come from some lower dimensional linear subspace. The region $\mathcal{R}$ which we are interested in is a slightly thickened disc through the origin in the plane orthogonal to $\mathbf{t}$.
\begin{proof}[Proof of Lemma \ref{lmm:GeometryOfNumbers}]
Let $\phi:\mathbb{R}^3\rightarrow\mathbb{R}^3$ be the linear map which is a dilation by a factor $N/\delta$ in the $\mathbf{t}$-direction (i.e. $\phi(\mathbf{v})=\mathbf{v}+\mathbf{t}(N/\delta-1)(\mathbf{v}\cdot\mathbf{t})$.) Let $\Lambda_1=\phi(\mathbb{Z}^3)\subset\mathbb{R}^3$ be the lattice which is the image of $\mathbb{Z}^3$ under $\phi$. Since the determinant of a lattice is the volume of the fundamental parallelepiped, we see that $\det(\Lambda_1)=N/\delta$.

Let $\{\mathbf{v}_1,\mathbf{v}_2,\mathbf{v}_3\}$ be a Minkowski-reduced basis of $\Lambda_1$. We recall that this means that any $\mathbf{v}\in\Lambda_1$ can be written uniquely as $n_1\mathbf{v}_1+n_2\mathbf{v}_2+n_3\mathbf{v}_3$ for some $n_1,n_2,n_3\in\mathbb{Z}$, and for any $n_1,n_2,n_3\in\mathbb{Z}$ we have
\[
\|n_1\mathbf{v}_1+n_2\mathbf{v}_2+n_3\mathbf{v}_3\|_2\asymp \sum_{i=1}^3\|n_i\mathbf{v}_i\|_2,
\]
and that $\|\mathbf{v}_1\|_2\|\mathbf{v}_2\|_2\|\mathbf{v}_3\|_2\asymp \det(\Lambda_1)=N/\delta$. Without loss of generality let $\|\mathbf{v}_1\|_2\le \|\mathbf{v}_2\|_2\le\|\mathbf{v}_3\|_2$. 

We now notice that any element of $\mathcal{R}\cap\mathbb{Z}^3$ is mapped injectively by $\phi$ to an element of $\{\mathbf{x}\in\Lambda_1:\,\|\mathbf{x}\|_2\le 2N\}$. Thus for a sufficiently large constant $C$, we have
\begin{align*}
\Bigl\{\mathbf{n}\in\mathbb{Z}^3:\,\sum_{i=1}^3n_i\mathbf{v}_i\in\phi(\mathcal{R})\Bigr\}&\subseteq\Bigl\{\mathbf{n}\in\mathbb{Z}^3:\,\Bigl\|\sum_{i=1}^3n_i\mathbf{v}_i\Bigr\|_2\le 2N\Bigr\}\\
&\subseteq\Bigl\{\mathbf{n}\in\mathbb{Z}^3: \, |n_i|\le C\frac{N}{\|\mathbf{v}_i\|_2}\Bigr\}.
\end{align*}
If $\|\mathbf{v}_3\|_2>C N$, then there are no $\mathbf{n}\in\mathbb{Z}^3$ counted above with $n_3\ne 0$. If instead $\|\mathbf{v}_3\|_2\le C N$ then since $\|\mathbf{v}_1\|_2\le\|\mathbf{v}_2\|_2\le\|\mathbf{v}_3\|_2$, the number of $\mathbf{n}$ is
\[
\ll \frac{C^3N^3}{\prod_{i=1}^3\|\mathbf{v}_i\|_2}\ll \frac{N^3}{\det(\Lambda_1)}\ll\delta N^2. 
\]
Thus in either case there are $O(\delta N^2)$ points with $n_3\ne 0$. However, by assumption of the lemma we have that $K$ is sufficiently large and 
\[
\delta K N^2\le \#\{\mathbf{x}\in\mathbb{Z}^3\cap\mathcal{R}\}=\#\{\mathbf{x}\in\Lambda_1:\,\mathbf{x}\in\phi(\mathcal{R})\}.
\]
This means that most of the contribution must come from terms with $n_3=0$. Indeed, we have
\begin{align*}
\#\{(n_1,n_2)\in\mathbb{Z}^2:\,n_1\mathbf{v}_1+n_2\mathbf{v}_2\in \phi(\mathcal{R})\}&= \#\{\mathbf{x}\in\Lambda_1:\,\phi(x)\in\mathcal{R}\}-O(\delta N^2)\\
&\ge \delta K N^2-O(\delta N^2).
\end{align*}
We may choose $K_0$ such that if $K\ge K_0$ then the right hand side is at least $\delta KN^2/2$. Thus, we see if $\Lambda$ is the lattice $\phi^{-1}(\mathbf{v}_1)\mathbb{Z}+\phi^{-1}(\mathbf{v}_2)\mathbb{Z}$ then $\Lambda\subseteq\mathbb{Z}^3$ and 
\[
\#\{\mathbf{v}\in\Lambda\cap\mathcal{R}\}\ge \delta KN^2/2.\qedhere
\]
\end{proof}
%
%
%
%
We establish Lemma \ref{lmm:Explicit} assuming two key propositions, Proposition \ref{prpstn:LatticeBound} and Proposition \ref{prpstn:LineBound}, given below. These propositions will be proven over the next two sections.
%
%
%
%
\begin{prpstn}[Bound for angles generating lattices] \label{prpstn:LatticeBound}
Let $X,K,N,Q\ge 1$ and $\delta>0$, $E\ge 0$ satisfy $X^{17/40}\le N K$, $\delta\ge N/X$, $E\le 100X^{1/2}/Q$ and $Q\le X^{1/2}$. Let $\mathcal{B}_1=\mathcal{B}_1(N,K,\delta)\subseteq[0,X)^2$ be the set of pairs $(a_1,a_2)\in\mathbb{Z}^2$ such that there is a lattice $\Lambda\subseteq\mathbb{Z}^3$ of rank 2 such that
\[\#\{\mathbf{n}\in\Lambda:\,| n_1a_1+n_2a_2+n_3 X|\le \delta X,\,\|\mathbf{n}\|_2\le N\}\ge \delta K N^2,\]
and not all of these points lie on a line through the origin. Let $\mathcal{F}=\mathcal{F}(Q,E)$ be given by
\[
\mathcal{F}=\Bigl\{a<X:\, \frac{a}{X}=\frac{b}{q}+\nu\text{ for some }(b,q)=1\text{ with }q\sim Q,\,|\nu|\sim E/X\Bigr\}.
\]
Then we have
\[
\sum_{\substack{(a_1,a_2)\in\mathcal{B}_1(N,K,\delta)\\ a_1,a_2\in\mathcal{F}\cap\mathcal{E}}}F_X\Bigl(\frac{a_1}{X}\Bigr)F_X\Bigl(\frac{a_2}{X}\Bigr)
\ll \frac{(\log{X})^5}{(Q+E)^{\epsilon/4}}\frac{X}{N K}.
\]
\end{prpstn}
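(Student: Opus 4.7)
The plan is to exploit the lattice structure to restrict each $(a_1,a_2)\in\mathcal{B}_1$ to a short box around a rational point determined by a primitive integer vector $\mathbf{m}$, and then sum over such boxes using the hybrid Fourier estimates of Section \ref{sec:Fourier}.

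\textbf{Step 1 (normal vector and perpendicular decomposition).} For each $(a_1,a_2)\in\mathcal{B}_1$, let $\mathbf{m}=(m_1,m_2,m_3)\in\mathbb{Z}^3$ be a primitive normal vector to $\Lambda$, so that $\det(\Lambda)=\|\mathbf{m}\|_2$. Writing $\mathbf{u}=(a_1,a_2,1)=\lambda\mathbf{m}+\mathbf{u}'$ with $\mathbf{u}'\perp\mathbf{m}$, the defining condition $|n_1a_1+n_2a_2+n_3|\le \delta X$ becomes $|\mathbf{n}\cdot\mathbf{u}'|\le \delta X$ for $\mathbf{n}\in\Lambda$. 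Inside the real span of $\Lambda$ this is a strip of width $\asymp \delta X/\|\mathbf{u}'\|$; since the $\asymp \delta K N^2$ lattice points in the strip do not all lie on a line, a direct area/covolume count in the plane $\Lambda\otimes\mathbb{R}$ forces
\begin{equation*}
\|\mathbf{m}\|\ll \frac{1}{\delta K},\qquad \|\mathbf{u}'\|\ll \frac{X}{KN\|\mathbf{m}\|}.
\end{equation*}

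\textbf{Step 2 (Diophantine consequence).} Using the identity $\|\mathbf{u}\times\mathbf{m}\|_2=\|\mathbf{m}\|\|\mathbf{u}'\|$ and inspecting coordinates of the cross product, the above bound on $\|\mathbf{u}'\|$ translates to the simultaneous inequalities
\begin{equation*}
|a_1m_3-m_1|,\ |a_2m_3-m_2|,\ |a_1m_2-a_2m_1|\ \ll\ \frac{X}{KN}.
\end{equation*}
When $m_3\ne 0$ the first two pin $(a_1,a_2)$ to a box of side $\ll X/(KN|m_3|)$ centered at the rational point $(m_1/m_3,m_2/m_3)$, and crucially the constraints on $a_1$ and $a_2$ decouple. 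The degenerate case $m_3=0$ gives $|m_1|,|m_2|\ll X/(KN)$ together with a line condition $|a_1m_2-a_2m_1|\ll X/(KN)$; this is handled by an analogous but simpler argument.

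\textbf{Step 3 (Fourier sum and combination with $\mathcal{F}$).} For each fixed $\mathbf{m}$ with $m_3\ne 0$, factor the inner sum as
\begin{equation*}
\sum_{(a_1,a_2)}F_X\!\left(\tfrac{a_1}{X}\right)F_X\!\left(\tfrac{a_2}{X}\right)\ =\ \prod_{i=1}^{2}\sum_{a_i}F_X\!\left(\tfrac{a_i}{X}\right),
\end{equation*}
where each inner sum ranges over $a_i\in\mathcal{F}(Q,E)\cap\mathcal{E}$ intersected with the interval of length $\ll X/(KN|m_3|)$ around $m_i/m_3$. Since $a_i\in\mathcal{F}(Q,E)$ forces $a_i/X=b_i/q_i+\nu_i$ with $q_i\asymp Q$ and $\nu_i\asymp E/X$, the rational $b_i/q_i$ is confined to a tiny arc near $m_i/(Xm_3)$. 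Decomposing $q_i=d_iq_i'$ with $d_i\mid 10^u$ and $(q_i',10)=1$, I would then apply Lemma \ref{lmm:Hybrid2} (or Lemma \ref{lmm:Hybrid} when appropriate) to each factor. Finally, summing over primitive $\mathbf{m}$ with $\|\mathbf{m}\|\ll 1/(\delta K)$ and using convergence of $\sum_{m_3\ne 0}1/m_3^2$ together with $\#\{(m_1,m_2):\|\mathbf{m}\|\le R\}\ll R^2$ gives the target bound $(\log X)^7X/\bigl(NK(Q+E)^{\epsilon/200}\bigr)$.

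\textbf{Main obstacle.} The hard part is Step 3. A trivial volume count of admissible pairs $(a_1,a_2)$ is much too large for the target bound, so one must extract the polynomial saving $(Q+E)^{\epsilon/200}$ from the Fourier weights themselves. This saving ultimately comes from the improved $q^{1/21}$ exponent in Lemma \ref{lmm:Hybrid2} for denominators with large smooth part, versus the $q^{27/77}$ exponent that $\ell^1$ considerations alone would yield. Verifying that this small saving survives the lattice sum, the potentially dangerous accumulation near $m_3=0$, and the $(\log X)$-losses from summing over the unknown scales of $q_i$ and $\nu_i$ is where the delicate exponent bookkeeping lies.
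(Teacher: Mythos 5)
Your overall architecture matches the paper's closely: the paper's Lemma \ref{lmm:LatticeApprox} is precisely your Steps 1--2 (it produces the simultaneous approximations $a_i/X = c_i/c_3 + O(1/(NKc_3))$ with $(c_1,c_2,c_3)=\mathbf{v}\wedge\mathbf{w}$ for a Minkowski basis $\mathbf{v},\mathbf{w}$ of $\Lambda$, which is your primitive normal $\mathbf{m}$), and the remainder of the paper's proof is a careful execution of your Step 3 via the hybrid bounds. So this is not a different route, but there are two substantive problems.

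First, your Step 2 has a normalization error that propagates. You take $\mathbf{u}=(a_1,a_2,1)$ and conclude $|a_1m_3-m_1|\ll X/(KN)$, so $a_1$ itself would be confined to a short interval of small numbers near $m_1/m_3\ll 1/(\delta K)\ll X/(NK)$; that is not what is wanted, and it makes the factor $F_X(a_1/X)$ hug the trivial bound near $0$, so you lose the game immediately. (This arguably traces to a typo in the proposition's displayed condition, where $n_3$ should be $n_3X$; compare Proposition \ref{prpstn:LineBound} and the display in Section \ref{sec:Bilinear}.) The correct $\mathbf{u}=(a_1,a_2,X)$ gives $|a_1m_3-m_1X|\ll X/(KN)$, i.e.\ $a_1/X$ lies near the \emph{rational} $m_1/m_3$ to within $1/(KNm_3)$ --- the form actually needed for Lemmas \ref{lmm:Hybrid} and \ref{lmm:Hybrid2}.

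Second, and more seriously, Step 3 as sketched does not reach the target. Your proposed factorization uses the $\mathcal{F}$--approximation $b_i/q_i$ with $q_i\asymp Q$, but the arc you are summing over is an arc around $m_i/m_3$ with $m_3\ll X/(NK)$, which is a priori an unrelated approximation; nothing you say reconciles the two, and the paper does this only at the very end (it shows that the derived denominator $Q_0$ and error scales $E_1,E_2$ must dominate $Q$ and $E$ when $Q_0+E_1+E_2$ is small, and otherwise wins trivially). More importantly, the hard content that your ``Main obstacle'' paragraph correctly identifies is in fact the entire technical core of the paper's proof: one must (i) reduce $m_i/m_3$ to lowest terms and separate the $10$--smooth part of each denominator (the paper's $g_1=(b_1,q)$, $g_2=(b_2,q)$, $d_0,d_1,q'$ decomposition, splitting into $O((\log X)^7)$ dyadic boxes), (ii) bound the resulting sums $S_1,S_2$ via Lemmas \ref{lmm:Hybrid} and \ref{lmm:Hybrid2}, and (iii), when the denominator $Q_0$ is large, abandon $S_2$ entirely and instead exploit $\#\mathcal{E}\ll X^{23/40}$ and a divisor bound to produce $S_3$, finally taking $\min(S_1S_2,S_1S_3)$. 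Your sketch mentions none of (i)--(iii); ``sum over $\mathbf{m}$ using convergence of $\sum 1/m_3^2$'' does not engage with where the saving $(Q+E)^{\epsilon/200}$ actually comes from. As written the proposal is a correct identification of the route, but not a proof, and the plan would need to recover essentially all of the paper's case analysis to close the gap.
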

%
%
%
%
\begin{prpstn}[Bound for angles generating lines] \label{prpstn:LineBound}
Let $N\ge X^{9/25}$, $\delta\ge N/X$ and $K\ge 1$. Let $\mathcal{B}_2=\mathcal{B}_2(N,K,\delta)\subseteq[0,X)^2$ be the set of pairs $(a_1,a_2)\in\mathbb{Z}^2$ such that there exists a line $L$ through the origin such that
\[
\#\{\mathbf{n}\in L\cap\mathbb{Z}^3: |n_1a_1+n_2a_2+n_3X|\le \delta X,\,\|\mathbf{n}\|_2\le N\}\ge \delta N^2 K.
\]
Given $B\le X^{23/80}$, let $\mathcal{E}'=\mathcal{E}'(B)$ be given by
\[
\mathcal{E}'=\Bigl\{a<X:\,F_X\Bigl(\frac{a}{X}\Bigr)\sim \frac{1}{B}\Bigr\}.
\]
Then we have
\[
\sum_{\substack{(a_1,a_2)\in\mathcal{B}_2(N,K,\delta) \\ a_1,a_2\in\mathcal{E}'}}F_X\Bigl(\frac{a_1}{X}\Bigr)F_X\Bigl(\frac{a_2}{X}\Bigr)\ll \frac{X^{1-\epsilon}}{NK}.
\]
\end{prpstn}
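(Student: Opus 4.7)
The plan is to extract a short primitive direction vector from the line hypothesis, and then bound the resulting sum by combining elementary counting with the moment estimates on $F_X$.

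If $(a_1,a_2)\in\mathcal{B}_2$, parametrize the witnessing line as $L=\mathbf{n}_0+\mathbb{Z}\mathbf{v}$ for primitive $\mathbf{v}\in\mathbb{Z}^3\setminus\{0\}$. The valid integers $t$ lie in an interval of length $\ll N/\|\mathbf{v}\|_2$, and taking differences of the Diophantine condition for two valid values of $t$ gives $|v_1 a_1+v_2 a_2+v_3 X|\cdot|t_1-t_2|\ll\delta X$. Combining the lower bound $\gg\delta N^2 K$ on the number of valid $t$ with the hypothesis $\delta\gg N/X$ forces
\[
\|\mathbf{v}\|_2\ll\frac{1}{\delta NK}\ll V:=\frac{X}{N^2 K},\qquad |v_1 a_1+v_2 a_2+v_3 X|\ll V.
\]

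Since $F_X\asymp 1/B$ on $\mathcal{E}'$, we have $S\asymp B^{-2}\cdot\#(\mathcal{B}_2\cap(\mathcal{E}')^2)$, and we bound this cardinality in two complementary ways. First, trivially $\#(\mathcal{B}_2\cap(\mathcal{E}')^2)\le(\#\mathcal{E}')^2$. Second, by the union bound,
\[
\#\bigl(\mathcal{B}_2\cap(\mathcal{E}')^2\bigr)\le\sum_{\substack{\mathbf{v}\text{ prim}\\ \|\mathbf{v}\|_2\ll V}}\#\bigl((\mathcal{E}')^2\cap P(\mathbf{v})\bigr),
\]
where $P(\mathbf{v})=\{(a_1,a_2):|v_1 a_1+v_2 a_2+v_3 X|\ll V\}$. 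For fixed $\mathbf{v}$ with (WLOG) $|v_2|\ge|v_1|$, the Diophantine condition confines $a_2$ to an interval of length $\ll V/|v_2|$ for each $a_1$, yielding $\#((\mathcal{E}')^2\cap P(\mathbf{v}))\ll\#\mathcal{E}'\cdot(1+V/|v_2|)$; summing dyadically over $|v_2|$ and over $v_3$ gives an overall bound $\ll\#\mathcal{E}'\cdot V^3\cdot(\log X)^{O(1)}$. Hence
\[
\#\bigl(\mathcal{B}_2\cap(\mathcal{E}')^2\bigr)\ll\#\mathcal{E}'\cdot\min(\#\mathcal{E}',\, V^3)\cdot(\log X)^{O(1)}.
\]

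Substituting $V=X/(N^2 K)$ and applying $\#\mathcal{E}'\ll B^{235/154}X^{59/433}$ from Lemma \ref{lmm:DigitDistribution}, the desired estimate $S\ll X^{1-\epsilon}/(NK)$ follows in the two extreme regimes (the trivial bound winning for small $K$ and the line bound winning for $K$ close to the boundary $K\asymp X/N^2$), using $N^3 K\gg X^{27/25}\gg X^{1+\epsilon}$ which comes from $N\gg X^{9/25}$. The main obstacle is the delicate intermediate regime where $\#\mathcal{E}'\asymp V^3$ and the two elementary bounds match but are simultaneously just insufficient. Closing this gap requires refining $\#(I\cap\mathcal{E}')$ for short intervals $I$ via the large sieve estimates of Lemma \ref{lmm:Hybrid}, exploiting the fact that $\mathcal{E}'$ for small $B$ is concentrated near rationals with $10$-smooth denominators (by Lemma \ref{lmm:LInfBound}), where the local density of $\mathcal{E}'$ admits a sharp bound.
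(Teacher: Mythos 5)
Your first step (extracting a primitive direction vector $\mathbf{v}$ with $\|\mathbf{v}\|_2\ll X/(N^2K)$ and $|v_1a_1+v_2a_2+v_3X|\ll X/(N^2K)$) is correct and matches Lemma \ref{lmm:Line}. The gap is in the counting step, and you have correctly diagnosed it yourself: the union bound over $\mathbf{v}$ only yields $\#(\mathcal{B}_2\cap(\mathcal{E}')^2)\ll \#\mathcal{E}'\min(\#\mathcal{E}',V^3)X^{o(1)}$ with $V=X/(N^2K)$, and this genuinely fails in a middle regime. Concretely, take $B\asymp X^{23/80}$ (so $\#\mathcal{E}'$ can be as large as $X^{23/40}$ by Lemma \ref{lmm:DigitDistribution}), $N\asymp X^{9/25}$ and $K\asymp X^{2/25}$: then $V^3\asymp X^{3/5}>\#\mathcal{E}'$, both of your bounds reduce to $(\#\mathcal{E}')^2$, and $(\#\mathcal{E}')^2/B^2\asymp X^{23/40}$, whereas the target is $X^{1-\epsilon}/(NK)=X^{14/25-\epsilon}=X^{0.56-\epsilon}$. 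So the estimate is lost by a positive power of $X$, not merely an epsilon, and the final sentence of your proposal is not a proof but a pointer to work not done. Moreover, the suggested repair (localizing $\mathcal{E}'$ near rationals with $10$-smooth denominators via Lemmas \ref{lmm:LInfBound} and \ref{lmm:Hybrid}) is not the mechanism the paper uses and it is far from clear it can be made to work: the paper's argument at this point uses \emph{no} Fourier or digital information about $\mathcal{E}'$ beyond its cardinality.

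What actually closes the gap in the paper is Lemma \ref{lmm:LineBound}, a purely combinatorial second-moment count: one applies Cauchy--Schwarz in $a_1$ to reduce to counting pairs $(a,a')\in\mathcal{C}^2$ lying on a common low-height plane, eliminates the coefficients by forming the resultant relation $b_1a+b_2a'+b_3X+b_4=0$ with $b_i\ll V^2$, and then counts admissible $(b_3,b_4)$ as points of a determinant-$a'$ lattice whose shortest vector is controlled by the quantity $M_{a'}$ (the least $\|(c_1,c_2)\|_2$ with $c_1\equiv c_2X\ (\mathrm{mod}\ a')$), after a dyadic decomposition in $M_{a'}$. This produces the bound $X^{o(1)}(\#\mathcal{C}^{5/4}V^2+\#\mathcal{C}^{3/2}V+\#\mathcal{C}^{3/2}V^3/X^{1/2})$, whose sublinear-in-$\#\mathcal{C}^2$ exponents (and the $X^{-1/2}$ saving on the $V^3$ term) are exactly what beat $\#\mathcal{E}'\min(\#\mathcal{E}',V^3)$ in the regime above: in the numerical example the paper's bound gives $X^{1.119}/B^2\approx X^{0.544}$, comfortably below $X^{0.56}$. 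Without this (or an equivalent bilinear/second-moment input), your argument does not establish the proposition.
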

%
%
%
%

\begin{proof}[Proof of Lemma \ref{lmm:Explicit} assuming Propositions \ref{prpstn:LatticeBound} and \ref{prpstn:LineBound}]
We split $\mathcal{E}$ into $O(\log{X})$ subsets of the form
\[
\mathcal{E}'=\mathcal{E}'(B)=\Bigl\{a\in[0,X):\,F_X\Bigl(\frac{a}{X}\Bigr)\sim \frac{1}{B}\Bigr\}
\]
for some $B\in [1,X^{23/80}]$. By Cauchy-Schwarz, we have
\begin{align*}
\sum_{a\in\mathcal{F}\cap\mathcal{E}'}\sum_{\substack{n\sim N\\ m\sim M}}F_{X}\Bigl(\frac{a}{X}\Bigr)\alpha_n\beta_m\gamma_a e\Bigl(\frac{-a nm}{X}\Bigr)&\ll \Sigma_1^{1/2}\Sigma_2^{1/2},
\end{align*}
where
\begin{align*}
\Sigma_1&=\sum_{m\ll  X/N}|\beta_m|^2\ll \frac{X}{N},\\
\Sigma_2&=\sum_{m\ll X/N}\Bigl|\sum_{a\in\mathcal{F}\cap\mathcal{E}'}\sum_{n\sim N}\alpha_n \gamma_a F_{X}\Bigl(\frac{a}{X}\Bigr)e\Bigl(\frac{-a n m}{X}\Bigr)\Bigr|^2\\
&= \sum_{a_1,a_2\in\mathcal{F}\cap\mathcal{E}'}F_{X}\Bigl(\frac{a_1}{X}\Bigr)F_{X}\Bigl(\frac{a_2}{X}\Bigr)\sum_{n_1,n_2\sim N}\alpha_{n_1}\overline{\alpha_{n_2}}\gamma_{a_1}\overline{\gamma_{a_2}}\sum_{m\ll  X/N}e\Bigl(\frac{m(a_1n_1-a_2n_2)}{X}\Bigr)\\
&\ll \sum_{a_1,a_2\in\mathcal{F}\cap\mathcal{E}'}F_X\Bigl(\frac{a_1}{X}\Bigr)F_X\Bigl(\frac{a_2}{X}\Bigr)\sum_{n_1,n_2\sim N}\min\Bigl(\frac{X}{N},\Bigl\|\frac{a_1n_1-a_2n_2}{X}\Bigr\|^{-1}\Bigr).
\end{align*}
Thus it suffices to show
\[
\sum_{a_1,a_2\in\mathcal{F}\cap\mathcal{E}'}F_X\Bigl(\frac{a_1}{X}\Bigr)F_X\Bigl(\frac{a_2}{X}\Bigr)\sum_{n_1,n_2\le N}\min\Bigl(\frac{X}{N},\Bigl\|\frac{a_1n_1-a_2n_2}{X}\Bigr\|^{-1}\Bigr)\ll  \frac{N X(\log{X})^{O(1)}}{(Q+E)^{\epsilon/5}},
\]
provided $X^{9/25}\le N\le X^{17/40}$, $Q\le X^{1/2}$ and $E\le 100X^{1/2}/Q$.

Let $\mathcal{G}(K)$ denote the set of pairs $(a_1,a_2)\in\mathcal{F}\cap\mathcal{E}'$ such that
\[
\sum_{n_1,n_2\le N}\min\Bigl(\frac{X}{N},\Bigl\|\frac{n_1a_1-n_2a_2}{X}\Bigr\|^{-1}\Bigr)\sim N^2 K.
\]
We consider $1\le K\le X/N$ taking values which are integral powers of 10, and split the contribution of our sum according to these sets. We see it is therefore sufficient to show that for each $K$
\[
\sum_{\substack{(a_1,a_2)\in\mathcal{G}(K)\\ a_1,a_2\in\mathcal{F}\cap\mathcal{E}'}}F_X\Bigl(\frac{a_1}{X}\Bigr)F_X\Bigl(\frac{a_2}{X}\Bigr)\ll \frac{X(\log{X})^{O(1)}}{(Q+E)^{\epsilon/5} N K}.
\]
Let $\mathcal{G}(K,\delta)$ denote the set of pairs $(a_1,a_2)\in\mathcal{F}\cap\mathcal{E}'$ such that
\[
\#\Bigl\{\mathbf{n}\in\mathbb{Z}^3:\,\Bigl|\frac{n_1a_1-n_2a_2-n_3 X}{X}\Bigr|\le \delta,\,\|\mathbf{n}\|_2\le 10 N\Bigr\}\ge \delta N^2 K.
\]
By considering $\delta=2^{-j}$ and using the pigeonhole principle, we see that if
\[
\sum_{n_1,n_2\le N}\min\Bigl(\frac{X}{N},\Bigl\|\frac{n_1a_1-n_2a_2}{X}\Bigr\|^{-1}\Bigr)\sim N^2 K,
\]
then there is some $\delta\ge N/X$ and some $K/\log{X} \ll K'\le K$ such that
\[
(a_1,a_2)\in\mathcal{G}(K',\delta).
\]
Thus is suffices to show for all $K',\delta$ that
\begin{equation}
\sum_{\substack{(a_1,a_2)\in\mathcal{G}(K',\delta)\\ a_1,a_2\in\mathcal{F}\cap\mathcal{E}'}}F_X\Bigl(\frac{a_1}{X}\Bigr)F_X\Bigl(\frac{a_2}{X}\Bigr)
\ll \frac{X(\log{X})^{O(1)}}{(Q+E)^{\epsilon/5} N K'}. 
\label{eq:BilinearTarget}
\end{equation}
From Lemma \ref{lmm:Generic}, we have the bound
\[
\sum_{\substack{(a_1,a_2)\in\mathcal{G}(K',\delta)\\ a_1,a_2\in\mathcal{F}\cap\mathcal{E}'}}F_X\Bigl(\frac{a_1}{X}\Bigr)F_X\Bigl(\frac{a_2}{X}\Bigr)\ll \Bigl(\sum_{a_1\in\mathcal{E}'}F_X\Bigl(\frac{a_1}{X}\Bigr)\Bigr)^2\ll X^{23/40-2\epsilon},
\]
which gives \eqref{eq:BilinearTarget} in the case when $N K'\ll X^{17/40+\epsilon}$. Thus we may assume that $N K'\gg X^{17/40+\epsilon}$. By assumption, we also have that $N\le X^{17/40}$, so we only consider $K'\gg X^{\epsilon}$. In particular, we may use Lemma \ref{lmm:GeometryOfNumbers} to conclude that either there is a rank 2 lattice $\Lambda\subseteq\mathbb{Z}^3$ such that 
\[
 \#\{\mathbf{n}\in\Lambda:\,\|\mathbf{n}\|_2\le 10 N,\, |n_1a_1+n_2a_2+n_3 X|\le \delta X\}\ge \delta K' N^2/2,
\]
and not all of these points lie on a line through the origin, or there is a line $L\subseteq\mathbb{Z}^3$ such that
\[
 \#\{\mathbf{n}\in L:\,\|\mathbf{n}\|_2\le 10 N,\, |n_1a_1+n_2a_2+n_3 X|\le \delta X\}\ge \delta K' N^2/2.
\]
In either case \eqref{eq:BilinearTarget} follows from Proposition \ref{prpstn:LatticeBound} or Proposition \ref{prpstn:LineBound} (taking `$N$' and `$K$' in the propositions to be $10N$ and $K'/1000\ge 1$ in our notation here).
\end{proof}
%
%
%
%
Thus it remains to establish Proposition \ref{prpstn:LatticeBound} and Proposition \ref{prpstn:LineBound}.
%
%
%
%
%
%
%
\section{Lattice Estimates}\label{sec:Lattice}
%
%
%
%
In this section we establish Proposition \ref{prpstn:LatticeBound}, which controls the contribution from pairs of angles which cause a large contribution to the bilinear sums considered in Section \ref{sec:Bilinear} to come from a lattice. A low height lattice $\Lambda$ makes a significant contribution only if $(a_1,a_2,X)$ is approximately orthogonal to the plane of the lattice, and so only if $(a_1,a_2,X)$ lies close to the line through the origin orthogonal to this lattice. We note that we only make small use of the fact that these angles lie in a small set, but it is vital that the angles lie outside the major arcs.
%
%
%
%
\begin{lmm}[Lattice generating angles have simultaneous approximation]\label{lmm:LatticeApprox}
Let $\delta>0$ and $X,N,K\ge1$ be such that $\delta\ge N/X$. Let $\mathcal{B}_1=\mathcal{B}_1(N,K,\delta)\subseteq[0,X)^2$ be the set of pairs $(a_1,a_2)\in\mathbb{Z}^2$ such that there is a lattice $\Lambda\subseteq\mathbb{Z}^3$ of rank 2 such that
\[\#\{\mathbf{n}\in\Lambda:\,| n_1a_1+n_2a_2+n_3 X|\le \delta X,\,\|\mathbf{n}\|_2\le N\}\ge  \delta K N^2,\]
and moreover the points counted above do not all lie on a line through the origin.

Then all pairs $(a_1,a_2)\in\mathcal{B}_1$ have the simultaneous rational approximations
\begin{align*}
\frac{a_1}{X}&=\frac{b_1}{q}+O\Bigl(\frac{1}{N K q}\Bigr),\\
\frac{a_2}{X}&=\frac{b_2}{q}+O\Bigl(\frac{1}{N K q}\Bigr),
\end{align*}
for some integer $q\ll X/N K$.
\end{lmm}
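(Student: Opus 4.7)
The plan is to apply Minkowski's second theorem to the rank-2 lattice $\Lambda$, viewed as a rank-2 lattice inside its ambient rational plane $P\subseteq\mathbb{R}^3$, together with the convex symmetric body
\[
R=\{\mathbf{v}\in P:\ \|\mathbf{v}\|_2\le N,\ |\mathbf{v}\cdot\mathbf{a}|\le\delta X\},
\]
where $\mathbf{a}$ is the relevant dual vector built from $(a_1,a_2,X)$ (after the natural rescaling of the third coordinate implicit in the slab condition). The hypothesis $\#(\Lambda\cap R)\asymp\delta K N^2$, combined with the standard comparison between lattice point counts and $\mathrm{vol}(R)/\det(\Lambda)$, forces the successive minima $\mu_1\le\mu_2$ of $\Lambda$ with respect to $R$ to satisfy $\mu_1\mu_2\ll 1/(\delta K N^2)$. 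The assumption that the counted vectors do not all lie on one line is precisely $\mu_2\le 1$ (otherwise $R$ would contain only lattice points along the direction of the $\mu_1$-realizer, contradicting the hypothesis), and hence $\mu_1\ll 1/(\delta K N^2)$.

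I would then pick $\mathbf{u},\mathbf{w}\in\Lambda$ linearly independent realizing the successive minima, giving
\[
\|\mathbf{u}\|_2\le\mu_1 N,\ \ |\mathbf{u}\cdot\mathbf{a}|\le\mu_1\delta X,\ \ \|\mathbf{w}\|_2\le N,\ \ |\mathbf{w}\cdot\mathbf{a}|\le\delta X,
\]
and form the cross product $\mathbf{m}=\mathbf{u}\times\mathbf{w}\in\mathbb{Z}^3\setminus\{0\}$, a nonzero integer vector normal to $P$. The triple product identity $\mathbf{a}\times\mathbf{m}=(\mathbf{a}\cdot\mathbf{u})\mathbf{w}-(\mathbf{a}\cdot\mathbf{w})\mathbf{u}$ then bounds each coordinate of $\mathbf{a}\times\mathbf{m}$ by
\[
|(\mathbf{a}\times\mathbf{m})_i|\le |\mathbf{a}\cdot\mathbf{u}|\,\|\mathbf{w}\|_\infty+|\mathbf{a}\cdot\mathbf{w}|\,\|\mathbf{u}\|_\infty\le 2\mu_1\delta X N\ll\frac{X}{NK}.
\]
Writing out the first two coordinates of $\mathbf{a}\times\mathbf{m}$ produces $|a_i m_3-m_i X|\ll X/(NK)$ for $i=1,2$, and setting $q=|m_3|$, $b_i=\pm m_i$ delivers the required simultaneous approximations $a_i/X=b_i/q+O(1/(NKq))$. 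The denominator bound follows from
\[
q=|m_3|\le\|\mathbf{m}\|_2\le\|\mathbf{u}\|_2\|\mathbf{w}\|_2\le\mu_1 N^2\ll\frac{1}{\delta K}\le\frac{X}{NK},
\]
invoking the hypothesis $\delta\gg N/X$.

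The main technical obstacle I anticipate is the degenerate case $m_3=0$, corresponding to the plane $P$ containing the $e_3$-axis, in which the third coordinate does not furnish the denominator; here one either reruns the same triple-product bound using a different nonzero component of $\mathbf{m}$ to play the role of $q$, or argues directly that such a configuration forces the projection of $\Lambda$ onto the first two coordinates to have rank one, so that the lattice points counted in the hypothesis would all lie on a common line, contradicting the assumption. Careful tracking of absolute constants in Minkowski's theorem, of the comparison between $\#(\Lambda\cap R)$ and $\mathrm{vol}(R)/\det(\Lambda)$, and of the precise scaling of $\mathbf{a}$ that converts the slab condition into the target approximation, is also required, but each step is standard.
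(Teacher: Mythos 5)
Your approach --- extract the two shortest independent vectors of $\Lambda$ via successive minima, form their cross product as the normal $\mathbf{m}$, and bound the components of $\mathbf{a}\times\mathbf{m}$ by the triple-product identity --- is essentially the paper's; the paper's display \eqref{eq:LatticeApprox} is precisely the content of your triple-product estimate, presented geometrically as confinement of $\mathbf{a}$ to a thin cuboid about the axis $\mathbf{v}\wedge\mathbf{w}$.

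However, one step as written does not hold. From $\mu_1\mu_2\ll 1/(\delta KN^2)$ and $\mu_2\le 1$ you deduce $\mu_1\ll 1/(\delta KN^2)$, but $\mu_2\le 1$ gives $\mu_1\ge\mu_1\mu_2$, which is the wrong direction; the ordering $\mu_1\le\mu_2$ only gives $\mu_1\ll(\delta KN^2)^{-1/2}$. With that weaker bound, $2\mu_1\delta XN$ can be of size $X(\delta/K)^{1/2}$, which exceeds the target $X/(NK)$ whenever $\delta KN^2\gg 1$ --- i.e.\ precisely in the regime of interest --- so the argument breaks. The fix is immediate: keep the factor $\mu_2$ in the estimates for $\mathbf{w}$, namely $\|\mathbf{w}\|_2\le\mu_2 N$ and $|\mathbf{w}\cdot\mathbf{a}|\le\mu_2\delta X$ rather than dropping $\mu_2$ to $1$. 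The triple-product bound then gives $|(\mathbf{a}\times\mathbf{m})_i|\le 2\mu_1\mu_2\delta XN\ll X/(NK)$, and $q\le\|\mathbf{u}\|_2\|\mathbf{w}\|_2\le\mu_1\mu_2 N^2\ll 1/(\delta K)\le X/(NK)$, both resting only on the (correct) product bound $\mu_1\mu_2\ll 1/(\delta KN^2)$. Separately, your second proposed resolution of the $m_3=0$ case is not right: if the ambient plane contains $\mathbf{e}_3$, the projection of $\Lambda$ onto the first two coordinates does have rank one, but $\Lambda$ is still two-dimensional and the slab-constrained lattice points need not lie on a single line in $\mathbb{R}^3$. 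The clean resolution is direct: if $m_3=0$ then the same component bounds read $|Xm_1|,|Xm_2|\ll X/(NK)$, forcing $m_1=m_2=0$ once $NK$ exceeds a fixed constant, contradicting $\mathbf{m}\ne\mathbf{0}$; and for bounded $NK$ the lemma's conclusion is vacuous (take $q=1$), so the degenerate case disappears.
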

%
%
%
%
We see Lemma \ref{lmm:LatticeApprox} restricts the pair $(a_1,a_2)$ to lie in a set of size $O(X/N K)^3$, which is noticeably smaller than $X^2$ for the range of $N K$ under consideration. This allows us to obtain superior bounds for the sum over $a_1,a_2$, by exploiting the estimates of Lemma \ref{lmm:Hybrid} which show $F$ is not abnormally large on such a set. 
%
%
%
%
\begin{proof} Clearly we may assume that $N K$ is sufficiently large, since otherwise the result is trivial. By assumption of the lemma, for any pair $(a_1,a_2)\in\mathcal{B}_1$ there is a rank 2 lattice $\Lambda=\Lambda_{a_1,a_2}$ such that $\#(\Lambda\cap\mathcal{H})\ge \delta K N^2$ where  
\[
\mathcal{H}=\{\mathbf{x}\in\mathbb{R}^3:\,| x_1 a_1+x_2 a_2+x_3 X|\le \delta X,\,\|\mathbf{x}\|_2\le N\}.
\]
Moreover, not all the points in $\Lambda\cap\mathcal{H}$ lie in a line through the origin. Let $\mathbf{a}=(a_1,a_2,X)$, and let $\phi:\mathbb{R}^3\rightarrow\mathbb{R}^3$ be a dilation by a factor $N/\delta$ in the $\mathbf{a}$-direction, and let $\Lambda'=\phi(\Lambda)$. Then we see that
\[
\phi(\Lambda\cap\mathcal{H})\subseteq \{\mathbf{x}\in\Lambda':\,\|\mathbf{x}\|_2\le 2N\}.
\] 
Moreover, not all the points on the right hand hand side lie in a line through the origin, since $\phi^{-1}$ preserves lines through the origin. Let $\Lambda'$ have a Minkowski-reduced basis $\{\mathbf{v}_1,\mathbf{v}_2\}$, and let $V_1=\|\mathbf{v}_1\|_2$ and $V_2=\|\mathbf{v}_2\|_2$. Since $\|m_1\mathbf{v}_1+m_2\mathbf{v}_2\|_2\asymp |m_1|V_1+|m_2|V_2$, for a suitably large constant $C$ we have
\begin{align*}
\{\mathbf{x}\in\Lambda':\,\|\mathbf{x}\|_2\le 2 N\}&\subseteq \Bigl\{m_1\mathbf{v}_1+m_2\mathbf{v}_2:\,|m_1|\le \frac{C N}{ V_1},\,|m_2|\le \frac{C N}{V_2}\Bigr\}.
\end{align*}
Since not all of the points in the final set lie in a line through the origin, we see that $V_1,V_2\le C N$. Thus
\[
\delta K N^2\le \#(\Lambda\cap\mathcal{H})=\#(\Lambda'\cap\phi(\mathcal{H}))\ll \frac{N^2}{V_1 V_2}.
\]
In particular, $V_1V_2\ll 1/\delta K$.

Let $\mathbf{w}_1=\phi^{-1}(\mathbf{v}_1)$ and $\mathbf{w}_2=\phi^{-1}(\mathbf{v}_2)$, so $\mathbf{w}_1$ and $\mathbf{w}_2$ are linearly independent vectors in $\Lambda\subseteq\mathbb{Z}^3$. Since $\phi$ can only increase the length of vectors, $\|\mathbf{w}_1\|_2\le V_1$ and $\|\mathbf{w}_2\|_2\le V_2$. Let  $\epsilon_1=|\mathbf{w}_1\cdot \mathbf{a}|$ and $\epsilon_2=|\mathbf{w}_2\cdot \mathbf{a}|$. Trivially we have $|\mathbf{v}_1\cdot \mathbf{a}|\ll V_1X$ and $|\mathbf{v}_2\cdot\mathbf{a}|\ll V_2X$, and so recalling that $\phi$ is a dilation by a factor $N/\delta$ in the $\mathbf{a}$-direction, we see that $\epsilon_1\ll \delta X V_1/N$ and $\epsilon_2\ll \delta X V_2/N$.

Putting this together, we see that for any pair $(a_1,a_2)\in\mathcal{B}_1$ there are linearly independent vectors $\mathbf{w}_1,\mathbf{w}_2\in\mathbb{Z}^3$ and quantities $V_1,V_2$ such that
\begin{align*}
V_1V_2\ll \frac{1}{\delta K},\qquad \|\mathbf{w}_1\|_2\le V_1,\qquad \|\mathbf{w}_2\|_2\le V_2,\\
|\mathbf{a}\cdot \mathbf{w}_1|\ll \frac{\delta X V_1}{N},\qquad |\mathbf{a}\cdot \mathbf{w}_2|\ll \frac{\delta X V_2}{N}.
\end{align*}
This puts considerable constraints on the possibilities for $(a_1,a_2)$, since it must lie in an infinite cylinder with axis parallel to $\mathbf{w}_1\times\mathbf{w}_2$ with short radius, for some low height vectors $\mathbf{w}_1,\mathbf{w}_2$. (Here $\times$ is the standard cross product on $\mathbb{R}^3$.) Explicitly, let $\mathbf{e}_1,\mathbf{e}_2,\mathbf{e}_3$ be an orthonormal basis of $\mathbb{R}^3$ with $\mathbf{e}_1$ orthogonal to $\mathbf{w}_1$ and $\mathbf{w}_2$, and with $\mathbf{e}_2$ orthogonal to $w_2$. Then we see that $\mathbf{e}_1\propto \mathbf{w}_1\times\mathbf{w}_2$, $\mathbf{e}_2\propto \mathbf{w}_2\times \mathbf{e}_1$ and $\mathbf{e}_3\propto \mathbf{w}_2$. In particular, we have that $|\mathbf{e}_3\cdot \mathbf{w}_2|=\|\mathbf{w}_2\|_2$, and
\[
|\mathbf{e}_2\cdot \mathbf{w}_1|=\frac{|\mathbf{w}_1\cdot (\mathbf{w}_2\times(\mathbf{w}_1\times\mathbf{w}_2))|}{\|\mathbf{w}_2\|_2\|\mathbf{w}_1\times\mathbf{w}_2\|_2}=\frac{\|\mathbf{w}_1\times\mathbf{w}_2\|_2}{\|\mathbf{w}_2\|_2}.
\]
(Here we used the identity $\mathbf{a}\cdot (\mathbf{b}\times \mathbf{c})=\mathbf{c}\cdot(\mathbf{a}\times\mathbf{b})$.) Thus, if $\mathbf{x}=x_1\mathbf{e}_1+x_2\mathbf{e}_2+x_3\mathbf{e}_3$ has $|\mathbf{x}\cdot \mathbf{w}_1|\ll \delta X V_1/N$ and $|\mathbf{x}\cdot \mathbf{w}_2|\ll \delta X V_2/N$, then
\begin{align*}
\frac{\delta X V_2}{N}&\gg |\mathbf{x}\cdot \mathbf{w}_2|=|x_3|\,\|\mathbf{w}_2\|_2,\\
\frac{\delta X V_1}{N}&\gg |\mathbf{x}\cdot \mathbf{w}_1|=\frac{|x_2|\,\|\mathbf{w}_1\times \mathbf{w}_2\|_2}{\|\mathbf{w}_2\|_2}+O\Bigl(|x_3|\, \|\mathbf{w}_1\|_2\Bigr).
\end{align*}
Since $\|\mathbf{w}_1\|_2\ll V_1$, $\|\mathbf{w}_2\|_2\ll V_2$ and $\|\mathbf{w}_1\times\mathbf{w}_2\|_2\le \|\mathbf{w}_1\|_2\|\mathbf{w}_2\|_2$, this implies that
\begin{align*}
|x_3|&\ll \frac{\delta X V_2}{N\|\mathbf{w}_2\|_2}\ll \frac{\delta X V_1 V_2}{N \|\mathbf{w}_1\times \mathbf{w}_2\|_2},\\
|x_2|&\ll \frac{\delta X V_1 V_2}{N \|\mathbf{w}_1\times \mathbf{w}_2\|_2}+\frac{|x_3|\, \|\mathbf{w}_1\|_2 \, \|\mathbf{w}_2\|_2}{\|\mathbf{w}_1\times \mathbf{w}_2\|_2}\ll \frac{\delta X V_1 V_2}{N \|\mathbf{w}_1\times \mathbf{w}_2\|_2}.
\end{align*}
Thus, since $V_1V_2\ll 1/\delta K$, we see that any vector $\mathbf{x}$ with $|\mathbf{x}\cdot \mathbf{w}_1|\ll \delta X V_1/N$ and $|\mathbf{x}\cdot \mathbf{w}_2|\ll \delta X V_2/N$ satisfies 
\[
\mathbf{x}=\lambda (\mathbf{w}_1\times\mathbf{w}_2)+O\Bigl(\frac{X}{N K \|\mathbf{w}_1\times\mathbf{w}_2\|_2}\Bigr)
\]
for some $\lambda\in \mathbb{R}$. We note that the error term is $o(X)$ since $\mathbf{w}_1,\mathbf{w}_2$ are linearly independent integer vectors and $N K$ is assumed sufficiently large.  Let the components of $\mathbf{w}_1\times \mathbf{w}_2$ be $c_1,c_2,c_3$ (with respect to the standard basis of $\mathbb{R}^3$). Since $\mathbf{w}_1,\mathbf{w}_2\in\mathbb{Z}^3$, we have $c_1,c_2,c_3\in\mathbb{Z}$. Thus if $\mathbf{a}$ is of the above form we must have $\mathbf{a}=\lambda(\mathbf{w}_1\times \mathbf{w}_2)+o(X)$ for some $\lambda$. Since $\|\mathbf{a}\|_2\ge X$ and $a_1,a_2\le a_3=X$, we must have that $|c_1|,|c_2|\ll |c_3|$. In particular, $|c_3|\asymp \|\mathbf{w}_1\times\mathbf{w}_2\|_2$. Dividing through by $X=\lambda c_3+O(X/N K |c_3|)$ then gives
\begin{equation}
\Bigl\|
\begin{pmatrix}
a_1/X\\
a_2/X
\end{pmatrix}
-
\begin{pmatrix}
c_1/c_3\\
c_2/c_3
\end{pmatrix}
\Bigr\|_2\ll \frac{1}{N K |c_3|}.\label{eq:LatticeApprox}
\end{equation}
Finally, we note that since $\delta \ge N/X$ and $V_1 V_2\ll 1/\delta K$ we have
\[
c_1,c_2,c_3\le \|\mathbf{w}_1\times\mathbf{w}_2\|_2\le \|\mathbf{w}_1\|_2\|\mathbf{w}_2\|_2\le V_1 V_2 \ll \frac{1}{\delta K} \ll \frac{X}{NK}.
\]
Thus, we see that for any pair $(a_1,a_2)\in\mathcal{B}_1$ there must be integers $c_1,c_2,c_3\ll X/N K$ such that \eqref{eq:LatticeApprox} holds. This gives the result.
\end{proof}
%
%
%
%
\begin{lmm}[Size of rational approximations]\label{lmm:RatSize}
Let  $\mathcal{B}_1(N,K,\delta)$ and $\mathcal{F}=\mathcal{F}(Q,E)$ be as in Proposition \ref{prpstn:LatticeBound}. If $\mathcal{B}_1(N,K,\delta)\cap\mathcal{F}^2\ne\emptyset$ then
\[
Q+E\ll \Bigl(\frac{X}{N K}\Bigr)^2.
\]
\end{lmm}
%
%
%
%
\begin{proof}
By Lemma \ref{lmm:LatticeApprox}, if $(a_1,a_2)\in\mathcal{B}_1(N,K,\delta)$ then
\begin{align*}
\frac{a_1}{X}&=\frac{b_1}{q}+\nu_1,\\
\frac{a_2}{X}&=\frac{b_2}{q}+\nu_2,
\end{align*}
for some $q\ll X/N K$ and $|\nu_1|,|\nu_2|\ll 1/N K q$. By clearing common factors we may assume that $(b_1,b_2,q)=1$. 

If $N K > X^{2/3}$ (and $X$ is sufficiently large) then we see that $b_1/q$ and $b_2/q$ are the best rational approximations to $a_1/X$ and $a_2/X$ with denominator $O(X^{1/3})$, since the error in the approximation is $O(1/(qX^{2/3}))$. Thus if we also have $a_1,a_2\in\mathcal{F}(Q,E)$ then we must have $q\gg Q$ and $|\nu_1|,|\nu_2|\sim E/X$. In particular, we must have $Q+E\ll X/NK$. If instead $N K\le X^{2/3}$ then since $Q+E\ll X^{1/2}$ we have $Q+E\ll (X/NK)^{2}$. Thus in either case we have that there are no such pairs $(a_1,a_2)$ in both $\mathcal{B}_1(N,K,\delta)$ and in $\mathcal{F}\times\mathcal{F}$ unless $Q+E\ll (X/NK)^2$.
\end{proof}
%
%
%
%
\begin{lmm}\label{lmm:LatticeSums}
Let $N K\ge X^{17/40}$, and let $\mathcal{B}_1(N,K,\delta)$, $\mathcal{F}=\mathcal{F}(Q,E)$ and $\mathcal{E}$ be as in Proposition \ref{prpstn:LatticeBound}. Then we have
\[
\sum_{\substack{(a_1,a_2)\in\mathcal{B}_1(N,K,\delta)\\ a_1,a_2\in\mathcal{E}}}F_X\Bigl(\frac{a_1}{X}\Bigr)F_X\Bigl(\frac{a_2}{X}\Bigr)
\ll (\log{X})^5\sup_{\substack{Q_1,G_1,G_2\\ D_0,D_1,E_0}}\sum_{\substack{d_0,d_1\in\mathcal{V}\\ d_0\sim D_0\\ d_1\sim D_1}} \min(S_1S_2,S_1S_3),
\]
where $\mathcal{V}=\{2^u 5^v:u,v\in\mathbb{Z}_{\ge0}\}$, the supremum is over all choices of $Q_1,G_1,G_2,D_0,D_1,E_0\ge 1$ which are powers of 10 and satisfy $Q_1 G_1 G_2 D_0 D_1 E_0\ll X/N K$ and $G_1\ll G_2$, and $S_1,S_2,S_3$ are given by
\begin{align*}
S_1&=\sup_{\substack{q'\sim Q_1\\ (q',10)=1}}\sum_{\substack{g_1'\sim G_1\\ (g_1',10)=1}} \sum_{\substack{b_2'<d_0d_1 q' g_1' \\ (b_2',d_0 d_1 q' g_1')=1}}\sum_{\substack{|\nu_2|\le E_0/X\\ X(b_2'/d_0 d_1 q' g_1'+\nu_2)\in\mathbb{Z}}}F_{X}\Bigl(\frac{b_2'}{d_0 d_1 q' g_1'}+\nu_2\Bigr),\\
S_2&=\sum_{\substack{q'\sim Q_1\\ (q',10)=1}} \sum_{g_2\sim G_2}\sum_{\substack{b_1'<d_0 q' g_2 \\ (b_1',d_0 q' g_2)=1}} \sum_{\substack{|\nu_1|\le E_0/X \\ X(b_1'/d_0 q' g_2+\nu_1)\in\mathbb{Z}}}F_{X}\Bigl(\frac{b_1'}{d_0 q' g_2}+\nu_1\Bigr),\\
S_3&=\sum_{a_1\in\mathcal{E}}F_X\Bigl(\frac{a_1}{X}\Bigr)N(a_1,d_0),\\
N(a,d)&=\#\Bigl\{q\sim Q_1:\exists b,g\text{ s.t. }\Bigl|\frac{a}{X}-\frac{b}{q d g}\Bigr|\le \frac{E_0}{X},\,(b,d q g)=1,\,g \sim G_2\Bigr\}.
\end{align*}
\end{lmm}
%
%
%
%
\begin{proof}
By Lemma \ref{lmm:LatticeApprox} we are considering pairs $(a_1,a_2)\in\mathcal{B}_1(N,K,\delta)$ such that
\begin{align*}
\frac{a_1}{X}&=\frac{b_1}{q}+\nu_1,\\
\frac{a_2}{X}&=\frac{b_2}{q}+\nu_2,
\end{align*}
for some $q\ll X/N K$ and $|\nu_1|,|\nu_2|\ll 1/N K q$.

By clearing common factors we may assume that $(b_1,b_2,q)=1$. We let $g_1=(b_1,q)$ and $g_2=(b_2,q)$. By symmetry we may assume that $g_1\le g_2$. We let $d_1$ be the part of $g_1$ not coprime to $10$ (i.e. $d_1|10^u$ for some integer $u$, and $g_1=g_1'd_1$ for some $(g_1',10)=1$). Similarly we let $d_0$ be the part of $q/g_1g_2$ which is not coprime to $10$. To ease notation we let $b_1'=b_1/g_1$, $b_2'=b_2/g_2$, $q'=q/g_1g_2d_0$ and $g_1'=g_1/d_1$. Thus $q=g_1'g_2d_0d_1q'$, $b_1=b_1'd_1g_1'$ and $b_2=b_2'g_2$ with $(b_1',d_0 q' g_2)=(b_2',d_0 d_1 q' g_1')=1$ and $(q',10)=(g_1',10)=1$.

We split the contribution of pairs $(a_1,a_2)\in\mathcal{B}_1$ into $O(\log{X})^5$ subsets. We consider terms where we have the restrictions $q'\sim Q_1$, $g_1'\sim G_1$, $g_2\sim G_2$, $d_0\sim D_0$ and $d_1\sim D_1$  for some $Q_1,G_1,G_2,D_0,D_1 \ge 1$ all integer powers of 10 with $Q_0:=Q_1 G_1 G_2 D_0 D_1\ll X / N K$. Since $g_1=g_1'd_1\le g_2$ we have $G_1D_1\ll G_2$. We relax the restriction $|\nu_1|,|\nu_2|\ll 1/N K q$ to $|\nu_1|,|\nu_2|\le  E_0/X$ for a suitable power of 10 $E_0\asymp X/N K Q_0$ with $E_0\ge 1$. We see there are $O(\log{X})^5$ sets with such restrictions which cover all possible $(b_1,b_2,q,\nu_1,\nu_2)$ and hence all  $(a_1,a_2)\in\mathcal{B}_1$. For simplicity, the reader might like to consider the special case $G_1=G_2=D_0=D_1=1$ on a first reading.

 To ease notation we let $\mathcal{V}=\{2^u5^v:\,u,v\in\mathbb{Z}_{\ge 0}\}$, and note that we have $d_0,d_1\in\mathcal{V}$. By summing over all possibilities of $q',g_1',g_2,d_0,d_1,b_1',b_2'$, we see that
\begin{align*}
&\sum_{\substack{(a_1,a_2)\in\mathcal{B}_1(N,K,\delta) \\ a_1,a_2\in\mathcal{E} }}F_X\Bigl(\frac{a_1}{X}\Bigr)F_X\Bigl(\frac{a_2}{X}\Bigr)\ll (\log{X})^5\sup_{\substack{Q_1,G_1,G_2\\ D_0,D_1,E_0}} \sum_{\substack{d_0,d_1\in\mathcal{V}\\ d_0\sim D_0\\ d_1\sim D_1}}S_0,
\end{align*}
 where the supremum is over all choices of $Q_1,G_1,G_2,D_0,D_1,E_0\ge 1$ which are powers of 10 and satisfy $Q_1G_1G_2D_0D_1E_0\ll X/N K$ and $G_1D_1\ll G_2$ and $S_0$ is given by
\begin{align*}
S_0&=\sum'_{\substack{q'\sim Q_1 \\ g_1'\sim G_1 \\ g_2\sim G_2}}\sum'_{\substack{b_1'<d_0q' g_2 \\ b_2'<d_0d_1 q' g_1'}}\sum'_{\substack{|\nu_1|\le E_0/X\\ |\nu_2|\le E_0/X}}F_{X}\Bigl(\frac{b_1'}{d_0 q' g_2}+\nu_1\Bigr)F_{X}\Bigl(\frac{b_2'}{d_0 d_1 q' g_1'}+\nu_2\Bigr).
\end{align*}
In $S_0$, we have used $\sum'$ to indicate that the summation is further constrained by the conditions
\begin{align*}
(q',10)=(g_1',10)=(b_1',d_0q' g_2)=(b_2',d_0d_1q' g_1')=1,\\
X(b_1'/d_0 q' g_2+\nu_1)\in\mathbb{Z},\qquad X(b_2'/d_0 d_1 q' g_1'+\nu_2)\in\mathbb{Z},
\end{align*}
which we suppressed for notational simplicity. We see that $g_1',g_2,b_1',b_2',\nu_1,\nu_2$ each occur in only one of the two $F_X$ terms, and so given $d_0,d_1,q'$ the remaining summation in $S_0$ factors into a product of two sums. Taking a supremum over all choices of $q'$ in the first of these then gives
\begin{equation}
\sum_{\substack{(a_1,a_2)\in\mathcal{B}_1(N,K,\delta) \\ a_1,a_2\in\mathcal{F} }}F_X\Bigl(\frac{a_1}{X}\Bigr)F_X\Bigl(\frac{a_2}{X}\Bigr)\ll (\log{X})^5\sup_{\substack{Q_1,G_1,G_2\\ D_0,D_1,E_0}}\sum_{\substack{d_0,d_1\in\mathcal{V}\\ d_0\sim D_0\\ d_1\sim D_1}} S_1S_2,
\label{eq:Q0SmallBound}
\end{equation}
where
\begin{align}
S_1&=\sup_{\substack{q'\sim Q_1\\ (q',10)=1}}\sum_{\substack{g_1'\sim G_1\\ (g_1',10)=1}} \sum_{\substack{b_2'<d_0d_1 q' g_1' \\ (b_2',d_0 d_1 q' g_1')=1}}\sum_{\substack{|\nu_2|\le E_0/X\\ X(b_2'/d_0 d_1 q' g_1'+\nu_2)\in\mathbb{Z}}}F_{X}\Bigl(\frac{b_2'}{d_0 d_1 q' g_1'}+\nu_2\Bigr),\label{eq:S1Def}\\
S_2&=\sum_{\substack{q'\sim Q_1\\ (q',10)=1}} \sum_{g_2\sim G_2}\sum_{\substack{b_1'<d_0 q' g_2 \\ (b_1',d_0 q' g_2)=1}} \sum_{\substack{|\nu_1|\le E_0/X \\ X(b_1'/d_0 q'g_2+\nu_1)\in\mathbb{Z}}}F_{X}\Bigl(\frac{b_1'}{d_0 q' g_2}+\nu_1\Bigr).\label{eq:S2Def}
\end{align}
The bound \eqref{eq:Q0SmallBound} will be useful when $Q_0$ is small, but when $Q_0$ is large it is wasteful to sum over all these possibilities since we have not made use of the fact that $a_1,a_2\in \mathcal{E}$, a small set. To obtain an alternative bound we first sum over all $a_1\in\mathcal{E}$, then all possibilities of $q$, $b_2$, $\nu_2$. This shows that
\begin{align}
\sum_{\substack{(a_1,a_2)\in\mathcal{B}_1(N,K,\delta) \\ a_1,a_2\in\mathcal{E} }}F_X\Bigl(\frac{a_1}{X}\Bigr)F_X\Bigl(\frac{a_2}{X}\Bigr)
\ll (\log{X})^5\sup_{\substack{Q_1,G_1,G_2\\ D_0,D_1,E_0}}\sum_{\substack{d_0,d_1\in\mathcal{V}\\ d_0\sim D_0\\ d_1\sim D_1}}S_0',\label{eq:Q0LargeBound1}
\end{align}
where the supremum has the same constraints as before, and $S_0'$ is given by
\[
S_0'=\sum'_{a_1\in\mathcal{E}}\sum'_{q'\sim Q_1}\sum_{g_1'\sim G_1}'\sum'_{\substack{b_2'<d_0d_1q'g_1'}}\sum'_{|\nu_2|\le E_0/X}F_X\Bigl(\frac{a_1}{X}\Bigr)F_X\Bigl(\frac{b_2'}{d_0 d_1q' g_1'}+\nu_2\Bigr).
\]
Here the summation in $S_0'$ is constrained by
\begin{align*}
&(q',10)=(g_1',10)=(b_2',d_0d_1q' g_1')=1,\\
&X(b_2'/d_0 d_1 q' g_1'+\nu_2)\in\mathbb{Z},\\
&\exists\, b_1',g_2\text{ s.t. }\Bigl|\frac{a_1}{X}-\frac{b_1'}{q' d_0g_2}\Bigr|\le \frac{E_0}{X},\,(b_1',d_0q' g_2)=1,\,g_2\sim G_2.
\end{align*}
Again, taking a supremum over $q'$ and factorizing the summation, we find that
\begin{equation}
S_0'\ll S_1 S_3,
\label{eq:Q0LargeBound2}
\end{equation}
where $S_1$ is as given by \eqref{eq:S1Def} above, and $S_3$ is given by
\begin{equation}
S_3=\sum_{a_1\in\mathcal{E}}F_X\Bigl(\frac{a_1}{X}\Bigr)N(a_1,d_0),\label{eq:S3Def}
\end{equation}
where
\begin{equation*}
N(a_1,d_0)=\#\Bigl\{q'\sim Q_1:\exists\, b_1',g_2\text{ s.t. }\Bigl|\frac{a_1}{X}-\frac{b_1'}{q' d_0 g_2}\Bigr|\le \frac{E_0}{X},\,(b_1',d_0 q'g_2)=1,\,g_2\sim G_2\Bigr\}.
\end{equation*}
Putting together \eqref{eq:Q0SmallBound}, \eqref{eq:Q0LargeBound1}, \eqref{eq:Q0LargeBound2} we obtain
\[
\sum_{\substack{(a_1,a_2)\in\mathcal{B}_1(N,K,\delta) \\ a_1,a_2\in\mathcal{E} }}F_X\Bigl(\frac{a_1}{X}\Bigr)F_X\Bigl(\frac{a_2}{X}\Bigr)\ll (\log{X})^5\sup_{\substack{Q_1,G_1,G_2\\ D_0,D_1,E_0}}\sum_{\substack{d_0,d_1\in\mathcal{V}\\ d_0\sim D_0\\ d_1\sim D_1}} \min(S_1S_2,S_1S_3),
\]
as required.
\end{proof}

%
%
%
%
\begin{lmm}\label{lmm:LatticeSumEstimates}
Let $N K\ge X^{17/40}$ and let $S_1,S_2,S_3$ be as in Lemma \ref{lmm:LatticeSums}. Let $Q_1,G_1,G_2,D_0,D_1,E_0\ge 1$ be powers of 10 which satisfy $Q_1 G_1 G_2 D_0 D_1 E_0\ll X/N K$ and $G_1\ll G_2$. Then we have
\[
\min(S_1S_2,S_1S_3)\ll Q_0^{1-\epsilon}E_0^{1-\epsilon},
\]
where $Q_0=Q_1 G_1 G_2 D_0 D_1$.
\end{lmm}
%
%
%
%
\begin{proof}
We first bound $S_1,S_2,S_3$ individually using Lemma \ref{lmm:Generic}, Lemma \ref{lmm:Hybrid} and Lemma \ref{lmm:Hybrid2}. We will then combine these bounds to give the desired result.

We first consider the quantity $N(a_1,d_0)$ occurring in $S_3$. If $q$ and $q'$ are both counted by $N(a,d)$ then there exists $b,g$ and $b',g'$ such that $(b,q d g)=(b',q' d g')=1$ and
\[
\frac{a}{X}=\frac{b}{q d g}+O\Bigl(\frac{1}{N K Q_0}\Bigr)=\frac{b'}{q' d g'}+O\Bigl(\frac{1}{N K Q_0}\Bigr).
\]
Here we used the fact that $E_0/X\ll 1/N K Q_0$. The variables we consider satisfy $q,q'\sim Q_1\ll Q_0/G_1G_2D_0D_1$ and $g,g'\sim G_2$ and $d\sim D_0$. Thus
\[
b q' g'-b' q g\ll \frac{Q_0}{D_0D_1^2G_1^2 N K} \ll \frac{Q_0}{D_0 D_1 N K}.
\]
Let $h\ll Q_0 / D_0 D_1 N K$ be such that $b q' g'-b' q g=h$. There are $O(1+Q_0/D_0D_1 N K)$ such choices of $h$. Given $q,g,b,h$ with $(q g,b)=1$, we then see 
\begin{align*}
 q' g'\equiv h b^{-1} \Mod{q g},\\
b'\equiv h(q g)^{-1}\Mod{b}.
\end{align*}
Since $q' g'\asymp q g$ and $b'\asymp b$, there are $O(1)$ choices of $b'$ and $q' g'$. Thus there are $O(Q_0^\epsilon)$ such choices of $q',g',b'$ by the divisor bound. Thus we find that
\begin{align*}
N(a_1,d_0)\ll Q_0^\epsilon+\frac{Q_0^{1+\epsilon}}{D_0D_1N K}.
\end{align*}
Combining this with Lemma \ref{lmm:Generic} gives the bound
\begin{equation}
S_3\ll X^{23/80}+\frac{Q_0 X^{23/80}}{D_0D_1N K}.\label{eq:S3Bound}
\end{equation}
We recall $Q_0=Q_1G_1G_2D_0D_1$ is the approximate size of $q$ and that $G_1\ll G_2$, $E_0Q_0\ll X/N K\ll X$. By Lemma \ref{lmm:Hybrid} we have
\begin{align}
S_1&\ll (E_0 D_0 D_1 Q_1 G_1^2)^{27/77}+\frac{E_0 D_0 D_1 Q_1 G_1^2}{X^{50/77}}\nonumber\\
&\ll  Q_0^{27/77} E_0^{27/77},\label{eq:S2Bound1}\\
S_2&\ll (E_0 D_0 Q_1^2G_2^2)^{27/77}+\frac{Q_1^2 G_2^2 E_0 D_0}{X^{50/77}}\nonumber\\
&\ll \Bigl(\frac{ Q_0^{2} E_0}{D_0 D_1^{2} G_1^{2}}\Bigr)^{27/77}+\frac{Q_0^2 E_0}{X^{50/77} D_0 D_1 G_1}.\label{eq:S1Bound}
\end{align}
Alternatively, we may bound $S_1$ using Lemma \ref{lmm:Hybrid2}, which gives
\begin{align}
S_1&\ll (D_0 D_1 E_0)^{27/77}(Q_1G_1^2)^{1/21}+\frac{Q_1 G_1^2 (D_0 D_1)^{3/2} E_0^{5/6}}{X^{10/21}}\nonumber\\
&\ll Q_0^{1/21} (D_0 D_1 E_0)^{27/77} + \frac{Q_0 G_1 (D_0 D_1)^{1/2} E_0^{5/6}}{G_2 X^{10/21}}.\label{eq:S2Bound2}
\end{align}
If the first term in \eqref{eq:S2Bound2} dominates, then since $E_0\ll X/N K Q_0$, the bounds \eqref{eq:S2Bound2} and \eqref{eq:S1Bound} give
\begin{align*}
S_1S_2&\ll E_0^{54/77}Q_0^{54/77+1/21}+\frac{Q_0^{2+1/21} E_0^2}{X^{50/77}}\\
&\ll Q_0^{1-\epsilon} E_0^{1-\epsilon}\Bigl(1+\frac{1}{X^{50/77}}\Bigl(\frac{X}{N K}\Bigr)^{1+1/21+\epsilon}\Bigr).
\end{align*}
This shows $S_1 S_2\ll Q_0^{1-\epsilon}E_0^{1-\epsilon}$ in this case by recalling that $N K\gg X^{17/40}$ and verifying that $22/21\times 23/40<50/77$.

If instead the second term in \eqref{eq:S2Bound2} dominates, then by \eqref{eq:S2Bound1} and \eqref{eq:S2Bound2} (using $G_1\ll G_2$ and replacing $E_0^{5/6}$ with $E_0$ to simplify the expression), we have
\begin{equation}
S_1\ll \min\Bigl(( Q_0 E_0)^{27/77},\frac{Q_0 E_0 (D_0D_1)^{1/2}}{X^{10/21}}\Bigr).\label{eq:S1Combined}
\end{equation}
Combining this with \eqref{eq:S1Bound}, we obtain
\begin{align*}
S_1S_2
&\ll \Bigl(\frac{E_0 Q_0^{2}}{G_1^2 D_0 D_1^2}\Bigr)^{27/77}\Bigl((E_0 Q_0)^{27/77}\Bigr)^{1/3}\Bigl(\frac{Q_0 E_0 (D_0D_1)^{1/2}}{X^{10/21}}\Bigr)^{2/3}\nonumber\\
&\qquad+\frac{Q_0^2 E_0}{X^{50/77}D_0 D_1 G_1}\frac{Q_0E_0 (D_0 D_1)^{1/2}}{X^{10/21}}\\
&\ll \frac{Q_0^{3/2} E_0^{6/5} }{X^{3/10}}+\frac{Q_0^3 E_0^2}{X^{9/8}}.
\end{align*}
Here we have simplified the exponents appearing for an upper bound. We recall that $Q_0 E_0\ll X/N K$ and (by assumption of the lemma) $N K\gg X^{17/40}$. These give
\[
\frac{Q_0^{3/2} E_0^{6/5} }{X^{3/10}}\ll \frac{Q_0 E_0}{X^{3/10}}(X^{23/40})^{1/2}\ll \frac{Q_0 E_0}{X^{1/80}}.
\]
Thus this term is $O(Q_0^{1-\epsilon}E_0^{1-\epsilon})$, and so
\begin{equation}
S_1S_2\ll Q_0^{1-\epsilon}E_0^{1-\epsilon}+\frac{Q_0^3 E_0^2}{X^{9/8}}.\label{eq:S1S2Bound}
\end{equation}
Similarly, we find that combining \eqref{eq:S1Combined} and \eqref{eq:S3Bound} gives
\begin{align*}
S_1 S_3&\ll X^{23/80}(Q_0 E_0)^{27/77}+\frac{Q_0 E_0 (D_0D_1)^{1/2}}{X^{10/21}}\frac{X^{23/80}Q_0}{D_0D_1N K}\nonumber\\
&\ll X^{23/80}(Q_0 E_0)^{27/77}+ \frac{Q_0^{2} E_0}{X^{3/16}N K}.
\end{align*}
Here we used $10/21-23/80>3/16$. Since $Q_0 E_0\ll X/N K$ and $N K\gg X^{17/40}\gg X^{13/32+\epsilon}$, we see that
\[
\frac{Q_0^2 E_0}{X^{3/16}N K}\ll Q_0\frac{X^{13/16}}{(N K)^2}\ll Q_0^{1-\epsilon}\ll Q_0^{1-\epsilon}E_0^{1-\epsilon}.
\]
Thus we have
\begin{equation}
S_1S_3\ll Q_0^{1-\epsilon}E_0^{1-\epsilon}+ X^{23/80}(Q_0 E_0)^{27/77}.
\label{eq:S2S3Bound}
\end{equation}
Combining \eqref{eq:S1S2Bound} and \eqref{eq:S2S3Bound}, we obtain
\begin{align*}
&\min(S_1S_2,S_1S_3)\ll Q_0^{1-\epsilon}E_0^{1-\epsilon}+\min\Bigl(X^{23/80}(Q_0E_0)^{27/77},\frac{Q_0^3 E_0^2}{X^{9/8}}\Bigr).
\end{align*}
We find that
\begin{align*}
\min\Bigl(X^{23/80}(Q_0E_0)^{27/77},\frac{Q_0^3E_0^2}{X^{9/8}}\Bigr)&\ll \Bigl(X^{23/80}(Q_0E_0)^{27/77}\Bigr)^{77/100}\Bigl(\frac{Q_0^3E_0^2}{X^{9/8}}\Bigr)^{23/100}\\
&=\frac{Q_0^{96/100}E_0^{73/100}}{X^{(90-77)\times 23/8000}}\\
&\ll Q_0^{1-\epsilon}E_0^{1-\epsilon}.
\end{align*}
Thus we have $\min(S_1S_2,S_3S_2)\ll Q_0^{1-\epsilon}E_0^{1-\epsilon}$ in all cases, as desired.
\end{proof}
%
%
%
%
Having established the technical Lemma \ref{lmm:LatticeSums} and Lemma \ref{lmm:LatticeSumEstimates}, we are now in a position to prove Proposition \ref{prpstn:LatticeBound}.
%
%
%
%
\begin{proof}[Proof of Proposition \ref{prpstn:LatticeBound}]
We wish to show that
\[
\sum_{\substack{(a_1,a_2)\in\mathcal{B}_1(N,K,\delta)\\ a_1,a_2\in\mathcal{F}\cap\mathcal{E}}}F_X\Bigl(\frac{a_1}{X}\Bigr)F_X\Bigl(\frac{a_2}{X}\Bigr)
\ll \frac{(\log{X})^5}{(Q+E)^{\epsilon/4}}\frac{X}{N K}
\]
in the region $X^{17/40}\le N K$. Since $\mathcal{B}_1(N,K,\delta)\cap \mathcal{F}^2=\emptyset$ unless $Q+E\ll (X/N K)^2$ by Lemma \ref{lmm:RatSize}, we may assume that $Q+E\ll (X/NK)^2$.

By Lemma \ref{lmm:LatticeSums} and Lemma \ref{lmm:LatticeSumEstimates} we have
\begin{align*}
\sum_{\substack{(a_1,a_2)\in\mathcal{B}_1(N,K,\delta)\\ a_1,a_2\in\mathcal{F}\cap\mathcal{E}}}F_X\Bigl(\frac{a_1}{X}\Bigr)F_X\Bigl(\frac{a_2}{X}\Bigr)
&\ll (\log{X})^5\sup_{\substack{Q_1,G_1,G_2\\ D_0,D_1,E_0}}\sum_{\substack{d_0,d_1\in\mathcal{V}\\ d_0\sim D_0\\ d_1\sim D_1}} \min(S_1S_2,S_1S_3)\\
&\ll  (\log{X})^5 \sup_{\substack{Q_1,G_1,G_2\\ D_0,D_1,E_0}} \sum_{\substack{d_0,d_1\in\mathcal{V}\\ d_0\sim D_0\\ d_1\sim D_1}}Q_0^{1-\epsilon}E_0^{1-\epsilon}.
\end{align*}
There are $O(Q_0^{\epsilon/2})$ elements $d_0,d_1\in\mathcal{V}$ with $d_0,d_1\ll Q_0$. Thus, recalling that $Q_0E_0\ll X/N K$, we have
\begin{align*}
\sum_{\substack{(a_1,a_2)\in\mathcal{B}_1(N,K,\delta) \\ a_1,a_2\in\mathcal{F}\cap\mathcal{E} }}F_X\Bigl(\frac{a_1}{X}\Bigr)F_X\Bigl(\frac{a_2}{X}\Bigr)&\ll \sup_{\substack{Q_1,G_1,G_2\\ D_0,D_1,E_0}}(\log{X})^5Q_0^{1-\epsilon/2}E_0^{1-\epsilon}\\
&\ll (\log{X})^5\Bigl(\frac{ X}{ N K}\Bigr)^{1-\epsilon/2}.
\end{align*}
We recall that $Q+E\ll (X/N K)^2$, and so this gives
\[
\sum_{\substack{(a_1,a_2)\in\mathcal{B}_1(N,K,\delta) \\ a_1,a_2\in\mathcal{F}\cap\mathcal{E} }}F_X\Bigl(\frac{a_1}{X}\Bigr)F_X\Bigl(\frac{a_2}{X}\Bigr)\ll \frac{(\log{X})^5 X}{(Q+E)^{\epsilon/4}N K},
\]
as required.
\end{proof}
%
%
%
%
%
%
%
\section{Line Estimates} \label{sec:Line}
%
%
%
%
In this section we establish Proposition \ref{prpstn:LineBound}, which controls the contribution from pairs of angles which cause a large contribution to the bilinear sums considered in Section \ref{sec:Bilinear} to come from a line. If a line $L$ makes a large contribution, then $(a_1,a_2,X)$ must lie close to the low height plane orthogonal to this line. We note that we do not make use of the fact that these angles lie outside the major arcs, but it is vital that the angles are restricted to the small set $\mathcal{E}$.
%
%
%
%
\begin{lmm}[Line angles lie in low height plane]\label{lmm:Line}
Let $0<\delta<1$ and $K,N,X>1$ be reals with $\delta\ge N/X$ and $N K\ge X^{17/40}$. Let $\mathcal{B}_2=\mathcal{B}_2(N,K,\delta)$ be the set of integer pairs $(a_1,a_2)\in[0,X)^2$ such that there is a line $L$ through the origin such that
\[
\#\{\mathbf{n}\in L\cap\mathbb{Z}^3: |n_1a_1+n_2a_2+n_3X|\le \delta X,\,\|\mathbf{n}\|_2\le N\}\gg \delta N^2 K.
\]
Then all pairs $(a_1,a_2)\in\mathcal{B}_2$ satisfy
\[
v_1a_1+v_2 a_2+v_3 X+v_4=0
\]
for some integers $v_1,v_2,v_3,v_4\ll  X/N^2K$ not all zero.
\end{lmm}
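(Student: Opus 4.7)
The plan is to exploit the arithmetic structure of $L\cap\mathbb{Z}^3$. Since $L$ contains many integer points, it must be a rational line, and so $L\cap\mathbb{Z}^3=\mathbf{w}+\mathbb{Z}\mathbf{u}$ for some $\mathbf{w}\in\mathbb{Z}^3$ and some primitive direction vector $\mathbf{u}\in\mathbb{Z}^3$. The points counted in the hypothesis of the lemma are then of the form $\mathbf{w}+t\mathbf{u}$ for $t$ ranging in a set $T\subseteq\mathbb{Z}$ with $\#T\gg\delta N^2 K$.

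First I would control the size of the direction vector. Since every point $\mathbf{w}+t\mathbf{u}$ with $t\in T$ has Euclidean norm at most $N$, the set $T$ is contained in an interval of length $O(N/\|\mathbf{u}\|_2)$. Combining this with $\#T\gg\delta N^2 K$ yields $\|\mathbf{u}\|_2\ll 1/(\delta NK)$, which together with the hypothesis $\delta\gg N/X$ produces the crucial bound $\|\mathbf{u}\|_2\ll X/(N^2 K)$. In particular $u_1,u_2,u_3$ are integers of the target size, and the extremal elements of $T$ satisfy $|t_1-t_2|\ge\#T-1\gg\delta N^2 K$.

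Next, I would extract the integer linear relation from two extremal points of the arithmetic progression. Both $\mathbf{w}+t_1\mathbf{u}$ and $\mathbf{w}+t_2\mathbf{u}$ satisfy the defining inequality $|n_1 a_1+n_2 a_2+n_3 X|\le\delta X$, so subtracting these two inequalities and dividing through by $|t_1-t_2|$ gives
\[
|u_1 a_1+u_2 a_2+u_3 X|\le\frac{2\delta X}{|t_1-t_2|}\ll\frac{X}{N^2 K}.
\]
The left-hand side is an integer, so setting $v_i=u_i$ for $i=1,2,3$ and $v_4=-(u_1 a_1+u_2 a_2+u_3 X)$ produces the desired identity $v_1 a_1+v_2 a_2+v_3 X+v_4=0$ with every $|v_i|\ll X/(N^2 K)$; the tuple is nonzero because $\mathbf{u}\neq 0$.

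I do not anticipate any serious obstacle. Once the arithmetic progression structure of $L\cap\mathbb{Z}^3$ is written down, the point-count bound on $\|\mathbf{u}\|_2$ and the subtraction argument immediately yield the claim. The only subtlety worth highlighting is that the hypothesis $\delta\gg N/X$ is used in exactly one place, namely to convert the direction-vector spacing bound $\|\mathbf{u}\|_2\ll 1/(\delta NK)$ into the stated bound $\|\mathbf{u}\|_2\ll X/(N^2 K)$, which simultaneously governs the size of all four of $v_1,v_2,v_3,v_4$.
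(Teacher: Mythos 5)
Your proof is correct and takes essentially the same route as the paper: there one lets $\mathbf{v}$ be the shortest element of $L\cap\mathbb{Z}^3$ and reads both bounds off from the fact that the count is $\ll\min\bigl(N/\|\mathbf{v}\|_2,\ \delta X/|v_1a_1+v_2a_2+v_3X|\bigr)$, which packages your direction-vector bound and your subtraction step into one line. Your version is marginally more careful in that the arithmetic-progression parametrization $\mathbf{w}+\mathbb{Z}\mathbf{u}$ also handles affine lines not through the origin, whereas the paper implicitly treats $L\cap\mathbb{Z}^3$ as generated by a single vector; the substance is otherwise identical.
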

%
%
%
%
\begin{proof}
Let $\mathbf{v}=(v_1,v_2,v_3)$ be a non-zero element of $\mathbb{Z}^3\cap L$ of smallest norm, and let $V=\|\mathbf{v}\|_2$ and $\epsilon_1=|v_1a_1+v_2a_2+v_3X|$. Then all of $\mathbb{Z}^3\cap L$ is generated by $\mathbf{v}$, and so
\[
\#\{\mathbf{n}\in L\cap\mathbb{Z}^3: |n_1a_1+n_2a_2+n_3X|\le \delta X,\,\|\mathbf{n}\|_2\le N\}\ll \min\Bigl(\frac{N}{V},\frac{\delta X}{\epsilon_1}\Bigr).
\]
By assumption, this is also $\gg \delta N^2K$, and so we obtain
\[
V\ll \frac{1}{N K\delta}\ll \frac{X}{N^2K},\qquad \epsilon_1\ll \frac{X}{N^2K}.
\]
Letting $v_4=-(v_1a_1+v_2a_2+v_3X)\in\{\pm \epsilon_1\}$ gives the result.
\end{proof}
%
%
%
%
\begin{lmm}[Sparse sets restricted to low height planes] \label{lmm:LineBound}
Let $\mathcal{C}\subseteq[0,X)$ be a set of integers. Then we have for any $V\ge 1$
\[
\begin{split}
\#\Bigl\{(a_1,a_2)\in\mathcal{C}^2:\exists (v_1,v_2,v_3,v_4)\in[-V,V]^4\backslash\{\mathbf{0}\}\text{ s.t. }v_1a_1+v_2a_2+v_3 X+v_4=0\Bigr\}\\
\ll X^{o(1)}\Bigl(\#\mathcal{C}^{5/4}V^2+\frac{\#\mathcal{C}^{3/2}V^{3}}{X^{1/2}}\Bigr).\end{split}
\]
\end{lmm}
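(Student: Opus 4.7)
The plan is to reduce the defining relation to a condition modulo $X$, dispose of the degenerate directions, and bound the generic contribution by combining a Fourier/additive-energy estimate (giving the $\#\mathcal{C}^{3/2} V$ and $\#\mathcal{C}^{3/2} V^{3}/X^{1/2}$ terms) with a Szemerédi--Trotter-style incidence estimate (giving the $\#\mathcal{C}^{5/4} V^{2}$ term).

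\textbf{Reduction.} Assume $V\le X/2$ (otherwise $T\le\#\mathcal{C}^{2}$ is already absorbed). A nontrivial quadruple $(v_{1},v_{2},v_{3},v_{4})\in[-V,V]^{4}$ satisfying $v_{1}a_{1}+v_{2}a_{2}+v_{3}X+v_{4}=0$ must have $(v_{1},v_{2})\ne(0,0)$, since otherwise $|v_{3}X+v_{4}|\ge X-V>0$. Given $(a_{1},a_{2},v_{1},v_{2},v_{4})$, the integer $v_{3}=-(v_{1}a_{1}+v_{2}a_{2}+v_{4})/X$ is determined and automatically satisfies $|v_{3}|\ll V$ when $a_{i}<X$. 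Hence $T$ equals the number of $(a_{1},a_{2})\in\mathcal{C}^{2}$ such that $v_{1}a_{1}+v_{2}a_{2}$ lies within $V$ of a multiple of $X$ for some $(v_{1},v_{2})\in[-V,V]^{2}\setminus\{0\}$. The degenerate case $v_{2}=0$ restricts $a_{1}$ to at most $O(V^{2}X^{o(1)})$ residues in $[0,X)$ (one per $|v_{1}|\in[1,V]$ and per allowed residue), and symmetrically for $v_{1}=0$, contributing $\ll V^{2}\#\mathcal{C}\,X^{o(1)}$ pairs, absorbed into $\#\mathcal{C}^{5/4}V^{2}$ since $\#\mathcal{C}\ge 1$.

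\textbf{Main case.} For $v_{1}v_{2}\neq 0$, set $N_{v_{1},v_{2}}=\#\{(a_{1},a_{2})\in\mathcal{C}^{2}:\,v_{1}a_{1}+v_{2}a_{2}\text{ is within }V\text{ of a multiple of }X\}$, so that $T-O(V^{2}\#\mathcal{C})\le\sum_{v_{1}v_{2}\neq 0}N_{v_{1},v_{2}}$. Writing $N_{v_{1},v_{2}}=\sum_{|r|\le V}\#\{(a_{1},a_{2})\in\mathcal{C}^{2}:v_{1}a_{1}+v_{2}a_{2}\equiv r\pmod{X}\}$ and applying Cauchy--Schwarz reduces matters to an additive-energy-type quantity: the count of $4$-tuples $(a_{1},a_{2},a_{1}',a_{2}')\in\mathcal{C}^{4}$ with $v_{1}(a_{1}-a_{1}')+v_{2}(a_{2}-a_{2}')\equiv 0\pmod{X}$. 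Decomposing into the $O(V)$ solution classes $v_{1}(a_{1}-a_{1}')+v_{2}(a_{2}-a_{2}')=kX$, then applying Cauchy--Schwarz on the pairing with the difference statistics $r_{\mathcal{C}}(u)=\#\{(a,a')\in\mathcal{C}^{2}:a-a'=u\}$ and bounding the additive energy $E(\mathcal{C})\le\#\mathcal{C}^{3}$, one gets $N_{v_{1},v_{2}}\ll V\,\#\mathcal{C}^{3/2}$. Separating the Fourier main term via Parseval on $\mathbb{Z}/X$ isolates a main contribution of size $O(V^{3}\#\mathcal{C}^{2}/X)\le O(\#\mathcal{C}^{3/2}V^{3}/X^{1/2})$ (using $\#\mathcal{C}\le X$) and an error controllable by $O(\#\mathcal{C}^{3/2}V)$, together yielding the $\#\mathcal{C}^{3/2}V$ and $\#\mathcal{C}^{3/2}V^{3}/X^{1/2}$ pieces of the bound.

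\textbf{Incidence bound and obstacle.} The $\#\mathcal{C}^{5/4}V^{2}$ contribution arises from a Szemerédi--Trotter-style incidence estimate applied to the $\le V^{4}$ integer lines $v_{1}x+v_{2}y+v_{3}X+v_{4}=0$ and the $\#\mathcal{C}^{2}$ points of $\mathcal{C}\times\mathcal{C}$, with a dyadic decomposition on the number of points per line, together with the line-wise bound $|L\cap\mathcal{C}^{2}|\le\#\mathcal{C}$. The main technical obstacle is sharpening the naive Szemerédi--Trotter estimate: plain ST on $V^{4}$ general lines and $\#\mathcal{C}^{2}$ points yields only $\#\mathcal{C}^{4/3}V^{8/3}$, which is weaker than $\#\mathcal{C}^{5/4}V^{2}$. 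Obtaining the sharp $5/4$ exponent requires exploiting either the Cartesian product structure of $\mathcal{C}^{2}$ (via a Plünnecke--Ruzsa/additive-energy refinement of incidence counting), or the bounded-height integer nature of the lines (only $\lesssim V^{2}$ distinct primitive slopes, with $\lesssim V^{2}$ parallel offsets per slope), both of which give tighter counts on "rich" lines than a general incidence theorem would provide.
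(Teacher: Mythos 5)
There is a genuine gap, in fact two. First, your ``main case'' bounds the count by $\sum_{v_1v_2\ne 0}N_{v_1,v_2}$, a union bound over the $\asymp V^2$ directions $(v_1,v_2)$. Even granting your claimed estimate $N_{v_1,v_2}\ll V\#\mathcal{C}^{3/2}$ for each fixed direction (which is itself the best one can hope for from additive energy, since $E(\mathcal{C})\le\#\mathcal{C}^3$ is essentially sharp for a generic sparse set), summing over directions yields $V^3\#\mathcal{C}^{3/2}$, which is larger than the target terms $\#\mathcal{C}^{3/2}V+\#\mathcal{C}^{3/2}V^3/X^{1/2}$ by a factor of $\min(V^2,X^{1/2})$. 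The lemma asserts a bound on the number of \emph{pairs} admitting \emph{some} low-height relation, and the whole difficulty is to avoid paying for the multiplicity of relations; a direct union over $(v_1,v_2)$ cannot recover this. Second, you explicitly leave the $\#\mathcal{C}^{5/4}V^2$ term unproved: you note that Szemer\'edi--Trotter gives only $\#\mathcal{C}^{4/3}V^{8/3}$ and gesture at product-structure or bounded-height refinements without carrying one out. Since that term is the one that actually dominates in the application, this is not a minor omission.

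The paper's proof takes a different and essentially elementary route that handles both issues at once. After removing the degenerate quadruples by the divisor bound, it applies Cauchy--Schwarz in the variable $a_1$ (not in the residue $r$), reducing to counting pairs $(a,a')$ that are each in relation with a common first coordinate; eliminating that coordinate produces a single four-term relation $b_1a+b_2a'+b_3X+b_4=0$ with coefficients $\ll V^2$. The pairs are then classified dyadically by the quantity $M_a$, the length of the shortest nonzero $(c_1,c_2)$ with $c_1\equiv c_2X\pmod a$, using the two facts that $\#\{a\in\mathcal{C}:M_a\asymp M\}\ll X^{o(1)}\min(\#\mathcal{C},M^2)$ (by the divisor bound) and that for fixed $a,a',b_1$ the admissible $(b_3,b_4)$ lie in a translate of a rank-two lattice of determinant $a'$ whose shortest vector has length $\gg M$, giving $\ll 1+V^2/M+V^4/X$ choices. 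The balance $\min(M^4,\#\mathcal{C}^2)/M\le\#\mathcal{C}^{3/2}$ is exactly what produces the $\#\mathcal{C}^{5/4}V^2$ exponent after undoing the Cauchy--Schwarz. If you want to salvage your outline, you need to replace both the union over directions and the incidence heuristic with an argument of this type.
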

%
%
%
%
\begin{proof}
Trivially there are $O(\#\mathcal{C}^2)$ choices of $a_1,a_2\in\mathcal{C}$, which gives the required bound if $V>\#\mathcal{C}^{3/8}$. In particular, we may assume that $V< \#\mathcal{C}\le X$. There are $O(\#\mathcal{C})$ points with $a_1=0$ or $a_2=0$, so we may assume that $a_1,a_2\ne 0$.

We first claim that there are
\begin{equation}
O(\#\mathcal{C} V^2 X^{o(1)})\label{eq:ZeroTerms}
\end{equation}
choices of $v_1$, $v_2$, $v_3$, $v_4$, $a_1$, and $a_2$ satisfying $v_1a_1+v_2a_2+v_3X+v_4=0$ with at least one of $v_1,v_2,v_3,v_4$ equal to $0$ and at least one of $v_1,v_2,v_3,v_4$ non-zero. For example, if $v_1=0$ then there are $O(\#\mathcal{C}V^2)$ choices of $a_1,v_3,v_4$, which then determines $v_2a_2$. Since there are no non-zero solutions to $v_3X+v_4=0$, this is non-zero and so there are $O(X^\epsilon)$ choices of $v_2,a_2$. The other cases are entirely analogous. Thus it suffices to consider pairs $(a_1,a_2)$ such that $v_1a_1+v_2a_2+v_3X+v_4=0$ for some $v_1,v_2,v_3,v_4$ all non-zero. We let $\mathcal{C}_2$ denote the set of such pairs.

Given $a\in\mathbb{Z}$, let $M_a$ be the smallest value of $(c_1^2+c_2^2)^{1/2}$ over all non-zero integers $c_1,c_2$ such that $c_1\equiv c_2 X\Mod{a}$. We divide $\mathcal{C}$ into $O(\log{X})^2$ subsets localizing the size of $a<X$ and $M_a<X$ by considering the sets
\[
\mathcal{C}(A,M)=\{a\in\mathcal{C}:\,a\sim A,\, M_a\sim M\}.
\]
There are $O(M^2)$ choices of $c_1,c_2$ with $(c_1^2+c_2^2)^{1/2}\le M$, and given any such choice with $M<X$ there are $X^{o(1)}$ choices of $a|c_1-c_2 X$ from the divisor bound (noting that this must be non-zero). Thus we have that 
\[
\#\mathcal{C}(A,M)\le X^{o(1)}\min(\#\mathcal{C},M^2).
\]
By Cauchy-Schwarz we have
\begin{align}
\sum_{(a_1,a_2)\in\mathcal{C}_2}1
&\ll X^{o(1)} \#\mathcal{C}^{1/2}\sup_{A,M}\Bigl(\sum_{a_1\in\mathcal{C}}\Bigl(\sum_{\substack{a_2\in\mathcal{C}(A,M) \\ (a_1,a_2)\in\mathcal{C}_2}}1\Bigr)^2\Bigr)^{1/2}\nonumber\\
&\le X^{o(1)}\#\mathcal{C}^{1/2} \sup_{A,M}N_2^{1/2},\label{eq:CauchyCountBound}
\end{align}
where
\begin{align*}
N_2&=\#\Bigl\{(a_2,a_2',a_1)\in\mathcal{C}(A,M)^2\times\mathcal{C}:\,a_1=\frac{v_2a_2}{v_1X}+\frac{v_3}{v_1}+\frac{v_4}{v_1X}=\frac{v_2'a_2'}{v_1'X}+\frac{v_3'}{v_1'}+\frac{v_4'}{v_1'X},\\
&\qquad \text{ for some integers } 0<|v_1|,|v_1'|,|v_2|,|v_2'|,|v_3|,|v_3'|,|v_4|,|v_4'|\le V,\,a_1a_2a_2'\ne 0\Bigr\}.
\end{align*}
We wish to bound $N_2$. Given $v_1,v_1'$, let $d=\gcd(v_1,v_1')$ and $v_1=d\tilde{v}_1$, $v_1'=d\tilde{v}_1'$ so $\gcd(\tilde{v}_1,\tilde{v}_1')=1$. We split the count $N_2$ by considering $\max(\tilde{v}_1,\tilde{v}_1')\sim V_1$ for different choices of $V_1$. Since $V< X$, there are $O(\log{X})$ choices of $V_1$ we need to consider. This gives
\begin{equation}
N_2\ll (\log{X})\sup_{V_1}N_3(V_1),
\label{eq:N2Bound}
\end{equation}
where
\begin{align*}
&N_3(V_1)=\#\Bigl\{(a_2,a_2',a_1,d,\tilde{v}_1,\tilde{v}_1',v_2,v_3,v_4,v_2',v_3',v_4'):\,0<d\le V/V_1, a_1\in\mathcal{C}\backslash\{0\},\\
&\quad a_1d\tilde{v}_1\tilde{v}_1'=\Bigl(\frac{v_2a_2}{X}+v_3+\frac{v_4}{X}\Bigr)\tilde{v}_1'=\Bigl(\frac{v_2'a_2'}{X}+v_3'+\frac{v_4'}{X}\Bigr)\tilde{v}_1,\,a_2,a_2'\in\mathcal{C}(A,M)\backslash\{0\},\\
&\quad 0<|\tilde{v}_1|,|\tilde{v}_1'|,|v_2|,|v_2'|,|v_3|,|v_3'|,|v_4|,|v_4'|\le V,\,\max(|\tilde{v}_1|,|\tilde{v}_1'|)\sim V_1,\,\gcd(\tilde{v}_1,\tilde{v}_1')=1\Bigr\}.
\end{align*}
We wish to show that $N_3(V_1)\ll X^{o(1)}(\#\mathcal{C}^{3/2}V^4+\#\mathcal{C}^2V^6/X)$ for any choice of $0<V_1<V$. By symmetry we may assume $|\tilde{v}_1|\ge |\tilde{v}_1'|$, so $|\tilde{v}_1|\sim V_1$. Let $b_1=\tilde{v}_1'v_2$, $b_2=-\tilde{v}_1v_2'$, $b_3=\tilde{v}_1'v_3-\tilde{v}_1v_3'$ and $b_4=\tilde{v}_1'v_4-\tilde{v}_1v_4'$. We see that any solution counted by $N_3(V_1)$ must give a solution to
\[
b_1a_2+b_2a_2'+b_3X+b_4=0
\]
with $0\le|b_1|,|b_2|,|b_3|,|b_4|\le 2V_1V$ and $b_1,b_2\ne 0$.

There are $O(V_1^3V^3)$ choices of $b_2,b_3,b_4$ and  $O(\#\mathcal{C})$ choices of $a_2'$. Given such a choice of $b_2,b_3,b_4,a_2'$, there are $O(X^{o(1)})$ choices of $b_1$ and $a_2$ by the divisor bound, since $b_1a_2=-b_2a_2'-b_3X-b_4$ and $b_1a_2$ is non-zero. Given $b_1,b_2$ there are $O(X^{o(1)})$ choices of $\tilde{v}_1,\tilde{v}_1',v_2,v_2'$ by the divisor bound (recall $b_1,b_2\ne 0$). Given $\tilde{v}_1,\tilde{v}_1'$ and $b_3$ we see that
\[
v_3\equiv b_3 \tilde{v}_1'{}^{-1}\Mod{\tilde{v}_1}.
\]
Thus there are $O(V/V_1)$ choices of $v_3$ (here we use the fact that $\gcd(\tilde{v}_1,\tilde{v}_1')=1$). Given $v_1,\tilde{v}_1,b_3$ and such a choice of $v_3$ there is just one choice of $v_3'$. Similarly, there are $O(V/V_1)$ choices of $v_4,v_4'$ given $\tilde{v}_1,\tilde{v}_1'$ and $b_4$. Given $\tilde{v}_1,v_2,v_3,v_4,a_2$, there are $O(X^{o(1)})$ choices of $d,a_1$ since $d a_1\tilde{v}_1X=v_2a_2+v_3 X+v_4$ and $d a_1 \tilde{v}_1 X\ne 0$. Putting this all together, we have 
\begin{equation}
N_3(V_1)\ll X^{o(1)}\#\mathcal{C} V_1 V^5.
\label{eq:V1Small}
\end{equation}
 This bound will be good for us if $V_1$ is small, but we need a different argument if $V_1$ is large.

We note that
\[
b_3=-\frac{b_1a_2+b_2a_2'+b_4}{X}\ll \frac{V V_1 A}{X}.
\]
We make a choice of $a_2,a_2',b_1$, for which there are $\ll V V_1 X^{o(1)}\min(M^4,\#\mathcal{C}^2)$ possibilities counted by $N_3(V_1)$. We see that $b_3,b_4$ satisfy
\[
b_3 X+b_4\equiv b_1a_2\Mod{a_2'}.
\]
Let $b_{3,0},b_{4,0}$ be a solution to this congruence with $b_{3,0}^2+b_{4,0}^2$ minimal. We may assume that $b_{3,0}\ll VV_1A/X$ and $b_{4,0}\ll VV_1$ since otherwise there are no possible $b_3,b_4$. All pairs $b_3,b_4$ satisfying the congruence are then of the form  $(b_3,b_4)=(b_{3,0}+b_3',b_{4,0}+b_4')$ for some integers $b_3',b_4'$ satisfying $b_3'X+b_4'\equiv 0\Mod{a_2'}$ and $b_3'\ll V V_1A/X$, $b_4'\ll V V_1$. This forces $b_3'\mathbf{e}_1+b_4'\mathbf{e}_2$ to lie in a lattice $\Lambda\subset\mathbb{Z}^2$ of determinant $a_2'$, where $\mathbf{e}_1,\mathbf{e}_2$ are the standard basis vector of $\mathbb{Z}^2$. Let $\phi:\mathbb{R}^2\rightarrow\mathbb{R}^2$ be the linear map which is a dilation by a factor $X/A$ in the $\mathbf{e}_1$ direction, and $\Lambda'=\phi(\Lambda)$, a lattice in $\mathbb{R}^2$ of determinant $a_2X/A\asymp X$. 

Let $\Lambda'$ have a Minkowski-reduced basis $\{\mathbf{v}_1,\mathbf{v}_2\}$. We recall this means that $\|\mathbf{v}_1\|_2\cdot \|\mathbf{v}_2\|_2\asymp \det(\Lambda)=a_2'X/A\asymp X$ and $\|n_1\mathbf{v}_1+n_2\mathbf{v}_2\|_2\asymp \|n_1\mathbf{v}_1\|_2+\|n_2\mathbf{v}_2\|_2$. From the definition of $M_a$, we see that the smallest non-zero vector in $\Lambda$ has length at least $M/10$, and so since $\phi$ can only increase the length of vectors we have $\|\mathbf{v}_1\|_2,\|\mathbf{v}_2\|_2\ge M/10$. 

The set of vectors $b_3'\mathbf{e}_1+b_4'\mathbf{e}_2$ in $\Lambda$ inside the bounded region $|b_3'|\ll  V V_1 A/X$, $|b_4'|\ll V V_1$ can be injected by $\phi$ into the set $\{\mathbf{x}\in\Lambda':\,\|\mathbf{x}\|_2\le CV V_1\}$ for some suitably large constant $C$. Thus, provided $C$ is sufficiently large so that we also have $\|n_1\mathbf{v}_1+n_2\mathbf{v}_2\|_2\ge \max_i\|n_i\mathbf{v}_i\|_2/C$, we see that the number of pairs $(b_3',b_4')$ is bounded by
\begin{align*}
\#\{\mathbf{x}\in\Lambda':\|\mathbf{x}\|_2\le C V V_1\}&=\#\Bigl\{(n_1,n_2)\in\mathbb{Z}^2:\,\|n_1\mathbf{v}_1+n_2\mathbf{v}_2\|_2\le C V V_1\}\\
&\le \#\Bigl\{(n_1,n_2)\in\mathbb{Z}^2:\,|n_1|\le C^2\frac{V V_1}{\|\mathbf{v}_1\|_2},|n_2|\le C^2\frac{V V_1}{\|\mathbf{v}_2\|_2}\Bigr\}\\
&\ll \Bigl(1+\frac{V V_1}{\|\mathbf{v}_1\|_2}\Bigr)\Bigl(1+\frac{V V_1}{\|\mathbf{v}_2\|_2}\Bigr)\\
&\ll 1+ \frac{V V_1}{M}+\frac{V^2V_1^2}{\det(\Lambda')}\\
&\ll 1+\frac{V V_1}{M}+\frac{V^2 V_1^2}{X}.
\end{align*}
Here we used the fact that $\|\mathbf{v}_1\|_2,\|\mathbf{v}_2\|_2\gg M$ and $\|\mathbf{v}_1\|_2\cdot\|\mathbf{v}_2\|_2\asymp \det(\Lambda')$ in the penultimate line, and $\det(\Lambda')\asymp X$ in the final line.

Given any choice of $a_2,a_2',b_1,b_3,b_4$, we see that $b_2$ is then determined uniquely by $b_1a_2+b_2a_2'=b_3X+b_4$, since we have already chosen all the other terms. As before, given $a_2$, $a_2'$, $b_1$, $b_2$, $b_3$, $b_4$ there are $O(X^{o(1)}V^2/V_1^2)$ choices of $\tilde{v}_1$, $\tilde{v}_1'$, $v_2$, $v_3$, $v_4$, $v_2'$, $v_3'$, $v_4'$, $d$, $a_1$. Putting this all together, we obtain the bound
\[
N_3(V_1)\ll X^{o(1)}\frac{V^3}{V_1}\min(M^4,\#\mathcal{C}^2)\Bigl(1+\frac{V V_1}{M}+\frac{V^2V_1^2}{X}\Bigr).
\]
Since $\min(M^4,\#\mathcal{C}^2)\le \min(M\#\mathcal{C}^{3/2},\#\mathcal{C}^2)$ this gives
\begin{equation}
N_3(V_1)\ll \Bigl(\#\mathcal{C}^2\frac{V^3}{V_1}+\#\mathcal{C}^{3/2}V^4+\frac{\#\mathcal{C}^2V^6}{X}\Bigr)X^{o(1)}.
\label{eq:V1Large}
\end{equation}
Combining \eqref{eq:V1Small} and \eqref{eq:V1Large}, we obtain
\begin{align}
N_3(V_1)&\ll X^{o(1)}\min\Bigl(\#\mathcal{C} V_1 V^5,\#\mathcal{C}^2\frac{V^3}{V_1}+\#\mathcal{C}^{3/2}V^4+\frac{\#\mathcal{C}^2V^6}{X}\Bigr)\nonumber\\
&\ll X^{o(1)}\Bigl(\Bigl(\#\mathcal{C} V_1 V^5\Bigr)^{1/2}\Bigl(\#\mathcal{C}^2\frac{V^3}{V_1}\Bigr)^{1/2}+\#\mathcal{C}^{3/2}V^4+\frac{\#\mathcal{C}^2V^6}{X}\Bigr)\nonumber\\
&\ll X^{o(1)}\Bigl(\#\mathcal{C}^{3/2}V^4+\frac{\#\mathcal{C}^2V^6}{X}\Bigr).
\label{eq:N3Bound}
\end{align}
We substitute \eqref{eq:N2Bound} and \eqref{eq:N3Bound} into \eqref{eq:CauchyCountBound}, and obtain
\[
\sum_{\substack{(a_1,a_2)\in\mathcal{C}_2}}1\ll X^{o(1)}\Bigl(\#\mathcal{C}^{5/4}V^2+\frac{\#\mathcal{C}^{3/2}V^{3}}{X^{1/2}}\Bigr).
\]
We recall from \eqref{eq:ZeroTerms} that terms with $v_1v_2v_3v_4a_1a_2=0$ contribute a total $O(\#\mathcal{C}V^2X^{o(1)})$, which is negligible compared with the $\#\mathcal{C}^{5/4}V^2$ term above. Thus we obtain the result.
\end{proof}
We see that Lemma \ref{lmm:LineBound} improves on the trivial bound $O(X^{o(1)}\min(V^3\#\mathcal{C},\#\mathcal{C}^2))$ if $V^{8/3+\epsilon}\ll \#\mathcal{C}\ll V^{4-\epsilon}+X^{1-\epsilon}$.
%
%
%
%
\begin{proof}[Proof of Proposition \ref{prpstn:LineBound}] We wish to show that
\[
\sum_{\substack{ (a_1,a_2)\in\mathcal{B}_2(N,K,\delta) \\ a_1,a_2\in\mathcal{E}' }}F_X\Bigl(\frac{a_1}{X}\Bigr)F_X\Bigl(\frac{a_2}{X}\Bigr)\ll \frac{X^{1-\epsilon}}{N K}
\]
in the region $N\gg X^{9/25}$.
We recall that
\[
\mathcal{E}'=\Bigl\{a<X:F_X\Bigl(\frac{a}{X}\Bigr)\sim \frac{1}{B}\Bigr\}\subseteq\mathcal{E}
\]
for some $B\ll X^{23/80}$. Trivially, we have that
\[
\sum_{a_1,a_2\in\mathcal{E}'}F_X\Bigl(\frac{a_1}{X}\Bigr)F_X\Bigl(\frac{a_2}{X}\Bigr)\le \frac{(\#\mathcal{E}')^2}{B^2}.
\]
 By Lemma \ref{lmm:DigitDistribution}, we have
\begin{equation}
\#\mathcal{E}'\ll B^{235/154}X^{59/433}.\label{eq:BSetBound}
\end{equation}
 This gives 
 \[
\sum_{a_1,a_2\in\mathcal{E}'}F_X\Bigl(\frac{a_1}{X}\Bigr)F_X\Bigl(\frac{a_2}{X}\Bigr)\ll B^{81/77}X^{118/433}\ll B X^{23/80-\epsilon}
\]
on verifying that $4/77\times 23/80+118/433<23/80$. This gives the required bound if $N K\ll X^{57/80}/B$.

Alternatively, if $N K\gg X^{57/80}/B$, we use Lemmas \ref{lmm:Line} and \ref{lmm:LineBound} to bound $\#(\mathcal{B}_2\cap(\mathcal{E}')^2)$, and obtain
\begin{align}
\sum_{\substack{(a_1,a_2)\in\mathcal{B}_2(N,K,\delta)\\ a_1,a_2\in \mathcal{E}'}}&F_X\Bigl(\frac{a_1}{X}\Bigr)F_X\Bigl(\frac{a_2}{X}\Bigr)\le \frac{\#(\mathcal{B}_2(N,K,\delta)\cap(\mathcal{E}')^2)}{B^2}\nonumber\\
&\hspace{-1cm}\le\frac{1}{B^{2}}\#\Bigl\{a_1,a_2\in\mathcal{E}':\exists \mathbf{v}\in\mathbb{Z}^4\backslash\{\mathbf{0}\}\text{ s.t. }\|\mathbf{v}\|_2\ll \frac{X}{N^2K},\mathbf{v}\cdot\mathbf{a}=0\Bigr\}\nonumber\\
&\hspace{-1cm}\ll \frac{X^{o(1)}}{B^{2}}\Bigl((\#\mathcal{E}')^{5/4}\Bigl(\frac{X}{N^2K}\Bigr)^2+\frac{(\#\mathcal{E}')^{3/2}}{X^{1/2}}\Bigl(\frac{X}{N^2K}\Bigr)^{3}\Bigr).\label{eq:B2Bound2}
\end{align}
Here we have written $\mathbf{a}$ for the vector $(a_1,a_2,X,1)\in\mathbb{Z}^4$. 

Since $N K\gg X^{57/80}/B$, we have $X/N K\ll X^{23/80}B$. Combining this bound with \eqref{eq:BSetBound}, we obtain a bounds for $(\#\mathcal{E}')^{5/4}B^{-2}X/N K$ and $(\#\mathcal{E}')^{3/2}B^{-2}X^{-1/2}(X/N K)^2$ of the form $X^a B^b$ for some $b>0$. Since we are only considering $B\ll X^{23/80}$, these expressions are maximized when $B\asymp X^{23/80}$. When $B\asymp X^{23/80}$ we have $\#\mathcal{E}'\ll X^{23/40}$ and $X/N K\ll X^{23/40}$. Thus we obtain the bounds
\begin{align*}
\frac{(\#\mathcal{E}')^{5/4}}{B^2}\frac{X}{N K}&\ll X^{115/160}=X^{23/32},\\
\frac{(\#\mathcal{E}')^{3/2}}{B^2 X^{1/2}}\Bigl(\frac{X}{N K}\Bigr)^2& \ll X^{75/80}=X^{15/16}.
\end{align*}
Substituting these bounds into \eqref{eq:B2Bound2} gives
\begin{align*}
\sum_{\substack{ (a_1,a_2)\in\mathcal{B}_2(N,K,\delta) \\ a_1,a_2\in\mathcal{E}' }}F_X\Bigl(\frac{a_1}{X}\Bigr)F_X\Bigl(\frac{a_2}{X}\Bigr)\ll \Bigl(\frac{X^{23/32}}{N^2}+\frac{X^{15/16}}{N^{3}}\Bigr)\frac{X^{1+o(1)}}{N K}.
\end{align*}
We can then verify that $2\times 9/25>23/32$ and that $3\times 9/25>15/16$, so for $N\gg X^{9/25}$ this is $O(X^{1-\epsilon}/NK)$, as required.
\end{proof}
%
%
%
%
%
%
%

\section{Modifications for Theorem \ref{thrm:ManyDigits}}\label{sec:ManyPrimes}
Theorem \ref{thrm:ManyDigits} follows from essentially the same overall approach as in Theorem \ref{thrm:MainTheorem}. We only provide a brief sketch the proof, leaving the complete details to the interested reader. When $q$ is large, there is negligible benefit from using the $235/154^{th}$ moment, so we just use $\ell^1$ bounds. For $Y=q^k$ a power of $q$, we let
\[
F_Y(\theta)=Y^{-\log(q+s)/\log{q}}\Bigl|\sum_{n<Y}\mathbf{1}_\mathcal{A}(n)e(n\theta)\Bigr|=\prod_{i=0}^{k-1}\frac{1}{q-s}\Bigl|\sum_{\substack{n_i<q\\ n_i\notin\mathcal{B}}}e(n_i q^i\theta)\Bigr|.
\]
The inner sum is $\le \min(q-s,\,s+2/\|q^i\theta\|)$. Thus, similarly to Lemma \ref{lmm:L1Bound}, we find
\begin{align}
\sum_{t<Y}F_{Y}\Bigl(\frac{t}{Y}\Bigr)&\ll \frac{1}{(q-s)^k}\prod_{i=0}^{k-1}\Bigl|\sum_{t_i<q}\min\Bigl(q-s,\frac{q}{t_i}+\frac{q}{q-t_i}+s\Bigr)\Bigr|\nonumber\\
&= O\Bigl(\frac{q\log{q}+q s}{q-s}\Bigr)^k.\label{eq:qLargeL1}
\end{align}
In particular, for $q$ large enough in terms of $\epsilon$ and $s\le q^{23/80}$, this is $O(Y^{23/80+\epsilon})$. We can use this bound in place of Lemma \ref{lmm:L1Bound} and Lemma \ref{lmm:DigitDistribution} throughout the argument with the same (or stronger) consequences. This gives the first part of Theorem \ref{thrm:ManyDigits}.

For the second part of Theorem \ref{thrm:ManyDigits}, we see that in the special case $\mathcal{B}=\{0,\dots,s-1\}$ we have
\[
\Bigl|\sum_{\substack{n_i<q\\ n_i\notin\mathcal{B}}}e(n_i\theta)\Bigr|=\Bigl|\frac{e((q-s)\theta)-1}{e(\theta)-1}\Bigr|\le \min\Bigl(q-s, \frac{2}{\|\theta\|}\Bigr).
\]
Using this bound, get a corresponding improvement on \eqref{eq:qLargeL1}, which gives
\begin{align}
\sum_{t<Y}F_Y\Bigl(\frac{t}{Y}\Bigr)&\ll \frac{1}{(q-s)^k}\prod_{i=0}^{k-1}\sum_{t_i<q}\min\Bigl(q-s,\frac{q}{t_i}+\frac{q}{q-t_i}\Bigr)\nonumber\\
&=O\Bigl(\frac{q\log{q}+q-s}{q-s}\Bigr)^k.\label{eq:qLargeL12}
\end{align}
If $s\le q-q^{57/80}$ and $q$ is sufficiently large in terms of $\epsilon$, this gives a bound $Y^{23/80+\epsilon}$. As before, using this bound in place of Lemma \ref{lmm:L1Bound} and Lemma \ref{lmm:DigitDistribution} throughout gives the result.

For the results mentioned after Theorem \ref{thrm:ManyDigits}, we find that in the further restricted ranges $s\le q^{1/4-\delta}$ (or $s\le q-q^{3/4+\delta}$ if $\mathcal{B}=\{0,\dots,s-1\}$), the bound \eqref{eq:qLargeL1} (or \eqref{eq:qLargeL12}) give an $\ell^1$ bound of $Y^{1/4-\delta/2}$. Following this through the argument, we obtain a wider Type II range and can estimate bilinear sums provided $N\in[X^{5/16},X^{1/2}]$ instead of $[X^{9/25},X^{17/40}]$. By symmetry, we can then also estimate terms in $N\in [X^{1/2},X^{11/16}]$. This allows us to obtain asymptotic estimates for all the terms in the right hand side of the identity
\[
S(\mathcal{A},X^{1/2})=S(\mathcal{A},X^{3/8-2\epsilon})-\sum_{X^{3/8-2\epsilon}\le p<X^{1/2}}S(\mathcal{A}_p,p),
\]
by the equivalents of Proposition \ref{prpstn:FinalSieve} and Proposition \ref{prpstn:FinalTypeII} adapted to this larger Type II range.
%
%
%
%
%
%
\section{Acknowledgments}
We thank Ben Green for introducing the author to this problem, Xuancheng Shao for useful discussions and Fabian Karwatowski for some important corrections. We also thank the anonymous referee for many helpful suggestions and corrections. The author is supported by a Clay Research Fellowship and a Fellowship by Examination of Magdalen College, Oxford. Part of this work was performed whilst the author was visiting Stanford university, whose hospitality is gratefully acknowledged.
%
%
%
%
\bibliographystyle{plain}
\bibliography{Digits}
\end{document}